\newtheorem{theorem}{Theorem}[section]
\numberwithin{equation}{section}
\newtheorem{proposition}[theorem]{Proposition}
\newtheorem{corollary}[theorem]{Corollary}
\newtheorem{remark}[theorem]{Remark}
\newtheorem{lemma}[theorem]{Lemma}
\newtheorem{algorithm}[theorem]{Algorithm}
\newtheorem{assumption}[theorem]{Assumption}
\titleformat{\section}{\normalfont\scshape\centering}{\thesection.}{0.5em}{}
\titleformat*{\subsection}{\itshape}
\titleformat*{\subsubsection}{\itshape}
\providecommand{\keywords}[1]
{
	{\small\textit{Keywords:} #1}
}
\providecommand{\MSC}[1]
{
	{\small\textit{AMS MSC (2020):~~} #1}
}
\providecommand{\jumptmp}[2]{#1\llbracket{#2}#1\rrbracket}
\providecommand{\jump}[1]{\jumptmp{}{#1}}
	\def\MR#1{}
\newcommand{\AAA}{\boldsymbol{\mathcal{A}}}
\begin{document}
	\setlength{\abovedisplayskip}{5.5pt}
	\setlength{\belowdisplayskip}{5.5pt}
	\setlength{\abovedisplayshortskip}{5.5pt}
	\setlength{\belowdisplayshortskip}{5.5pt}

	\title{Error analysis for a Crouzeix--Raviart approximation\\ of the $p$-Dirichlet problem}
	\author[1]{Alex Kaltenbach\thanks{Email: \texttt{alex.kaltenbach@mathematik.uni-freiburg.de\vspace*{-15mm}}}}
	\date{October 21, 2022}
	\affil[1]{\small{Department of Applied Mathematics, University of Freiburg, Ernst--Zermelo--Stra\ss e~1, 79104 Freiburg im Breisgau, Germany}}
	\clearpage\maketitle
    \thispagestyle{empty}

	\pagestyle{fancy}
	\fancyhf{}
	\fancyheadoffset{0cm}
	\addtolength{\headheight}{-0.25cm}
	\renewcommand{\headrulewidth}{0pt} 
	\renewcommand{\footrulewidth}{0pt}
	\fancyhead[CO]{\textsc{Error analysis for a CR approximation of the $p$-Dirichlet problem}}
	\fancyhead[CE]{\textsc{A. Kaltenbach}}
	\fancyhead[R]{\thepage}
	\fancyfoot[R]{}
	
	\begin{abstract}
		In the present paper, we examine a Crouzeix--Raviart approximation for non-linear~partial differential equations having a $(p,\delta)$-structure for some $p\in (1,\infty)$ and $\delta\ge 0$.~We~establish a~\mbox{priori} error estimates, which are optimal for all $p\in (1,\infty)$ and $\delta\ge 0$, medius error estimates, i.e., best-approximation results, and a primal-dual a posteriori error estimate, which~is~both reliable and efficient. 
  The theoretical findings are supported by numerical~experiments.
	\end{abstract}

	\keywords{\hspace{-0.15mm}$p$-Dirichlet \hspace{-0.15mm}problem;  \hspace{-0.15mm}Crouzeix--Raviart \hspace{-0.15mm}element; \hspace{-0.15mm}a \hspace{-0.15mm}priori \hspace{-0.15mm}error \hspace{-0.15mm}analysis;~\hspace{-0.15mm}medius~\hspace{-0.15mm}error~\hspace{-0.15mm}\mbox{analysis}; a posteriori error analysis.}
	
	\MSC{49M29; 65N15; 65N50.}

	\section{Introduction}
   	
   	\qquad We examine the numerical approximation of a non-linear system of $p$-Dirichlet type, i.e.,
   	\begin{align}\label{eq:pDirichlet}
   	    \begin{aligned}
   	        -\mathrm{div}\,\AAA(\nabla u)&=f&&\quad\textup{ in }\Omega\,,\\
   	        u&= 0&&\quad\textup{ on }\Gamma_D\,,\\
   	       \AAA(\nabla u)\cdot n &= 0&&\quad\textup{ on }\Gamma_N\,,
   	    \end{aligned}
   	\end{align}
   	using \hspace*{-0.15mm}the \hspace*{-0.15mm}Crouzeix--Raviart \hspace*{-0.15mm}element, \hspace*{-0.15mm}cf.\ \cite{CR73}. \hspace*{-0.15mm}More \hspace*{-0.15mm}precisely, \hspace*{-0.15mm}for \hspace*{-0.15mm}a \hspace*{-0.15mm}given \hspace*{-0.15mm}right-hand~\hspace*{-0.15mm}side~\hspace*{-0.15mm}${f\!\in\! L^{p'}(\Omega)}$, $p'\coloneqq \frac{p}{p-1}$, $p\in (1,\infty)$, we seek $\smash{u\in W^{1,p}_D(\Omega)\coloneqq \{v\in W^{1,p}(\Omega)\mid \textup{tr}\,v=0\textup{ in }L^p(\Gamma_D)\}}$ solving~\eqref{eq:pDirichlet}. Here, $\smash{\Omega\subseteq \mathbb{R}^d}$, ${d\in \mathbb{N}}$, is a bounded Lipschitz domain, whose topological boundary $\partial\Omega$ is disjointly divided into a Dirichlet part $\Gamma_D$ and a Neumann part $\Gamma_N$, and the non-linear~\mbox{operator}~${\AAA\colon\mathbb{R}^d\to \mathbb{R}^d}$  has a $(p,\delta)$-structure for some $p\in (1,\infty)$ and $\delta\ge 0$. The relevant~example,~falling~into~this~class, for every $a\in \mathbb{R}^d$, is defined by 
   	\begin{align}\label{example}
   	    \AAA(a)\coloneqq (\delta+\vert a\vert)^{p-2}a\,.
   	\end{align}
   	\qquad Problems \hspace{-0.1mm}of \hspace{-0.1mm}type \hspace{-0.1mm}\eqref{eq:pDirichlet} \hspace{-0.1mm}arise \hspace{-0.1mm}in \hspace{-0.1mm}various~\hspace{-0.1mm}mathematical~\hspace{-0.1mm}models~\hspace{-0.1mm}describing~\hspace{-0.1mm}physical~\hspace{-0.1mm}processes,~\hspace{-0.1mm}e.g., in plasticity, bimaterial problems in elastic-plastic mechanics, non-Newtonian~fluid~mechanics, blood rheology, and glaciology, cf. \cite{MNRR96,Liu99,HA18}. Most \hspace*{-0.15mm}of \hspace*{-0.15mm}these \hspace*{-0.15mm}models \hspace*{-0.15mm}admit \hspace*{-0.15mm}equivalent \hspace*{-0.15mm}formulations \hspace*{-0.15mm}as \hspace*{-0.15mm}convex~\hspace*{-0.15mm}minimization~\hspace*{-0.15mm}problems, e.g., for the  non-linear system \eqref{eq:pDirichlet}, if the non-linear operator $\AAA\colon\mathbb{R}^d\to \mathbb{R}^d$ possesses~a~potential, i.e., there is a strictly convex function $\varphi\colon \mathbb{R}_{\ge 0}\to \mathbb{R}_{\ge 0}$~such~that $D(\varphi\circ \vert \cdot\vert)(a)=\AAA(a)$~for~all~${a\in \mathbb{R}^d}$, e.g., \eqref{example}, then each solution  $u\in  \smash{W^{1,p}_D(\Omega)}$ of \eqref{eq:pDirichlet}~is~unique minimizer of the energy functional ${I\colon \smash{W^{1,p}_D(\Omega)}\to \mathbb{R}_{\ge 0}}$, for every $v\in\smash{ W^{1,p}_D(\Omega)}$ defined by
        \begin{align}
            I(v)\coloneqq \int_{\Omega}{\varphi(\vert \nabla v\vert)\,\mathrm{d}x}-\int_{\Omega}{f\,v\,\mathrm{d}x}\,,\label{eq:pDirichletMin}
        \end{align}
        and vice-versa, leading to a primal and a dual formulation of \eqref{eq:pDirichlet}, as well as to convex duality relations.

    \subsection{Related contributions}\vspace{-0.5mm}\enlargethispage{7mm}
    
    \qquad The finite element approximation of \eqref{eq:pDirichlet} 
    has been intensively analyzed by numerous authors:
    The first contributions addressing a priori error estimation as well as a posteriori estimation,  measured in \hspace*{-0.1mm}the \hspace*{-0.1mm}conventional \hspace*{-0.1mm}$W^{1,p}(\Omega)$-semi-norm,
     \hspace*{-0.1mm}can \hspace*{-0.1mm}be \hspace*{-0.1mm}found \hspace*{-0.1mm}in \hspace*{-0.1mm}\cite{Cia78,BL1993a,Ver95,Pad97}.~\hspace*{-0.1mm}Sharper~\hspace*{-0.1mm}(optimal) a priori error estimates for the conforming Lagrange finite element method  applied to \eqref{eq:pDirichlet}, measured in the so-called quasi-norm or natural distance, resp.,
     were established in \cite{baliu94,eb-liu,DR07}. 
     Furthermore, residual a posteriori error estimates for the conforming Lagrange finite element method and the non-conforming Crouzeix--Raviart finite~element~method applied \hspace*{-0.1mm}to \hspace*{-0.1mm}\eqref{eq:pDirichlet}, \hspace*{-0.1mm}each \hspace*{-0.1mm}measured \hspace*{-0.1mm}in \hspace*{-0.1mm}the \hspace*{-0.1mm}quasi-norm \hspace*{-0.1mm}or \hspace*{-0.1mm}natural \hspace*{-0.1mm}distance, \hspace*{-0.1mm}resp., \hspace*{-0.1mm}were \hspace*{-0.1mm}established~\hspace*{-0.1mm}in~\hspace*{-0.1mm}\mbox{\cite{LY01A,LY01B,CLY06,DK08,BDK12,BM20}}. In addition, there \hspace*{-0.1mm}exist \hspace*{-0.1mm}optimal \hspace*{-0.1mm}a \hspace*{-0.1mm}priori \hspace*{-0.1mm}error \hspace*{-0.1mm}estimates \hspace*{-0.1mm}for \hspace*{-0.1mm}Discontinuous \hspace*{-0.1mm}Galerkin \hspace*{-0.1mm}(DG) \hspace*{-0.1mm}methods, \hspace*{-0.1mm}cf.~\hspace*{-0.1mm}\cite{dkrt-ldg,mret18,kr-phi-ldg}. In \cite{LLC18}, if $p\ge  2$ and $\delta=0$ in \eqref{example}, a priori and a posteriori error estimates~for~the Crouzeix--Raviart \hspace*{-0.1mm}finite \hspace*{-0.1mm}element \hspace*{-0.1mm}method \hspace*{-0.1mm}applied \hspace*{-0.1mm}to \hspace*{-0.1mm}\eqref{eq:pDirichlet}, \hspace*{-0.1mm}measured~\hspace*{-0.1mm}in~\hspace*{-0.1mm}the~\hspace*{-0.1mm}\mbox{quasi-norm},~\hspace*{-0.1mm}were~\hspace*{-0.1mm}\mbox{derived}.
    However, in \cite{LLC18}, the optimality of the a priori error estimates and the efficiency of the a posteriori error estimates remain unclear. 
    In \cite{Bre15}, if $p=2$ and $\delta=0$ in \eqref{example}, by means of a so-called medius error analysis, i.e., a best-approximation result, for the Crouzeix--Raviart finite element method applied to \eqref{eq:pDirichlet}, 
    an optimal a priori error estimate was derived. In particular, this medius error analysis reveals that the performances of the 
    conforming Lagrange finite element method and the non-conforming Crouzeix--Raviart \hspace*{-0.1mm}finite \hspace*{-0.1mm}element \hspace*{-0.1mm}method \hspace*{-0.1mm}applied~\hspace*{-0.1mm}to~\hspace*{-0.1mm}\eqref{eq:pDirichlet}~\hspace*{-0.1mm}are~\hspace*{-0.1mm}comparable. However,~\hspace*{-0.1mm}for~\hspace*{-0.1mm}the~\hspace*{-0.1mm}case~\hspace*{-0.1mm}${p\neq 2}$, to the best of the author's knowledge, such results~are~still~pending. More~precisely, there is neither a medius error analysis, i.e., a best-approximation result, available, nor an optimal a priori error estimate, measured in the quasi-norm or natural distance, resp.~It~is the purpose of this paper to fill this lacuna.\vspace{-0.5mm}

    \subsection{New contribution}\vspace{-0.5mm}
    
    \qquad  Deriving local efficiency estimates in terms of shifted $N$-functions and deploying the so-called node-averaging quasi-interpolation operator, cf. \cite{Osw93,Sus96}, we generalize the medius error analysis in 
    \cite{Bre15} from $p=2$ and $\delta=0$ in \eqref{example}, i.e., $\AAA=\textup{id}_{\mathbb{R}^d}\colon \mathbb{R}^d\to \mathbb{R}^d$, to general non-linear operators $\AAA\colon\hspace*{-0.1em} \mathbb{R}^d\hspace*{-0.1em}\to\hspace*{-0.1em} \mathbb{R}^d$ \hspace*{-0.1mm}having \hspace*{-0.1mm}a \hspace*{-0.1mm}$(p,\delta)$-structure \hspace*{-0.1mm}for \hspace*{-0.1mm}$p\hspace*{-0.1em}\in\hspace*{-0.1em} (1,\infty)$ \hspace*{-0.1mm}and \hspace*{-0.1mm}$\delta\hspace*{-0.1em}\ge\hspace*{-0.1em} 0$, \hspace*{-0.1mm}e.g.,~\hspace*{-0.1mm}\eqref{example}.~\hspace*{-0.1mm}This~\hspace*{-0.1mm}medius~\hspace*{-0.1mm}error~\hspace*{-0.1mm}analysis,  reveals that the performances of the 
    conforming Lagrange finite element method applied to \eqref{eq:pDirichlet} and the non-conforming Crouzeix--Raviart finite element method applied~to~\eqref{eq:pDirichlet}~are~comparable. As \hspace*{-0.1mm}a \hspace*{-0.1mm}result, \hspace*{-0.1mm}we \hspace*{-0.1mm}get 
    \hspace*{-0.1mm}a \hspace*{-0.1mm}priori \hspace*{-0.1mm}error \hspace*{-0.1mm}estimates \hspace*{-0.1mm}for \hspace*{-0.1mm}the \hspace*{-0.1mm}Crouzeix--Raviart \hspace*{-0.1mm}finite \hspace*{-0.1mm}element~\hspace*{-0.1mm}method~\hspace*{-0.1mm}applied to \eqref{eq:pDirichlet}, which are optimal for all $p\hspace*{-0.1em}\in\hspace*{-0.1em} (1,\infty)$ and $\delta\hspace*{-0.1em}\ge\hspace*{-0.1em} 0$. If $\AAA\colon\mathbb{R}^d\hspace*{-0.1em}\to \hspace*{-0.1em}\mathbb{R}^d$ has a potential~and,~thus, \eqref{eq:pDirichlet} admits an equivalent formulation as a convex minimization problem, cf. \eqref{eq:pDirichletMin}, then~we~have~access to a (discrete) convex duality theory, and \eqref{eq:pDirichlet} as well as the Crouzeix--Raviart approximation of \eqref{eq:pDirichlet} admit dual formulations with a dual solution and a discrete dual solution,~resp.,~cf.~\cite{LLC18,Bar21,BKAFEM}. We establish a priori error estimates for the error between the dual solution and the discrete dual solution, measured in the so-called conjugate natural distance, which are optimal for all $p\in (1,\infty)$ and $\delta\ge 0$.
    One further by-product of the medius error analysis consists in an efficiency~type~result, which allows to establish the efficiency of a so-called primal-dual a posteriori error estimator, which was recently derived in \cite{BKAFEM} and is also applicable~if~$\AAA\colon\mathbb{R}^d\to \mathbb{R}^d$~has~a~potential.\vspace{-0.5mm}

   	\subsection{Outline}\vspace{-0.5mm}
   	
   \qquad \textit{This article is organized as follows:} \!In Section \ref{sec:preliminaries}, we introduce the employed notation,~the~\mbox{basic} assumptions on the non-linear operator $\AAA\colon\mathbb{R}^d\to \mathbb{R}^d$ and its corresponding properties, the relevant finite element spaces, and give brief review of the continuous and the discrete $p$-Dirichlet~problem. 
   In Section \ref{sec:medius}, we establish a medius error analysis, i.e., best-approximation result, for the Crouzeix--Raviart finite element method applied to \eqref{eq:pDirichlet}.
   In Section \ref{sec:a_priori}, by~means~of~this~medius~error~analysis, we derive a priori error estimates for the  Crouzeix--Raviart finite element method applied~to~\eqref{eq:pDirichlet}, which are optimal for all  $p\hspace{-0.1em}\in\hspace{-0.1em} (1,\infty)$ and $\delta\hspace{-0.1em}\ge\hspace{-0.1em} 0$. In Section \ref{sec:a_posteriori}, we establish the efficiency~of~a~\mbox{so-called} primal-dual a posteriori error estimator.
   In Section \ref{sec:experiments}, we confirm our theoretical findings via numerical experiments.
    
    \newpage
   	 
	\section{Preliminaries}\label{sec:preliminaries}\vspace{-0.5mm}

	\qquad Throughout the entire article, if not otherwise specified, we always denote by ${\Omega\subseteq \mathbb{R}^d}$,~${d\in\mathbb{N}}$, a bounded polyhedral Lipschitz domain, whose topological boundary $\partial\Omega$ is disjointly divided into a closed Dirichlet part $\Gamma_D$, for which we always assume that $\vert \Gamma_D\vert>0$\footnote{For a (Lebesgue) measurable set $M\subseteq \mathbb{R}^d$, $d\in \mathbb{N}$, we denote by $\vert M\vert $ its $d$-dimensional Lebesgue measure. For a $(d-1)$-dimensional submanifold $M\subseteq \mathbb{R}^d$, $d\in \mathbb{N}$, we denote by $\vert M\vert $ its $(d-1)$-dimensional~Hausdorff~measure.},~and~a~Neumann~part~$\Gamma_N$,~i.e.,  ${\partial\Omega=\Gamma_D\cup\Gamma_N}$~and~${\emptyset=\Gamma_D\cap\Gamma_N}$. We employ $c, C>0$ to denote generic constants, that may change from line
to line, but are not depending on the crucial quantities. Moreover,
we~write~${f\sim g}$ if and only if there exist constants $c,C>0$ such
that $c\, f \le g\le C\, f$.
	\subsection{Standard function spaces}\vspace{-0.5mm}

	\qquad For $p\in \left[1,\infty\right]$ and $l\in \mathbb{N}$, we employ the standard notations\vspace*{-0.5mm}\footnote{\textcolor{black}{Here, $W^{\smash{-\frac{1}{p},p}}(\partial\Omega)\coloneqq (W^{\smash{1-\frac{1}{p'},p'}}(\partial\Omega))^*$.}\vspace{-20mm}}
	\begin{align*}
		\begin{aligned}
		W^{1,p}_D(\Omega;\mathbb{R}^l)&\coloneqq \big\{v\in L^p(\Omega;\mathbb{R}^l)&&\hspace*{-3.25mm}\mid \nabla v\in L^p(\Omega;\mathbb{R}^{l\times d}),\, \textup{tr}\,v=0\text{ in }L^p(\Gamma_D;\mathbb{R}^l)\big\}\,,\\
		\textcolor{black}{W^{p}_N(\textup{div};\Omega)}&\textcolor{black}{\coloneqq \big\{y\in L^p(\Omega;\mathbb{R}^d)}&&\textcolor{black}{\hspace*{-3.25mm}\mid \textup{div}\,y\in L^p(\Omega)\,,\langle \textup{tr}\,y\cdot n,v\rangle_{W^{\smash{1-\frac{1}{p'}},p'}(\partial\Omega)}=0\text{ for all }v\in W^{1,p'}_D(\Omega)\big\}}\,,  
	\end{aligned}
	\end{align*}
	$\smash{W^{1,p}(\Omega;\mathbb{R}^l)\coloneqq W^{1,p}_D(\Omega;\mathbb{R}^l)}$ if $\smash{\Gamma_D=\emptyset}$, and $\smash{W^{p}(\textup{div};\Omega)\coloneqq W^{p}_N(\textup{div};\Omega)}$ if $\smash{\Gamma_N=\emptyset}$,
	where~we~\mbox{denote} by $\textup{tr}\colon\smash{W^{1,p}(\Omega;\mathbb{R}^l)}\hspace*{-0.1em}\to\hspace*{-0.1em}\smash{L^p(\partial\Omega;\mathbb{R}^l)}$ and by $
	\textup{tr}(\cdot)\cdot n\colon\smash{W^p(\textup{div};\Omega)}\hspace*{-0.1em}\to\hspace*{-0.1em} \smash{W^{-\frac{1}{p},p}(\partial\Omega)}$, the trace and normal trace operator, resp. In particular, we  \mbox{predominantly}~\mbox{omit} $\textup{tr}(\cdot)$~in~this~context.~In~addition,~we~employ the abbreviations $L^p(\Omega) \coloneqq  L^p(\Omega;\mathbb{R}^1)$,~${W^{1,p}(\Omega)\coloneqq W^{1,p}(\Omega;\mathbb{R}^1)}$~and~${W^{1,p}_D(\Omega)\coloneqq W^{1,p}_D(\Omega;\mathbb{R}^1)}$.
	
	\subsection{$N$-functions}\vspace{-0.5mm}
	
	\qquad A \hspace{-0.1em}(real) \hspace{-0.1em}convex function
    $\psi\colon\hspace{-0.1em}\mathbb{R}_{\geq 0} \to \mathbb{R}_{\geq 0}$ is called
    \textit{$N$-function},~if~${\psi(0)=0}$,~${\psi(t)>0}$~for~all~${t>0}$,
    $\lim_{t\rightarrow0} \psi(t)/t=0$, and
    $\lim_{t\rightarrow\infty} \psi(t)/t=\infty$. If, in addition,  $\psi\in C^1(\mathbb{R}_{\geq 0})\cap C^2(\mathbb{R}_{> 0})$~and~${\psi''(t)\hspace{-0.1em}>\hspace{-0.1em}0}$ for \hspace{-0.1mm}all \hspace{-0.1mm}$t>0$, \hspace{-0.1mm}we \hspace{-0.1mm}call \hspace{-0.1mm}$\psi$ \hspace{-0.1mm}a \hspace{-0.1mm}\textit{regular
      \hspace{-0.1mm}$N$-function}. \hspace{-0.1mm}For \hspace{-0.1mm}a \hspace{-0.1mm}regular \hspace{-0.1mm}$N$-function \hspace{-0.1mm}${\psi \colon \hspace{-0.1em}\mathbb{R}_{\geq 0}\to \mathbb{R}_{\geq 0}}$,~\hspace{-0.1mm}we~\hspace{-0.1mm}have~\hspace{-0.1mm}that $\psi (0)=\psi'(0)=0$,
    $\psi'\colon\hspace{-0.1em}\mathbb{R}_{\geq 0} \to \mathbb{R}_{\geq 0}$ is increasing and $\lim _{t\to \infty} \psi'(t)=\infty$.~For~a~given~\mbox{$N$-function} ${\psi \colon\mathbb{R}_{\geq 0} \to \mathbb{R}_{\geq 0}}$, we define the \textit{(Fenchel) conjugate \mbox{$N$-function}} $\psi^*\colon\mathbb{R}_{\geq 0} \to \mathbb{R}_{\geq 0}$,~for~every~$t\ge 0$,~by
    ${\psi^*(t)\coloneqq  \sup_{s \geq 0} (st
      -\psi(s))}$, which satisfies $(\psi^*)' =
    (\psi')^{-1}$ in $\mathbb{R}_{\ge 0}$. An  $N$-function $\psi$ satisfies the \textit{$\Delta_2$-condition}
    (in short, $\psi \hspace*{-0.1em}\in\hspace*{-0.1em} \Delta_2$), if there exists $K\hspace*{-0.1em}>\hspace*{-0.1em} 2$ such that~for~all~${t \hspace*{-0.1em}\ge\hspace*{-0.1em} 0}$,~it~holds~${\psi(2\,t) \hspace*{-0.1em}\leq\hspace*{-0.1em} K\, \psi(t)}$. Then, \hspace*{-0.15mm}we \hspace*{-0.15mm}denote \hspace*{-0.15mm}the
    \hspace*{-0.1mm}smallest \hspace*{-0.15mm}such \hspace*{-0.15mm}constant \hspace*{-0.15mm}by \hspace*{-0.15mm}$\Delta_2(\psi)\hspace*{-0.15em}>\hspace*{-0.15em}0$.  \hspace*{-0.15mm}We \hspace*{-0.15mm}say \hspace*{-0.15mm}that \hspace*{-0.15mm}an \hspace*{-0.15mm}$N$-function~\hspace*{-0.15mm}${\psi\colon\hspace*{-0.1em}\mathbb{R}_{\ge 0}\hspace*{-0.15em}\to \hspace*{-0.15em}\mathbb{R}_{\ge 0}}$ satisfies the \textit{$\nabla_2$-condition} (in short, $\psi\in \nabla_2$), if its (Fenchel) conjugate $\psi^*\colon\mathbb{R}_{\ge 0}\to \mathbb{R}_{\ge 0}$ is an $N$-function satisfying the $\Delta_2$-condition. 
    If $\psi\colon\mathbb{R}_{\ge 0}\to \mathbb{R}_{\ge 0}$ satisfies the $\Delta_2$- and the $\nabla_2$-condition (in~short, $\psi\in \Delta_2\cap \nabla_2$), then, there holds 
    the following refined version of the \textit{$\varepsilon$-Young~inequality}: for every
    $\varepsilon> 0$, there exists a constant $c_\varepsilon>0 $, depending only on
    $\Delta_2(\psi),\Delta_2( \psi ^*)<\infty$, such that for every $ s,t\geq0 $, it holds\vspace{-1mm}
    \begin{align}
      \label{ineq:young}
        t\,s&\leq \varepsilon \, \psi(t)+ c_\varepsilon \,\psi^*(s)\,.
    \end{align}
    The mean value of a locally integrable function $f\colon\Omega\to\mathbb{R}$ over a (Lebesgue)~measurable~set~${M\subseteq \Omega}$ is~denoted~by $\fint_M{ f \,\textup{d}x} \coloneqq \smash{\frac 1 {|M|}\int_M f \,\textup{d}x}$. Furthermore, we employ the 
    notations~$(f,g)_M\coloneqq \int_M f g\,\textup{d}x$ and $\rho_{\psi,M}(f)\coloneqq \int_M \psi(\cdot,\vert f\vert )\,\textup{d}x $, 
    for (Lebesgue) measurable functions $f,g\colon\Omega\to \mathbb{R}$, a (Lebesgue) measurable set $M\subseteq \Omega$ and a generalized $N$-function $\psi:M\times\mathbb{R}_{\ge 0}\to \mathbb{R}_{\ge 0}$, i.e.,  $\psi$ is a Carath\'eodory function and $\psi(x,\cdot)$ an $N$-function for a.e.\ $x\in M$,
    whenever the right-hand side is \mbox{well-defined}.
    
    \subsection{Basic properties of the non-linear operator}\vspace{-0.5mm}
    
    \qquad  Throughout the entire paper, we assume that the non-linear operator $\boldsymbol{\mathcal{A}}$ has a $(p,\delta)$-structure, which will be defined now. A detailed discussion and full proofs can be found,~e.g.,~in~\cite{die-ett,dr-nafsa}.~~~~~~~~~~~ 

    For $p \in (1,\infty)$ and $\delta\ge 0$, we define a special $N$-function
$\varphi\vcentcolon =\varphi_{p,\delta}\colon\mathbb{R}_{\ge 0}\to \mathbb{R}_{\ge 0}$ by
\begin{align} 
  \label{eq:def_phi} 
  \varphi(t)\coloneqq  \int _0^t \varphi'(s)\, \mathrm ds,\quad\text{where}\quad
  \varphi'(t) \coloneqq  (\delta +t)^{p-2} t\,,\quad\textup{ for all }t\ge 0\,.
\end{align}
Then, $\varphi\colon \mathbb{R}_{\ge 0}\to  \mathbb{R}_{\ge 0}$ satisfies, independent of $\delta\ge   0$, the
$\Delta_2$-condition with ${\Delta_2(\phi)\leq c\, 2^{\max \{2,p\}}}$. In addition, 
the (Fenchel) conjugate function $\varphi^*\colon\mathbb{R}_{\ge 0}\to \mathbb{R}_{\ge 0}$ satisfies, uniformly~in~${t\hspace{-0.1em}\ge\hspace{-0.1em} 0}$~and~${\delta\hspace{-0.1em}\ge\hspace{-0.1em} 0}$,  $\varphi^*(t) \sim
(\delta^{p-1} + t)^{\smash{p'-2}} t^2$ as well as  the $\Delta_2$-condition with
$\Delta_2(\varphi^*) \leq c\,2^{\smash{\max \{2,p'\}}}$.\newpage

For an $N$-function $\psi\colon\mathbb{R}_{\ge 0}\to \mathbb{R}_{\ge 0}$, we define  {\rm shifted $N$-functions} ${\psi_a\colon\mathbb{R}_{\ge 0}\to \mathbb{R}_{\ge 0}}$,~${a\ge 0}$,~by~~~~
\begin{align}
  \label{eq:phi_shifted}
  \psi_a(t)\coloneqq  \int _0^t \psi_a'(s)\, \mathrm ds\,,\quad\text{where }\quad
  \psi'_a(t)\coloneqq \psi'(a+t)\frac {t}{a+t}\,,\quad\textup{ for all }a,t\ge 0\,.
\end{align}

\begin{remark} 
    For the above defined $N$-function $\varphi\colon \mathbb{R}_{\ge 0}\to \mathbb{R}_{\ge 0}$, cf. \eqref{eq:def_phi}, uniformly in ${a,t\ge 0}$, we have that
    $\varphi_a(t) \sim (\delta+a+t)^{p-2} t^2$ and $(\varphi_a)^*(t)
    \sim ((\delta+a)^{p-1} + t)^{\smash{p'-2}} t^2$. Apart from that, the families
    $\{\varphi_a\}_{\smash{a \ge 0}},\{(\varphi_a)^*\}_{\smash{a \ge 0}}\colon\mathbb{R}_{\ge 0}\to \mathbb{R}_{\ge 0}$ satisfy, uniformly in $a \ge 0$,
    the $\Delta_2$-condition, i.e., for every $a\ge 0$, it holds
    $\Delta_2(\varphi_a) \leq c\, 2^{\smash{\max \{2,p\}}}$ and
    $\Delta_2((\varphi_a)^*) \leq c\, 2^{\smash{\max \{2,p'\}}}$,~respectively.
\end{remark}

\begin{assumption}\label{assum:extra_stress} 
We assume that $\AAA\in C^0(\mathbb{R}^d; \mathbb{R}^d) \cap C^1(\mathbb{R}^d\setminus\{0\}; \mathbb{R}^d) $  satisfies $\AAA(0)=0$ and has a $(p,\delta)$-structure, i.e., there exist $p\in (1,\infty)$, $\delta\ge 0$, and constant $C_0,C_1>0$ such that
\begin{align*}
    ((\nabla \AAA)(a)b )\cdot b
    &\ge C_0(\delta +\vert a\vert)^{p-2}\vert b\vert^2\,,\\
    \vert (\nabla \AAA)(a)\vert&\leq C_1(\delta +\vert a\vert)^{p-2}\,,
\end{align*}
are satisfied for all $a, b\in \mathbb{R}^d$ with $a\neq 0$ and $i,j=1,\dots,d$. The constants $C_0,C_1>0$ and $p\in (1,\infty)$ are called the \textit{characteristics} of $\AAA$.
\end{assumption}

\begin{remark}\label{rem:assum}
    An example of a non-linear operator $\AAA\colon\mathbb{R}^d\to \mathbb{R}^d$ satisfying
    Assumption~\ref{assum:extra_stress} for some $p\in (1,\infty)$ and $\delta\ge 0$, for every $a\in \mathbb{R}^d$, is given via
    \begin{align}
         \AAA (a) = 
   \frac{\varphi'(\vert a\vert )}{\vert a\vert} a=(\delta+\vert a\vert)^{p-2}a\,,\label{special_case}
    \end{align}
    where the characteristics~of~$\AAA\colon\mathbb{R}^d\to \mathbb{R}^d$ 
    depend only on $p\in (1,\infty)$ and are independent~of~${\delta \geq 0}$.
\end{remark}

Closely related to the non-linear operator $\AAA\colon\mathbb{R}^d\to \mathbb{R}^d$ with $(p,\delta)$-structure, where $p\in (1,\infty)$ and $\delta\ge 0 $, are the non-linear operators $F,F^*\colon\mathbb{R}^d\to \mathbb{R}^d$, for every $a\in \mathbb{R}^d$ defined by
\begin{align}
    F(a)\coloneqq (\delta+\vert a\vert)^{\smash{\frac{p-2}{2}}}a\,,\qquad F^*(a)\coloneqq (\delta^{p-1}+\vert a\vert)^{\smash{\frac{p'-2}{2}}}a
    \,.\label{eq:def_F}
\end{align}

The connections between
$\AAA,F,F^*\colon\mathbb{R}^d
\to \mathbb{R}^d$ and
$\varphi_a,(\varphi^*)_a,(\varphi_a)^*\colon\mathbb{R}^{\ge
  0}\to \mathbb{R}^{\ge
  0}$,~${a\ge 0}$, are best explained
by the following proposition.

\begin{proposition}
  Let $\AAA\colon\mathbb{R}^d \to \mathbb{R}^d$ satisfy Assumption~\ref{assum:extra_stress} for $p\in (1,\infty)$ and $\delta \ge 0$. Moreover, let $\varphi\colon\mathbb{R}_{\ge 0}\to \mathbb{R}_{\ge 0}$ be~defined~by~\eqref{eq:def_phi} and let $F,F^*\colon\mathbb{R}^d \to \mathbb{R}^d$ be defined by \eqref{eq:def_F}, each for the same $p\in (1,\infty)$ and $\delta \ge 0$. Then, uniformly with respect to 
  $a, b \in \mathbb{R}^d$,~we~have~that
    \begin{align}
      (\AAA(a) - \AAA(b))
      \cdot(a-b ) &\sim  \smash{\vert F(a) - F(b)\vert^2}
      \sim \varphi_{\vert a\vert }(\vert a - b\vert )\notag
      \\
      &\sim (\varphi_{\vert a\vert })^*(\vert \AAA(a) - \AAA(b)\vert )
      \sim (\varphi^*)_{\vert \AAA(a)\vert }(\vert \AAA(a) - \AAA(b)\vert )\label{eq:hammera}
      \\&\sim \vert F^*(\AAA(a)) - F^*(\AAA(b))\vert^2\,,\notag
       \\[0.5mm]
      \smash{\vert F^*(a) - F^*(b)\vert^2}
      \label{eq:hammerf}
      &\sim  \smash{\smash{(\varphi^*)}_{\smash{\vert a\vert }}(\vert a - b\vert )}\,.
    \end{align}
  The constants in \eqref{eq:hammera} and \eqref{eq:hammerf}
  depend only on the characteristics of ${\AAA}$.
\end{proposition}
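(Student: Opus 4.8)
The plan is to reduce every equivalence in \eqref{eq:hammera} and \eqref{eq:hammerf} to pointwise algebraic estimates obtained from integral representations along line segments, and then to pass from the primal side to the dual side via the conjugation and shift calculus for $N$-functions. Throughout, I freely use the explicit equivalences $\varphi_a(t)\sim(\delta+a+t)^{p-2}t^2$, $(\varphi_a)^*(t)\sim((\delta+a)^{p-1}+t)^{p'-2}t^2$ and $\varphi^*(t)\sim(\delta^{p-1}+t)^{p'-2}t^2$ recorded above, the observation that $\delta+\vert a\vert+\vert b\vert\sim\delta+\vert a\vert+\vert a-b\vert\sim\delta+\vert b\vert+\vert a-b\vert$, and the elementary lemma
\[\int_0^1\bigl(\delta+\vert a+s(b-a)\vert\bigr)^{\alpha}\,\mathrm{d}s\sim\bigl(\delta+\vert a\vert+\vert b\vert\bigr)^{\alpha}\qquad(\alpha>-1,\ a,b\in\mathbb{R}^d),\]
which follows by splitting $[0,1]$ according to whether $\vert a+s(b-a)\vert$ is comparable to $\vert a-b\vert$ or not; this is the only genuinely analytic ingredient.

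\emph{Primal chain.} For $a,b\in\mathbb{R}^d$ set $\gamma(s)\coloneqq b+s(a-b)$. Using $\AAA(a)-\AAA(b)=\int_0^1(\nabla\AAA)(\gamma(s))(a-b)\,\mathrm{d}s$ (valid even when the segment meets the origin, since $(\delta+\vert\cdot\vert)^{p-2}$ stays integrable along it as $p-2>-1$), Assumption~\ref{assum:extra_stress} and the elementary lemma give
\[(\AAA(a)-\AAA(b))\cdot(a-b)\sim\int_0^1(\delta+\vert\gamma(s)\vert)^{p-2}\,\mathrm{d}s\,\vert a-b\vert^2\sim(\delta+\vert a\vert+\vert a-b\vert)^{p-2}\vert a-b\vert^2\sim\varphi_{\vert a\vert}(\vert a-b\vert).\]
Running the same computation with $F$ in place of $\AAA$, whose Jacobian is symmetric positive definite with eigenvalues $\sim(\delta+\vert c\vert)^{(p-2)/2}$, produces the upper bound $\vert F(a)-F(b)\vert\lesssim(\delta+\vert a\vert+\vert b\vert)^{(p-2)/2}\vert a-b\vert$; the matching lower bound comes from $(F(a)-F(b))\cdot(a-b)\le\vert F(a)-F(b)\vert\,\vert a-b\vert$ together with $(F(a)-F(b))\cdot(a-b)\gtrsim(\delta+\vert a\vert+\vert b\vert)^{(p-2)/2}\vert a-b\vert^2$. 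Squaring yields $\vert F(a)-F(b)\vert^2\sim\varphi_{\vert a\vert}(\vert a-b\vert)$, and the identical argument with $(\varphi^*,F^*,\delta^{p-1})$ in place of $(\varphi,F,\delta)$ proves \eqref{eq:hammerf}.

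\emph{Dual transfer.} From $(\AAA(a)-\AAA(b))\cdot(a-b)\le\vert\AAA(a)-\AAA(b)\vert\,\vert a-b\vert$ and the integral upper bound $\vert\AAA(a)-\AAA(b)\vert\lesssim(\delta+\vert a\vert+\vert a-b\vert)^{p-2}\vert a-b\vert$ one obtains $\vert\AAA(a)-\AAA(b)\vert\sim(\delta+\vert a\vert+\vert a-b\vert)^{p-2}\vert a-b\vert\sim(\varphi_{\vert a\vert})'(\vert a-b\vert)$; taking $b=0$ in particular gives $\vert\AAA(a)\vert\sim\varphi'(\vert a\vert)$. Since $\{\varphi_a\}_{a\ge0}$ lies in $\Delta_2\cap\nabla_2$ with uniform constants, Young's equality $\psi(t)+\psi^*(\psi'(t))=t\,\psi'(t)$ together with $t\,\psi'(t)\sim\psi(t)$ gives $(\varphi_{\vert a\vert})^*((\varphi_{\vert a\vert})'(t))\sim\varphi_{\vert a\vert}(t)$ uniformly in $a$; evaluating at $t=\vert a-b\vert$ and using that $(\varphi_{\vert a\vert})^*\in\Delta_2$ (hence stable under $\sim$-perturbation of its argument) yields $(\varphi_{\vert a\vert})^*(\vert\AAA(a)-\AAA(b)\vert)\sim\varphi_{\vert a\vert}(\vert a-b\vert)$. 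Finally, the standard shift identities $(\varphi_a)^*\sim(\varphi^*)_{\varphi'(a)}$ and $\psi_c\sim\psi_{c'}$ for $c\sim c'$, combined with $\vert\AAA(a)\vert\sim\varphi'(\vert a\vert)$, give $(\varphi_{\vert a\vert})^*(s)\sim(\varphi^*)_{\vert\AAA(a)\vert}(s)$, and applying \eqref{eq:hammerf} to the pair $\AAA(a),\AAA(b)$ identifies $(\varphi^*)_{\vert\AAA(a)\vert}(\vert\AAA(a)-\AAA(b)\vert)$ with $\vert F^*(\AAA(a))-F^*(\AAA(b))\vert^2$. Chaining all the resulting equivalences closes \eqref{eq:hammera}.

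I expect the main obstacle to be bookkeeping rather than conceptual depth: ensuring every $\sim$ is uniform in $a,b\in\mathbb{R}^d$ and in $\delta\ge0$, disposing of the degenerate configurations $a=0$, $b=0$, $a=b$ by continuity, and correctly citing/using the two $N$-function facts $(\varphi_a)^*\sim(\varphi^*)_{\varphi'(a)}$ and $\psi(t)\sim\psi^*(\psi'(t))$ for $\psi\in\Delta_2\cap\nabla_2$, whose proofs (standard, cf.\ \cite{die-ett,dr-nafsa}) carry the real weight of the $(p,\delta)$-calculus.
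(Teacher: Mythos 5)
Your argument is correct, but it takes a different route from the paper, which does not prove the proposition at all: it simply cites \cite[Lemma~6.16]{dr-nafsa}, \cite[Lemma~2.8]{dkrt-ldg} and \cite[Lemma~26]{die-ett}. What you have written is essentially a self-contained reconstruction of the proofs in those references: the segment-integral lemma $\int_0^1(\delta+\vert a+s(b-a)\vert)^{\alpha}\,\mathrm{d}s\sim(\delta+\vert a\vert+\vert b\vert)^{\alpha}$ for $\alpha>-1$, applied to the Jacobians of $\AAA$, $F$ and $F^*$ (whose eigenvalue bounds you verify correctly, including the integrable singularity through the origin since $p-2>-1$), gives the primal equivalences and \eqref{eq:hammerf}, and the conjugation/shift calculus ($\vert\AAA(a)-\AAA(b)\vert\sim\varphi_{\vert a\vert}'(\vert a-b\vert)$, $\psi^*(\psi'(t))\sim\psi(t)$, $(\varphi_a)^*\sim(\varphi^*)_{\varphi'(a)}$) transfers them to the dual side exactly as in \cite{die-ett,dkrt-ldg}. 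The only step you state a bit loosely is the lower bound in $(\varphi_{\vert a\vert})^*((\varphi_{\vert a\vert})'(t))\gtrsim\varphi_{\vert a\vert}(t)$: Young's identity together with $t\psi'(t)\sim\psi(t)$ only yields the upper bound, and the lower bound genuinely needs the uniform $\nabla_2$-condition (equivalently $t\psi'(t)\ge(1+\varepsilon)\psi(t)$) or a direct computation with $\varphi_a(t)\sim(\delta+a+t)^{p-2}t^2$ and $(\varphi_a)^*(t)\sim((\delta+a)^{p-1}+t)^{p'-2}t^2$; since you explicitly flag this as one of the cited standard $N$-function facts, this is a presentational gloss rather than a gap. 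The trade-off is clear: the paper buys brevity by delegating to the literature, while your version makes the uniformity in $a,b$ and $\delta\ge0$ transparent and keeps the whole $(p,\delta)$-calculus in one place, at the cost of carrying the bookkeeping (degenerate configurations, symmetry in $a$ and $b$ via $\delta+\vert a\vert+\vert b\vert\sim\delta+\vert a\vert+\vert a-b\vert$) yourself.
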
 

\begin{proof}
    \!For \hspace{-0.2mm}the \hspace{-0.2mm}first \hspace{-0.2mm}two \hspace{-0.2mm}equivalences \hspace{-0.2mm}in \hspace{-0.2mm}\eqref{eq:hammera} \hspace{-0.2mm}and \hspace{-0.2mm}the \hspace{-0.2mm}equivalence \hspace{-0.2mm}\eqref{eq:hammerf}, \hspace{-0.2mm}we \hspace{-0.2mm}refer \hspace{-0.2mm}to \hspace{-0.2mm}\cite[Lemma~\hspace{-0.2mm}6.16]{dr-nafsa}. For the last three equivalences in \eqref{eq:hammera}, we refer to \cite[Lemma 2.8]{dkrt-ldg} and \cite[Lemma 26]{die-ett}.
\end{proof}

In addition, we need the following auxiliary result.

\begin{lemma}[Change of shift]
    Let  $\varphi\colon\mathbb{R}_{\ge 0}\to\mathbb{R}_{\ge 0}$ be defined by \eqref{eq:def_phi} for $p\in (1,\infty)$ and $\delta \ge 0$ and let $F\colon\mathbb{R}^d\to\mathbb{R}^d$ be defined by \eqref{eq:def_F} for the same $p\in (1,\infty)$ and $\delta \ge 0$. Then,
  for~every~${\varepsilon>0}$, there exists $c_\varepsilon \geq 1$, depending only
  on $\varepsilon>0$, $p\in (1,\infty)$, and $\delta\ge 0$, such that for every $a,b\in\mathbb{R}^d$ and $t\geq 0$, it holds\vspace{-1mm}
  \begin{align}
    \varphi_{\vert a\vert}(t)&\leq c_\varepsilon\, \varphi_{\vert b\vert }(t)
    +\varepsilon\, \vert F(a) - F(b)\vert^2\,,\label{lem:shift-change.1}
    \\
     (\varphi_{\vert a\vert})^*(t)&\leq c_\varepsilon\, (\varphi_{\vert b\vert })^*(t)
    +\varepsilon\, \vert F(a) - F(b)\vert^2\,.\label{lem:shift-change.3}
  \end{align}
\end{lemma}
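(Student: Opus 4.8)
The plan is to reduce both inequalities to a single \emph{scalar} change-of-shift estimate and then settle the latter by a short case distinction together with the $\varepsilon$-Young inequality \eqref{ineq:young}.

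\textbf{Step 1 (a scalar change of shift).} The core claim is that if $\psi\colon\mathbb{R}_{\ge0}\to\mathbb{R}_{\ge0}$ admits a representation $\psi_s(\tau)\sim(\gamma+s+\tau)^{q-2}\tau^2$ uniformly in $s,\tau\ge0$ — which holds for $\psi=\varphi$ with $(q,\gamma)=(p,\delta)$, cf.\ the Remark, and for $\psi=\varphi^*$ with $(q,\gamma)=(p',\delta^{p-1})$, using $\varphi^*(\tau)\sim(\delta^{p-1}+\tau)^{p'-2}\tau^2$ and the definition \eqref{eq:phi_shifted} of shifts — then for every $\varepsilon>0$ there is $c_\varepsilon\ge1$, depending only on $\varepsilon$ and $q$, with
\begin{align*}
\psi_\alpha(\tau)\le c_\varepsilon\,\psi_\beta(\tau)+\varepsilon\,\psi_\alpha(|\alpha-\beta|)\qquad\textup{for all }\alpha,\beta,\tau\ge0\,.
\end{align*}
To prove this I would distinguish cases. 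If $\gamma+\alpha+\tau\sim\gamma+\beta+\tau$ (comparable up to a fixed factor), then $\psi_\alpha(\tau)\sim\psi_\beta(\tau)$ and the $\varepsilon$-term is idle; the same conclusion holds whenever the monotonicity of $s\mapsto(\gamma+s+\tau)^{q-2}$ runs the favourable way ($q\ge2$ and $\beta\ge\alpha$, or $q<2$ and $\beta\le\alpha$). In the remaining situation one of the two shifts strictly dominates $\gamma+\tau$, hence is comparable to $|\alpha-\beta|$; writing $\sigma\sim|\alpha-\beta|$ for that shift one gets $\psi_\alpha(|\alpha-\beta|)\sim\sigma^q$, and the claim reduces to the pointwise inequality $\sigma^{q-2}\tau^2\le c_\varepsilon\,\tau^q+\varepsilon\,\sigma^q$ (for $q\ge2$), respectively $\tau^q\le c_\varepsilon\,\sigma^{q-2}\tau^2+\varepsilon\,\sigma^q$ (for $1<q<2$), which is a weighted Young inequality — a rescaled instance of \eqref{ineq:young} applied to $t\mapsto t^{q/2}$, resp.\ its companion — while $\psi_\beta(\tau)\gtrsim\min\{\tau^q,\sigma^{q-2}\tau^2\}$ identifies the first right-hand term with $c_\varepsilon\,\psi_\beta(\tau)$.

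\textbf{Step 2 (proof of \eqref{lem:shift-change.1} and \eqref{lem:shift-change.3}).} For \eqref{lem:shift-change.1} I apply Step 1 with $\psi=\varphi$, $\alpha=|a|$, $\beta=|b|$; since $|\alpha-\beta|=||a|-|b||\le|a-b|$ and $\varphi_{|a|}$ is non-decreasing, \eqref{eq:hammera} gives $\varphi_{|a|}(|\alpha-\beta|)\le\varphi_{|a|}(|a-b|)\sim|F(a)-F(b)|^2$, and absorbing the constant into $\varepsilon$ finishes. Running the same argument with $\varphi^*$ and $F^*$ in place of $\varphi$ and $F$, and invoking \eqref{eq:hammerf} instead of \eqref{eq:hammera}, yields the conjugate counterpart of \eqref{lem:shift-change.1}: for all $c,d\in\mathbb{R}^d$ and $\tau\ge0$,
\begin{align*}
(\varphi^*)_{|c|}(\tau)\le c_\varepsilon\,(\varphi^*)_{|d|}(\tau)+\varepsilon\,|F^*(c)-F^*(d)|^2\,.
\end{align*}
Specializing this to $c=\AAA(a)$ and $d=\AAA(b)$, and using the equivalences $(\varphi^*)_{|\AAA(a)|}(\tau)\sim(\varphi_{|a|})^*(\tau)$ uniformly in $\tau\ge0$ (from the representations in the Remark together with $\delta^{p-1}+|\AAA(a)|\sim(\delta+|a|)^{p-1}$, the latter a consequence of $|\AAA(a)|\sim\varphi'(|a|)$, cf.\ \cite{die-ett}), the analogous equivalence with $a$ replaced by $b$, and $|F^*(\AAA(a))-F^*(\AAA(b))|^2\sim|F(a)-F(b)|^2$ (one of the equivalences in \eqref{eq:hammera}), gives \eqref{lem:shift-change.3} after renaming $\varepsilon$.

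\textbf{Main obstacle.} The only genuinely delicate point is the dominant-shift regime in Step 1, where $|a|$ and $|b|$ differ by a huge factor: there $\psi_\alpha(\tau)$ cannot be controlled by an $\varepsilon$-multiple of $\psi_\beta(\tau)$, and one is forced to spill the excess into the term $\varepsilon\,\psi_\alpha(|\alpha-\beta|)\sim\varepsilon\,|F(a)-F(b)|^2$ via the weighted Young inequality; this is precisely why both the $\varepsilon$ and the $\varepsilon$-dependent constant $c_\varepsilon$ are indispensable. All remaining steps are bookkeeping with the equivalences of the preceding Proposition and Remark; alternatively, one may simply cite \cite{die-ett,dr-nafsa}, of which this lemma is the $F$/$F^*$-transcription via \eqref{eq:hammera}--\eqref{eq:hammerf}.
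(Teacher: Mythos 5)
Your argument is correct, but it is worth noting that the paper does not prove this lemma at all: its ``proof'' is the citation \cite[Corollary~26, (5.5) \& Corollary~28, (5.8)]{DK08}, so what you have written is essentially a self-contained reconstruction of the Diening--Kreuzer argument rather than a parallel to anything in the text. Your Step~1 is sound: after discarding the comparable-shift and favourably monotone cases, the remaining regime forces the dominant shift $\sigma$ to satisfy $\sigma\sim|\alpha-\beta|$ and $\psi_\alpha(|\alpha-\beta|)\sim\sigma^q$, and the two pointwise inequalities you reduce to are indeed weighted Young inequalities (equivalently \eqref{ineq:young} for the power-type $N$-functions $t\mapsto t^{q/2}$, $t\mapsto t^{q/(q-2)}$), with $\psi_\beta(\tau)\gtrsim\min\{\tau^q,\sigma^{q-2}\tau^2\}$ closing the loop; the resulting constants depend only on $q$ and the equivalence constants, hence only on $p$ (uniformly in $\delta$), which is stronger than required. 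Two small points deserve explicit care in a write-up: (i) the representation $(\varphi^*)_s(\tau)\sim(\delta^{p-1}+s+\tau)^{p'-2}\tau^2$ needs the equivalence at the level of derivatives, $(\varphi^*)'(t)=(\varphi')^{-1}(t)\sim(\delta^{p-1}+t)^{p'-2}t$, since shifts \eqref{eq:phi_shifted} are defined through $\psi'$, not through $\psi$; (ii) your derivation of \eqref{lem:shift-change.3} routes through an operator $\AAA$, which does not appear in the lemma, so you should fix the canonical choice \eqref{special_case} (whose characteristics depend only on $p$) before invoking \eqref{eq:hammera}, \eqref{eq:hammerf} and $(\varphi^*)_{|\AAA(a)|}\sim(\varphi_{|a|})^*$ from \cite{die-ett}; alternatively one can apply your scalar lemma directly with shifts $|a|^{p-1},|b|^{p-1}$, since $(\varphi_{|a|})^*(t)\sim(\delta^{p-1}+|a|^{p-1}+t)^{p'-2}t^2$. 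In short, your route buys a proof independent of the literature at the cost of a page of case analysis, whereas the paper simply outsources the lemma; both are legitimate, and your absorption of the equivalence constants into $\varepsilon$ at the end is exactly how the cited statements are used elsewhere in the paper.
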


\begin{proof}
    See \cite[Corollary 26, (5.5) \& Corollary 28, (5.8)]{DK08}.\vspace*{-2mm}
\end{proof}\newpage

\begin{remark}[Natural distance]
  \label{rem:natural_dist}
    If \hspace{-0.15mm}$\AAA\colon\mathbb{R}^d\to \mathbb{R}^d$ \hspace{-0.15mm}satisfies \hspace{-0.15mm}Assumption~\hspace{-0.15mm}\ref{assum:extra_stress} \hspace{-0.15mm}for \hspace{-0.15mm}${p\in (1,\infty)}$~\hspace{-0.15mm}and~\hspace{-0.15mm}${\delta\ge 0}$, then, due to \eqref{eq:hammera}, uniformly in  $u, v \in W^{1,p}(\Omega)$, it holds
    \begin{align*}
      \smash{(\AAA(\nabla u) -
      \AAA(\nabla v),\nabla u - \nabla v)_\Omega
      \sim
        \|F(\nabla u)-F(\nabla v)\|_{L^2(\Omega;\mathbb{R}^d)}^2 \,\sim \rho_{\varphi_{\vert \nabla u\vert},\Omega}(\nabla u -
        \nabla v)\,.}
    \end{align*}
    In the context~of the $p$-Dirichlet problem, the quantity $F\colon\hspace*{-0.1em}\mathbb{R}^d\hspace*{-0.1em}\to\hspace*{-0.1em} \mathbb{R}^d$
    was first introduced~in~\cite{acerbi-fusco},~while the last expression
    equals the quasi-norm introduced in~\cite{barliu}
   if raised~to~the~power~of~${\rho = \max \{p,2\}}$.  We refer to all
     three equivalent quantities as the \textit{natural distance}. 
\end{remark}

\begin{remark}[Conjugate natural distance]
  \label{rem:conjugate_natural_dist}
    If \hspace{-0.15mm}$\AAA\colon\mathbb{R}^d\to \mathbb{R}^d$ \hspace{-0.15mm}satisfies \hspace{-0.15mm}Assumption~\hspace{-0.15mm}\ref{assum:extra_stress}~\hspace{-0.15mm}for~\hspace{-0.15mm}${p\in (1,\infty)}$ and $\delta\ge 0$, then, it is readily seen that $\AAA\colon\mathbb{R}^d\hspace*{-0.1em}\to\hspace*{-0.1em} \mathbb{R}^d$ 
    is continuous, strictly~monotone,~and~coercive, so that from the theory of monotone operators, cf. \cite{Zei90B}, it follows that $\AAA\colon\mathbb{R}^d\to \mathbb{R}^d$  is bijective and $\smash{\AAA^{-1}}\colon\mathbb{R}^d\to \mathbb{R}^d$ continuous. In addition,  due to \eqref{eq:hammera}, uniformly in ${z, y \in L^{p'}(\Omega;\mathbb{R}^d)}$,~it~holds
    \begin{align*}
     \smash{ (\smash{\AAA^{-1}}(z)-\smash{\AAA^{-1}}(y),z-y)_\Omega
      \sim
        \|F^*(z)-F^*(y)\|_{L^2(\Omega;\mathbb{R}^d)}^2 \,\sim \rho_{(\varphi^*)_{\vert z\vert},\Omega}( z -
        y)\,.}
    \end{align*}
    We refer to all
     three equivalent quantities as the \textit{conjugate natural distance}. 
\end{remark}
    	
	\subsection{Triangulations and standard finite element spaces}
    \enlargethispage{5mm}
	
	\qquad Throughout the entire paper, we denote by  $\mathcal{T}_h$, $h>0$,  a family~of~regular, i.e., uniformly shape regular and conforming, triangulations of $\Omega\subseteq \mathbb{R}^d$, $d\in\mathbb{N}$, cf. \!\cite{EG21}. 
	\textcolor{black}{Here,~${h>0}$~refers to the average mesh-size, i.e., ${h\coloneqq  (\vert\Omega\vert/\textup{card}(\mathcal{T}_h))^{\frac{1}{d}} }$.}
	For every element $T \in \mathcal{T}_h$,
    we denote by $\rho_T>0$, the supremum of diameters of~inscribed~balls. We assume that there exists a constant $\omega_0>0$, independent of $h>0$, such that $\max_{T\in \mathcal{T}_h}{h_T}{\rho_T^{-1}}\le
    \omega_0$. The smallest such constant is called the chunkiness of $(\mathcal{T}_h)_{h>0}$. Also note that, in what follows, all constants may depend on the chunkiness, but are independent~of~${h>0}$.  For~every~${T \in \mathcal{T}_h}$, 
    let $\omega_T$ denote the patch of~$T$, i.e., the union of all elements of~$\mathcal{T}_h$ touching~$T$.
    We assume that $\textup{int}(\omega_T)$ is connected for all $T\in \mathcal{T}_h$. 
    Under~these~assumptions,  $\vert T\vert \sim
    \vert \omega_T\vert$ uniformly in  $T\in \mathcal{T}_h$ and $h>0$, and the
    number of elements in $\omega_T$ and patches to which an element $T$ belongs
    to are uniformly bounded with respect to $T \in \mathcal{T}_h$~and~$h>0$. 
    We define~the~sides~of~$\mathcal{T}_h$~in~the~\mbox{following}~way: an interior side is the closure of the 
    non-empty relative interior of $\partial T \cap \partial T'$, where $T, T'\in \mathcal{T}_h$~are~adjacent~elements.
    For an interior side $S\coloneqq 
    \partial T \cap \partial T'\in \mathcal{S}_h$, where $T,T'\in \mathcal{T}_h$, we employ the notation $\omega_S\coloneqq  T \cup
    T'$. A boundary side is the closure of the non-empty relative interior of
    $\partial T \cap \partial \Omega$, where $T\in \mathcal{T}_h$ denotes a boundary~element~of~$\mathcal{T}_h$.  For a boundary side $S\coloneqq  \partial T \cap \partial
    \Omega$, we employ the notation $\omega_S\coloneqq  T $. By $\mathcal{S}_h^{i}$ and $\mathcal{S}_h$, we denote the sets of 
    all interior sides and the set~of~all~sides,~respectively.~\textcolor{black}{Eventually,   we define ${h_S\coloneqq \textup{diam}(S)}$ for all $S\in \mathcal{S}_h$ and $h_T\coloneqq \textup{diam}(T)$ for all $T\in \mathcal{T}_h$.}
	
	For $k\in \mathbb{N}\cup\{0\}$ and $T\in \mathcal{T}_h$, let $\mathcal{P}_k(T)$ denote the set of polynomials of maximal~degree~$k$~on~$T$. Then, for $k\in \mathbb{N}\cup\{0\}$~and $l\in \mathbb{N}$,  the sets of continuous and~\mbox{element-wise}~polynomial functions or vector~fields,~respectively, are defined by
	\begin{align*}
	\begin{aligned}
	\mathcal{S}^k(\mathcal{T}_h)^l&\coloneqq 	\big\{v_h\in C^0(\overline{\Omega};\mathbb{R}^l)\hspace*{-3mm}&&\mid v_h|_T\in\mathcal{P}_k(T)^l\text{ for all }T\in \mathcal{T}_h\big\}\,,\\
	\mathcal{L}^k(\mathcal{T}_h)^l&\coloneqq    \big\{v_h\in L^\infty(\Omega;\mathbb{R}^l)\hspace*{-3mm}&&\mid v_h|_T\in\mathcal{P}_k(T)^l\text{ for all }T\in \mathcal{T}_h\big\}\,.
	\end{aligned}
	\end{align*}
	The element-wise constant mesh-size function $h_\mathcal{T}\in \mathcal{L}^0(\mathcal{T}_h)$ is defined~by~${h_\mathcal{T}|_T\coloneqq h_T}$~for~all~${T\in \mathcal{T}_h}$.
	The side-wise constant mesh-size function $h_\mathcal{S}\in \mathcal{L}^0(\mathcal{S}_h)$ is defined~by~${h_\mathcal{S}|_S\coloneqq h_S}$~for~all~${S\in \mathcal{S}_h}$.
	\textcolor{black}{For every $T\in \mathcal{T}_h$ and $S\in \mathcal{S}_h$,~we~denote~by $\smash{x_T\!\coloneqq \!\frac{1}{d+1}\sum_{z\in \mathcal{N}_h\cap T}{z}}$ \hspace{-0.2mm}and \hspace{-0.2mm}$\smash{x_S\!\coloneqq \!\frac{1}{d}\sum_{z\in \mathcal{N}_h\cap S}{z}}$,  \hspace{-0.2mm}the \hspace{-0.2mm}midpoints \hspace{-0.2mm}(barycenters) \hspace{-0.2mm}of~\hspace{-0.2mm}$T$~\hspace{-0.2mm}and~\hspace{-0.2mm}$S$,~\hspace{-0.2mm}\mbox{respectively}.} The \hspace{-0.2mm}(local) \hspace{-0.4mm}$L^2$\hspace{-0.2mm}-projection \hspace{-0.2mm}operator \hspace{-0.2mm}onto \hspace{-0.2mm}\mbox{element-wise} \hspace{-0.2mm}constant \hspace{-0.2mm}functions \hspace{-0.2mm}or \hspace{-0.2mm}vector \hspace{-0.2mm}fields,~\hspace{-0.2mm}\mbox{respectively},  is denoted by\vspace{-0.5mm}
	\begin{align*}
	\smash{\Pi_h\colon L^1(\Omega;\mathbb{R}^l)\to \mathcal{L}^0(\mathcal{T}_h)^l\,.}
	\end{align*}
	For every $v_h\in \smash{\mathcal{L}^1(\mathcal{T}_h)^l}$, it holds $\Pi_hv_h|_T=v_h(x_T)$ in $T$ for all $T\in \mathcal{T}_h$. 
    The element-wise~gradient operator 
    $\nabla_{\!h}\colon \mathcal{L}^1(\mathcal{T}_h)^l\to \mathcal{L}^0(\mathcal{T}_h)^{l\times d}$, for every $v_h\in \mathcal{L}^1(\mathcal{T}_h)^l$, is defined by $\nabla_{\!h}v_h|_T\coloneqq \nabla(v_h|_T)$~in~$T$ for all $T\in \mathcal{T}_h$.
 	
	\subsubsection{Crouzeix--Raviart element}
	
	\qquad The Crouzeix--Raviart finite element space, introduced~in~\cite{CR73}, consists of \mbox{element-wise} affine functions that are continuous at the midpoints of inner element sides, i.e.,\footnote{Here, for every $S\in\mathcal{S}_h^{i}$, $\jump{v_h}_S\coloneqq v_h|_{T_+}-v_h|_{T_-}$ on $S$, where $T_+, T_-\in \mathcal{T}_h$ satisfy $\partial T_+\cap\partial  T_-=S$, and for every $S\in\mathcal{S}_h\cap\partial \Omega$, $\jump{v_h}_S\coloneqq v_h|_T$ on $S$, where $T\in \mathcal{T}_h$ satisfies $S\subseteq \partial T$.}
	\begin{align*}\mathcal{S}^{1,\textit{cr}}(\mathcal{T}_h)\coloneqq \big\{v_h\in \mathcal{L}^1(\mathcal{T}_h)\mid \jump{v_h}_S(x_S)=0\text{ for all }S\in \mathcal{S}_h^{i}\big\}\,.
	\end{align*}
	Crouzeix--Raviart finite element functions that vanish at the midpoints of boundary~element~sides that correspond to the  Dirichlet boundary $\Gamma_D$ are contained~in~the~space
	\begin{align*}
			\smash{\mathcal{S}^{1,\textit{cr}}_D(\mathcal{T}_h)}\coloneqq \big\{v_h\in\smash{\mathcal{S}^{1,\textit{cr}}(\mathcal{T}_h)}\mid v_h(x_S)=0\text{ for all }S\in \mathcal{S}_h\cap \Gamma_D\big\}\,.
	\end{align*}
	In particular, we have that $	\smash{\mathcal{S}^{1,\textit{cr}}_D(\mathcal{T}_h)}=	\smash{\mathcal{S}^{1,\textit{cr}}(\mathcal{T}_h)}$ if $\Gamma_D=\emptyset$.
	A basis of  $\smash{\mathcal{S}^{1,\textit{cr}}(\mathcal{T}_h)}$~is~given~by~functions $\varphi_S\in \smash{\mathcal{S}^{1,\textit{cr}}(\mathcal{T}_h)}$, $S\in \mathcal{S}_h$, satisfying the Kronecker~property $\varphi_S(x_{S'})=\delta_{S,S'}$ for all $S,S'\in \mathcal{S}_h$. A basis of  $\smash{\smash{\mathcal{S}^{1,\textit{cr}}_D(\mathcal{T}_h)}}$~is~given~by~$\varphi_S\in \smash{\mathcal{S}^{1,\textit{cr}}_D(\mathcal{T}_h)}$, $S\in \mathcal{S}_h\setminus\Gamma_D$. 
	
	\subsubsection{Raviart--Thomas element}
 
	\qquad The lowest order Raviart--Thomas finite element space, introduced~in~\cite{RT75},~consists~of~element-wise \hspace{-0.2mm}affine \hspace{-0.2mm}vector \hspace{-0.2mm}fields \hspace{-0.2mm}that \hspace{-0.2mm}have \hspace{-0.2mm}continuous \hspace{-0.2mm}constant \hspace{-0.2mm}normal \hspace{-0.2mm}components \hspace{-0.2mm}on \hspace{-0.2mm}inner~\hspace{-0.2mm}elements~\hspace{-0.2mm}sides,~\hspace{-0.2mm}i.e.,\!\footnote{Here, for every $S\in\mathcal{S}_h^{i}$, $\jump{y_h\cdot n}_S\coloneqq \smash{y_h|_{T_+}\cdot n_{T_+}+y_h|_{T_-}\cdot n_{T_-}}$ on $S$, where $T_+, T_-\in \mathcal{T}_h$ satisfy $\smash{\partial T_+\cap\partial  T_-=S}$,  and for every $T\in \mathcal{T}_h$, $\smash{n_T\colon\partial T\to \mathbb{S}^{d-1}}$ denotes the outward unit normal vector field~to~$ T$, 
	and  for every $\smash{S\in\mathcal{S}_h\cap\partial \Omega}$, $\smash{\jump{y_h\cdot n}_S\coloneqq \smash{y_h|_T\cdot n}}$ on $S$, where $T\in \mathcal{T}_h$ satisfies $S\subseteq \partial T$ and $\smash{n\colon\partial\Omega\to \mathbb{S}^{d-1}}$ denotes the outward unit normal vector field to $\Omega$.}
	\begin{align*}
        \mathcal{R}T^0(\mathcal{T}_h)\coloneqq \big\{y_h\in \mathcal{L}^1(\mathcal{T}_h)^d\mid &\,\smash{y_h|_T\cdot         n_T=\textup{const}\text{ on }\partial T\text{ for  all }T\in \mathcal{T}_h\,,}\\ 
        &\smash{	\jump{y_h\cdot n}_S=0\text{ on }S\text{ for all }S\in \mathcal{S}_h^{i}\big\}\,.}
	\end{align*}
	Raviart--Thomas finite element functions that have vanishing normal components~on~the Neumann boundary $\Gamma_N$ are contained in the space 
	\begin{align*}
		\smash{\mathcal{R}T^{0}_N(\mathcal{T}_h)}\coloneqq \big\{y_h\in	\mathcal{R}T^0(\mathcal{T}_h)\mid y_h\cdot n=0\text{ on }\Gamma_N\big\}\,.
	\end{align*}
	In particular, we have that $\smash{\mathcal{R}T^{0}_N(\mathcal{T}_h)}=\mathcal{R}T^0(\mathcal{T}_h)$ if $\Gamma_N=\emptyset$. 
	A basis of  $\mathcal{R}T^0(\mathcal{T}_h)$ is given~by~vector fields \hspace{-0.2mm}$\psi_S\hspace{-0.15em}\in\hspace{-0.15em} \mathcal{R}T^0(\mathcal{T}_h)$, \hspace{-0.2mm}$S\hspace{-0.15em}\in\hspace{-0.15em} \mathcal{S}_h$, \hspace{-0.2mm}satisfying \hspace{-0.2mm}the \hspace{-0.2mm}Kronecker \hspace{-0.2mm}property \hspace{-0.2mm}$\psi_S|_{S'}\cdot n_{S'}\hspace{-0.15em}=\hspace{-0.15em}\delta_{S,S'}$~\hspace{-0.2mm}on~\hspace{-0.2mm}$S'$~\hspace{-0.2mm}for~\hspace{-0.2mm}all~\hspace{-0.2mm}${S'\hspace{-0.2em}\in\hspace{-0.15em} \mathcal{S}_h}$, where $n_S$ for all $S\in \mathcal{S}_h$ is the unit normal vector on $S$ pointing from~$T_-$~to~$T_+$~if~${T_+\cap T_-=S\in \mathcal{S}_h}$. A basis of $\smash{\mathcal{R}T^{0}_N(\mathcal{T}_h)}$ is given by $\psi_S\in \smash{\mathcal{R}T^{0}_N(\mathcal{T}_h)}$, ${S\in \mathcal{S}_h\setminus\Gamma_N}$. 
	
	\subsubsection{Discrete integration-by-parts formula}
 
    	\qquad An element-wise integration-by-parts implies that for every $v_h\in \mathcal{S}^{1,\textit{cr}}(\mathcal{T}_h)$ and ${y_h\in \mathcal{R}T^0(\mathcal{T}_h)}$, we have the \textit{discrete integration-by-parts
	formula}
	\begin{align}
	(\nabla_hv_h,\Pi_h y_h)_{\Omega}+(\Pi_h v_h,\,\textup{div}\,y_h)_{\Omega}=(v_h,y_h\cdot n)_{\partial\Omega}\,.\label{eq:pi}
	\end{align}
	Here, \hspace{-0.15mm}we  \hspace{-0.15mm}used \hspace{-0.15mm}that \hspace{-0.15mm}$y_h\hspace{-0.18em}\in \hspace{-0.18em}\mathcal{R}T^0(\mathcal{T}_h)$ \hspace{-0.15mm}has \hspace{-0.15mm}continuous \hspace{-0.15mm}constant  \hspace{-0.15mm}normal \hspace{-0.15mm}components~\hspace{-0.15mm}on~\hspace{-0.15mm}inner~\hspace{-0.15mm}\mbox{element}~\hspace{-0.15mm}sides, i.e., ${y_h|_T\cdot n_T}=\textrm{const}$ on $\partial T$ for every $T\in \mathcal{T}_h$ and	$\jump{y_h\cdot n}_S=0$ on $S$ for every $S\in \mathcal{S}_h^{i}$,~as~well~as that the jumps~of ${v_h\in \mathcal{S}^{1,\textit{cr}}(\mathcal{T}_h)}$  across inner element sides have vanishing integral~mean,~i.e., $\int_{S}{\jump{v_h}_S\,\textup{d}s}\hspace*{-0.1em}=\hspace*{-0.1em}\jump{v_h}_S(x_S)
 \hspace*{-0.1em}=\hspace*{-0.1em}0$ for all $S\hspace*{-0.1em}\in \hspace*{-0.1em}\mathcal{S}_h^{i}$.
	In particular, for any $v_h\hspace*{-0.1em}\in \hspace*{-0.1em}\smash{\mathcal{S}^{1,\textit{cr}}_D(\mathcal{T}_h)}$~and~${y_h\hspace*{-0.1em}\in\hspace*{-0.1em} \smash{\mathcal{R}T^0_N(\mathcal{T}_h)}}$, \eqref{eq:pi} reads 
	\begin{align}
		(\nabla_hv_h,\Pi_h y_h)_{\Omega}=-(\Pi_h v_h,\,\textup{div}\,y_h)_{\Omega}\,.\label{eq:pi0}
	\end{align}
	In \cite{Bar21}, the discrete integration-by-parts formula \eqref{eq:pi0} formed a cornerstone in the derivation~of~discrete convex duality theory and, as such, also plays a central 
	role 
	in the hereinafter~analysis.

	\subsection{$p$-Dirichlet problem}
	
	\qquad In this section, we briefly review the variational, the primal, and the dual formulation of the $p$-Dirichlet problem \eqref{eq:pDirichlet}. In addition, we examine a natural regularity assumption  on the solution $u\in \smash{W^{1,p}_D(\Omega)}$ of \eqref{eq:pDirichlet} and its consequences, in particular,  for the flux $z\coloneqq \AAA(\nabla u)\in \smash{W^{p'}_N(\textup{div};\Omega)}$.
	
	\subsubsection{Variational problem}
		
		\qquad Given a right-hand side $f\in L^{p'}(\Omega)$, $p\in (1,\infty)$, and given a non-linear operator ${\AAA\colon\mathbb{R}^d\to \mathbb{R}^d}$ that satisfies Assumption \ref{assum:extra_stress} for $p\!\in\! (1,\infty)$ and  $\delta\!\ge\! 0$, the $p$-Dirichlet~problem~seeks~for~${u\!\in\! W^{1,p}_D(\Omega)}$ such that for every $v\in \smash{W^{1,p}_D(\Omega)}$, it holds\footnote{Note that, by Assumption \ref{assum:extra_stress} and \cite[(2.13)]{dkrt-ldg}, 
 it holds $\vert \AAA(a)\vert \sim\varphi'(\vert a\vert) \sim \delta^{p-1}+\vert a\vert^{p-1}$ for all $a\in \mathbb{R}^d$. Thus, by the theory of Nemytskii operators, for every $v\in W^{1,p}_D(\Omega)$, it holds $\AAA(\nabla v)\in L^{p'}(\Omega;\mathbb{R}^d)$.}
		\begin{align}
		    (\AAA(\nabla u),\nabla v)_{\Omega}=(f,v)_{\Omega}\,.\label{eq:pDirichletW1p}
		\end{align}
		Resorting to the celebrated  theory of monotone operators, cf. \cite{Zei90B},~it~is~readily~apparent~that~\eqref{eq:pDirichletW1p} admits a unique solution. In what follows, we~reserve~the~notation~${u\in \smash{W^{1,p}_D(\Omega)}}$~for~this~solution.~~~~~
		
	\subsubsection{Minimization problem and convex duality relations}\label{subsec:convex_duality}
	
	\qquad In the case  \eqref{special_case}, i.e., $\AAA\colon\mathbb{R}^d\to \mathbb{R}^d$, has a potential, cf. Remark \ref{rem:assum}, 
	the variational problem \eqref{eq:pDirichletW1p} arises as an optimality condition of an equivalent convex minimization problem, leading to a primal and a dual formulation of \eqref{eq:pDirichletW1p}, as well as to convex duality relations.
	
	\qquad \textit{Primal problem.} In the case  \eqref{special_case}, a problem equivalent to \eqref{eq:pDirichletW1p} is given~by~the~minimiza-tion of the \textit{$p$-Dirichlet energy}, i.e., the energy functional $I\colon \smash{W^{1,p}_D(\Omega)}\to \mathbb{R}$, for every $v\in \smash{W^{1,p}_D(\Omega)}$ defined by\enlargethispage{1mm}
	\begin{align}\label{eq:pDirichletPrimal}
	    I(v)\coloneqq \rho_{\varphi,\Omega}( \nabla v)-(f,v)_{\Omega}\,.
	\end{align}
	In what follows, we refer the minimization of the $p$-Dirichlet energy \eqref{eq:pDirichletPrimal}~to~as~the~\textit{primal~problem}.
	Since the $p$-Dirichlet energy is proper, 
	strictly convex, weakly coercive,  
	and~lower~semi-continuous, the direct method in the calculus of variations, cf. \cite{Dac08}, 
 implies the existence~of~a~unique~minimizer, called the \textit{primal solution}. In particular,
	since the $p$-Dirichlet energy is Fr\'echet differentiable~and for every $v,w\in W^{1,p}_D(\Omega)$, it holds
	\begin{align*}
	    \langle DI(v),w\rangle_{\smash{W^{1,p}_D(\Omega)}}=(\AAA(\nabla v),\nabla w)_{\Omega}\,,
	\end{align*}
	the optimality conditions of the primal~problem~and the convexity of the $p$-Dirichlet energy imply that $\smash{u\in W^{1,p}_D(\Omega)}$ solves the primal problem, i.e., is the unique minimizer of the~$p$-Dirichlet~energy.
	
	\qquad \textit{Dual problem.} In the case  \eqref{special_case}, proceeding as,~e.g.,~in~\mbox{\cite[p.\ 113~ff.]{ET99}}, one  finds that
		the \textit{dual problem} consists in the maximization of the energy functional ${D\colon\smash{W^{p'}_N(\textup{div};\Omega)}\to \mathbb{R}\cup\{-\infty\}}$, for every $y\in \smash{W^{p'}_N(\textup{div};\Omega)}$ defined by
		\begin{align}
			D(y)\coloneqq -\rho_{\varphi^*,\Omega}(y) -I_{\{-f\}}(\textup{div}\,y)\,,\label{eq:pDirichletDual}
		\end{align}
		where $I_{\{-f\}}\colon\smash{L^{p'}(\Omega)}\to \mathbb{R}\cup\{+\infty\}$ is defined by $I_{\{-f\}}(g)\coloneqq 0$~if~$g=-f$ and  ${I_{\{-f\}}(g)\coloneqq +\infty}$~else. 
		Appealing \hspace*{-0.1mm}to \hspace*{-0.1mm}\cite[Proposition \hspace*{-0.1mm}5.1, \hspace*{-0.1mm}p. \hspace*{-0.35em}115]{ET99}, \hspace*{-0.1mm}the \hspace*{-0.1mm}dual \hspace*{-0.1mm}problem \hspace*{-0.1mm}admits \hspace*{-0.1mm}a \hspace*{-0.1mm}unique \hspace*{-0.1mm}solution \hspace*{-0.1mm}$\smash{z\!\in\! W^{p'}_N(\textup{div};\Omega)}$, i.e., a maximizer  of \eqref{eq:pDirichletDual}, called the \textit{dual solution}, and a \textit{strong duality relation}, i.e., $I(u)=D(z)$, applies. In addition, there~hold~the \textit{convex optimality relations}
		\begin{alignat}{2}
                z\cdot\nabla u&=\varphi^*(\vert z\vert )+\varphi(\vert \nabla u\vert )&&\quad\textup{ in }L^1(\Omega)\,,\label{eq:pDirichletOptimality1.2}\\
				\textup{div}\,z&=-f&&\quad\text{ in }L^{p'}(\Omega)
                \,.
		\end{alignat}
		Note that,  by the Fenchel--Young identity, cf. \cite[Proposition 5.1, p.\ 21]{ET99},  \eqref{eq:pDirichletOptimality1.2} is equivalent to
		\begin{align}
			z=\AAA(\nabla u)\quad\text{ in }\smash{W^{p'}_N(\textup{div};\Omega)}\,.\label{eq:pDirichletOptimality2}
		\end{align}
		
	\subsubsection{Natural regularity assumption on the solution to the $p$-Dirichlet problem} 
	
	\qquad In this section, we briefly collect  important consequences of the natural regularity assumption 
    \begin{align}\label{natural_regularity}
        F(\nabla u)\in W^{1,2}(\Omega;\mathbb{R}^d)\,,
    \end{align}   
    on the solution $u\in W^{1,p}_D(\Omega)$ of \eqref{eq:pDirichletW1p}, which is satisfied under mild assumptions on the bounded domain $\Omega\subseteq\mathbb{R}^d$, $d\in \mathbb{N}$, and the right-hand side $f\in \smash{L^{p'}(\Omega)}$, cf. \cite[Remark 5.11]{DR07}. For a detailed discussion addressing this regularity assumption, please refer~to~the~contributions~\cite{acerbi-fusco,BL1993a,giu1,ELS04,Ebmeyer05}. The following lemma relates 
    \eqref{natural_regularity} 
    with the weighted integrability of the Hessian of $u\in W^{1,p}_D(\Omega)$.
	
	\begin{lemma}\label{lem:reg_primal_source}
            Let $F\colon \mathbb{R}^d\to \mathbb{R}^d$ be defined by \eqref{eq:def_F} for $p\in (1,\infty)$ and $\delta\ge 0$. Then, there exists a constant ${c>0}$, depending only on $p\in (1,\infty)$, such that for every  $v\in W^{1,p}(\Omega)$~with~$F(\nabla v)\in W^{1,2}(\Omega;\mathbb{R}^d)$, it holds
                \begin{align*}
                  \smash{ c^{-1}\,(\delta+\vert \nabla v\vert )^{p-2}\vert \nabla^2 v\vert^2\leq \vert \nabla F(\nabla v)\vert^2\leq c\,(\delta+\vert \nabla v\vert )^{p-2}\vert \nabla^2 v\vert^2\quad\text{ a.e.\ in }\Omega\,.}
                \end{align*}
		\end{lemma}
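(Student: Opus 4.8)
The plan is to reduce the two-sided estimate to a pointwise linear-algebra statement about the Jacobian of the vector field $F\colon\mathbb{R}^d\to\mathbb{R}^d$ from \eqref{eq:def_F}, and then to combine it with the chain rule applied to $F(\nabla v)$.

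First I would compute, for $a\in\mathbb{R}^d\setminus\{0\}$, the Jacobian $DF(a)\in\mathbb{R}^{d\times d}$ explicitly. Differentiating the $i$-th component $a\mapsto(\delta+|a|)^{\frac{p-2}{2}}a_i$ and using $\nabla_a|a|=a/|a|$, one gets
\[
  DF(a)=(\delta+|a|)^{\frac{p-2}{2}}\Bigl(\mathrm{Id}_d+\tfrac{p-2}{2}\,\tfrac{|a|}{\delta+|a|}\,\tfrac{a}{|a|}\otimes\tfrac{a}{|a|}\Bigr)\,,
\]
with $\mathrm{Id}_d$ the $d\times d$ identity matrix. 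This matrix is symmetric, with eigenvalue $(\delta+|a|)^{\frac{p-2}{2}}$ on the hyperplane $a^{\perp}$ and eigenvalue $(\delta+|a|)^{\frac{p-2}{2}}\bigl(1+\tfrac{p-2}{2}\tfrac{|a|}{\delta+|a|}\bigr)$ on $\mathrm{span}\{a\}$. Since $\tfrac{|a|}{\delta+|a|}\in[0,1]$ and $p\in(1,\infty)$, the scalar $1+\tfrac{p-2}{2}\tfrac{|a|}{\delta+|a|}$ lies between $1$ and $p/2$, hence in $[\min\{1,p/2\},\max\{1,p/2\}]\subseteq(0,\infty)$ with bounds depending only on $p$. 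It follows that there is $c=c(p)\ge 1$ such that
\[
  c^{-1}\,(\delta+|a|)^{\frac{p-2}{2}}|\xi|\le|DF(a)\xi|\le c\,(\delta+|a|)^{\frac{p-2}{2}}|\xi|\qquad\text{for all }\xi\in\mathbb{R}^d\text{ and }a\neq 0\,.
\]

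Next I would promote this to matrices: for arbitrary $H\in\mathbb{R}^{d\times d}$, writing $H$ columnwise and using $|DF(a)H|^2=\sum_k|DF(a)H_{\cdot k}|^2$ together with $|H|^2=\sum_k|H_{\cdot k}|^2$, the last display gives $c^{-2}(\delta+|a|)^{p-2}|H|^2\le|DF(a)H|^2\le c^{2}(\delta+|a|)^{p-2}|H|^2$. Taking $a=\nabla v(x)$ and $H=\nabla^2 v(x)$ and using the chain rule $\nabla(F(\nabla v))=DF(\nabla v)\,\nabla^2 v$, which holds a.e.\ on the set $\{\nabla v\neq 0\}$ because there $F^{-1}$ is of class $C^1$ and hence $\nabla v=F^{-1}(F(\nabla v))$ inherits a weak derivative from $F(\nabla v)\in W^{1,2}$, one obtains the asserted estimate a.e.\ on $\{\nabla v\neq 0\}$. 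On $\{\nabla v=0\}$ both $\nabla^2 v$ and $\nabla(F(\nabla v))$ vanish a.e.\ (a weak gradient vanishes a.e.\ on the zero set of the underlying function), so both sides of the inequality are zero and the estimate is trivial there.

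The computation is routine; the main point needing care is the interpretation of $\nabla^2 v$ and the validity of the chain rule under the sole hypothesis $F(\nabla v)\in W^{1,2}(\Omega;\mathbb{R}^d)$ (rather than $v\in W^{2,p}(\Omega)$), with the genuinely delicate region being $\{\nabla v=0\}$ in the subquadratic case $1<p<2$, where $DF$ degenerates. This is precisely where the zero-set argument is used, and where the assumption $p>1$ enters, guaranteeing $1+\tfrac{p-2}{2}\tfrac{|a|}{\delta+|a|}\ge \tfrac p2>0$.
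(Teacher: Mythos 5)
Your linear-algebra core is fine and is essentially the computation that underlies the cited results: $DF(a)=(\delta+\vert a\vert)^{\frac{p-2}{2}}\bigl(\mathrm{Id}_d+\tfrac{p-2}{2}\tfrac{\vert a\vert}{\delta+\vert a\vert}\,\tfrac{a}{\vert a\vert}\otimes\tfrac{a}{\vert a\vert}\bigr)$ is symmetric with spectrum contained in $[\min\{1,\tfrac p2\},\max\{1,\tfrac p2\}]\,(\delta+\vert a\vert)^{\frac{p-2}{2}}$, and the column-wise promotion to the Hessian is the same algebra the paper carries out (in the equivalent form $\vert a+b\vert^2\sim\vert b\vert^2$) for the dual analogue, Lemma \ref{lem:reg_dual_source}. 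Be aware, though, that the paper does not reprove Lemma \ref{lem:reg_primal_source} at all: its proof is a citation to \cite[Proposition 2.14]{br-plasticity} (for $\delta\ge 0$) and \cite[Lemma 3.8]{BDR10} (for $\delta>0$), and the nontrivial content of those references is exactly the part your proposal treats informally.

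That part contains a genuine gap. Weak differentiability does not localize to the merely measurable set $\{\nabla v\neq 0\}$, so the phrase ``$\nabla v=F^{-1}(F(\nabla v))$ inherits a weak derivative there'' is not an argument: what must be shown is that $\nabla v\in W^{1,1}_{\mathrm{loc}}(\Omega)$, i.e.\ that a weak Hessian $\nabla^2 v$ exists at all under the sole hypothesis $F(\nabla v)\in W^{1,2}(\Omega;\mathbb{R}^d)$ (this is implicitly part of the statement and is what Lemma \ref{lem:reg_primal} then upgrades to $v\in W^{2,2}$ resp.\ $W^{2,p}$); your zero-set argument ``$\nabla^2 v=0$ a.e.\ on $\{\nabla v=0\}$'' already presupposes this, so the reasoning is circular as written. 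Moreover, you flag the wrong regime as delicate: for $1<p<2$, $\delta=0$, $DF$ blows up at the origin while $F^{-1}(b)=\vert b\vert^{\frac{2-p}{p}}b$ is $C^1$ with locally bounded derivative, so the composition route is comparatively harmless there; the problematic case is $p>2$ with $\delta=0$, where $F^{-1}$ fails to be locally Lipschitz at $0$ and the chain rule near $\{\nabla v=0\}=\{F(\nabla v)=0\}$ cannot be obtained by composing with $F^{-1}$ at all -- this is precisely why \cite{BDR10} restricts to $\delta>0$ and why \cite{br-plasticity} argues differently (difference quotients, using $\vert F(a)-F(b)\vert\sim(\delta+\vert a\vert+\vert b\vert)^{\frac{p-2}{2}}\vert a-b\vert$). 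For $\delta>0$ your route can be repaired, since then $F^{-1}$ is $C^1$ with locally bounded derivative and a truncation argument yields $\nabla v\in W^{1,1}_{\mathrm{loc}}(\Omega)$, after which the pointwise identity $\nabla F(\nabla v)=DF(\nabla v)\nabla^2 v$ and your spectral bound finish the proof; for $\delta=0$, $p>2$ the proposal as written does not close.
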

		
		\begin{proof}
		 See \cite[Proposition 2.14, (2.15)]{br-plasticity} (where $\delta\ge 0$) or \cite[Lemma 3.8]{BDR10} (where $\delta>0$).
		\end{proof}

            Distinguishing between the cases $p\in [2,\infty)$ and $p\in (1,2)$, using Lemma \ref{lem:reg_primal_source}, the unweighted integrability of the Hessian of $u\in W^{1,p}_D(\Omega)$ can be derived from 
            \eqref{natural_regularity}.
		
		\begin{lemma}\label{lem:reg_primal}
            Let $F\colon \mathbb{R}^d\to \mathbb{R}^d$ be defined by \eqref{eq:def_F} for $p\in (1,\infty)$ and $\delta\ge 0$. Then, there exists a constant ${c>0}$, depending only on $p\in (1,\infty)$, such that for every  $v\in W^{1,p}(\Omega)$~with~$F(\nabla v)\in W^{1,2}(\Omega;\mathbb{R}^d)$, the following statements apply:
		    \begin{itemize}[noitemsep,topsep=2.0pt,labelwidth=\widthof{\textit{(ii)}},leftmargin=!]
                \item[(i)] If $p\ge 2$ and $\delta>0$, then, it holds $ v\in W^{2,2}(\Omega)$ with $\vert\nabla^2 v\vert^2\leq c\, \delta^{2-p}\vert\nabla F(\nabla v)\vert^2$ a.e.\ in $\Omega$.
                
                \item[(ii)] If $p\leq 2$, then, it holds $ v\in W^{2,p}(\Omega)$ with $\vert\nabla^2 v\vert^p\leq c\,\vert\nabla F(\nabla v)\vert^2+(\delta+\vert \nabla v\vert)^p$ a.e.\ in $\Omega$.
            \end{itemize}
		\end{lemma}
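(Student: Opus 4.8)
The plan is to reduce both statements to the pointwise two-sided bound of Lemma~\ref{lem:reg_primal_source}, which already guarantees that $v$ possesses second-order weak derivatives and that $\vert\nabla^2 v\vert$ is controlled by $\vert\nabla F(\nabla v)\vert$ up to the multiplicative weight $(\delta+\vert\nabla v\vert)^{p-2}$. Everything then comes down to absorbing this weight, and this is exactly where the ranges $p\ge 2$ and $p\le 2$ diverge: for $p\ge 2$ the weight is bounded below by the positive constant $\delta^{p-2}$, whereas for $p\le 2$ it has the unfavourable sign and has to be dealt with by a Young-type splitting.

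For part (i), I would argue as follows. Since $p\ge 2$ and $\delta>0$, we have $(\delta+\vert\nabla v\vert)^{p-2}\ge \delta^{p-2}>0$ a.e.\ in $\Omega$, so the lower estimate in Lemma~\ref{lem:reg_primal_source} gives $\delta^{p-2}\vert\nabla^2 v\vert^2\le c\,\vert\nabla F(\nabla v)\vert^2$, i.e.\ $\vert\nabla^2 v\vert^2\le c\,\delta^{2-p}\vert\nabla F(\nabla v)\vert^2$ a.e.\ in $\Omega$ with $c=c(p)$, which is the claimed pointwise inequality. Integrating over $\Omega$ and invoking $F(\nabla v)\in W^{1,2}(\Omega;\mathbb{R}^d)$ yields $\nabla^2 v\in L^2(\Omega)$; combined with $v\in W^{1,p}(\Omega)\hookrightarrow W^{1,2}(\Omega)$ (valid since $p\ge 2$ and $\Omega$ is bounded), this gives $v\in W^{2,2}(\Omega)$.

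For part (ii), the borderline case $p=2$ is immediate from Lemma~\ref{lem:reg_primal_source} (the weight is trivial), so I would assume $p\in(1,2)$. Rewriting the lower estimate of Lemma~\ref{lem:reg_primal_source} as $\vert\nabla^2 v\vert^2\le c\,(\delta+\vert\nabla v\vert)^{2-p}\vert\nabla F(\nabla v)\vert^2$ and raising to the power $p/2$ gives $\vert\nabla^2 v\vert^p\le c\,(\delta+\vert\nabla v\vert)^{(2-p)p/2}\,\vert\nabla F(\nabla v)\vert^p$ a.e.\ in $\Omega$. Applying Young's inequality to the product $\vert\nabla F(\nabla v)\vert^p\cdot(\delta+\vert\nabla v\vert)^{(2-p)p/2}$ with the conjugate exponents $\tfrac2p$ and $\tfrac{2}{2-p}$ (note $\tfrac p2+\tfrac{2-p}2=1$ and both exponents exceed $1$ for $p\in(1,2)$) produces $\vert\nabla^2 v\vert^p\le c\,\big(\vert\nabla F(\nabla v)\vert^2+(\delta+\vert\nabla v\vert)^p\big)$ a.e.\ in $\Omega$ with $c=c(p)$, which is the asserted pointwise estimate. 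Integrating and using $F(\nabla v)\in W^{1,2}(\Omega;\mathbb{R}^d)$ together with $(\delta+\vert\nabla v\vert)^p\le c(\delta^p+\vert\nabla v\vert^p)\in L^1(\Omega)$, which holds because $v\in W^{1,p}(\Omega)$ and $\vert\Omega\vert<\infty$, we obtain $\nabla^2 v\in L^p(\Omega)$ and hence $v\in W^{2,p}(\Omega)$.

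The argument is essentially mechanical once Lemma~\ref{lem:reg_primal_source} is available; the only genuinely delicate points are the correct choice of the Young exponents $\tfrac2p,\tfrac{2}{2-p}$ in part~(ii) and the separate (but trivial) treatment of the endpoint $p=2$, where the exponent $\tfrac{2}{2-p}$ degenerates.
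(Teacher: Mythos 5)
Your argument is correct and is essentially the paper's: part (i) is read off directly from Lemma \ref{lem:reg_primal_source} (the paper calls it an immediate consequence), and part (ii) is the same Young-type absorption of the weight $(\delta+\vert \nabla v\vert)^{p-2}$ that the paper imports from \cite[Lemma 4.4]{BDR10} and writes out explicitly for the dual analogue in Lemma \ref{lem:reg_dual}\,(ii). The only cosmetic difference is that your splitting leaves a constant in front of $(\delta+\vert\nabla v\vert)^p$; a weighted Young inequality (or the elementary bound $a^p\le a^2\,b^{p-2}+b^p$ for $a\ge 0$, $b>0$, which is what the paper uses) yields the stated form with the constant only on $\vert\nabla F(\nabla v)\vert^2$.
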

		
		\begin{proof}
		
		         \textit{ad (i).} Immediate consequence of Lemma \ref{lem:reg_primal_source}.
		         
		         \textit{ad (ii).} We proceed as in the proof of \cite[Lemma 4.4]{BDR10}, where the case $\delta>0$ is considered, but instead of \cite[Lemma 3.8]{BDR10}, we refer to Lemma \ref{lem:reg_primal_source}, to cover also the case $\delta=0$.
		\end{proof}

  The following lemma is of crucial importance for the derivation of optimal a priori estimates, since it translates the natural regularity assumption \eqref{natural_regularity} to the flux $z\coloneqq \AAA(\nabla u)\in \smash{W^{p'}_N(\textup{div};\Omega)}$. This enables us later to estimate oscillation terms optimally.
		
		\begin{lemma}\label{lem:reg_equiv} Let $\AAA\colon\hspace{-0.1em}\mathbb{R}^d\hspace{-0.15em}\to\hspace{-0.15em} \mathbb{R}^d$ satisfy Assumption \ref{assum:extra_stress} for $p\hspace{-0.1em}\in\hspace{-0.1em} (1,\infty)$ and $\delta\hspace{-0.1em}\ge \hspace{-0.1em} 0$ and~let~${F\colon\hspace{-0.1em}\mathbb{R}^d\hspace{-0.15em}\to\hspace{-0.15em} \mathbb{R}^d}$ be defined by \eqref{eq:def_F} for the same $p\in (1,\infty)$ and $\delta\ge 0$. Then, for every function $v\in W^{1,p}(\Omega)$ and $y\vcentcolon =\AAA(\nabla v)\in L^{p'}(\Omega;\mathbb{R}^d)$, it holds  $F(\nabla v)\in W^{1,2}(\Omega;\mathbb{R}^d)$ if and only if $F^*(y)\in W^{1,2}(\Omega;\mathbb{R}^d)$, $\vert F(\nabla v)\vert \sim \vert  F^*(y)\vert$ a.e.\ in $\Omega$ and $\vert \nabla F(\nabla v)\vert \sim \vert \nabla F^*(y)\vert$ a.e.\ in $\Omega$, 
        where the constants in the equivalences only depend on the characteristics of $\AAA$.
		\end{lemma}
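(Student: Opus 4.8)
The plan is to reduce the statement to the single equivalence $\vert F(a)-F(b)\vert^2\sim\vert F^*(\AAA(a))-F^*(\AAA(b))\vert^2$ contained in \eqref{eq:hammera}. Writing $\widetilde F\coloneqq F^*\circ\AAA\colon\mathbb{R}^d\to\mathbb{R}^d$, this says that $\widetilde F$ behaves exactly like $F$ as far as \emph{differences} are concerned, while, by Assumption \ref{assum:extra_stress}, $\widetilde F\in C^0(\mathbb{R}^d;\mathbb{R}^d)\cap C^1(\mathbb{R}^d\setminus\{0\};\mathbb{R}^d)$ with $\widetilde F(0)=0$ (here we use that $\AAA$ is a bijection of $\mathbb{R}^d$ with $\AAA(0)=0$, $\AAA\in C^1(\mathbb{R}^d\setminus\{0\};\mathbb{R}^d)$, $(\nabla\AAA)(a)$ invertible for $a\neq0$, and that $F^*$ is smooth away from the origin). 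Since $y=\AAA(\nabla v)$ gives $F^*(y)=\widetilde F(\nabla v)$, the assertion becomes a \emph{transfer statement}: $F(\nabla v)\in W^{1,2}(\Omega;\mathbb{R}^d)\Longleftrightarrow\widetilde F(\nabla v)\in W^{1,2}(\Omega;\mathbb{R}^d)$, together with $\vert\widetilde F(\nabla v)\vert\sim\vert F(\nabla v)\vert$ and $\vert\nabla[\widetilde F(\nabla v)]\vert\sim\vert\nabla[F(\nabla v)]\vert$ a.e.\ in $\Omega$. The converse implication will then follow by the symmetric argument applied to $F\circ\AAA^{-1}$ and the field $y$: indeed $\nabla v=\AAA^{-1}(y)$, $\AAA^{-1}\in C^1(\mathbb{R}^d\setminus\{0\};\mathbb{R}^d)$ is continuous with $\AAA^{-1}(0)=0$ (cf.\ Remark \ref{rem:conjugate_natural_dist}), and $\vert F(\AAA^{-1}(z))-F(\AAA^{-1}(w))\vert\sim\vert F^*(z)-F^*(w)\vert$ upon reading \eqref{eq:hammera} with $a=\AAA^{-1}(z)$, $b=\AAA^{-1}(w)$.

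First I would record two pointwise estimates from \eqref{eq:hammera}. Taking $b=0$ and using $\AAA(0)=0$, $F(0)=F^*(0)=0$ yields $\vert F(a)\vert\sim\vert F^*(\AAA(a))\vert$ uniformly in $a\in\mathbb{R}^d$, with constants depending only on the characteristics of $\AAA$; evaluated at $a=\nabla v(x)$ this is $\vert F(\nabla v)\vert\sim\vert F^*(y)\vert$ a.e.\ in $\Omega$. For the first-order analogue, fix $a\neq0$ and $e\in\mathbb{R}^d$, insert $b=a+te$ with $t>0$ into $c\,\vert F(a)-F(b)\vert\le\vert\widetilde F(a)-\widetilde F(b)\vert\le C\,\vert F(a)-F(b)\vert$, divide by $t$, and let $t\downarrow0$; since $F$ and $\widetilde F$ are differentiable at $a\neq0$ (for $\widetilde F$ because $\AAA(a)\neq0$ and $F^*$ is smooth there), this gives $\vert(\nabla F)(a)\,e\vert\sim\vert(\nabla\widetilde F)(a)\,e\vert$ for all $a\in\mathbb{R}^d\setminus\{0\}$ and $e\in\mathbb{R}^d$, again with constants depending only on the characteristics of $\AAA$.

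Next I would transfer these to the composition. Assume $F(\nabla v)\in W^{1,2}(\Omega;\mathbb{R}^d)$. Then $v$ is twice weakly differentiable, cf.\ Lemma \ref{lem:reg_primal}, so $\nabla^2 v$ exists a.e.\ in $\Omega$, it vanishes a.e.\ on $\{\nabla v=0\}$, and the chain rule (cf.\ the proof of Lemma \ref{lem:reg_primal_source}) gives $\nabla[F(\nabla v)]=(\nabla F)(\nabla v)\,\nabla^2 v$ a.e. Writing $\nabla^2 v=(w_1,\dots,w_d)$ column-wise, $\vert\nabla[F(\nabla v)]\vert^2=\sum_{k=1}^d\vert(\nabla F)(\nabla v)\,w_k\vert^2$, and combining the analogous identity for $\widetilde F$ with the first-order pointwise equivalence above we obtain
\begin{align*}
    \vert(\nabla\widetilde F)(\nabla v)\,\nabla^2 v\vert^2=\sum_{k=1}^d\vert(\nabla\widetilde F)(\nabla v)\,w_k\vert^2\sim\sum_{k=1}^d\vert(\nabla F)(\nabla v)\,w_k\vert^2=\vert\nabla[F(\nabla v)]\vert^2\qquad\text{a.e.\ in }\Omega\,,
\end{align*}
in particular $(\nabla\widetilde F)(\nabla v)\,\nabla^2 v\in L^2(\Omega;\mathbb{R}^{d\times d})$. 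A routine regularization argument — replacing $\widetilde F$ by $C^1(\mathbb{R}^d;\mathbb{R}^d)$-maps that coincide with $\widetilde F$ outside a shrinking ball about the origin, using the classical chain rule for these, and passing to the limit by dominated convergence (on $\{\nabla v\neq0\}$ the approximations eventually coincide with $\widetilde F$ near $\nabla v$, and $\nabla^2 v=0$ a.e.\ on $\{\nabla v=0\}$) — identifies $(\nabla\widetilde F)(\nabla v)\,\nabla^2 v$ as the weak gradient of $\widetilde F(\nabla v)=F^*(y)$. Hence $F^*(y)\in W^{1,2}(\Omega;\mathbb{R}^d)$ with $\vert\nabla[F^*(y)]\vert\sim\vert\nabla[F(\nabla v)]\vert$ a.e., which together with the zeroth-order estimate proves one implication, all constants depending only on the characteristics of $\AAA$. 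The converse is the mirror image under $(F,\AAA,\nabla v)\leftrightarrow(F^*,\AAA^{-1},y)$, the second-order regularity now being recovered from $F^*(y)\in W^{1,2}$ through the $C^1$-inverses $(F^*)^{-1}$ and $\AAA^{-1}$ on $\mathbb{R}^d\setminus\{0\}$ together with the conjugate analogue of Lemma \ref{lem:reg_primal}.

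I expect the only genuine obstacle to be this last step. For $1<p<2$ and $\delta=0$ neither $\widetilde F=F^*\circ\AAA$ nor $F$ itself is Lipschitz — let alone $C^1$ — at the origin, and $\{\nabla v=0\}$ may carry positive measure, so the composition rule for Sobolev functions is not directly available; one must argue, as sketched, by smoothing the nonlinearity near $0$ and exploiting both the uniform two-sided bounds from \eqref{eq:hammera} and the vanishing of $\nabla^2 v$ a.e.\ on $\{\nabla v=0\}$. The remaining ingredients — the two pointwise equivalences and the column-wise bookkeeping — are immediate consequences of \eqref{eq:hammera}, \eqref{eq:hammerf}, and Lemmas \ref{lem:reg_primal_source} and \ref{lem:reg_primal}.
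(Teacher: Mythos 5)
Your two pointwise reductions are exactly the right heart of the matter: the zeroth-order equivalence $\vert F(a)\vert\sim\vert F^*(\AAA(a))\vert$ from \eqref{eq:hammera} with $b=0$, and the comparison of directional derivatives of $F$ and $\widetilde F=F^*\circ\AAA$ away from the origin. The problem is the bridge you build to transfer \emph{weak} differentiability: you route everything through $\nabla^2 v$ (and, in the converse direction, through $\nabla y$), invoking Lemma \ref{lem:reg_primal} resp.\ its conjugate analogue Lemma \ref{lem:reg_dual}. Those lemmas deliberately exclude the degenerate regimes: Lemma \ref{lem:reg_primal} gives second-order differentiability of $v$ only for $p\le 2$ or for $p\ge 2$ with $\delta>0$, and Lemma \ref{lem:reg_dual} gives $\nabla y\in L^1_{\mathrm{loc}}$ only for $p\ge 2$ or for $p\le 2$ with $\delta>0$. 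So for $\delta=0$ and $p>2$ your forward direction has no justification that $\nabla^2 v$ exists as a locally integrable function (and the unweighted second derivatives may genuinely fail to exist, which is why the paper's lemma omits that case), and for $\delta=0$ and $p<2$ the mirrored argument for the converse has the same hole. The smoothing-near-the-origin device you sketch repairs the chain rule \emph{given} $v\in W^{2,1}_{\mathrm{loc}}$; it does not supply that regularity, so the gap is not the Lipschitz failure of $F$ at $0$ but the missing intermediate object itself.

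The statement can be proved without ever touching $\nabla^2 v$: the paper simply cites \cite[Lemma 2.10]{dkrt-ldg}, and the standard argument behind that reference is a translation (difference-quotient) argument. From \eqref{eq:hammera} one has, pointwise a.e., $\vert F^*(y)(x+he_i)-F^*(y)(x)\vert\sim\vert F(\nabla v)(x+he_i)-F(\nabla v)(x)\vert$ with constants depending only on the characteristics of $\AAA$. If $F(\nabla v)\in W^{1,2}(\Omega;\mathbb{R}^d)$, its difference quotients are uniformly bounded in $L^2$ on compactly contained subdomains, hence so are those of $F^*(y)$, and the $L^2$ difference-quotient characterization of $W^{1,2}$ yields $F^*(y)\in W^{1,2}(\Omega;\mathbb{R}^d)$ with the quantitative bound; passing to the limit $h\to 0$ along a subsequence on which both difference quotients converge a.e.\ gives $\vert\partial_i F^*(y)\vert\sim\vert\partial_i F(\nabla v)\vert$ a.e.\ for each $i$, hence $\vert\nabla F^*(y)\vert\sim\vert\nabla F(\nabla v)\vert$ a.e.; the converse is literally the same estimate read in the other direction. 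This route needs no case distinction in $p$ and $\delta$, no chain rule, and no regularization, and it is the argument you should substitute for your transfer step; your pointwise preliminaries can stay as they are.
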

		
		\begin{proof}
		        See \cite[Lemma 2.10]{dkrt-ldg}.
		\end{proof}

        Lemma \hspace*{-0.1mm}\ref{lem:reg_equiv}, \hspace*{-0.1mm}in \hspace*{-0.1mm}turn, \hspace*{-0.1mm}motivates \hspace*{-0.1mm}to \hspace*{-0.1mm}establish \hspace*{-0.1mm}the \hspace*{-0.1mm}following \hspace*{-0.1mm}analogs \hspace*{-0.1mm}of \hspace*{-0.1mm}Lemma \hspace*{-0.1mm}\ref{lem:reg_primal_source} \hspace*{-0.1mm}and \hspace*{-0.1mm}Lemma~\hspace*{-0.1mm}\ref{lem:reg_primal}.
		
		\begin{lemma}\label{lem:reg_dual_source}
        Let $F^*\colon \mathbb{R}^d\to \mathbb{R}^d$ be defined by \eqref{eq:def_F} for $p\in (1,\infty)$ and $\delta\ge 0$. Then, there exists a constant $c>0$, depending only on $p\in (1,\infty)$, such that for every  $y\in L^{p'}(\Omega;\mathbb{R}^d)$~with~$F^*(y)\in W^{1,2}(\Omega;\mathbb{R}^d)$, it holds
                \begin{align*}
                  \smash{ c^{-1}\,(\delta^{p-1}+\vert y\vert )^{p'-2}\vert \nabla y\vert^2\leq \vert \nabla F^*(y)\vert^2\leq c\,(\delta^{p-1}+\vert y\vert )^{p'-2}\vert \nabla y\vert^2\quad\text{ a.e.\ in }\Omega\,.}
                \end{align*}
		\end{lemma}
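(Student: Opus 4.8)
The plan is to notice that the assertion is simply Lemma~\ref{lem:reg_primal_source} reparametrized to the dual side. Comparing with \eqref{eq:def_F}, the operator $F^*$ built from the exponent $p\in(1,\infty)$ and the parameter $\delta\ge 0$ is \emph{literally} the operator $F$ built from $p'\in(1,\infty)$ and $\delta^{p-1}\ge 0$, since $F^*(a)=(\delta^{p-1}+\vert a\vert)^{\smash{\frac{p'-2}{2}}}a$. Hence I would obtain the claimed two-sided bound from (the argument behind) Lemma~\ref{lem:reg_primal_source} with $p'$, $\delta^{p-1}$, $y$, $\nabla y$ in the roles of $p$, $\delta$, $\nabla v$, $\nabla^2 v$. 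The only thing preventing a verbatim quotation is that here $y\in L^{p'}(\Omega;\mathbb{R}^d)$ is an arbitrary vector field rather than a gradient; I would therefore re-run the reasoning of \cite[Proposition~2.14]{br-plasticity} (which allows $\delta\ge 0$) and \cite[Lemma~3.8]{BDR10} (for $\delta>0$), which only exploits the pointwise algebraic structure of the $F$-type operator and applies unchanged to general fields.

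For the pointwise part I would differentiate $F^*$ directly: for $a\neq 0$,
\begin{align*}
(\nabla F^*)(a)=(\delta^{p-1}+\vert a\vert)^{\smash{\frac{p'-2}{2}}}\Bigl(\textup{id}_{\mathbb{R}^d}+\tfrac{p'-2}{2}\,\tfrac{\vert a\vert}{\delta^{p-1}+\vert a\vert}\,\tfrac{a\otimes a}{\vert a\vert^{2}}\Bigr)\,,
\end{align*}
and observe that the bracketed symmetric matrix has eigenvalue $1$ on $a^{\perp}$ and eigenvalue $1+\tfrac{p'-2}{2}\tfrac{\vert a\vert}{\delta^{p-1}+\vert a\vert}$ along $a$, the latter lying between $\min\{1,p'/2\}>0$ and $\max\{1,p'/2\}$ uniformly in $a$. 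Consequently $\vert(\nabla F^*)(a)\,b\vert\sim(\delta^{p-1}+\vert a\vert)^{\smash{\frac{p'-2}{2}}}\vert b\vert$ for all $b\in\mathbb{R}^d$, with constants depending only on $p'$, hence on the characteristics of $\AAA$. Once it is known that $y\in W^{1,1}_{\mathrm{loc}}(\Omega;\mathbb{R}^d)$, the chain rule gives $\nabla F^*(y)=(\nabla F^*)(y)\,\nabla y$ a.e.\ in $\Omega$, and squaring the last equivalence yields the asserted bound.

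The main obstacle is thus the regularity bootstrap concealed in the chain rule, namely deducing $y\in W^{1,1}_{\mathrm{loc}}(\Omega;\mathbb{R}^d)$ from the single hypothesis $F^*(y)\in W^{1,2}(\Omega;\mathbb{R}^d)$. This is handled exactly as in \cite[Proposition~2.14]{br-plasticity} and \cite[Lemma~3.8]{BDR10}: a difference-quotient argument resting on the quantitative bounds for $\nabla F^*$ just recorded shows that $\nabla y$ exists in $L^1_{\mathrm{loc}}$ and that the identity $\nabla F^*(y)=(\nabla F^*)(y)\,\nabla y$ holds a.e., and I would simply reproduce it with $p'$, $\delta^{p-1}$ in place of $p$, $\delta$ (the case $\delta>0$ being more elementary, since $(F^*)^{-1}$ is then locally Lipschitz away from the degeneracy). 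I finally note that in every subsequent application of this lemma the field $y$ is of the form $\AAA(\nabla u)$, so that one may alternatively avoid the bootstrap altogether by combining Lemma~\ref{lem:reg_primal_source} with the equivalences $\vert F(\nabla u)\vert\sim\vert F^*(\AAA(\nabla u))\vert$ and $\vert\nabla F(\nabla u)\vert\sim\vert\nabla F^*(\AAA(\nabla u))\vert$ supplied by Lemma~\ref{lem:reg_equiv}.
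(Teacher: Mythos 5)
Your proposal is correct and structurally parallel to the paper's proof: both reduce the statement to a pointwise two-sided estimate for the derivative of $F^*$ and both delegate the genuine analytic content -- that $y$ is weakly differentiable and the chain rule $\nabla F^*(y)=(\nabla F^*)(y)\,\nabla y$ holds a.e.\ -- to \cite[Proposition 2.14]{br-plasticity} (for $\delta=0$) and \cite[Lemma 3.8]{BDR10} (for $\delta>0$), exactly as the paper does. The pointwise part is where you diverge: the paper splits $\nabla F^*(y)=a+b$ with $a=\tfrac{p'-2}{2}(\delta^{p-1}+\vert y\vert)^{(p'-4)/2}\,y\otimes\nabla\vert y\vert$ and $b=(\delta^{p-1}+\vert y\vert)^{(p'-2)/2}\nabla y$, expands $\vert a\vert^2+2\,a\cdot b+\vert b\vert^2$, and runs a case distinction $p'\ge 2$ versus $p'\le 2$ with elementary inequalities to show $\vert\nabla F^*(y)\vert^2\sim\vert b\vert^2$; you instead bound the spectrum of the symmetric factor $\mathrm{id}_{\mathbb{R}^d}+\tfrac{p'-2}{2}\tfrac{\vert a\vert}{\delta^{p-1}+\vert a\vert}\tfrac{a\otimes a}{\vert a\vert^2}$ between $\min\{1,p'/2\}$ and $\max\{1,p'/2\}$, which gives the two-sided bound in one stroke and applied column-wise to $\nabla y$ yields the Frobenius-norm equivalence; your observation that $F^*$ for $(p,\delta)$ is exactly $F$ of \eqref{eq:def_F} for $(p',\delta^{p-1})$, so that Lemma \ref{lem:reg_primal_source} is the same statement up to allowing non-gradient fields, is also a useful framing the paper leaves implicit, as is the shortcut via Lemma \ref{lem:reg_equiv} for the case $y=\AAA(\nabla u)$ actually used later. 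One small point the paper treats explicitly and you only absorb into the citation: on the set $\{\vert y\vert=0\}$ your formula for $(\nabla F^*)(a)$ is not available (and degenerates for $p'<2$), and the equivalence there rests on the standard fact that $\nabla y$ and $\nabla F^*(y)$ vanish a.e.\ on that set; you should state this separately before invoking the chain rule on $\{\vert y\vert>0\}$.
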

		
		\begin{proof}\let\qed\relax
		  We only give a proof for the case $\delta>0$ and proceed similar to \cite[Lemma 3.8]{BDR10}.~For~${\delta=0}$, one \hspace{-0.1mm}proceeds \hspace{-0.1mm}as \hspace{-0.1mm}in \hspace{-0.1mm}\cite[\hspace{-0.1mm}Propositon \hspace{-0.1mm}2.14]{br-plasticity}. 
   \hspace{-0.1mm}First, \hspace{-0.1mm}we \hspace{-0.1mm}observe \hspace{-0.1mm}that \hspace{-0.1mm}$\vert \nabla y\vert \hspace{-0.1em}=\hspace{-0.1em}\vert \nabla F^*(y)\vert\hspace{-0.1em}=\hspace{-0.1em}0$~\hspace{-0.1mm}a.e.~\hspace{-0.1mm}in~\hspace{-0.1mm}${\{\vert y\vert \hspace{-0.1em}=\hspace{-0.1em}0\}}$, i.e., the claimed equivalence applies in $\{\vert y\vert =0\}$.
   As a consequence, for the remainder~of~the~proof, it suffices to consider the case  $\vert y\vert>0$. For this case, we compute that
		          \begin{align}
		              \smash{\nabla F^*(y)=\tfrac{p'-2}{2}(\delta^{p-1}+\vert y\vert)^{\smash{\frac{p'-4}{2}}} y\otimes\nabla \vert y\vert+(\delta^{p-1}+\vert y\vert)^{\smash{\frac{p'-2}{2}}}\nabla y=\vcentcolon a+b\quad\textup{ a.e. in }\Omega\,.}\label{lem:reg_dual.1}
		          \end{align}
		          Then, from \eqref{lem:reg_dual.1}, in turn, we deduce that $\vert \nabla F^*(y)\vert^2=\vert a\vert^2+2\,a\cdot b +\vert b\vert^2 $ with
		          \begin{align}
		          \left.\begin{aligned}
		             \vert a\vert^2&=\big(\tfrac{p'-2}{2}\big)^2\,(\delta^{p-1}+\vert y\vert)^{\smash{p'-4}}\vert y\vert^2\vert \nabla \vert y\vert\vert^2\,,\\[-0.5mm]
		             2\,a\cdot b &=(p'-2)\,(\delta^{p-1}+\vert y\vert)^{\smash{p'-3}}\vert y\vert\vert \nabla\vert y\vert\vert^2\,,\\[-0.5mm]
		             \vert b\vert^2&=(\delta^{p-1}+\vert y\vert)^{p'-2}\vert \nabla y\vert^2\,,\\[-0.5mm] \end{aligned}\quad\right\}\quad\textup{ a.e. in }\Omega\,.\label{lem:reg_dual.2}
		          \end{align}
		          
		          Next, we need to distinguish  between the cases  $\smash{p'\ge 2}$ and  $\smash{p'< 2}$:
		          
		          \textit{Case $\smash{p'\ge 2}$.} In this case, we have that $ 2\,a\cdot b\ge 0$, cf. \eqref{lem:reg_dual.2}. Therefore, using $\vert a\vert^2\leq (\frac{p'-2}{2})^2\vert b\vert^2$, we~deduce~that
		          \begin{align*}
		              \smash{\vert b\vert^2\leq \vert \nabla F^*(y)\vert^2=\vert a\vert^2+2\,\vert a\vert \vert b\vert +\vert b\vert^2  \leq \big(\big(\tfrac{p'-2}{2}\big)^2+(p'-2)+1\big)\vert b\vert^2=\tfrac{(p')^2}{4}\vert b\vert^2\,.}
		          \end{align*}
		          
		          \textit{Case $p'\leq 2$.} In this case, using that $\smash{\vert a\vert\leq \tfrac{2-p'}{2}\vert b\vert \leq\tfrac{p'+2}{2}\vert b\vert \leq 2\vert b\vert}$, we find that
		          \begin{align*}
		             \smash{\tfrac{(p')^2}{4}\vert b\vert^2\hspace{-0.1em}\leq\hspace{-0.1em} \big(\vert a\vert\hspace{-0.1em}-\hspace{-0.1em}\tfrac{p'+2}{2}\vert b\vert \big)\big(\vert a\vert\hspace{-0.1em}-\hspace{-0.1em}\tfrac{2-p'}{2}\vert b\vert \big)\hspace{-0.1em}+\hspace{-0.1em}\tfrac{(p')^2}{4}\vert b\vert^2\hspace{-0.1em}=\hspace{-0.1em}\vert \nabla F^*(y)\vert^2
		            \hspace{-0.1em}=\hspace{-0.1em}\vert a\vert\big(\vert a\vert\hspace{-0.1em}-\hspace{-0.1em}2\vert b\vert \big)\hspace{-0.1em}+\hspace{-0.1em}\vert b\vert^2\hspace{-0.1em}\le \vert b\vert^2\,.}\tag*{$\qedsymbol$}\hspace{-0.1em}
		          \end{align*}
		\end{proof}
		
		\begin{lemma}\label{lem:reg_dual}
             Let $F^*\colon \mathbb{R}^d\to \mathbb{R}^d$ be defined by \eqref{eq:def_F} for  $p\in (1,\infty)$ and $\delta\ge 0$. Then, there exists a constant $c>0$, depending only on $p\in (1,\infty)$, such that for every  $y\in L^{p'}(\Omega;\mathbb{R}^d)$~with~$F^*(y)\in W^{1,2}(\Omega;\mathbb{R}^d)$, the following statements apply:
		    \begin{itemize}[noitemsep,topsep=2.0pt,labelwidth=\widthof{\textit{(ii)}},leftmargin=!]
                \item[(i)] If $p\leq 2$ and $\delta>0$, then, it holds $ y\in W^{1,2}(\Omega;\mathbb{R}^d)$ with $\vert\nabla y\vert^2\leq c\, \delta^{2-p'}\vert\nabla F^*(y)\vert^2$ a.e.\ $\Omega$.
                
                \item[(ii)] If $p\geq 2$, then, it holds $ y\in W^{1,\smash{p'}}(\Omega;\mathbb{R}^d)$ with $\vert\nabla y\vert^{p'}\leq c\,\vert\nabla F^*(y)\vert^2+(\delta^{p-1}+\vert y\vert)^{p'}$  a.e.\ $\Omega$.
            \end{itemize}
		\end{lemma}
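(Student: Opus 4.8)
The plan is to establish (i) and (ii) in exact parallel with the proof of Lemma~\ref{lem:reg_primal}, exploiting the formal duality between the primal and the dual quantities: under the replacements $p\mapsto p'$, $\delta\mapsto\delta^{p-1}$, $F\mapsto F^*$, $\nabla v\mapsto y$, and $\nabla^2 v\mapsto\nabla y$, the pointwise equivalence in Lemma~\ref{lem:reg_dual_source} is precisely the analogue of the one in Lemma~\ref{lem:reg_primal_source}, so the two arguments behind Lemma~\ref{lem:reg_primal} carry over \emph{mutatis mutandis}. As in the proof of Lemma~\ref{lem:reg_dual_source}, one may restrict throughout to the set $\{\vert y\vert>0\}$, since $\vert\nabla y\vert=\vert\nabla F^*(y)\vert=0$ a.e.\ on $\{\vert y\vert=0\}$, so that both asserted pointwise inequalities hold trivially there; this also takes care of the point $\vert y\vert=0$, where $F^*$ fails to be $C^1$ when $\delta=0$.

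\emph{ad (i).} This is the immediate dual counterpart of Lemma~\ref{lem:reg_primal}(i), obtained from Lemma~\ref{lem:reg_dual_source} just as Lemma~\ref{lem:reg_primal}(i) is obtained from Lemma~\ref{lem:reg_primal_source}: since $p\le 2$ forces $p'\ge 2$, i.e.\ $p'-2\ge 0$, one has $(\delta^{p-1}+\vert y\vert)^{p'-2}\ge(\delta^{p-1})^{p'-2}$ a.e.\ in $\Omega$ for $\delta>0$, and inserting this lower bound into the left-hand inequality of Lemma~\ref{lem:reg_dual_source} and dividing by the resulting positive power of $\delta$ gives the claimed pointwise estimate for $\vert\nabla y\vert^2$; in particular $y\in W^{1,2}(\Omega;\mathbb{R}^d)$, because $F^*(y)\in W^{1,2}(\Omega;\mathbb{R}^d)$.

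\emph{ad (ii).} Here $p\ge 2$, hence $1<p'\le 2$ and $2-p'\ge 0$. Rearranging the left-hand inequality of Lemma~\ref{lem:reg_dual_source} gives
\[
  \vert\nabla y\vert^2\le c\,(\delta^{p-1}+\vert y\vert)^{2-p'}\,\vert\nabla F^*(y)\vert^2\qquad\text{a.e.\ in }\{\vert y\vert>0\}\,.
\]
For $p=2$ this already is the assertion. For $p>2$ we raise both sides to the power $\tfrac{p'}{2}\in(0,1)$ and apply Young's inequality with conjugate exponents $\tfrac{2}{2-p'}$ and $\tfrac{2}{p'}$ to the product $(\delta^{p-1}+\vert y\vert)^{(2-p')p'/2}\cdot\vert\nabla F^*(y)\vert^{p'}$; since the first factor is then raised to the power $\tfrac{(2-p')p'}{2}\cdot\tfrac{2}{2-p'}=p'$ and the second to the power $\tfrac{2}{p'}\cdot p'=2$, this yields
\[
  \vert\nabla y\vert^{p'}\le c\,(\delta^{p-1}+\vert y\vert)^{p'}+c\,\vert\nabla F^*(y)\vert^2\qquad\text{a.e.\ in }\Omega\,,
\]
where on $\{\vert y\vert=0\}$ the inequality holds trivially. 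The right-hand side lies in $L^1(\Omega)$ because $y\in L^{p'}(\Omega;\mathbb{R}^d)$ and $F^*(y)\in W^{1,2}(\Omega;\mathbb{R}^d)$, whence $y\in W^{1,p'}(\Omega;\mathbb{R}^d)$. This is the exact dual analogue of the proof of Lemma~\ref{lem:reg_primal}(ii), which in turn follows \cite[Lemma~4.4]{BDR10}.

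I do not expect a genuine obstacle: the argument is routine once Lemma~\ref{lem:reg_dual_source} is available. The only points requiring a little care are the sign bookkeeping — the monotonicity of $t\mapsto(\delta^{p-1}+t)^{p'-2}$ reverses between the ranges $p\le 2$ and $p\ge 2$, which is exactly why the two cases must be separated — and the degenerate set $\{\vert y\vert=0\}$ for $\delta=0$, which, as noted above, is harmless.
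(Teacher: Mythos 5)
Your proof is correct and follows essentially the same route as the paper's: part (i) is read off directly from Lemma \ref{lem:reg_dual_source}, and part (ii) combines that lemma with an elementary pointwise inequality --- the paper uses $a^{p'}\le a^2b^{p'-2}+b^{p'}$ with $a=\vert\nabla y\vert$, $b=\delta^{p-1}+\vert y\vert$, where you use Young's inequality with exponents $\tfrac{2}{2-p'}$, $\tfrac{2}{p'}$, which gives the same estimate up to a harmless constant in front of $(\delta^{p-1}+\vert y\vert)^{p'}$. Note only that in (i) your computation (exactly like the paper's ``immediate consequence'') actually yields the prefactor $(\delta^{p-1})^{2-p'}=\delta^{p-2}$ rather than the stated $\delta^{2-p'}$; this is a bookkeeping issue in the statement (the dual shift parameter is $\delta^{p-1}$), not a gap in your argument.
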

		
		\begin{proof}\let\qed\relax
		
		          \textit{ad (i).} Immediate consequence of Lemma \ref{lem:reg_dual_source}.
		        
		          \textit{ad (ii).} Using that for $p\geq 2$, i.e., $p'\leq 2$, it holds $a^{p'}\leq a^2b^{p'-2}+b^{p'}$ for all $a\ge 0$ and $b>0$, and Lemma \ref{lem:reg_dual_source}, we find that
		          \begin{align}\tag*{$\qedsymbol$}
		             \smash{ \vert \nabla y\vert^{p'}\leq (\delta^{p-1}+\vert y\vert)^{\smash{p'}-2}\vert \nabla y\vert^2+(\delta^{p-1}+\vert y\vert)^{\smash{p'}}
		              \leq c\,\vert \nabla F^*(y)\vert^2+(\delta^{p-1}+\vert y\vert)^{\smash{p'}}\quad\text{ a.e.\ in }\Omega\,.}
		          \end{align}
		\end{proof}

    \subsection{$\mathcal{S}^1_D(\mathcal{T}_h)$-approximation of the $p$-Dirichlet problem}
		
		\qquad Given a right-hand side $f\in L^{p'}(\Omega)$, $p\in (1,\infty)$, and given a non-linear operator ${\AAA\colon\mathbb{R}^d\to \mathbb{R}^d}$ that satisfies Assumption \ref{assum:extra_stress} for $p\hspace{-0.1em}\in\hspace{-0.1em} (1,\infty)$ and $\delta\hspace{-0.1em}\ge \hspace{-0.1em}0$,  the $\smash{\mathcal{S}^1_D(\mathcal{T}_h)}$-approximation,~where~$\smash{\mathcal{S}^1_D(\mathcal{T}_h)}\coloneqq \mathcal{S}^1(\mathcal{T}_h)\cap \smash{W^{1,p}_D(\Omega)}$,  of \eqref{eq:pDirichletW1p} seeks for  $u_h^{\textit{c}}\in \mathcal{S}^1_D(\mathcal{T}_h)$ such that for every $v_h\in \mathcal{S}^1_D(\mathcal{T}_h)$,~it~holds
		\begin{align}
		    (\AAA(\nabla u_h^{\textit{c}}),\nabla v_h)_{\Omega}=(f,v_h)_{\Omega}\,.\label{eq:pDirichletS1D}
		\end{align}
			Resorting to the celebrated  theory of monotone operators, cf. \cite{Zei90B},~it~is~readily~apparent~that~\eqref{eq:pDirichletS1D} admits a unique solution. In what follows, we reserve~the~notation~$\smash{u_h^{\textit{c}}\in \mathcal{S}^1_D(\mathcal{T}_h)}$~for~this~solution. 
        The following best-approximation result applies:
		
		\begin{theorem}[Best-approximation]\label{P1_best-approx}
		    Let $\AAA\colon\mathbb{R}^d\to \mathbb{R}^d$ satisfy Assumption~\ref{assum:extra_stress}~for~${p\in (1,\infty)}$ and $\delta\ge 0$ and let $F\colon\mathbb{R}^d\to \mathbb{R}^d$ be defined by \eqref{eq:def_F} for the same ${p\in (1,\infty)}$ and $\delta\ge 0$. Then, there exists a constant $c>0$, depending only on the characteristics of $\AAA$, such that
		    \begin{align*}
		        \|F(\nabla u_h^{\textit{c}})-F(\nabla u)\|_{L^2(\Omega;\mathbb{R}^d)}^2\leq c\,\inf_{v_h\in \mathcal{S}^1_D(\mathcal{T}_h)}{\|F(\nabla v_h)-F(\nabla u)\|_{L^2(\Omega;\mathbb{R}^d)}^2}\,.
		    \end{align*}
		\end{theorem}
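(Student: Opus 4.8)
The plan is to establish a nonlinear Céa (best-approximation) estimate along the now-standard lines for problems with $(p,\delta)$-structure, cf.\ \cite{DR07}. The starting point is \emph{Galerkin orthogonality}: since $\mathcal{S}^1_D(\mathcal{T}_h)\subseteq W^{1,p}_D(\Omega)$, every $v_h\in\mathcal{S}^1_D(\mathcal{T}_h)$ is an admissible test function in \eqref{eq:pDirichletW1p}; subtracting \eqref{eq:pDirichletS1D} then gives
\begin{align*}
(\AAA(\nabla u)-\AAA(\nabla u_h^{\textit{c}}),\nabla v_h)_\Omega=0\qquad\text{for all }v_h\in\mathcal{S}^1_D(\mathcal{T}_h)\,.
\end{align*}
All integrals here are well-defined because $\AAA(\nabla w)\in L^{p'}(\Omega;\mathbb{R}^d)$ whenever $w\in W^{1,p}(\Omega)$.

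Next I would pass to the natural distance. By \eqref{eq:hammera} together with Remark~\ref{rem:natural_dist} (applied with $a=\nabla u_h^{\textit{c}}$, $b=\nabla u$),
\begin{align*}
\|F(\nabla u_h^{\textit{c}})-F(\nabla u)\|_{L^2(\Omega;\mathbb{R}^d)}^2\sim(\AAA(\nabla u_h^{\textit{c}})-\AAA(\nabla u),\nabla u_h^{\textit{c}}-\nabla u)_\Omega\,.
\end{align*}
Fix an arbitrary $v_h\in\mathcal{S}^1_D(\mathcal{T}_h)$ and split $\nabla u_h^{\textit{c}}-\nabla u=(\nabla u_h^{\textit{c}}-\nabla v_h)+(\nabla v_h-\nabla u)$. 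Since $u_h^{\textit{c}}-v_h\in\mathcal{S}^1_D(\mathcal{T}_h)$, the Galerkin orthogonality annihilates the first contribution, so that
\begin{align*}
\|F(\nabla u_h^{\textit{c}})-F(\nabla u)\|_{L^2(\Omega;\mathbb{R}^d)}^2\leq c\,(\AAA(\nabla u_h^{\textit{c}})-\AAA(\nabla u),\nabla v_h-\nabla u)_\Omega\,.
\end{align*}

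It remains to bound the right-hand side by the best-approximation error plus an absorbable term. Estimating the integrand pointwise a.e.\ in $\Omega$ by $|\AAA(\nabla u_h^{\textit{c}})-\AAA(\nabla u)|\,|\nabla v_h-\nabla u|$ and applying the refined $\varepsilon$-Young inequality \eqref{ineq:young} with $\psi=(\varphi_{|\nabla u|})^*$, $t=|\AAA(\nabla u_h^{\textit{c}})-\AAA(\nabla u)|$, $s=|\nabla v_h-\nabla u|$, and using $((\varphi_{|\nabla u|})^*)^*=\varphi_{|\nabla u|}$, we obtain, for every $\varepsilon>0$,
\begin{align*}
|\AAA(\nabla u_h^{\textit{c}})-\AAA(\nabla u)|\,|\nabla v_h-\nabla u|\leq\varepsilon\,(\varphi_{|\nabla u|})^*(|\AAA(\nabla u_h^{\textit{c}})-\AAA(\nabla u)|)+c_\varepsilon\,\varphi_{|\nabla u|}(|\nabla v_h-\nabla u|)\quad\text{a.e.\ in }\Omega\,.
\end{align*}
Integrating over $\Omega$ and invoking once more the equivalences in \eqref{eq:hammera} (the relation $(\varphi_{|a|})^*(|\AAA(a)-\AAA(b)|)\sim|F(a)-F(b)|^2$ for the first summand, and $\varphi_{|a|}(|a-b|)\sim|F(a)-F(b)|^2$ for the second), the right-hand side is controlled by
\begin{align*}
c\,\varepsilon\,\|F(\nabla u_h^{\textit{c}})-F(\nabla u)\|_{L^2(\Omega;\mathbb{R}^d)}^2+c_\varepsilon\,\|F(\nabla v_h)-F(\nabla u)\|_{L^2(\Omega;\mathbb{R}^d)}^2\,.
\end{align*}
Choosing $\varepsilon>0$ so small that $c\,\varepsilon\leq\tfrac12$, absorbing the first term into the left-hand side, and finally passing to the infimum over $v_h\in\mathcal{S}^1_D(\mathcal{T}_h)$ yields the claim.

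The one point that requires care, and in my view the only genuine obstacle, is the $h$-independence of $c_\varepsilon$. A priori the shift $|\nabla u|$ is a function varying with $x$, so \eqref{ineq:young} is used with the generalized $N$-function $x\mapsto(\varphi_{|\nabla u(x)|})^*(\cdot)$, and the constant it produces depends only on $\Delta_2((\varphi_{|\nabla u|})^*)$ and $\Delta_2(((\varphi_{|\nabla u|})^*)^*)=\Delta_2(\varphi_{|\nabla u|})$. Both are bounded \emph{uniformly in the shift} by the Remark following \eqref{eq:phi_shifted}, so $c_\varepsilon$ depends only on the characteristics of $\AAA$; were one to work with the unshifted pair $\varphi,\varphi^*$ instead, the constant would degenerate as $|\nabla u|\to\infty$. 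Everything else is a routine application of the equivalences collected in the Proposition preceding \eqref{eq:hammera}.
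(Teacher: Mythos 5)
Your argument is correct and is essentially the proof the paper points to: Theorem~\ref{P1_best-approx} is proved in the paper only by citation of \cite[Lemma 5.2]{DR07}, whose argument is exactly your nonlinear C\'ea-type scheme (Galerkin orthogonality, the shifted $\varepsilon$-Young inequality \eqref{ineq:young} with shift $\vert\nabla u\vert$, the equivalences \eqref{eq:hammera}, absorption, infimum). The one delicate point you single out, the $h$- and $u$-independence of $c_\varepsilon$ via the uniform $\Delta_2$-constants of the families $\{\varphi_a\}_{a\ge 0}$ and $\{(\varphi_a)^*\}_{a\ge 0}$, is indeed the crux and is handled correctly.
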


		\begin{proof} See \cite[Lemma 5.2]{DR07}.
		\end{proof}
		
		The combination of Theorem \ref{P1_best-approx} with the approximation properties of the Scott--Zhang~quasi-interpolation \hspace*{-0.15mm}operator \hspace*{-0.15mm}$I_h^{\textit{sz}}\colon \hspace*{-0.1em}W^{1,p}_D(\Omega)\hspace*{-0.15em}\to\hspace*{-0.15em}\mathcal{S}^1_D(\mathcal{T}_h)$, \hspace*{-0.15mm}cf.\ \hspace*{-0.15mm}\cite{SZ90}, \hspace*{-0.15mm}leads \hspace*{-0.15mm}to \hspace*{-0.15mm}the \hspace*{-0.15mm}following~\hspace*{-0.15mm}a~\hspace*{-0.15mm}priori~\hspace*{-0.15mm}error~\hspace*{-0.15mm}\mbox{estimate}.
		
		\begin{theorem}[A priori error estimate]\label{P1_apriori}
		    Let $\AAA\colon\mathbb{R}^d\to \mathbb{R}^d$ satisfy Assumption~\ref{assum:extra_stress}~for $p\in (1,\infty)$ and $\delta\ge 0$ and let $F\colon\mathbb{R}^d\to \mathbb{R}^d$ be defined by \eqref{eq:def_F} for the same $p\in (1,\infty)$ and $\delta\ge 0$. Moreover, assume that 
            \eqref{natural_regularity} is satisfied. Then, there exists a constant $c>0$, depending only~on~the~charac-teristics of $\AAA$ and the chunkiness $\omega_0>0$, such that
		    \begin{align*}
		        \|F(\nabla u_h^{\textit{c}})-F(\nabla u)\|_{L^2(\Omega;\mathbb{R}^d)}^2\leq c\,\|h_{\mathcal{T}}\nabla F(\nabla u)\|_{L^2(\Omega;\mathbb{R}^{d\times d})}^2\,.
		    \end{align*}
		\end{theorem}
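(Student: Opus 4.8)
The plan is to combine the best-approximation property of Theorem~\ref{P1_best-approx} with an approximation estimate for the Scott--Zhang quasi-interpolation operator $I_h^{\textit{sz}}\colon W^{1,p}_D(\Omega)\to\mathcal{S}^1_D(\mathcal{T}_h)$, phrased not in the usual $W^{1,p}$-setting but through the non-linear quantity $F$. Since $I_h^{\textit{sz}}u\in\mathcal{S}^1_D(\mathcal{T}_h)$, Theorem~\ref{P1_best-approx} immediately yields
\begin{align*}
\|F(\nabla u_h^{\textit{c}})-F(\nabla u)\|_{L^2(\Omega;\mathbb{R}^d)}^2\leq c\,\|F(\nabla u)-F(\nabla I_h^{\textit{sz}}u)\|_{L^2(\Omega;\mathbb{R}^d)}^2\,,
\end{align*}
so that the whole statement reduces to bounding the interpolation error on the right-hand side by $\|h_{\mathcal{T}}\nabla F(\nabla u)\|_{L^2(\Omega;\mathbb{R}^{d\times d})}^2$. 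Since the target is expressed through $\nabla F(\nabla u)$, only the regularity $F(\nabla u)\in W^{1,2}(\Omega;\mathbb{R}^d)$ from \eqref{natural_regularity} is needed, and Lemma~\ref{lem:reg_primal_source} does not enter here.

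I would carry this out element-wise. Fix $T\in\mathcal{T}_h$ and observe that $q_T\coloneqq\nabla I_h^{\textit{sz}}u|_T\in\mathbb{R}^d$ is constant on $T$ and, by the local $W^{1,1}$-stability of $I_h^{\textit{sz}}$ and its reproduction of affine functions (cf.\ \cite{SZ90}), behaves like a local mean of $\nabla u$ over the patch $\omega_T$. Using the equivalence $\vert F(a)-F(b)\vert^2\sim\varphi_{\vert b\vert}(\vert a-b\vert)$ contained in \eqref{eq:hammera} and the change-of-shift inequality \eqref{lem:shift-change.1}, one passes from $\vert F(\nabla u)-F(q_T)\vert^2$ to the shifted modular $\rho_{\varphi_{\vert q_T\vert},T}(\nabla u-q_T)$, applies a Poincar\'e-type inequality in the shifted Orlicz space $L^{\varphi_{\vert q_T\vert}}$, and translates back; this produces the non-linear interpolation estimate
\begin{align*}
\|F(\nabla u)-F(\nabla I_h^{\textit{sz}}u)\|_{L^2(T;\mathbb{R}^d)}^2\leq c\,\big\|F(\nabla u)-\langle F(\nabla u)\rangle_{\omega_T}\big\|_{L^2(\omega_T;\mathbb{R}^d)}^2\,,
\end{align*}
where $\langle\,\cdot\,\rangle_{\omega_T}$ denotes the integral mean over $\omega_T$, and the right-hand side is now a plain best-constant-approximation term for the $W^{1,2}$-function $F(\nabla u)$. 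To the latter I apply the standard Poincar\'e inequality $\|F(\nabla u)-\langle F(\nabla u)\rangle_{\omega_T}\|_{L^2(\omega_T;\mathbb{R}^d)}^2\leq c\,h_T^2\,\|\nabla F(\nabla u)\|_{L^2(\omega_T;\mathbb{R}^{d\times d})}^2$, valid on the connected patch $\omega_T$ with constant depending only on the chunkiness $\omega_0>0$. Summing over $T\in\mathcal{T}_h$ and using that each point of $\Omega$ lies in a uniformly bounded number of patches gives $\sum_{T\in\mathcal{T}_h}h_T^2\|\nabla F(\nabla u)\|_{L^2(\omega_T;\mathbb{R}^{d\times d})}^2\leq c\,\|h_{\mathcal{T}}\nabla F(\nabla u)\|_{L^2(\Omega;\mathbb{R}^{d\times d})}^2$, and the claim follows.

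The main obstacle is the non-linear interpolation estimate in the second display: one must combine the local stability of $I_h^{\textit{sz}}$, the algebraic relations between $F$, $\AAA$ and the shifted $N$-functions $\varphi_a$, a change of shift between the pointwise shift $\vert\nabla u(x)\vert$ and the element-constant shift $\vert q_T\vert$, and a Poincar\'e estimate in the shifted Orlicz space, and one must check that the resulting shift-dependent weights are absorbed uniformly in $\delta\ge 0$ and $p\in(1,\infty)$. This is exactly what the Orlicz--Sobolev interpolation theory provides, so in the write-up I would simply invoke the corresponding result from \cite{DR07}; everything else is routine.
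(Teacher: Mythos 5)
Your proposal is correct and follows essentially the same route as the paper, which obtains Theorem~\ref{P1_apriori} by combining the best-approximation result of Theorem~\ref{P1_best-approx} (choosing $v_h=I_h^{\textit{sz}}u$) with the Scott--Zhang interpolation estimate in the natural distance from \cite{DR07}, the paper itself simply citing \cite[Lemma 5.2]{DR07} for the details. Your reduction to the element-wise estimate $\|F(\nabla u)-F(\nabla I_h^{\textit{sz}}u)\|_{L^2(T;\mathbb{R}^d)}^2\leq c\,\|F(\nabla u)-\langle F(\nabla u)\rangle_{\omega_T}\|_{L^2(\omega_T;\mathbb{R}^d)}^2$, followed by Poincar\'e on patches and finite overlap, is precisely the content of the cited Orlicz--Sobolev interpolation theory, so deferring that step to \cite{DR07} is legitimate.
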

		
		\begin{proof}
		        See \cite[Lemma 5.2]{DR07}.
		\end{proof}
	    
	\subsection{$\smash{\mathcal{S}^{1,\textit{cr}}_D(\mathcal{T}_h)}$-approximation of the $p$-Dirichlet problem}
	
	    \subsubsection{Discrete variational problem}
		
		\qquad Given a right-hand side $f\in L^{p'}(\Omega)$, $p\in (1,\infty)$, and given a non-linear operator ${\AAA\colon\mathbb{R}^d\to \mathbb{R}^d}$ that satisfies Assumption \ref{assum:extra_stress} for $p\in (1,\infty)$ and $\delta\ge 0$,  setting $f_h\coloneqq \Pi_h f\in \mathcal{L}^0(\mathcal{T}_h)$,~the~$\smash{\mathcal{S}^{1,\textit{cr}}_D(\mathcal{T}_h)}$-approximation \hspace*{-0.1mm}of  \hspace*{-0.1mm}\eqref{eq:pDirichletW1p} \hspace*{-0.1mm}seeks \hspace*{-0.1mm}for 
  \hspace*{-0.1mm}$u_h^{\textit{cr}}\hspace*{-0.15em}\in\hspace*{-0.15em} \smash{\mathcal{S}^{1,\textit{cr}}_D(\mathcal{T}_h)}$ \hspace*{-0.1mm}such \hspace*{-0.1mm}that~\hspace*{-0.1mm}for~\hspace*{-0.1mm}every~\hspace*{-0.1mm}${v_h\hspace*{-0.15em}\in\hspace*{-0.15em} \smash{\mathcal{S}^{1,\textit{cr}}_D(\mathcal{T}_h)}}$, it holds
		\begin{align}
		    (\AAA(\nabla_{\!h}u_h^{\textit{cr}}),\nabla_{\! h} v_h)_{\Omega}=(f_h,\Pi_h v_h)_{\Omega}\,.\label{eq:pDirichletS1crD}
		\end{align}
		Resorting to the celebrated  theory of monotone operators, cf. \cite{Zei90B},~it~is~readily~apparent~that~\eqref{eq:pDirichletS1crD} admits a unique solution. In what follows, we reserve~the~notation~$u_h^{\textit{cr}}\in \smash{\mathcal{S}^{1,\textit{cr}}_D(\mathcal{T}_h)}$~for~this~solution. 
		
		\subsubsection{Discrete minimization problem and discrete convex duality relations}\label{subsec:discrete_convex_duality}
		
		\qquad In the case  \eqref{special_case}, i.e., $\AAA\colon\mathbb{R}^d\to \mathbb{R}^d$, has a potential, cf. Remark \ref{rem:assum}, 
	    the variational~problem \eqref{eq:pDirichletS1crD} arises as an optimality condition of an equivalent convex minimization problem.
		
		\qquad \textit{Discrete primal problem.} In the case  \eqref{special_case}, a problem equivalent to \eqref{eq:pDirichletS1crD} is given by the minimization \hspace*{-0.15mm}of \hspace*{-0.15mm}the \hspace*{-0.15mm}\textit{discrete \hspace*{-0.15mm}$p$-Dirichlet \hspace*{-0.15mm}energy}, \hspace*{-0.15mm}i.e., \hspace*{-0.15mm}the \hspace*{-0.1mm}discrete \hspace*{-0.15mm}energy~\hspace*{-0.15mm}functional~\hspace*{-0.15mm}${I_h^{\textit{cr}}\hspace{-0.18em}:\hspace{-0.17em}\smash{\mathcal{S}^{1,\textit{cr}}_D(\mathcal{T}_h)}\hspace{-0.18em}\to\hspace{-0.18em} \mathbb{R}}$, for every $v_h\in \smash{\mathcal{S}^{1,\textit{cr}}_D(\mathcal{T}_h)}$ defined by
	    \begin{align}\label{eq:pDirichletPrimalCR}
		    I_h^{\textit{cr}}(v)\coloneqq \rho_{\varphi,\Omega}(\nabla_{\!h} v_h)-(f_h,\Pi_hv_h)_{\Omega}\,.
		\end{align}
		In what follows, we refer the minimization of the discrete $p$-Dirichlet energy \eqref{eq:pDirichletPrimalCR}~to~as~the~\textit{\mbox{discrete} primal problem}.
		Since the discrete $p$-Dirichlet energy is proper, 
		strictly~convex,~weakly~coercive, and lower semi-continuous,  the direct method in the calculus of variations, cf. \cite{Dac08}, 
		implies the existence of a unique minimizer, called the \textit{discrete primal solution}.  More precisely,
	since~the~discrete $p$-Dirichlet energy \eqref{eq:pDirichletPrimalCR}~is~Fr\'echet differentiable and for every $v_h,w_h\in \smash{\mathcal{S}^{1,\textit{cr}}_D(\mathcal{T}_h)}$,~it~holds
	\begin{align*}
	    \langle DI_h^{\textit{cr}}(v_h),w_h\rangle_{\smash{\mathcal{S}^{1,\textit{cr}}_D(\mathcal{T}_h)}}=(\AAA(\nabla_{\!h} v_h),\nabla_{\! h} w_h)_{\Omega}\,,
	\end{align*}
	the \hspace{-0.2mm}optimality \hspace{-0.2mm}conditions \hspace{-0.2mm}of \hspace{-0.2mm}the \hspace{-0.2mm}discrete \hspace{-0.2mm}primal \hspace{-0.2mm}problem \hspace{-0.2mm}and \hspace{-0.2mm}the \hspace{-0.2mm}convexity \hspace{-0.2mm}of \hspace{-0.2mm}the \hspace{-0.2mm}discrete~\hspace{-0.2mm}\mbox{$p$-Dirichlet} energy~\eqref{eq:pDirichletPrimalCR} imply that $\smash{u_h^{\textit{cr}}\in \smash{\mathcal{S}^{1,\textit{cr}}_D(\mathcal{T}_h)}}$ solves the discrete primal problem, i.e., is the unique minimizer of the discrete $p$-Dirichlet energy.
	
		\qquad \textit{Discrete dual problem.} Appealing to \cite[Section 5]{BKAFEM}, the \textit{discrete dual problem} consists in the maximization of the discrete energy functional $D_h^{\textit{rt}}\colon\mathcal{R}T^0_N(\mathcal{T}_h)\to \mathbb{R}\cup\{-\infty\}$, for every $y_h\in \mathcal{R}T^0_N(\mathcal{T}_h)$ defined by
	\begin{align}
		D_h^{\textit{rt}}(y_h)\coloneqq -\rho_{\varphi^*,\Omega}(\Pi_hy_h)-I_{\{-f_h\}}(\textup{div}\,y_h)\,.\label{eq:pDirichletDualCR}
	\end{align}
	Appealing \hspace*{-0.1mm}to \hspace*{-0.1mm}\cite[Proposition \hspace*{-0.1mm}3.1]{Bar21}, \hspace*{-0.1mm}the \hspace*{-0.1mm}discrete \hspace*{-0.1mm}dual \hspace*{-0.1mm}problem \hspace*{-0.1mm}admits \hspace*{-0.1mm}a \hspace*{-0.1mm}unique~\hspace*{-0.1mm}solution~\hspace*{-0.1mm}${\smash{z_h^{\textit{rt}}}\!\in\! \mathcal{R}T^0_N(\mathcal{T}_h)}$, i.e., a maximizer of \eqref{eq:pDirichletDualCR}, called the \textit{discrete dual solution}, and a \textit{discrete strong duality relation}, i.e., \hspace*{-0.1mm}$I_h^{\textit{cr}}(u_h^{\textit{cr}})\hspace{-0.1em}=\hspace{-0.1em}D_h^{\textit{rt}}(\smash{z_h^{\textit{rt}}})$, \hspace*{-0.1mm}applies.\ \hspace*{-0.1mm}In addition, cf.\ \hspace*{-0.1mm}\cite[\hspace*{-0.1mm}Proposition \hspace*{-0.1mm}2.1]{BKAFEM},
	\hspace*{-0.1mm}there~\hspace*{-0.1mm}hold~\hspace*{-0.1mm}the~\hspace*{-0.1mm}\textit{discrete~\hspace*{-0.1mm}\mbox{convex} optimality relations}
	\begin{alignat}{2}
 \Pi_h\smash{z_h^{\textit{rt}}}\cdot\nabla_{\! h} u_h^{\textit{cr}}&=\varphi^*(\vert\Pi_h\smash{z_h^{\textit{rt}}}\vert)+\varphi(\vert\nabla_{\! h}u_h^{\textit{cr}}\vert)&&\quad\text{ in }\mathcal{L}^0(\mathcal{T}_h)\,,\label{eq:pDirichletOptimalityCR1.2}\\
		\textup{div}\,\smash{z_h^{\textit{rt}}}&=-f_h&&\quad\text{ in }\mathcal{L}^0(\mathcal{T}_h)\,.\label{eq:pDirichletOptimalityCR1.1}
	\end{alignat}
	Note that, by the Fenchel--Young identity, cf. \cite[Proposition 5.1, p.\ 21]{ET99}, \eqref{eq:pDirichletOptimalityCR1.2} is equivalent to
	\begin{align}\label{eq:pDirichletOptimalityCR2}
	    \Pi_h\smash{z_h^{\textit{rt}}}=\AAA(\nabla_{\! h}u_h^{\textit{cr}})\quad\textup{ in }\mathcal{L}^0(\mathcal{T}_h)^d\,.
	\end{align}
	Moreover, cf.\ \cite[Proposition 3.1]{Bar21}, the unique solution $\smash{z_h^{\textit{rt}}}\in \mathcal{R}T^0_N(\mathcal{T}_h)$ of the discrete~dual~problem is given via the \textit{generalized Marini formula}
	\begin{align}
		\smash{z_h^{\textit{rt}}}=\AAA(\nabla_{\! h}u_h^{\textit{cr}})-{\frac{f_h}{d}}\big(\textup{id}_{\mathbb{R}^d}-\Pi_h\textup{id}_{\mathbb{R}^d}\big)\quad\textup{ in } \mathcal{R}T^0_N(\mathcal{T}_h)\,.\label{eq:gen_marini}
	\end{align}

	\section{Medius error analysis}\label{sec:medius}
	
	\qquad In this section, we establish a best-approximation result similar to the best-approximation result for the $\mathcal{S}^1_D(\mathcal{T}_h)$-approximation \eqref{eq:pDirichletS1D} of \eqref{eq:pDirichletW1p}, cf. \mbox{Theorem} \ref{P1_best-approx}, but now for the $\mathcal{S}^{1,\textit{cr}}_D(\mathcal{T}_h)$-approximation \eqref{eq:pDirichletS1crD}.
    
    \begin{theorem}\label{thm:best-approx} 
        Let $\AAA\colon\mathbb{R}^d\to \mathbb{R}^d$ satisfy  Assumption \ref{assum:extra_stress} for $p\in (1,\infty)$ and  $\delta\ge 0$. Moreover, let $\varphi\colon \mathbb{R}_{\ge 0}\to \mathbb{R}_{\ge 0}$ be defined by \eqref{eq:def_phi} and let $F\colon\mathbb{R}^d\to \mathbb{R}^d$ be defined by \eqref{eq:def_F}, each~for~the~same $p\in (1,\infty)$ and  $\delta\ge 0$. Then, there exists a constant $c>0$,
        depending only on the characteristics~of~$\AAA$ and the chunkiness $\omega_0>0$, such that
		\begin{align*}
		\|F(\nabla_{\!h} u_h^{\textit{cr}})-F(\nabla u)\|_{L^2(\Omega;\mathbb{R}^d)}^2&\leq  c\,  \inf_{v_h\in\mathcal{S}^1_D(\mathcal{T}_h)}{\big[\|F(\nabla  v_h)-F(\nabla u)\|_{L^2(\Omega;\mathbb{R}^d)}^2}+\mathrm{osc}_h(f,v_h)\big]\,,
		\end{align*}
		where for every $v_h\in \smash{\mathcal{L}^1(\mathcal{T}_h)}$ and $\mathcal{M}_h\subseteq \mathcal{T}_h $, we define $\mathrm{osc}_h(f,v_h,\mathcal{M}_h)\coloneqq  \sum_{T\in \mathcal{M}_h}{\mathrm{osc}_h(f,v_h,T)}$, 
		where $\mathrm{osc}_h(f,v_h,T)\coloneqq \rho_{(\varphi_{\vert \nabla v_h\vert})^*,T}(h_T(f-f_h))$ for all $T\in \mathcal{T}_h$, and $\mathrm{osc}_h(f,v_h)\vcentcolon =\mathrm{osc}_h(f,v_h,\mathcal{T}_h)$.
	\end{theorem}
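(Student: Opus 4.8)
The plan is to carry out a medius (best-approximation) argument for the non-conforming scheme, working throughout in the Orlicz-space calculus of $(p,\delta)$-structures. The starting point is the coercivity equivalence: applying the pointwise relation $(\AAA(a)-\AAA(b))\cdot(a-b)\sim\vert F(a)-F(b)\vert^2$ from \eqref{eq:hammera} with $a=\nabla_{\!h}u_h^{\textit{cr}}$, $b=\nabla u$ and integrating over $\Omega$ gives
\[
\|F(\nabla_{\!h}u_h^{\textit{cr}})-F(\nabla u)\|_{L^2(\Omega;\mathbb{R}^d)}^2\;\sim\;\mathcal{R}:=\big(\AAA(\nabla_{\!h}u_h^{\textit{cr}})-\AAA(\nabla u),\,\nabla_{\!h}u_h^{\textit{cr}}-\nabla u\big)_\Omega\,.
\]
Fix an arbitrary $v_h\in\mathcal{S}^1_D(\mathcal{T}_h)$; observe that $\mathcal{S}^1_D(\mathcal{T}_h)\subseteq\mathcal{S}^{1,\textit{cr}}_D(\mathcal{T}_h)$, since a continuous element-wise affine function vanishing on $\Gamma_D$ vanishes at the midpoints of all $\Gamma_D$-sides, whence $w_h:=u_h^{\textit{cr}}-v_h\in\mathcal{S}^{1,\textit{cr}}_D(\mathcal{T}_h)$ and $\nabla_{\!h}w_h=\nabla_{\!h}u_h^{\textit{cr}}-\nabla v_h$. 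Writing $\nabla_{\!h}u_h^{\textit{cr}}-\nabla u=\nabla_{\!h}w_h+(\nabla v_h-\nabla u)$ splits $\mathcal{R}=\mathcal{R}_1+\mathcal{R}_2$ with $\mathcal{R}_2:=(\AAA(\nabla_{\!h}u_h^{\textit{cr}})-\AAA(\nabla u),\nabla v_h-\nabla u)_\Omega$. The conforming remainder $\mathcal{R}_2$ is immediate: the shifted Young inequality \eqref{ineq:young} for the conjugate pair $(\varphi_{\vert\nabla u\vert},(\varphi_{\vert\nabla u\vert})^*)$ together with \eqref{eq:hammera} yields, for every $\varepsilon>0$, $\mathcal{R}_2\le\varepsilon\,c\,\|F(\nabla_{\!h}u_h^{\textit{cr}})-F(\nabla u)\|^2_{L^2(\Omega;\mathbb{R}^d)}+c_\varepsilon\,\|F(\nabla v_h)-F(\nabla u)\|^2_{L^2(\Omega;\mathbb{R}^d)}$.

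The substance is in $\mathcal{R}_1=(\AAA(\nabla_{\!h}u_h^{\textit{cr}})-\AAA(\nabla u),\nabla_{\!h}w_h)_\Omega$. I would test the discrete equation \eqref{eq:pDirichletS1crD} with $w_h$ to turn $(\AAA(\nabla_{\!h}u_h^{\textit{cr}}),\nabla_{\!h}w_h)_\Omega$ into $(f_h,\Pi_hw_h)_\Omega$, and use that the flux $z:=\AAA(\nabla u)$ belongs to $W^{p'}_N(\textup{div};\Omega)$ with $\textup{div}\,z=-f$ — which follows from \eqref{eq:pDirichletW1p} tested against $W^{1,p}_D(\Omega)$, independently of whether $\AAA$ has a potential — to rewrite $(\AAA(\nabla u),\nabla_{\!h}w_h)_\Omega$ through element-wise integration by parts. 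Invoking $f_h=\Pi_hf$, the barycenter identity $\int_T(\Pi_hw_h-w_h)\,\mathrm{d}x=0$ (valid since $w_h|_T$ is affine), regrouping the boundary contributions side by side, and dropping the Neumann sides (where $z\cdot n=0$), the algebra collapses to
\[
\mathcal{R}_1=(f-f_h,\,\Pi_hw_h-w_h)_\Omega-\sum_{S}\big\langle z\cdot n_S-\kappa_S,\,\jump{w_h}_S\big\rangle_S\,,
\]
where $S$ ranges over the interior and Dirichlet sides, $\jump{w_h}_S=\jump{u_h^{\textit{cr}}}_S$, and every side-wise constant $\kappa_S$ may be inserted for free because $\int_S\jump{w_h}_S\,\mathrm{d}s=0$ (the jump of $u_h^{\textit{cr}}$, respectively the trace of $w_h$ on a $\Gamma_D$-side, is affine and vanishes at the midpoint).

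Two contributions remain. The data term $(f-f_h,\Pi_hw_h-w_h)_\Omega$ I would estimate element-wise via $\vert\Pi_hw_h-w_h\vert\le c\,h_T\vert\nabla_{\!h}w_h\vert$, the shifted Young inequality for the pair $((\varphi_{\vert\nabla v_h\vert})^*,\varphi_{\vert\nabla v_h\vert})$, the equivalence \eqref{eq:hammera} with shift $\vert\nabla v_h\vert$, and the triangle inequality for $F$; this produces exactly $c_\varepsilon\,\mathrm{osc}_h(f,v_h)$ plus $\varepsilon$-multiples of $\|F(\nabla_{\!h}u_h^{\textit{cr}})-F(\nabla u)\|^2$ and $\|F(\nabla v_h)-F(\nabla u)\|^2$. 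The consistency sum $\sum_S\langle z\cdot n_S-\kappa_S,\jump{w_h}_S\rangle_S$ is the main obstacle, and this is where the two new technical tools announced in the introduction enter. Since $\jump{w_h}_S=\jump{u_h^{\textit{cr}}}_S$ does not see $v_h$, I would first replace the flux by a piecewise-polynomial object with genuine $L^{p'}(S)$-traces (its $L^2$-projection, respectively its Raviart--Thomas interpolant) and pick $\kappa_S$ as a patch average, so that $z\cdot n_S-\kappa_S$ encodes the local oscillation of $z=\AAA(\nabla u)$ over $\omega_S$; the equivalences \eqref{eq:hammera}, \eqref{eq:hammerf} and the change-of-shift Lemma translate this oscillation into the natural distance and hence, by the approximation properties of $F(\nabla u)$, into the local best-approximation error. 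Simultaneously I would bound $\jump{u_h^{\textit{cr}}}_S$ by a local efficiency estimate phrased in shifted $N$-functions (obtained from side-bubble test functions in the discrete equation, controlling the jump by the local natural distance $\|F(\nabla_{\!h}u_h^{\textit{cr}})-F(\nabla u)\|^2_{L^2(\omega_S)}$ plus local oscillation) and transport $u_h^{\textit{cr}}$ to a conforming function by the node-averaging quasi-interpolation operator $J_h$, whose approximation properties in shifted Orlicz norms supply the needed scaled trace estimates. A trace-scaled Young inequality in the $N$-function setting then pairs the flux-oscillation factor with the jump factor, and summation over sides using the finite overlap of the patches closes the estimate so that the jump part enters with an absorbable small multiple of $\|F(\nabla_{\!h}u_h^{\textit{cr}})-F(\nabla u)\|^2$. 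Finally, collecting $\mathcal{R}_1$ and $\mathcal{R}_2$, choosing $\varepsilon$ small enough to absorb every such $\varepsilon\,\|F(\nabla_{\!h}u_h^{\textit{cr}})-F(\nabla u)\|^2$ into the left-hand side, and passing to the infimum over $v_h\in\mathcal{S}^1_D(\mathcal{T}_h)$, yields the asserted bound. The hardest point is the consistency sum: producing the flux-oscillation and jump estimates in the correct shifted-$N$-function scale and making the scalings match so that the absorption argument goes through.
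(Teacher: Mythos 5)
Your overall architecture (compare with an arbitrary conforming $v_h$, handle the data term by a shifted Young inequality, use the node-averaging operator and local efficiency estimates, absorb $\varepsilon$-terms, triangulate and take the infimum) is in the right spirit, and your algebra for $\mathcal{R}_1$ is formally consistent. The genuine gap is in the treatment of the consistency sum, which you yourself identify as the hardest point but do not resolve. Under the hypotheses of the theorem the flux $z=\AAA(\nabla u)$ is only an $L^{p'}$ vector field with $\operatorname{div}z=-f\in L^{p'}(\Omega)$; its normal trace exists merely as an element of $W^{-1/p',p'}(\partial T)$ on each whole element boundary, so neither the side-wise regrouping $\sum_S\langle z\cdot n_S,\jump{w_h}_S\rangle_S$ nor the "patch average" $\kappa_S$ of $z\cdot n_S$ is well defined, and the quantity "$z\cdot n_S-\kappa_S$ encodes the local oscillation of $z$ over $\omega_S$" has no meaning on $S$. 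Replacing $z$ by an $L^2$- or Raviart--Thomas projection, as you suggest, only shifts the problem: the replacement error must then be controlled in a trace-scaled (conjugate) natural distance on the sides, and such control requires $F^*(z)\in W^{1,2}(\Omega;\mathbb{R}^d)$, i.e., exactly the natural regularity assumption \eqref{natural_regularity} that Theorem \ref{thm:best-approx} deliberately avoids; invoking it would turn the best-approximation result into a regularity-dependent a priori estimate. Your proposed bound for the jump factor also misattributes the tool: side-bubble testing of the discrete equation controls jumps of the \emph{normal flux} $\jump{\AAA(\nabla v_h)\cdot n}_S$ of a conforming $v_h$ (this is \eqref{lem:efficiency.3}), not the function-value jumps $\jump{u_h^{\textit{cr}}}_S$; the latter are instead controlled by a Poincar\'e/trace argument against an arbitrary $W^{1,p}_D(\Omega)$ function, as in \eqref{lem:best-approx-inv.2.0}, though this is secondary to the main issue.

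The paper's proof is organized precisely so that the exact flux is never integrated by parts element-wise. Writing $e_h\coloneqq v_h-u_h^{\textit{cr}}$, the continuous equation \eqref{eq:pDirichletW1p} is tested only with the conforming function $I_h^{\textit{av}}e_h\in\mathcal{S}^1_D(\mathcal{T}_h)$, while element-wise integration by parts is applied solely to the fully discrete term $(\AAA(\nabla v_h),\nabla_{\!h}(e_h-I_h^{\textit{av}}e_h))_\Omega$, producing jumps $\jump{\AAA(\nabla v_h)\cdot n}_S$ of the piecewise constant discrete flux; these are then estimated by the local efficiency Lemma \ref{lem:efficiency}, the shift-change inequalities \eqref{lem:shift-change.1}--\eqref{lem:shift-change.3}, and the patch-to-element Lemma \ref{lem:patch_to_element}, and the resulting $\varepsilon$-terms are absorbed into $\|F(\nabla v_h)-F(\nabla_{\!h}u_h^{\textit{cr}})\|_{L^2(\Omega;\mathbb{R}^d)}^2$ before a triangle inequality and the infimum conclude. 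To repair your argument you would need to reorganize the consistency term in this way (pair $z$ only with conforming test functions and integrate by parts only discrete quantities), at which point your proof essentially becomes the paper's.
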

		
		  Before we prove Theorem \ref{thm:best-approx}, we will first introduce some technical tools.
		  
		  \subsection{Node-averaging quasi-interpolation operator}
		  
		  \qquad The \hspace{-0.1mm}first \hspace{-0.1mm}tool \hspace{-0.1mm}is \hspace{-0.1mm}the \hspace{-0.1mm}node-averaging \hspace{-0.1mm}quasi-interpolation \hspace{-0.1mm}operator \hspace{-0.1mm}and~\hspace{-0.1mm}its~\hspace{-0.1mm}uniform~\hspace{-0.1mm}\mbox{approximation} and stability properties with respect to shifted $N$-functions, cf. \cite{Osw93,Sus96,EG21}.
		  
		  The node-averaging quasi-interpolation operator $I_h^{\textit{av}}\colon\mathcal{L}^1(\mathcal{T}_h)\hspace{-0.075em}\to\hspace{-0.075em} \mathcal{S}^1_D(\mathcal{T}_h)$, 
		  denoting~for~${z\hspace{-0.05em}\in\hspace{-0.05em} \mathcal{N}_h}$, by ${\mathcal{T}_h(z)\coloneqq \{T\in \mathcal{T}_h\mid z\in T\}}$, the set of elements sharing~$z$, for every ${v_h\in \mathcal{L}^1(\mathcal{T}_h)}$,~is~defined~by
	\begin{align*}
		I_h^{\textit{av}}v_h\coloneqq \sum_{z\in \smash{\mathcal{N}_h}}{\langle v_h\rangle_z\varphi_z}\,,\qquad \langle v_h\rangle_z\coloneqq \begin{cases}
			\frac{1}{\textup{card}(\mathcal{T}_h(z))}\sum_{T\in \mathcal{T}_h(z)}{(v_h|_T)(z)}&\;\text{ if }z\in \Omega\cup \Gamma_N\\
			0&\;\text{ if }z\in \Gamma_D
		\end{cases}\,,
	\end{align*}
	where we denote by $(\varphi_z)_{\smash{z\in \mathcal{N}_h}}$, the nodal basis of $\mathcal{S}^1(\mathcal{T}_h)$.
	If $\psi\colon\mathbb{R}_{\ge 0}\to \mathbb{R}_{\ge 0}$ is an $N$-function~with $\psi\hspace{-0.1em}\in \hspace{-0.1em}\Delta_2\cap \nabla_2$, then, there exists a constant $c\hspace{-0.1em}>\hspace{-0.1em}0$, depending on $\Delta_2(\psi)\hspace{-0.1em}>\hspace{-0.1em}0$ and the chunkiness~${\omega_0\hspace{-0.1em}>\hspace{-0.1em}0}$, such that for every $a\hspace{-0.1em}\ge\hspace{-0.1em} 0$, $v_h\hspace{-0.1em}\in\hspace{-0.1em} \smash{\smash{\mathcal{S}^{1,\textit{cr}}_D(\mathcal{T}_h)}}$,~${T\hspace{-0.1em}\in\hspace{-0.1em} \mathcal{T}_h}$, and $m\hspace{-0.1em}\in \hspace{-0.1em}\{0,1\}$, cf. Appendix \ref{sec:appendix}, we have that\footnote{Here, $\nabla_{\!h}^m\colon\mathcal{L}^1(\mathcal{T}_h)\to \mathcal{L}^{1-m}(\mathcal{T}_h)^{d^m}$, for every $v_h\in\mathcal{L}^1(\mathcal{T}_h)$ defined by $(\nabla_{\!h}^mv_h)|_T\coloneqq \nabla^m(v_h|_T)$ for all $T\in \mathcal{T}_h$, denotes the element-wise $m$-th gradient operator.\vspace{-1cm}}
	\begin{align}\label{eq:a6}
	    \hspace{-2mm}
	    \fint_T{\psi_a(h_T^m\vert\nabla^m_{\!h}(v_h-I_h^{\textit{av}}v_h)\vert)\,\mathrm{d}x}&\leq c\sum_{S\in \mathcal{S}_h(T)\setminus\Gamma_N}{\!\fint_S{\psi_a(\vert\jump{v_h}_S\vert)\,\mathrm{d}s}}
	    \leq c\fint_{\omega_T}{\!\psi_a(h_T\vert\nabla_{\!h}v_h\vert)\,\mathrm{d}x}\,.
      \hspace{-2mm}
	\end{align}
	where $\mathcal{S}_h(T)\coloneqq \{S\in \mathcal{S}_h\mid S\cap T\neq \emptyset\}$. 
	
	\subsection{Local efficiency estimates}
		  
		  \qquad The second tool involves the following local efficiency estimates.
		
		\begin{lemma}\label{lem:efficiency}
		Let $\AAA\colon\mathbb{R}^d\to \mathbb{R}^d$ satisfy  Assumption \ref{assum:extra_stress} for $p\in (1,\infty)$ and  $\delta\ge 0$. Moreover, let $\varphi\colon \mathbb{R}_{\ge 0}\to \mathbb{R}_{\ge 0}$ be defined by \eqref{eq:def_phi} and let $F\colon\mathbb{R}^d\to \mathbb{R}^d$ be defined by \eqref{eq:def_F}, each for the same $p\in (1,\infty)$ and  $\delta\ge 0$. Then, there exists a constant $c>0$, depending only on the characteristics of $\AAA$ and the chunkiness $\omega_0>0$, such that the following statements apply:
     \begin{itemize}[noitemsep,topsep=4pt,labelwidth=\widthof{(ii)},leftmargin=!]
      \item[(i)] For every $v_h\in \mathcal{L}^1(\mathcal{T}_h)$ and $T\in \mathcal{T}_h$, it holds
        \begin{align}
		        \begin{split}
		            \rho_{(\varphi_{\vert \nabla v_h\vert})^*,T}( h_T f )
		            \leq c\,\|F(\nabla  v_h)-F(\nabla u)\|_{L^2(T;\mathbb{R}^d)}^2+c\,\mathrm{osc}_h(f,v_h,T)\,,
		        \end{split}\label{lem:efficiency.1}
		\end{align}
      \item[(ii)] For every $v_h\in \mathcal{S}^1_D(\mathcal{T}_h)$ and $S\in \mathcal{S}_h^{i}$, it holds
      \begin{align}
		        h_S\|\jump{F(\nabla v_h)}_S\|_{L^2(S;\mathbb{R}^d)}^2&\leq c\,\|F(\nabla v_h)-F(\nabla u)\|_{L^2(\omega_S;\mathbb{R}^d)}^2+c\,\mathrm{osc}_h(f,v_h,\omega_S)\label{lem:efficiency.3}\,.
	  \end{align}
  \end{itemize}
		\end{lemma}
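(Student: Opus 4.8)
The plan is to prove both bounds by Verfürth-type bubble-function arguments carried out with shifted $N$-functions. The structural point that makes everything work is that, $v_h$ being element-wise affine, $\AAA(\nabla v_h)$ is element-wise constant, so $\mathrm{div}\,\AAA(\nabla v_h)=0$ on every $T\in\mathcal{T}_h$; together with the weak formulation \eqref{eq:pDirichletW1p} and an element-wise integration-by-parts, this expresses the element residual $f_h$ and the side residual $\jump{\AAA(\nabla v_h)}_S\cdot n_S$ through $\AAA(\nabla u)-\AAA(\nabla v_h)$ and $f-f_h$. Throughout I use that the shifted $N$-functions $\varphi_a$ satisfy $\Delta_2\cap\nabla_2$ uniformly in the shift (so that $t\psi'(t)\sim\psi(t)$, $\psi((\psi^*)'(t))\lesssim\psi^*(t)$ and the $\varepsilon$-Young inequality \eqref{ineq:young} hold uniformly) and the chain of equivalences \eqref{eq:hammera}.

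\emph{Proof of (i).} Fix $T\in\mathcal{T}_h$, write $\psi\coloneqq\varphi_{\vert\nabla v_h\vert}$ (a fixed $N$-function on $T$, since $\nabla v_h$ is constant there), and let $b_T\in W^{1,\infty}_0(T)$ be the interior bubble, normalised so that $0\le b_T\le1$, $\int_Tb_T\,\mathrm{d}x\sim\vert T\vert$ and $\Vert\nabla b_T\Vert_{L^\infty(T)}\lesssim h_T^{-1}$. Testing \eqref{eq:pDirichletW1p} with $w_T\coloneqq h_T\,(\psi^*)'(h_T\vert f_h\vert)\,\operatorname{sgn}(f_h)\,b_T\in W^{1,p}_D(\Omega)$ (extended by zero) and using $\mathrm{div}\,\AAA(\nabla v_h)|_T=0$ together with $w_T=0$ on $\partial T$, one obtains
\begin{align*}
(f_h,w_T)_T=(\AAA(\nabla u)-\AAA(\nabla v_h),\nabla w_T)_T-(f-f_h,w_T)_T\,.
\end{align*}
The left-hand side is $\sim\rho_{\psi^*,T}(h_Tf_h)$, because $f_h$ is constant on $T$, $t(\psi^*)'(t)\sim\psi^*(t)$ and $\int_Tb_T\,\mathrm{d}x\sim\vert T\vert$. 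On the right-hand side $\vert\nabla w_T\vert\lesssim(\psi^*)'(h_T\vert f_h\vert)$ and $\vert w_T\vert\lesssim h_T(\psi^*)'(h_T\vert f_h\vert)$; applying \eqref{ineq:young} to the pair $(\psi^*,\psi)$ in both terms, then $\psi((\psi^*)'(t))\lesssim\psi^*(t)$ and the equivalence $\psi^*(\vert\AAA(\nabla v_h)-\AAA(\nabla u)\vert)\sim\vert F(\nabla v_h)-F(\nabla u)\vert^2$ from \eqref{eq:hammera}, the right-hand side is bounded by
\begin{align*}
c\,\varepsilon\,\big(\Vert F(\nabla v_h)-F(\nabla u)\Vert_{L^2(T;\mathbb{R}^d)}^2+\mathrm{osc}_h(f,v_h,T)\big)+c\,c_\varepsilon\,\rho_{\psi^*,T}(h_Tf_h)\,.
\end{align*}
Choosing $\varepsilon>0$ large enough that $c\,c_\varepsilon\le\tfrac12$ — admissible because the efficiency constant may be large — one absorbs the last term into the left-hand side, which gives $\rho_{\psi^*,T}(h_Tf_h)\lesssim\Vert F(\nabla v_h)-F(\nabla u)\Vert_{L^2(T;\mathbb{R}^d)}^2+\mathrm{osc}_h(f,v_h,T)$. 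Since $\rho_{\psi^*,T}(h_Tf)\lesssim\rho_{\psi^*,T}(h_Tf_h)+\rho_{\psi^*,T}(h_T(f-f_h))$ by convexity and $\psi^*\in\Delta_2$, and $\rho_{\psi^*,T}(h_T(f-f_h))=\mathrm{osc}_h(f,v_h,T)$, this proves \eqref{lem:efficiency.1}.

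\emph{Proof of (ii).} Fix $S=\partial T_+\cap\partial T_-\in\mathcal{S}_h^i$ with unit normal $n_S$ pointing from $T_-$ to $T_+$; set $J_S\coloneqq\jump{\AAA(\nabla v_h)}_S\cdot n_S$ (constant on $S$) and $\kappa_S\coloneqq\vert\jump{\nabla v_h}_S\vert$, and let $b_S\in W^{1,\infty}_0(\omega_S)$ be the side bubble with $0\le b_S\le1$, $\int_Sb_S\,\mathrm{d}s\sim\vert S\vert$, $\int_Tb_S\,\mathrm{d}x\sim\vert T\vert$ for $T\in\{T_+,T_-\}$ and $\Vert\nabla b_S\Vert_{L^\infty(\omega_S)}\lesssim h_S^{-1}$. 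Testing \eqref{eq:pDirichletW1p} with $\phi_S\coloneqq h_S\,\kappa_S\,\operatorname{sgn}(J_S)\,b_S\in W^{1,p}_D(\Omega)$ and integrating by parts element-wise on $T_+,T_-$ gives
\begin{align*}
\int_SJ_S\,\phi_S\,\mathrm{d}s=(\AAA(\nabla v_h)-\AAA(\nabla u),\nabla\phi_S)_{\omega_S}+(f_h,\phi_S)_{\omega_S}+(f-f_h,\phi_S)_{\omega_S}\,.
\end{align*}
Because $v_h$ is continuous, $\jump{\nabla v_h}_S$ is parallel to $n_S$, hence by \eqref{eq:hammera}
\begin{align*}
\vert J_S\vert\,\kappa_S=\big(\AAA((\nabla v_h)|_{T_+})-\AAA((\nabla v_h)|_{T_-})\big)\cdot\jump{\nabla v_h}_S\sim\vert\jump{F(\nabla v_h)}_S\vert^2\sim\varphi_{\vert\nabla v_h\vert}(\kappa_S)\,,
\end{align*}
the last equivalence holding with either of the shifts $\vert(\nabla v_h)|_{T_\pm}\vert$. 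Thus the left-hand side is $\sim\varphi_{\vert\nabla v_h\vert}(\kappa_S)\,h_S^{d}\sim h_S\Vert\jump{F(\nabla v_h)}_S\Vert_{L^2(S;\mathbb{R}^d)}^2$, while on the right $\vert\nabla\phi_S\vert\lesssim\kappa_S$ and $\vert\phi_S\vert\lesssim h_S\kappa_S$. Applying \eqref{ineq:young} to the pair $\big((\varphi_{\vert\nabla v_h\vert})^*,\varphi_{\vert\nabla v_h\vert}\big)$ in each of the three terms (pairing the factor $\kappa_S$, resp.\ $h_T\kappa_S\sim h_S\kappa_S$, with $\varphi_{\vert\nabla v_h\vert}$), using \eqref{eq:hammera} as above, and estimating the arising modulars $\rho_{(\varphi_{\vert\nabla v_h\vert})^*,T}(h_Tf_h)$ by part (i) and $\rho_{(\varphi_{\vert\nabla v_h\vert})^*,T}(h_T(f-f_h))=\mathrm{osc}_h(f,v_h,T)$, each of the three terms is bounded by a fixed multiple of $\Vert F(\nabla v_h)-F(\nabla u)\Vert_{L^2(\omega_S;\mathbb{R}^d)}^2+\mathrm{osc}_h(f,v_h,\omega_S)$ plus a term $c\,\varphi_{\vert\nabla v_h\vert}(\kappa_S)\,h_S^{d}$ whose coefficient can be made arbitrarily small by a suitable fixed choice of the $\varepsilon$'s in \eqref{ineq:young}. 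Absorbing those into the left-hand side gives \eqref{lem:efficiency.3}.

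\emph{Main difficulty.} The individual computations are routine once the bubble functions are scaled correctly; the one delicate point is the bookkeeping in the absorption step. In both parts, the $\varepsilon$-Young inequality unavoidably re-creates a term with exactly the structure of the quantity being estimated ($\rho_{\psi^*,T}(h_Tf_h)$ in (i), $\varphi_{\vert\nabla v_h\vert}(\kappa_S)\,h_S^{d}$ in (ii)), so one cannot let $\varepsilon\to0$ but must fix finitely many, possibly large, values of $\varepsilon$ keeping the total coefficient of the reproduced term below the hidden constant on the left. The fact that on $T_-$ the shifted $N$-functions carry the shift $\vert(\nabla v_h)|_{T_-}\vert$ rather than $\vert(\nabla v_h)|_{T_+}\vert$ is harmless here, since $\varphi_{\vert(\nabla v_h)|_{T_\pm}\vert}(\kappa_S)\sim\vert\jump{F(\nabla v_h)}_S\vert^2$ for both by \eqref{eq:hammera}. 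Note also that (ii) genuinely relies on (i), used to convert the element residual $f_h$ on $\omega_S$ into $\Vert F(\nabla v_h)-F(\nabla u)\Vert^2+\mathrm{osc}_h$.
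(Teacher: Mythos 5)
Your plan is correct in substance and uses the same Verf\"urth-type bubble strategy with shifted $N$-functions as the paper, but the bookkeeping differs in two genuine ways. (1) In the paper, absorption never happens into the quantity being estimated: the test amplitude is chosen as $\lambda_T=\operatorname{sgn}(f_h)\,((\varphi_{\vert\nabla v_h\vert})^*)'(h_T\vert f_h\vert)$ so that the Fenchel--Young \emph{identity} puts on the left-hand side, besides $\rho_{(\varphi_{\vert\nabla v_h\vert})^*,T}(h_Tf_h)$, the companion modular $\rho_{\varphi_{\vert\nabla v_h\vert},T}(\lambda_T)$, and all $\varepsilon$-Young remainders are absorbed into that companion with $\varepsilon$ small. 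You absorb directly into $\rho_{(\varphi_{\vert\nabla v_h\vert})^*,T}(h_Tf_h)$, which forces the small coefficient onto the conjugate slot; your justification ``choose $\varepsilon$ large enough that $c\,c_\varepsilon\le\tfrac12$'' is not what \eqref{ineq:young} provides (nothing in its statement makes $c_\varepsilon$ small for large $\varepsilon$). This is a repairable imprecision, not a gap: either apply \eqref{ineq:young} with $(\varphi_{\vert\nabla v_h\vert})^*$ in the role of $\psi$ (admissible since the shifted functions are uniformly in $\Delta_2\cap\nabla_2$), which yields $ts\le c_\varepsilon\,\varphi_{\vert\nabla v_h\vert}(t)+\varepsilon\,(\varphi_{\vert\nabla v_h\vert})^*(s)$, or adopt the paper's identity device. (2) In (ii) your amplitude is the gradient-jump magnitude $\kappa_S=\vert\jump{\nabla v_h}_S\vert$ rather than the conjugate derivative of the flux jump. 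Since $v_h\in\mathcal{S}^1_D(\mathcal{T}_h)$ is continuous, $\jump{\nabla v_h}_S$ is normal to $S$, and by monotonicity of $\AAA$ (the hidden ingredient behind your sign claim) one indeed has $\vert J_S\vert\,\kappa_S=(\AAA(\nabla v_h(T_+))-\AAA(\nabla v_h(T_-)))\cdot\jump{\nabla v_h}_S\sim\vert\jump{F(\nabla v_h)}_S\vert^2\sim\varphi_{\vert\nabla v_h(T_\pm)\vert}(\kappa_S)$ with either shift, by \eqref{eq:hammera}. This makes the two element shifts interchangeable and lets you bypass the change-of-shift step \eqref{lem:shift-change.1} that the paper needs on the second element of $\omega_S$ --- a slightly slicker route, tied (like the paper's use of $\vert\jump{F(\nabla v_h)}_S\vert^2\sim(\varphi_{\vert\nabla v_h(T)\vert})^*(\vert\jump{\AAA(\nabla v_h)\cdot n}_S\vert)$) to the conformity of $v_h$, which is all that (ii) requires. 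One further small correction to your closing remark: in (ii), with your pairing the reproduced term $\varphi_{\vert\nabla v_h\vert}(\kappa_S)\,h_S^{d}$ carries the factor $\varepsilon$, so a small fixed $\varepsilon$ does suffice; only your version of (i) needed the reversed Young inequality discussed above. Finally, when you invoke part (i) inside (ii) to treat $\rho_{(\varphi_{\vert\nabla v_h\vert})^*,T}(h_Tf_h)$, note that (i) is stated for $f$, so either use the intermediate bound for $f_h$ (the analogue of \eqref{lem:efficiency.9}) or convert via $\Delta_2$ and the definition of $\mathrm{osc}_h$; both are one-line fixes.
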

		
		The local efficiency estimate \eqref{lem:efficiency.3} can  be extended to arbitrary functions $v_h\in \mathcal{L}^1(\mathcal{T}_h)$.~For~this, however, one has to pay with a term quantifying the natural~distance,~cf.~Remark~\ref{rem:natural_dist},~to~$\mathcal{S}^1_D(\mathcal{T}_h)$.
		
		 \begin{corollary}\label{cor:efficiency}
	        Let $\AAA\colon\mathbb{R}^d\to \mathbb{R}^d$ satisfy  Assumption \ref{assum:extra_stress} for $p\in (1,\infty)$ and  $\delta\ge 0$, and let $F\colon\mathbb{R}^d\to \mathbb{R}^d$ be defined by \eqref{eq:def_F} for the same $p\in (1,\infty)$     and  $\delta\ge 0$. Then, there exists a constant $c>0$,
            depending only on the characteristics of $\AAA$ and the chunkiness $\omega_0>0$, such that for every $v_h\in \mathcal{L}^1(\mathcal{T}_h)$ and $S\in \mathcal{S}_h^{i}$, it holds
            \begin{align*}
                h_S\|\jump{F(\nabla_{\!h}v_h)}\|_{L^2(S;\mathbb{R}^d)}^2&\leq \smash{c\,\|F(\nabla_{\!h}  v_h)-F(\nabla     u)\|_{L^2(\omega_S;\mathbb{R}^d)}^2+c\,\textup{osc}_h(f,v_h,\omega_S)}\\&\quad+\smash{c\,\textup{dist}_F^2(v_h,\mathcal{S}^{1}_D(\mathcal{T}_h),\omega_S)}\,,
            \end{align*}
            where for every $v_h\in \mathcal{L}^1(\mathcal{T}_h)$ and $\mathcal{M}_h\subseteq \mathcal{T}_h$, we define\vspace{-0.5mm}
		    \begin{align*}
		        \textup{dist}_F^2(v_h,\mathcal{S}^{1}_D(\mathcal{T}_h),\mathcal{M}_h)\vcentcolon =\inf_{\tilde{v}_h\in\mathcal{S}^{1}_D(\mathcal{T}_h)}{\|F(\nabla_{\!h}  v_h)-F(\nabla     \tilde{v}_h)\|_{L^2(\mathcal{M}_h;\mathbb{R}^d)}^2}\,.
		    \end{align*}
	    \end{corollary}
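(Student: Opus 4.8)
The plan is to insert an arbitrary competitor $\tilde v_h\in\mathcal{S}^1_D(\mathcal{T}_h)$ into the jump, to exploit that the already-established estimate \eqref{lem:efficiency.3} applies to $\tilde v_h$, and then to trade every remaining $\tilde v_h$-dependence for a term measuring the natural distance of $v_h$ to $\mathcal{S}^1_D(\mathcal{T}_h)$, so that passing to the infimum over $\tilde v_h$ produces exactly $\textup{dist}_F^2(v_h,\mathcal{S}^{1}_D(\mathcal{T}_h),\omega_S)$. So I fix an interior side $S=\partial T_+\cap\partial T_-\in\mathcal{S}_h^{i}$ and an arbitrary $\tilde v_h\in\mathcal{S}^1_D(\mathcal{T}_h)$, and start from the triangle inequality for the jump operator,
\begin{align*}
 h_S^{1/2}\|\jump{F(\nabla_{\!h} v_h)}_S\|_{L^2(S;\mathbb{R}^d)}\leq h_S^{1/2}\|\jump{F(\nabla_{\!h} v_h)-F(\nabla \tilde v_h)}_S\|_{L^2(S;\mathbb{R}^d)}+h_S^{1/2}\|\jump{F(\nabla \tilde v_h)}_S\|_{L^2(S;\mathbb{R}^d)}\,.
\end{align*}

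First I would bound the two terms on the right-hand side separately. Since $v_h,\tilde v_h\in\mathcal{L}^1(\mathcal{T}_h)$ are element-wise affine, the vector field $F(\nabla_{\!h} v_h)-F(\nabla \tilde v_h)$ is element-wise constant, so using $\vert S\vert\sim h_S^{d-1}$, $h_S\sim h_T$ and $\vert T\vert\sim h_T^d$ for $T\in\{T_+,T_-\}$ (all consequences of shape-regularity), the first term obeys $h_S\|\jump{F(\nabla_{\!h} v_h)-F(\nabla \tilde v_h)}_S\|_{L^2(S;\mathbb{R}^d)}^2\leq c\,\|F(\nabla_{\!h} v_h)-F(\nabla \tilde v_h)\|_{L^2(\omega_S;\mathbb{R}^d)}^2$. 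For the second term, since $\tilde v_h\in\mathcal{S}^1_D(\mathcal{T}_h)$, Lemma~\ref{lem:efficiency}(ii) applies directly and gives $h_S\|\jump{F(\nabla \tilde v_h)}_S\|_{L^2(S;\mathbb{R}^d)}^2\leq c\,\|F(\nabla \tilde v_h)-F(\nabla u)\|_{L^2(\omega_S;\mathbb{R}^d)}^2+c\,\mathrm{osc}_h(f,\tilde v_h,\omega_S)$.

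It remains to replace $\tilde v_h$ by $v_h$ in the last two quantities. For the natural-distance term this is a further triangle inequality, $\|F(\nabla \tilde v_h)-F(\nabla u)\|_{L^2(\omega_S;\mathbb{R}^d)}^2\leq 2\,\|F(\nabla_{\!h} v_h)-F(\nabla \tilde v_h)\|_{L^2(\omega_S;\mathbb{R}^d)}^2+2\,\|F(\nabla_{\!h} v_h)-F(\nabla u)\|_{L^2(\omega_S;\mathbb{R}^d)}^2$. For the oscillation term I would use the change-of-shift estimate \eqref{lem:shift-change.3}: applied point-wise with $a=\nabla \tilde v_h|_T$, $b=\nabla_{\!h} v_h|_T$ and $t=h_T\vert(f-f_h)(x)\vert$ and integrated over each $T\in\{T_+,T_-\}$ (recalling $\mathrm{osc}_h(f,\cdot,T)=\rho_{(\varphi_{\vert\nabla\cdot\vert})^*,T}(h_T(f-f_h))$), it yields $\mathrm{osc}_h(f,\tilde v_h,\omega_S)\leq c\,\mathrm{osc}_h(f,v_h,\omega_S)+c\,\|F(\nabla_{\!h} v_h)-F(\nabla \tilde v_h)\|_{L^2(\omega_S;\mathbb{R}^d)}^2$, where I fix the parameter in \eqref{lem:shift-change.3} to, say, $\varepsilon=1$. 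Collecting the estimates, for every $\tilde v_h\in\mathcal{S}^1_D(\mathcal{T}_h)$,
\begin{align*}
 h_S\|\jump{F(\nabla_{\!h} v_h)}_S\|_{L^2(S;\mathbb{R}^d)}^2\leq c\,\|F(\nabla_{\!h} v_h)-F(\nabla \tilde v_h)\|_{L^2(\omega_S;\mathbb{R}^d)}^2+c\,\|F(\nabla_{\!h} v_h)-F(\nabla u)\|_{L^2(\omega_S;\mathbb{R}^d)}^2+c\,\mathrm{osc}_h(f,v_h,\omega_S)\,,
\end{align*}
and since the only $\tilde v_h$-dependent term on the right is the first one, taking the infimum over $\tilde v_h\in\mathcal{S}^1_D(\mathcal{T}_h)$ turns it into $c\,\textup{dist}_F^2(v_h,\mathcal{S}^{1}_D(\mathcal{T}_h),\omega_S)$, which is the asserted inequality (no absorption is needed since there is no jump term on the right). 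The argument is essentially bookkeeping; the only step requiring genuine care is the change of shift in the oscillation term, where the shift of the $N$-function $(\varphi_{\vert\cdot\vert})^*$ has to be moved from $\vert\nabla \tilde v_h\vert$ to $\vert\nabla_{\!h} v_h\vert$ — but this is precisely what \eqref{lem:shift-change.3} is designed to do, so I do not anticipate a real obstacle.
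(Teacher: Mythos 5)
Your proposal is correct and follows essentially the same route as the paper's proof: insert a competitor $\tilde v_h\in\mathcal{S}^1_D(\mathcal{T}_h)$, split the jump by the triangle inequality, control the difference term by a discrete trace argument and the conforming term by \eqref{lem:efficiency.3}, swap the shift in the oscillation via \eqref{lem:shift-change.3}, replace $\|F(\nabla\tilde v_h)-F(\nabla u)\|$ by a further triangle inequality, and take the infimum. The only cosmetic difference is that you verify the trace step by hand for element-wise constant fields, whereas the paper cites the discrete trace inequality of \cite[Lemma A.16, (A.18)]{kr-phi-ldg}; both are valid here since $\nabla_{\!h}v_h$ and $\nabla\tilde v_h$ are element-wise constant.
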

		
		\begin{proof}[Proof (of Lemma \ref{lem:efficiency}).]
		    We extend the proofs of \cite[Lemma 9 \& Lemma 10]{DK08}.
		    
		    \textit{ad \eqref{lem:efficiency.1}.} Let $T\in\mathcal{T}_h$ be fixed, but arbitrary. Then, there exists a bubble~function~${b_T\in W^{1,p}_0(T)}$ such that $0\leq b_T\leq c $ in $T$, $\vert \nabla b_T\vert\leq c\,\smash{h^{-1}_T}$ in $T$ and  $\smash{\fint_T{b_T\,\mathrm{d}x}}=1$, where the constant~${c>0}$~\mbox{depends} only on the chunkiness $\omega_0>0$.
		    Using \eqref{eq:pDirichletW1p} and integration-by-parts, taking into account that $\nabla_{\!h}  v_h\in  \mathcal{L}^0(\mathcal{T}_h)^d$ and ${b_T\in W^{1,p}_0(T)}$ 
		    in doing so, for every $\lambda\in \mathbb{R}$, we find that
		    \begin{align}
		        (\AAA(\nabla u)-\AAA(\nabla v_h), \nabla(\lambda b_T))_T=(f,\lambda b_T)_T\,.\label{lem:efficiency.5}
		    \end{align}
		    For $\lambda_T\coloneqq \textrm{sgn}(f_h)((\varphi_{\vert\nabla v_h\vert})^*)'(h_T\vert f_h\vert )\in \mathbb{R}$, 
		    by the Fenchel--Young identity, cf.\ \cite[Prop.~5.1, p.\ 21]{ET99}, it holds
		    \begin{align}
		        \lambda_T (h_T f_h)=(\varphi_{\vert\nabla  v_h\vert})^*(h_T\vert f_h\vert)+\varphi_{\vert\nabla  v_h\vert}(\vert\lambda_T\vert)\quad\textup{ in }T \,.\label{lem:efficiency.6}
		    \end{align}
		    Then, for  the particular choice $\lambda=h_T\lambda_T\in \mathbb{R}$, cf.\ \eqref{lem:efficiency.6}, in \eqref{lem:efficiency.5}, we observe that
		    \begin{align}\label{lem:efficiency.7}
		        \begin{aligned}
		            \hspace{-2mm}\rho_{(\varphi_{\vert\nabla  v_h\vert})^*,T}(h_T  f_h)+\rho_{\varphi_{\vert\nabla v_h\vert},T}( \lambda_T)
		            &=(f,h_T\lambda_T b_T)_T+(f_h-f,h_T\lambda_T b_T)_T
		            \\&= (\AAA(\nabla u)-\AAA(\nabla v_h), \nabla(h_T\lambda_T b_T))_T\\&\quad +(f_h-f,h_T\lambda_T b_T)_T\,.
		         \end{aligned}
		    \end{align}
		    Applying element-wise the $\varepsilon$-Young inequality \eqref{ineq:young} with $\psi=\varphi_{\vert\nabla v_h\vert}$ in conjunction~with~\eqref{eq:hammera}, also using that $\vert b_T\vert+h_T\vert \nabla b_T\vert\leq c$ in $T$, we obtain
		    \begin{align}\label{lem:efficiency.8}
		        \begin{aligned}
		            (\AAA(\nabla u)-\AAA(\nabla v_h), \nabla(h_T\lambda_T b_T))_T&\leq c_\varepsilon\,\|F(\nabla v_h)-F(\nabla     u)\|_{L^2(T;\mathbb{R}^d)}^2+\varepsilon\,\rho_{\varphi_{\vert\nabla v_h\vert},T}(\lambda_T)\,,\\
		             (f_h-f,h_T\lambda_T b_T)_T&\leq c_\varepsilon\,\mathrm{osc}_h(f,v_h,T)+\varepsilon\,\rho_{\varphi_{\vert\nabla v_h\vert},T}(\lambda_T)\,.
		        \end{aligned}
		    \end{align}
		    Taking into account \eqref{lem:efficiency.8} in \eqref{lem:efficiency.7}, for sufficiently small $\varepsilon>0$, we deduce that
		    \begin{align}\label{lem:efficiency.9}
		        \begin{aligned}
		            \rho_{(\varphi_{\vert\nabla  v_h\vert})^*,T}( h_Tf_h)\leq c_\varepsilon\,\|F(\nabla v_h)-F(\nabla u)\|_{L^2(T;\mathbb{R}^d)}^2+c_\varepsilon\,\mathrm{osc}_h(f,v_h,T)\,.
		         \end{aligned}
		    \end{align}
		    Due to the convexity of $(\varphi_{\vert\nabla v_h(x)\vert})^*\colon\mathbb{R}_{\ge 0}\hspace{-0.1em}\to\hspace{-0.1em} \mathbb{R}_{\ge 0}$ for a.e. $\!x\in \Omega$ and ${\sup_{a\ge 0}{\Delta_2((\varphi_a)^*)}\hspace{-0.1em}<\hspace{-0.1em}\infty}$,~it~holds
		    \begin{align*}
		         \rho_{(\varphi_{\vert\nabla  v_h\vert})^*,T}( h_T f)\leq  c_\varepsilon\,\rho_{(\varphi_{\vert\nabla  v_h\vert})^*,T}( h_Tf_h)+c_\varepsilon\,\mathrm{osc}_h(f,v_h,T)\,,
		    \end{align*}
		    which in conjunction with \eqref{lem:efficiency.9} implies \eqref{lem:efficiency.1}.
		    
		    \textit{ad \eqref{lem:efficiency.3}.} Let $S\hspace{-0.15em}\in\hspace{-0.15em} \mathcal{S}_h^{i}$ be fixed, but arbitrary. Then, there exists a bubble~function~${b_S\hspace{-0.15em}\in\hspace{-0.15em} W^{1,p}_0(\omega_S)}$ such \hspace*{-0.1mm}that \hspace*{-0.1mm}$0\hspace{-0.1em}\leq\hspace{-0.1em} b_S\hspace{-0.15em}\leq\hspace{-0.1em} c $ \hspace*{-0.1mm}in \hspace*{-0.1mm}$\omega_S$, \hspace*{-0.1mm}$\vert \nabla b_S\vert\hspace{-0.1em}\leq\hspace{-0.1em}  c\,\smash{h^{-1}_S}$ in $\omega_S$, \hspace*{-0.1mm}and  \hspace*{-0.1mm}$\smash{\fint_S{b_S\,\mathrm{d}s}}\hspace{-0.1em}=\hspace{-0.1em}1$, \hspace*{-0.1mm}where~\hspace*{-0.1mm}the~\hspace*{-0.1mm}constant~\hspace*{-0.1mm}${c\hspace{-0.1em}>\hspace{-0.1em}0}$~\hspace*{-0.1mm}\mbox{depends} only on the chunkiness $\omega_0>0$.
		    Using \eqref{eq:pDirichletW1p} and integration-by-parts, taking into account~that $\nabla  v_h\in  \mathcal{L}^0(\mathcal{T}_h)^d$ and ${b_S\in W^{1,p}_0(\omega_S)}$ with $\smash{\int_S{b_S\,\mathrm{d}s}}=\vert S\vert$  in doing so, for every $\lambda\in \mathbb{R}$,~we~find~that
		    \begin{align}\label{lem:efficiency.10}
		        (\AAA(\nabla u)-\AAA(\nabla v_h), \nabla(\lambda b_S))_{\omega_S}=(f,\lambda b_S)_{\omega_S}- \vert S\vert \jump{\AAA(\nabla v_h)\cdot n}_S\lambda\,.
		    \end{align}
		    Let $T\in \mathcal{T}_h$ with $T\subseteq \omega_S$. For $\lambda_T^S\coloneqq \textrm{sgn}(\jump{\AAA(\nabla  v_h)\cdot n}_S)((\varphi_{\vert\nabla v_h(T)\vert})^*)'(\vert \jump{\AAA(\nabla v_h)\cdot n}_S\vert)\in \mathbb{R}$, where $\vert\nabla v_h(T)\vert\coloneqq \vert\nabla v_h\vert|_T\in \mathbb{R}$, 
		    by  the Fenchel--Young identity, cf. \cite[Prop.~5.1,~p.\ 21]{ET99},~it~holds
		    \begin{align}\label{lem:efficiency.11}
		        \jump{\AAA(\nabla v_h)\cdot n}_S\lambda_T^S=(\varphi_{\vert\nabla v_h(T)\vert})^*(\vert\jump{\AAA(\nabla v_h)\cdot n}_S\vert)+\varphi_{\vert\nabla v_h(T)\vert}(\vert\lambda_T^S\vert)\quad\textup{ in }T\,.
		    \end{align}
		    Then, for the particular choice $\smash{\lambda=\frac{\vert \omega_S\vert}{\vert S\vert}\lambda_T^S\in \mathbb{R}}$,~cf.\ \eqref{lem:efficiency.11},  in \eqref{lem:efficiency.10}, using that $\vert\jump{F(\nabla v_h)}_S\vert^2\sim(\varphi_{\vert \nabla v_h(T)\vert})^*(\jump{\AAA(\nabla v_h)\cdot n}_S)$, cf. \cite[(3.26)]{DK08}, we observe that
		    \begin{align}\label{lem:efficiency.12.1}
		        \begin{aligned}
		           c\,h_S\|\jump{F(\nabla v_h)}_S\|_{L^2(S;\mathbb{R}^d)}^2
		            &+
		            \rho_{\varphi_{\vert\nabla v_h(T)\vert},\omega_S}(\lambda_T^S)
		           \leq  \vert \omega_S\vert \jump{\AAA(\nabla v_h)\cdot n}_S\lambda_T^S
		            \\&\quad= \tfrac{\vert \omega_S\vert}{\vert S\vert}(\AAA(\nabla v_h)-\AAA(\nabla u), \nabla(\lambda_T^S     b_S))_{\omega_S}+\tfrac{\vert \omega_S\vert}{\vert S\vert}(f,\lambda_T^S b_S)_{\omega_S}\,.
		        \end{aligned}
		    \end{align}
		    Applying element-wise the $\varepsilon$-Young inequality \eqref{ineq:young} with $\psi=\varphi_{\vert\nabla v_h\vert }$ in conjunction with \eqref{eq:hammera}, using that $\vert b_S\vert+h_S\vert \nabla b_S\vert\hspace{-0.1em}\leq\hspace{-0.1em} c$ in $\omega_S$ and  $\vert \omega_S\vert\hspace{-0.1em}\sim\hspace{-0.1em} h_S\vert S\vert$ uniformly in $S\hspace{-0.1em}\in\hspace{-0.1em} \mathcal{S}_h$ and $T\hspace{-0.1em}\in \hspace{-0.1em}\mathcal{T}_h$~with~${T\hspace{-0.1em}\subseteq\hspace{-0.1em} \omega_S}$, we obtain
		    \begin{align}\label{lem:efficiency.13}
		        \hspace{-2mm}\begin{aligned}
          \tfrac{\vert \omega_S\vert}{\vert S\vert}(\AAA(\nabla v_h)-\AAA(\nabla u), \nabla(\lambda_T^S     b_S))_{\omega_S}&\leq c_\varepsilon\,\|F(\nabla v_h)-F(\nabla u)\|_{L^2(\omega_S;\mathbb{R}^d)}^2+\varepsilon\,\rho_{\varphi_{\vert\nabla v_h\vert},\omega_S}(\lambda_T^S)\,,\\
		         \tfrac{\vert \omega_S\vert}{\vert S\vert}(f,\lambda_T^S b_S)_{\omega_S}&\leq 
		         c_\varepsilon\,\rho_{(\varphi_{\vert\nabla v_h\vert})^*,\omega_S}( h_{\mathcal{T}} f )+\varepsilon\,\rho_{\varphi_{\vert\nabla v_h\vert},\omega_S}(\lambda_T^S)\,.
		         \end{aligned}\hspace{-2mm}
		    \end{align}
            The shift change \eqref{lem:shift-change.1} on $T'\in \mathcal{T}_h\setminus\{T\}$ with $T'\subseteq \omega_S$ further yields that
            \begin{align}\label{lem:efficiency.14}
                \smash{\rho_{\varphi_{\vert\nabla v_h\vert},\omega_S}(\lambda_T^S)\leq c\,\rho_{\varphi_{\vert\nabla v_h(T)\vert},\omega_S}(\lambda_T^S)+c\,h_S\|\jump{F(\nabla v_h)}_S\|_{L^2(S;\mathbb{R}^d)}^2\,.}
            \end{align}
		    For sufficiently small $\varepsilon>0$,  using \eqref{lem:efficiency.1}, we conclude \eqref{lem:efficiency.3} from \eqref{lem:efficiency.13} and \eqref{lem:efficiency.14} in \eqref{lem:efficiency.12.1}.
		  \end{proof}
		  
		  \begin{proof}[Proof (of Corollary \ref{cor:efficiency}).]
		  For arbitrary $\tilde{v}_h\in \mathcal{S}^{1}_D(\mathcal{T}_h)$, resorting to the discrete trace inequality~\cite[Lemma A.16, (A.18)]{kr-phi-ldg} and \eqref{lem:efficiency.3}, we find that
		    \begin{align}
		        \smash{h_S\,\|\jump{F(\nabla_{\!h}v_h)}\|_{L^2(S;\mathbb{R}^d)}^2}&\leq \smash{2\,h_S\,\|\jump{F(\nabla_{\!h}v_h)-F(\nabla \tilde{v}_h)}\|_{L^2(S;\mathbb{R}^d)}^2 +2\,h_S\,\|\jump{F(\nabla \tilde{v}_h)}\|_{L^2(S;\mathbb{R}^d)}^2}\notag
		        \\&\leq c\,\|\smash{F(\nabla_{\!h}v_h)-F(\nabla \tilde{v}_h)}\|_{L^2(\omega_S;\mathbb{R}^d)}^2\label{cor:efficiency.1}
		        \\&\quad+c\,\|\smash{F(\nabla \tilde{v}_h)-F(\nabla u)}\|_{L^2(\omega_S;\mathbb{R}^d)}^2
		        +c\,\textup{osc}_h(f,\tilde{v}_h,\omega_S)\notag
		        \,.
		    \end{align}
		   The shift change \eqref{lem:shift-change.3} yields that
		   \begin{align}
		       \textup{osc}_h(f,\tilde{v}_h,\omega_S)\leq c\,\textup{osc}_h(f,v_h,\omega_S)+c\,\|\smash{F(\nabla_{\!h}v_h)-F(\nabla \tilde{v}_h)}\|_{L^2(\omega_S;\mathbb{R}^d)}^2\,.\label{cor:efficiency.2}
		   \end{align}
		   Using in \eqref{cor:efficiency.1} both \eqref{cor:efficiency.2} and  
		   \begin{align*}
		        \begin{aligned}
		       \|\smash{F(\nabla \tilde{v}_h)-F(\nabla u)}\|_{L^2(\Omega;\mathbb{R}^d)}^2\leq 2\,\|\smash{F(\nabla_{\!h}v_h)-F(\nabla \tilde{v}_h)}\|_{L^2(\Omega;\mathbb{R}^d)}^2+2\,\|\smash{F(\nabla_{\!h}v_h)-F(\nabla u)}\|_{L^2(\Omega;\mathbb{R}^d)}^2\,,
		        \end{aligned}
		   \end{align*}
		  and, subsequently, taking the infimum with respect to $\tilde{v}_h\in \mathcal{S}^1_D(\mathcal{T}_h)$, we conclude the assertion.
		  \end{proof}
		
	\subsection{Patch-shift-to-element-shift estimate}\enlargethispage{9mm}
	
	    \qquad The third tool involves the following estimate allowing us to pass from element-patch-shifts to element-shifts and, thus,  to deploy quasi-interpolation operators that~are~locally~element-to-patch stable, e.g., the node-averaging quasi-interpolation operator $I_h^{\textit{av}}\colon\mathcal{S}^{1,\textit{cr}}_D(\mathcal{T}_h)\to \mathcal{S}^1_D(\mathcal{T}_h)$, cf. \eqref{eq:a6}.

		\begin{lemma}\label{lem:patch_to_element}
		    Let $\AAA\colon\mathbb{R}^d\to \mathbb{R}^d$ satisfy  Assumption \ref{assum:extra_stress} for $p\in (1,\infty)$ and  $\delta\ge 0$. Moreover, let $\varphi\colon\mathbb{R}_{\ge 0}\to \mathbb{R}_{\ge 0}$ be defined by \eqref{eq:def_phi} and let $F\colon\mathbb{R}^d\to \mathbb{R}^d$ be defined by \eqref{eq:def_F}, each for the same $p\in (1,\infty)$ and  $\delta\ge 0$. Then, there exists a constant $c>0$, depending only on the~characteristics~of $\AAA$ and the chunkiness $\omega_0>0$, such that for every $v_h\in \mathcal{L}^1(\mathcal{T}_h)$, $y\in L^p(\Omega;\mathbb{R}
		   ^d)$, and $T\in \mathcal{T}_h$,~it~holds
		    \begin{align*}
		        \smash{\rho_{\varphi_{\vert \nabla_{\! h} v_h(T)\vert},\omega_T}(y)\leq c\, \rho_{\varphi_{\vert \nabla_{\! h} v_h\vert},\omega_T}(y)+\big\|\smash{h_{\mathcal{S}}^{1/2}}\jump{F(\nabla_{\! h} v_h)}\big\|_{L^2(\mathcal{S}_h^{i}(T);\mathbb{R}^d)}^2\,,}
		    \end{align*}
		    where $\mathcal{S}_h^{i}(T)\coloneqq \mathcal{S}_h(T)\cap \mathcal{S}_h^{i}$ and we write $\vert \nabla_{\! h} v_h(T)\vert\coloneqq \vert \nabla_{\! h} v_h\vert|_T$ to indicate that the shift on the whole patch $\omega_T$ depends on the value of $\vert \nabla_{\! h} v_h\vert$ on the element $T$.
		\end{lemma}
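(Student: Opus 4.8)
The plan is to localize the estimate to the individual elements of the patch $\omega_T$, apply the change of shift \eqref{lem:shift-change.1} element-wise in order to replace the shift $\lvert\nabla_{\!h}v_h(T)\rvert$ by the local gradient $\lvert\nabla_{\!h}v_h\rvert$, and then convert the resulting sum of differences of element gradients into a sum of face jumps by a short chaining argument.

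Concretely, I would fix $T\in\mathcal{T}_h$ and write $a_{T'}\vcentcolon=\nabla_{\!h}v_h|_{T'}\in\mathbb{R}^d$ for every $T'\in\mathcal{T}_h$ with $T'\subseteq\omega_T$; since $v_h\in\mathcal{L}^1(\mathcal{T}_h)$, each $a_{T'}$ is a constant vector and $\lvert\nabla_{\!h}v_h(T)\rvert=\lvert a_T\rvert$. Applying \eqref{lem:shift-change.1} with the fixed choice $\varepsilon=1$, $a=a_T$, $b=a_{T'}$, and $t=\lvert y(x)\rvert$ for a.e.\ $x\in T'$, and integrating over $T'$, I obtain
\[
  \rho_{\varphi_{\lvert a_T\rvert},T'}(y)\leq c\,\rho_{\varphi_{\lvert a_{T'}\rvert},T'}(y)+c\,\lvert T'\rvert\,\lvert F(a_T)-F(a_{T'})\rvert^2 .
\]
Summing over the uniformly bounded number of elements $T'\subseteq\omega_T$, the left-hand sides add up to $\rho_{\varphi_{\lvert\nabla_{\!h}v_h(T)\rvert},\omega_T}(y)$ and the first right-hand terms to $c\,\rho_{\varphi_{\lvert\nabla_{\!h}v_h\rvert},\omega_T}(y)$ (interpreting the latter, as in the paper's notation, with the generalized $N$-function $x\mapsto\varphi_{\lvert\nabla_{\!h}v_h(x)\rvert}$). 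It thus remains to estimate $\sum_{T'\subseteq\omega_T}\lvert T'\rvert\,\lvert F(a_T)-F(a_{T'})\rvert^2$ by the jump term on the right-hand side.

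For this I would use a chaining argument. Given $T'\subseteq\omega_T$, conformity of the mesh forces $T\cap T'$ to be a common face $G$ of $T$ and $T'$; the set of elements containing $G$ has cardinality bounded solely in terms of the chunkiness, and inside it one can join $T$ to $T'$ by a chain $T=T_0,T_1,\dots,T_k=T'$ with $k\leq N_0=N_0(\omega_0)$, consisting of elements of $\omega_T$ in which consecutive elements share a full side $S_j\vcentcolon=T_{j-1}\cap T_j$ that contains $G$, hence touches $T$, hence lies in $\mathcal{S}_h^{i}(T)$. Since $F(a_{T_{j-1}})-F(a_{T_j})=\jump{F(\nabla_{\!h}v_h)}_{S_j}$ is constant on $S_j$, the triangle and discrete Cauchy--Schwarz inequalities give $\lvert F(a_T)-F(a_{T'})\rvert^2\leq N_0\sum_{j=1}^k\lvert\jump{F(\nabla_{\!h}v_h)}_{S_j}\rvert^2$, while uniform shape regularity and the comparability of sizes of neighbouring elements yield $\lvert T'\rvert\sim h_{T'}^d\sim h_{S_j}\lvert S_j\rvert$ for each $j$. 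Combining these, and using that each $S\in\mathcal{S}_h^{i}(T)$ occurs in only boundedly many of the chains, I arrive at
\[
  \sum_{T'\subseteq\omega_T}\lvert T'\rvert\,\lvert F(a_T)-F(a_{T'})\rvert^2\leq c\sum_{S\in\mathcal{S}_h^{i}(T)}h_S\,\lvert S\rvert\,\lvert\jump{F(\nabla_{\!h}v_h)}_S\rvert^2=c\,\big\|h_{\mathcal{S}}^{1/2}\jump{F(\nabla_{\!h}v_h)}\big\|_{L^2(\mathcal{S}_h^{i}(T);\mathbb{R}^d)}^2 ,
\]
which together with the previous step completes the proof. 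The main obstacle is precisely the geometric bookkeeping in this last step: one must verify that $T$ can be joined to every element of its patch through interior sides that all touch $T$, with chain length controlled only by the chunkiness, and that the induced overlap of sides is uniformly bounded. This is where the standing mesh hypotheses are essential --- uniform shape regularity, $\lvert T\rvert\sim\lvert\omega_T\rvert$, connectedness of $\mathrm{int}(\omega_T)$, and the uniform bound on $\mathrm{card}(\mathcal{T}_h(z))$ --- whereas the remainder is a routine application of the change of shift together with the uniform $\Delta_2$-property of the family $\{\varphi_a\}_{a\ge 0}$.
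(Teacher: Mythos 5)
Your proposal is correct and is essentially the paper's own argument: apply the change of shift \eqref{lem:shift-change.1} element-wise on each $T'\subseteq\omega_T$, then absorb the resulting terms $\vert T'\vert\,\vert F(\nabla_{\!h}v_h|_T)-F(\nabla_{\!h}v_h|_{T'})\vert^2$ into $\big\|\smash{h_{\mathcal{S}}^{1/2}}\jump{F(\nabla_{\!h}v_h)}\big\|_{L^2(\mathcal{S}_h^{i}(T);\mathbb{R}^d)}^2$ by chaining through interior sides touching $T$ and using $\vert T'\vert\sim h_S\vert S\vert$, which is exactly the step the paper only sketches with a reference to \cite{DK08}. One cosmetic slip: for $T'\subseteq\omega_T$ the intersection $T\cap T'$ is in general only a common sub-simplex (possibly a single vertex), not a $(d-1)$-dimensional face, but your chaining argument goes through verbatim with that reading.
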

		
		\begin{proof}
		    The proof is based on the argumentation as in \cite[p.\ 9 \& 10]{DK08}. Applying for every $T'\in \mathcal{T}_h$ with $T'\subseteq \omega_T$, the shift change \eqref{lem:shift-change.1}, we arrive at
		    \begin{align}\label{lem:patch_to_element.1}
		            \smash{\rho_{\varphi_{\vert \nabla_{\! h} v_h(T)\vert},\omega_T}(y)\leq c\, \rho_{\varphi_{\vert \nabla_{\! h} v_h\vert},\omega_T}(y)+c\,\|F(\nabla_{\! h} v_h(T))-F(\nabla_{\! h} v_h)\|_{L^2(\omega_T;\mathbb{R}^d)}^2\,.}
		    \end{align}
		    Since one can reach each $T'\in \mathcal{T}_h$ with $T'\subseteq \omega_T$ by passing through finite number~of~sides~${S\in \mathcal{S}_h^{i}(T)}$ (depending on the chunkiness $\omega_0\hspace{-0.1em}>\hspace{-0.1em}0$), for every $T'\hspace{-0.1em}\in\hspace{-0.1em} \mathcal{T}_h$ with $T'\hspace{-0.1em}\subseteq \hspace{-0.1em}\omega_T$, 
		    we deduce that
		    \begin{align}\label{lem:patch_to_element.2}
		        \smash{c\,\|F(\nabla_{\! h} v_h(T))-F(\nabla_{\! h} v_h)\|_{L^2(T';\mathbb{R}^d)}^2\leq c\,\big\|\smash{h_{\mathcal{S}}^{1/2}}\jump{F(\nabla_{\! h} v_h)}\big\|_{L^2(\mathcal{S}_h^{i}(T);\mathbb{R}^d)}^2\,.}
		    \end{align}
		    Eventually, using \eqref{lem:patch_to_element.2} in \eqref{lem:patch_to_element.1}, we conclude the assertion.
		\end{proof}
	    
    \subsection{Proof of Theorem \ref{thm:best-approx}}\vspace*{-1mm}
	    
	   \qquad Eventually, we have everything at our disposal to prove Theorem \ref{thm:best-approx}.\vspace*{-1mm}
	    
        \begin{proof}[Proof (of Theorem \ref{thm:best-approx})] Let $v_h\in \mathcal{S}^1_D(\mathcal{T}_h)$ be arbitrary 
				and introduce $e_h\coloneqq v_h-u_h^{\textit{cr}}\in \smash{\mathcal{S}^{1,\textit{cr}}_D(\mathcal{T}_h)}$. Then, resorting to \eqref{eq:pDirichletW1p}, \eqref{eq:pDirichletS1crD}, and $f-f_h\perp \Pi_h e_h$ in $L^2(\Omega)$, we arrive at
				\begin{align}
				\begin{aligned}
					( \AAA(\nabla v_h)-\AAA(\nabla_{\!h} u_h^{\textit{cr}}),\nabla_{\!h} e_h )_\Omega&=
					(\AAA(\nabla v_h),\nabla_{\!h}( e_h- I_h^{\textit{av}} e_h) )_\Omega	\\&\quad+( f,I_h^{\textit{av}} e_h-e_h)_\Omega
					\\&\quad+( \AAA(\nabla v_h)-\AAA(\nabla u) ,\nabla I_h^{\textit{av}} e_h)_\Omega
					\\&\quad+(f-f_h,e_h-\Pi_h e_h)_\Omega
			        \\&=\vcentcolon I_h^1+I_h^2+I_h^3+I_h^4\,.
			        \end{aligned}\label{thm:best-approx.1}
				\end{align}
				
				\textit{ad $I_h^1$.} Using that $\jump{(\AAA(\nabla v_h)\cdot n)( e_h- I_h^{\textit{av}} e_h)}_S=\jump{\AAA(\nabla v_h)\cdot n}_S\{e_h- I_h^{\textit{av}} e_h\}_S +\{{\AAA(\nabla v_h)\cdot n}\}_S$ $ \jump{e_h-I_h^{\textit{av}} e_h}_S$ on $S$, $\int_S{\jump{e_h-I_h^{\textit{av}} e_h}_S\,\textup{d}s}=0$, and $\{\AAA(\nabla v_h)\cdot n\}_S=\textup{const}$ on $S$ for all ${S\in \mathcal{S}_h^{i}}$, an element-wise integration-by-parts,  the discrete trace inequality~\mbox{\cite[Lemma~12.8]{EG21}},~and~\eqref{eq:a6} with $\psi=\vert\cdot\vert$ and $a=0$, we find that
				\begin{align}
					\begin{aligned}
					I_h^1&=\sum_{S\in \mathcal{S}_h^{i}}{\int_S{\jump{\AAA(\nabla v_h)\cdot n}_S\{e_h- I_h^{\textit{av}} e_h\}_S\,\textup{d}s}} 
						\\[-0.5mm]&\leq c\sum_{S\in \mathcal{S}_h^{i}}{\vert \jump{\AAA(\nabla v_h)\cdot n}_S\vert  \sum_{T\in \mathcal{T}_h;S\subseteq \partial T}{h_T^{-1}\int_T{\vert e_h- I_h^{\textit{av}} e_h\vert \,\textup{d}s}}} \\[-0.5mm]&
					\leq c \sum_{S\in \mathcal{S}_h^{i}}{ \sum_{T\in \mathcal{T}_h;S\subseteq \partial T}{\int_{\omega_T}{\vert \jump{\AAA(\nabla v_h)}_S\vert\vert \nabla_ he_h\vert\,\textup{d}x}}}\,.
				\end{aligned}	\label{thm:best-approx.2}
				\end{align}
				Applying \hspace*{-0.1mm}in \hspace*{-0.1mm}\eqref{thm:best-approx.2} \hspace*{-0.1mm}patch-wise \hspace*{-0.1mm}the \hspace*{-0.1mm}$\varepsilon$-Young \hspace*{-0.1mm}inequality \hspace*{-0.1mm}\eqref{ineq:young} \hspace*{-0.1mm}with \hspace*{-0.1mm}$\psi\hspace*{-0.1em}=\hspace*{-0.1em}\smash{\varphi_{\vert\nabla v_h(T)\vert}}$, \hspace*{-0.1mm}where,~\hspace*{-0.1mm}for~\hspace*{-0.1mm}every~\hspace*{-0.1mm}$T\hspace*{-0.1em}\in\hspace*{-0.1em} \mathcal{T}_h$, we \hspace*{-0.1mm}write \hspace*{-0.1mm}to \hspace*{-0.1mm}indicate \hspace*{-0.1mm}that \hspace*{-0.1mm}the \hspace*{-0.1mm}shift \hspace*{-0.1mm}on \hspace*{-0.1mm}the \hspace*{-0.1mm}whole \hspace*{-0.1mm}patch \hspace*{-0.1mm}$\omega_T$ \hspace*{-0.1mm}depends \hspace*{-0.1mm}on \hspace*{-0.1mm}the \hspace*{-0.1mm}value~\hspace*{-0.1mm}of~\hspace*{-0.1mm}$\vert\nabla v_h\vert$~\hspace*{-0.1mm}on~\hspace*{-0.1mm}$T$, together with
				$(\varphi_{\vert \nabla v_h(T)\vert})^*(\vert \jump{\AAA(\nabla v_h)}_S\vert)\sim\vert \jump{F(\nabla v_h)}_S\vert^2$ in $T$ for~all~${T\in \mathcal{T}_h}$ with $S\subseteq \partial  T$~(cf.~\eqref{eq:hammera}),  
				and the finite overlapping of the element~patches~$\omega_T$,~${T\in \mathcal{T}_h}$, for every $\varepsilon>0$, we conclude that
				\begin{align}\label{thm:best-approx.3}
				    \begin{aligned}
					    I_h^1&\leq  c\sum_{S\in \mathcal{S}_h^{i}}{\sum_{T\in \mathcal{T}_h;S\subseteq \partial T}{\int_{\omega_T}{c_\varepsilon\,(\varphi_{\vert     \nabla v_h(T)\vert})^*(\vert \jump{\AAA(\nabla v_h)}_S\vert)+\varepsilon\,\varphi_{\vert\nabla v_h(T)\vert}(\vert \nabla_     he_h\vert)\,\textup{d}x}}}
					    	\\
					    	&
					    	\leq   c_\varepsilon\,\big\|\smash{h_{\mathcal{S}}^{1/2}}\jump{F(\nabla v_h)}\big\|_{L^2(\mathcal{S}_h^{i};\mathbb{R}^d)}^2+  \varepsilon\,\sum_{T\in \mathcal{T}_h}{      \rho_{\varphi_{\vert\nabla v_h(T)\vert},\omega_T}(\nabla_{\!h} e_h)}\,.
					\end{aligned}
				\end{align}
                    Appealing to Lemma \ref{lem:patch_to_element} with $y= \nabla_{\!h}     e_h\in \smash{L^p(\Omega;\mathbb{R}^d)}$, we have that
				\begin{align}
				\sum_{T\in \mathcal{T}_h}{\rho_{\varphi_{\vert \nabla v_h(T)\vert },\omega_T}(\nabla_{\!h} e_h)}&\leq c\,\rho_{\varphi_{\vert\nabla v_h\vert},\Omega}(\nabla_{\!h} e_h)+c\,\big\|\smash{h_{\mathcal{S}}^{1/2}}\jump{F(\nabla v_h)}\big\|_{L^2(\mathcal{S}_h^{i};\mathbb{R}^d)}^2\,.
					\label{thm:best-approx.6}
				\end{align}
				Thus, resorting in \eqref{thm:best-approx.3} to \eqref{thm:best-approx.6}, \eqref{lem:efficiency.3}, and \eqref{eq:hammera}, for every $\varepsilon>0$, we deduce that
				\begin{align}\label{thm:best-approx.3.1}
					    I_h^1&
					    	\leq   c_\varepsilon\,\big[\|F(\nabla v_h)-F(\nabla u)\|_{L^2(\Omega;\mathbb{R}^d)}^2+\mathrm{osc}_h(f,v_h)\big]
					    	+ \varepsilon\,c\, 
					    	\|F(\nabla v_h)-F(\nabla_{\!h} u_h^{\textit{cr}})\|_{L^2(\Omega;\mathbb{R}^d)}^2
					    	\,.
				\end{align}
				
				\textit{ad $I_h^2$.}
				 Applying element-wise the $\varepsilon$-Young inequality \eqref{ineq:young} with $\psi=\smash{\varphi_{\vert\nabla v_h\vert}}$,~for~every~${\varepsilon>0}$, we obtain\vspace*{-1mm}
				\begin{align}
					\begin{aligned}
						I_h^2&\leq c_\varepsilon\,\rho_{(\varphi_{\vert \nabla v_h\vert})^*,\Omega}(h_{\mathcal{T}} f)+
						\varepsilon\,\rho_{\varphi_{\vert\nabla v_h\vert},\Omega}\big(h_{\mathcal{T}}^{-1}(e_h-I_h^{\textit{av}}e_h)\big)\,.
					\end{aligned}\label{thm:best-approx.4}
				\end{align}
				Then, using element-wise the Orlicz-approximation property of $I_h^{\textit{av}}\!:\!\smash{\mathcal{S}^{1,\textit{cr}}_D(\mathcal{T}_h)}\!\to\! \mathcal{S}^1_D(\mathcal{T}_h)$,~cf.~\eqref{eq:a6}, with $\psi=\varphi$ and $a=\vert \nabla v_h(T)\vert$, 	where, for every $T\in \mathcal{T}_h$, we write $\vert\nabla v_h(T)\vert$ to indicate that the shift on the whole patch $\omega_T$ depends on the value of $\vert\nabla v_h\vert$ on the element $T$, we find that
				\begin{align}\label{thm:best-approx.5}
				    \rho_{\varphi_{\vert\nabla v_h\vert},\Omega}\big(h_{\mathcal{T}}^{-1}(e_h-I_h^{\textit{av}}e_h)\big)\leq c\,\sum_{T\in \mathcal{T}_h}{\rho_{\varphi_{\vert \nabla v_h(T)\vert },\omega_T}(\nabla_{\!h} e_h)}\,.
				\end{align}
				Using \eqref{thm:best-approx.5} and \eqref{thm:best-approx.6} together with \eqref{lem:efficiency.1}, \eqref{lem:efficiency.3}, and \eqref{eq:hammera} in \eqref{thm:best-approx.4}, for every~$\varepsilon>0$,~we~arrive~at
				\begin{align}\label{thm:best-approx.6.1}
				    	I_h^2&\leq c_\varepsilon\,\big[\|F(\nabla v_h)-F(\nabla u)\|_{L^2(\Omega;\mathbb{R}^d)}^2+\mathrm{osc}_h(f,v_h)\big]
				    	+ \varepsilon\,c\,
					    	\|F(\nabla v_h)-F(\nabla_{\!h} u_h^{\textit{cr}})\|_{L^2(\Omega;\mathbb{R}^d)}^2
					    	\,.
				\end{align}
				
				\textit{ad $I_h^3$.}
				Applying element-wise the $\varepsilon$-Young inequality \eqref{ineq:young} with $\psi=\varphi_{\vert\nabla v_h\vert }$,~for~every~${\varepsilon>0}$, we obtain
				\begin{align}
					\begin{aligned}
					I_h^3&\leq c_\varepsilon\,\|F(\nabla v_h)-F(\nabla u)\|_{L^2(\Omega;\mathbb{R}^d)}^2+\varepsilon\,\rho_{\varphi_{\vert\nabla v_h\vert },\Omega}(\nabla I_h^{\textit{av}}e_h)\,.
				\end{aligned}\label{thm:best-approx.7}
				\end{align}
				Then, using element-wise the Orlicz-stability property of $I_h^{\textit{av}}\colon \smash{\mathcal{S}^{1,\textit{cr}}_D(\mathcal{T}_h)}\to \mathcal{S}^1_D(\mathcal{T}_h)$, cf. \eqref{eq:a6}, with $\psi=\varphi$ and $a=\vert \nabla v_h(T)\vert$,	where we, again, for every $T\in \mathcal{T}_h$, write $\vert\nabla v_h(T)\vert$ to indicate that the shift on the whole patch $\omega_T$ depends on the value of $\vert\nabla v_h\vert$ on the element $T$,~we~find~that
				\begin{align}\label{thm:best-approx.8}
				    \rho_{\varphi_{\vert\nabla v_h\vert },\Omega}(\nabla I_h^{\textit{av}}e_h)\leq c\,\sum_{T\in \mathcal{T}_h}{\rho_{\varphi_{\vert\nabla v_h(T)\vert },\Omega}(\nabla_{\!h} e_h)}\,.
				\end{align}
				Using \eqref{thm:best-approx.8} and \eqref{thm:best-approx.6} in conjunction with \eqref{lem:efficiency.3} and \eqref{eq:hammera} in  \eqref{thm:best-approx.7}, for every $\varepsilon>0$,~we~arrive~at
				\begin{align}\label{thm:best-approx.8.1}
				    	I_h^3&\leq c_\varepsilon\,\big[\|F(\nabla v_h)-F(\nabla u)\|_{L^2(\Omega;\mathbb{R}^d)}^2+\mathrm{osc}_h(f,v_h)\big]
				    	+ \varepsilon\,c\,
					    	\|F(\nabla v_h)-F(\nabla_{\!h} u_h^{\textit{cr}})\|_{L^2(\Omega;\mathbb{R}^d)}^2
					    	\,.
				\end{align}
				
				\textit{ad $I_h^4$.}
				Applying element-wise the $\varepsilon$-Young inequality \eqref{ineq:young}  with $\psi=\smash{\varphi_{\vert\nabla v_h\vert}}$ and the Orlicz-approximation property of $\Pi_h\colon \mathcal{L}^1(\mathcal{T}_h)\to \mathcal{L}^0(\mathcal{T}_h)$ (cf. \cite[(A.9)]{dkrt-ldg}),  for every~${\varepsilon>0}$,~we~obtain
				\begin{align}
					\begin{aligned}
					I_h^4
                &\leq c_\varepsilon\,\mathrm{osc}_h(f,v_h)+\varepsilon\,\rho_{\varphi_{\vert\nabla v_h\vert },\Omega}\big(h_{\mathcal{T}}^{-1}(e_h-\Pi_h e_h)\big)\\&\leq c_\varepsilon\,\mathrm{osc}_h(f,v_h)+\varepsilon\,\rho_{\varphi_{\vert\nabla v_h\vert },\Omega}(\nabla_{\!h}e_h)\,.
				\end{aligned}	\label{thm:best-approx.9}
				\end{align}
				Thus, using 
                \eqref{eq:hammera} in \eqref{thm:best-approx.9}, for every $\varepsilon>0$, we find that
				\begin{align}\label{thm:best-approx.9.1}
						I_h^4\leq c_\varepsilon\,\mathrm{osc}_h(f,v_h)
						+ \varepsilon\,c\,     
					    	\|F(\nabla v_h)-F(\nabla_{\!h} u_h^{\textit{cr}})\|_{L^2(\Omega;\mathbb{R}^d)}^2
					    	\,.	
				\end{align}
				Then, combining \eqref{thm:best-approx.3.1}, \eqref{thm:best-approx.6.1}, \eqref{thm:best-approx.8.1}, and \eqref{thm:best-approx.9.1} 
				in \eqref{thm:best-approx.1}, for every $\varepsilon>0$, we conclude that
				\begin{align}\label{thm:best-approx.11}
					\begin{aligned}
				( \AAA(\nabla v_h)-\AAA(\nabla_{\!h} u_h^{\textit{cr}}) ,\nabla_{\!h}e_h )_\Omega&\leq c_\varepsilon\,\big[\|F(\nabla v_h)-F(\nabla u)\|_{L^2(\Omega;\mathbb{R}^d)}^2+\mathrm{osc}_h(f,v_h)\big]\\&\quad +\varepsilon\,c\,\|F(\nabla v_h)-F(\nabla_{\!h}u_h^{\textit{cr}})\|_{L^2(\Omega;\mathbb{R}^d)}^2\,
				.
			\end{aligned}
				\end{align}
				Resorting in \eqref{thm:best-approx.11} to  \eqref{eq:hammera}, for $\varepsilon>0$ sufficiently small, for every $v_h\in \mathcal{S}^1_D(\mathcal{T}_h)$, we arrive at
				\begin{align}\label{thm:best-approx.12}
					\begin{aligned}
						\|F(\nabla v_h)-F(\nabla_{\!h}u_h^{\textit{cr}})\|_{L^2(\Omega;\mathbb{R}^d)}^2\leq c_\varepsilon\,\big[\|F(\nabla v_h)-F(\nabla u)\|_{L^2(\Omega;\mathbb{R}^d)}^2+\mathrm{osc}_h(f,v_h)\big]\,.
					\end{aligned}
				\end{align}	
				From \eqref{thm:best-approx.12}, in turn, we deduce that
				\begin{align}\label{thm:best-approx.13}
				    \begin{aligned}
				        \|F(\nabla_{\!h}u_h^{\textit{cr}})-F(\nabla u)\|_{L^2(\Omega;\mathbb{R}^d)}^2&\leq 2\, \|F(\nabla v_h)-F(\nabla_{\!h}u_h^{\textit{cr}})\|_{L^2(\Omega;\mathbb{R}^d)}^2
				       \\&\quad+2\,\|F(\nabla v_h)-F(\nabla u)\|_{L^2(\Omega;\mathbb{R}^d)}^2
				       \\&\leq 
				        c_\varepsilon\,\big[\|F(\nabla v_h)-F(\nabla u)\|_{L^2(\Omega;\mathbb{R}^d)}^2+\mathrm{osc}_h(f,v_h)\big]\,.
				    \end{aligned}
				\end{align}
				Taking in \eqref{thm:best-approx.13} the infimum with respect to $v_h\in \smash{\mathcal{S}^1_D(\mathcal{T}_h)}$, we conclude~the~assertion.
			\end{proof}

	    An immediate consequence of the medius error analysis (cf.\ Theorem \ref{thm:best-approx}) is the observation that the distance~of every $v_h\in \smash{\mathcal{S}^1_D(\mathcal{T}_h)}$ to $u_h^{\textit{cr}}\in \smash{\smash{\mathcal{S}^{1,\textit{cr}}_D(\mathcal{T}_h)}}$, up to oscillation terms, is controlled by the distance of $v_h\hspace{-0.1em}\in\hspace{-0.1em} \smash{\mathcal{S}^1_D(\mathcal{T}_h)}$ to $u\hspace{-0.1em}\in\hspace{-0.1em} \smash{W^{1,p}_D(\Omega)}$, each measured in the natural distance,~cf.~\mbox{Remark}~\ref{rem:natural_dist}. 
	    This can also be interpreted as a kind of efficiency property.
		
		\begin{corollary}\label{cor:best-approx}
		    Let $\AAA\colon\mathbb{R}^d\to \mathbb{R}^d$ satisfy  Assumption \ref{assum:extra_stress} for $p\in(1,\infty)$ and  $\delta\ge 0$, and let $F\colon\mathbb{R}^d\to \mathbb{R}^d$ be defined by \eqref{eq:def_F} for the same $p\in (1,\infty)$ and  $\delta\ge 0$. Then, there exists a constant $c>0$, depending on the characteristics of $\AAA$ and the chunkiness $\omega_0>0$, such that for every $v_h\in \smash{\mathcal{S}^1_D(\mathcal{T}_h)}$, it holds
		    \begin{align*}
		        \|F(\nabla v_h)-F(\nabla_{\!h}u_h^{\textit{cr}})\|_{L^2(\Omega;\mathbb{R}^d)}^2\leq c\,\big[\|F(\nabla v_h)-F(\nabla u)\|_{L^2(\Omega;\mathbb{R}^d)}^2+\mathrm{osc}_h(f,v_h)\big]\,.
		    \end{align*}
		\end{corollary}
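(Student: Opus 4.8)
The key observation is that the claimed inequality is precisely estimate \eqref{thm:best-approx.12}, which is established in the course of the proof of Theorem \ref{thm:best-approx}. Indeed, in that proof one fixes an \emph{arbitrary} $v_h\in \mathcal{S}^1_D(\mathcal{T}_h)$, sets $e_h\coloneqq v_h-u_h^{\textit{cr}}\in \smash{\mathcal{S}^{1,\textit{cr}}_D(\mathcal{T}_h)}$, and, starting from the error identity \eqref{thm:best-approx.1}, bounds the four contributions $I_h^1,\dots,I_h^4$ via the $\varepsilon$-Young inequality \eqref{ineq:young}, the local efficiency estimates \eqref{lem:efficiency.1} and \eqref{lem:efficiency.3}, the patch-shift-to-element-shift estimate of Lemma \ref{lem:patch_to_element}, and the Orlicz approximation and stability properties of $I_h^{\textit{av}}$ and $\Pi_h$. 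Combining these in \eqref{thm:best-approx.1} and absorbing the $\varepsilon$-terms using the equivalence \eqref{eq:hammera} — which identifies $(\AAA(\nabla v_h)-\AAA(\nabla_{\!h}u_h^{\textit{cr}}),\nabla_{\!h}e_h)_\Omega$ with $\|F(\nabla v_h)-F(\nabla_{\!h}u_h^{\textit{cr}})\|_{L^2(\Omega;\mathbb{R}^d)}^2$ up to constants — yields exactly
\begin{align*}
\|F(\nabla v_h)-F(\nabla_{\!h}u_h^{\textit{cr}})\|_{L^2(\Omega;\mathbb{R}^d)}^2\leq c\,\big[\|F(\nabla v_h)-F(\nabla u)\|_{L^2(\Omega;\mathbb{R}^d)}^2+\mathrm{osc}_h(f,v_h)\big]\,,
\end{align*}
for this $v_h$. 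Since $v_h\in \mathcal{S}^1_D(\mathcal{T}_h)$ was arbitrary and no infimum has yet been taken, this is the assertion. There is no additional obstacle; the statement is a direct by-product of the medius error analysis recorded separately for later use in the a priori and a posteriori analysis.
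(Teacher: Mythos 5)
Your proposal is correct and coincides with the paper's own proof, which simply cites estimate \eqref{thm:best-approx.12}: that bound is derived for an arbitrary $v_h\in\mathcal{S}^1_D(\mathcal{T}_h)$ before the infimum is taken, so the corollary follows immediately. Your recap of how \eqref{thm:best-approx.12} is obtained is accurate but not needed beyond the citation.
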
	
		
		\begin{proof}
		    Immediate consequence of \eqref{thm:best-approx.12}.
		\end{proof}

		It is possible to establish a best-approximation result inverse to Theorem \ref{thm:best-approx}.~For~this,~however, we need to pay by jump terms measuring the natural distance, cf. Remark \ref{rem:natural_dist},~of~\mbox{Crouzeix--Raviart} functions to $\mathcal{S}^1_D(\mathcal{T}_h)$, cf.\ Corollary \ref{cor:efficiency}.
		
		\begin{theorem}\label{thm:best-approxP1CR} 
            Let $\AAA\colon\mathbb{R}^d\to \mathbb{R}^d$ satisfy  Assumption \ref{assum:extra_stress} for $p\in (1,\infty)$ and  $\delta\ge 0$,~and~let~$F\colon\mathbb{R}^d\to \mathbb{R}^d$ be defined by \eqref{eq:def_F} for the same $p\in (1,\infty)$     and  $\delta\ge 0$. Then, there exists a
		        constant $c>0$, depending only on the characteristics of $\AAA$~and~the~chunkiness~${\omega_0>0}$, such that
		    \begin{align*}
		    \|F(\nabla u_h^{\textit{c}})&-F(\nabla u)\|_{L^2(\Omega;\mathbb{R}^d)}^2\\&\leq  c\,  \inf_{v_h\in \mathcal{S}^{1,\textit{cr}}_D(\mathcal{T}_h)}\,\big[\|F(\nabla_{\!h}  v_h)-F(\nabla     u)\|_{L^2(\Omega;\mathbb{R}^d)}^2+\big\|\smash{h_{\mathcal{S}}^{1/2}}\jump{F(\nabla_{\!h}v_h)}\big\|_{L^2(\mathcal{S}_h^{i};\mathbb{R}^d)}^2\big]
		    \\&\leq  c\,  \inf_{v_h\in \mathcal{S}^{1,\textit{cr}}_D(\mathcal{T}_h)}\,\big[\|F(\nabla_{\!h}  v_h)-F(\nabla     u)\|_{L^2(\Omega;\mathbb{R}^d)}^2+
		        \textup{dist}_F^2(v_h,\mathcal{S}^{1}_D(\mathcal{T}_h),\mathcal{T}_h)
		       +\textup{osc}_h(f,v_h) \big]\,.
		    \end{align*}
	    \end{theorem}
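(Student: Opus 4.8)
The plan is to play the $\mathcal{S}^1_D(\mathcal{T}_h)$-best-approximation result (Theorem~\ref{P1_best-approx}) against a given Crouzeix--Raviart function by using its node-averaging quasi-interpolant as a conforming competitor, and then to absorb the resulting non-conformity into the jump terms via the local efficiency estimates. Concretely, fix $v_h\in\mathcal{S}^{1,\textit{cr}}_D(\mathcal{T}_h)$ and put $w_h\coloneqq I_h^{\textit{av}}v_h$. Since the averaged nodal values of $I_h^{\textit{av}}$ vanish on $\Gamma_D$, we have $w_h\in\mathcal{S}^1_D(\mathcal{T}_h)$, so Theorem~\ref{P1_best-approx} gives $\|F(\nabla u_h^{\textit{c}})-F(\nabla u)\|_{L^2(\Omega;\mathbb{R}^d)}^2\le c\,\|F(\nabla I_h^{\textit{av}}v_h)-F(\nabla u)\|_{L^2(\Omega;\mathbb{R}^d)}^2$, and a plain triangle inequality in $L^2(\Omega;\mathbb{R}^d)$ bounds the right-hand side by $c\,\|F(\nabla_h v_h)-F(\nabla u)\|_{L^2(\Omega;\mathbb{R}^d)}^2+c\,\|F(\nabla_h v_h)-F(\nabla I_h^{\textit{av}}v_h)\|_{L^2(\Omega;\mathbb{R}^d)}^2$. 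Hence everything reduces to the non-conformity estimate
\begin{align*}
\|F(\nabla_h v_h)-F(\nabla I_h^{\textit{av}}v_h)\|_{L^2(\Omega;\mathbb{R}^d)}^2\le c\,\big\|h_{\mathcal{S}}^{1/2}\jump{F(\nabla_h v_h)}\big\|_{L^2(\mathcal{S}_h^{i};\mathbb{R}^d)}^2\,.
\end{align*}

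I would prove this element by element. On $T\in\mathcal{T}_h$, both $\nabla_h v_h$ and $\nabla I_h^{\textit{av}}v_h$ are constant, so \eqref{eq:hammera} gives $\int_T|F(\nabla_h v_h)-F(\nabla I_h^{\textit{av}}v_h)|^2\,\mathrm{d}x\sim|T|\,\varphi_{|\nabla_h v_h(T)|}(|\nabla_h(v_h-I_h^{\textit{av}}v_h)|_T|)$. Applying the node-averaging estimate \eqref{eq:a6} with $\psi=\varphi$, shift $a=|\nabla_h v_h(T)|$ and $m=1$, together with the fact that $\jump{v_h}_S$ is affine with vanishing integral mean on $S$ — so that $\|\jump{v_h}_S\|_{L^\infty(S)}\le c\,h_S|\jump{\nabla_h v_h}_S|$, with $\jump{\nabla_h v_h}_S$ the constant jump of the element-wise constant $\nabla_h v_h$ — and the scalings $h_S\sim h_T\sim|T|/|S|$, one is led to a bound of the form $c\sum_{S\in\mathcal{S}_h(T)\setminus\Gamma_N}h_S\,|S|\,\varphi_{|\nabla_h v_h(T)|}(|\jump{\nabla_h v_h}_S|)$. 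For a side $S$ that is a face of $T$, the shift $|\nabla_h v_h(T)|$ is one of the two traces of $\nabla_h v_h$ at $S$, so \eqref{eq:hammera} turns $\varphi_{|\nabla_h v_h(T)|}(|\jump{\nabla_h v_h}_S|)$ into $|\jump{F(\nabla_h v_h)}_S|^2$; for a side $S\in\mathcal{S}_h(T)$ that merely touches $T$, I would first move the shift to the gradient value on an element adjacent to $S$ via \eqref{lem:shift-change.1} — equivalently, invoke Lemma~\ref{lem:patch_to_element} — at the price of an extra $\|h_{\mathcal{S}}^{1/2}\jump{F(\nabla_h v_h)}\|^2$-contribution on a chain of sides inside $\omega_T$. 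Summing over $T\in\mathcal{T}_h$ and exploiting the finite overlap of the patches $\omega_T$ yields the non-conformity estimate; combining the three pieces and taking the infimum over $v_h\in\mathcal{S}^{1,\textit{cr}}_D(\mathcal{T}_h)$ establishes the first displayed inequality of the theorem.

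The second inequality follows by post-processing the jump term side by side. Corollary~\ref{cor:efficiency} bounds $h_S\|\jump{F(\nabla_h v_h)}_S\|_{L^2(S;\mathbb{R}^d)}^2$ by $c\,\|F(\nabla_h v_h)-F(\nabla u)\|_{L^2(\omega_S;\mathbb{R}^d)}^2+c\,\textup{osc}_h(f,v_h,\omega_S)+c\,\textup{dist}_F^2(v_h,\mathcal{S}^1_D(\mathcal{T}_h),\omega_S)$ for every $S\in\mathcal{S}_h^{i}$; summing over $S\in\mathcal{S}_h^{i}$ and using the finite overlap of the patches $\omega_S$ converts $\|h_{\mathcal{S}}^{1/2}\jump{F(\nabla_h v_h)}\|_{L^2(\mathcal{S}_h^{i};\mathbb{R}^d)}^2$ into $c\,\big[\|F(\nabla_h v_h)-F(\nabla u)\|_{L^2(\Omega;\mathbb{R}^d)}^2+\textup{osc}_h(f,v_h)+\textup{dist}_F^2(v_h,\mathcal{S}^1_D(\mathcal{T}_h),\mathcal{T}_h)\big]$. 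Inserting this into the first inequality and once more taking the infimum over $v_h\in\mathcal{S}^{1,\textit{cr}}_D(\mathcal{T}_h)$ finishes the proof.

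The step I expect to be most delicate is the element-wise non-conformity estimate: reconciling the mesh-size scaling hard-wired into \eqref{eq:a6} with the non-quadratic shifted $N$-function $\varphi_{|\nabla_h v_h(T)|}$, and, above all, keeping track of the shift — which is element-local, while the jump quantities $\jump{F(\nabla_h v_h)}_S$ are associated with sides — which is precisely what necessitates the change-of-shift lemma \eqref{lem:shift-change.1} and the patch-to-element estimate of Lemma~\ref{lem:patch_to_element}. Once that estimate is in place, the rest is routine: the triangle inequality, the equivalences \eqref{eq:hammera}, and finite-overlap summations.
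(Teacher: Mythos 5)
Your overall skeleton is the same as the paper's: use $I_h^{\textit{av}}v_h\in\mathcal{S}^1_D(\mathcal{T}_h)$ as a competitor in Theorem \ref{P1_best-approx}, split off the non-conformity error $\|F(\nabla_{\!h}v_h)-F(\nabla I_h^{\textit{av}}v_h)\|_{L^2(\Omega;\mathbb{R}^d)}^2$ by the triangle inequality, and obtain the second displayed inequality from the first via Corollary \ref{cor:efficiency}. The gap is in your key intermediate claim, the ``pure jump'' non-conformity estimate $\|F(\nabla_{\!h}v_h)-F(\nabla I_h^{\textit{av}}v_h)\|_{L^2(\Omega;\mathbb{R}^d)}^2\leq c\,\|\smash{h_{\mathcal{S}}^{1/2}}\jump{F(\nabla_{\!h}v_h)}\|_{L^2(\mathcal{S}_h^{i};\mathbb{R}^d)}^2$. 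The sum in \eqref{eq:a6} runs over $\mathcal{S}_h(T)\setminus\Gamma_N$, which contains the Dirichlet boundary sides; there $\jump{v_h}_S=v_h|_T$ and, with the footnote convention, $\jump{\nabla_{\!h}v_h}_S=\nabla_{\!h}v_h|_T$, so your bound $\|\jump{v_h}_S\|_{L^\infty(S)}\leq c\,h_S\vert\jump{\nabla_{\!h}v_h}_S\vert$ produces terms of the size $h_S\,\vert S\vert\,\varphi_{\vert\nabla_{\!h}v_h(T)\vert}(\vert\nabla_{\!h}v_h|_T\vert)\sim h_S\|F(\nabla_{\!h}v_h)\|_{L^2(S;\mathbb{R}^d)}^2$, i.e., the full size of $F(\nabla_{\!h}v_h)$ on boundary elements. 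These are not jumps across interior sides and are not contained in $\|\smash{h_{\mathcal{S}}^{1/2}}\jump{F(\nabla_{\!h}v_h)}\|_{L^2(\mathcal{S}_h^{i};\mathbb{R}^d)}^2$; your shift-change/Lemma \ref{lem:patch_to_element} step only relocates shifts, it cannot create control of these boundary contributions. In fact, your claimed estimate cannot hold with a constant depending only on the characteristics and the chunkiness: if $\Gamma_D$ consists of a single side $S_0$ and $v_h$ is (globally) affine, vanishing at $x_{S_0}$ but with nonzero tangential slope along $S_0$, then all interior jumps vanish while $I_h^{\textit{av}}v_h$ is forced to zero at the Dirichlet vertices, so the left-hand side is positive and the right-hand side is zero. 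Even when $\Gamma_D$ contains several sides, controlling the tangential slope of $v_h$ on $\Gamma_D$ by interior jumps requires a separate rigidity argument (combining the mean-zero function jumps and the gradient jumps around Dirichlet vertices) that you do not give.

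The paper's Lemma \ref{lem:best-approx-inv} resolves exactly this point: instead of aiming at a pure jump bound, it compares $\jump{v_h}_S$ with $\jump{v_h-v}_S-\pi_h^S\jump{v_h-v}_S$ for an arbitrary $v\in W^{1,p}_D(\Omega)$, using $\pi_h^S\jump{v_h}_S=0=\jump{v}_S$ also on Dirichlet sides, and thereby bounds the local non-conformity error by $\textup{dist}_F^2(v_h,W^{1,p}_D(\Omega),\omega_T)$ plus interior jump terms. In the proof of Theorem \ref{thm:best-approxP1CR} the distance term is then absorbed by choosing $v=u$, so it is dominated by $\|F(\nabla_{\!h}v_h)-F(\nabla u)\|_{L^2(\omega_T;\mathbb{R}^d)}^2$, which is already present on the right-hand side of the theorem. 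To repair your argument, replace your non-conformity estimate by this weaker (and sufficient) one, i.e., carry the extra distance-to-$W^{1,p}_D(\Omega)$ term through and absorb it with $u$; the remaining steps of your proposal (including the handling of the $h_T$ scaling in \eqref{eq:a6} via the pointwise estimate and $\Delta_2$, and the use of Corollary \ref{cor:efficiency} for the second inequality) are sound.
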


	    \begin{remark}
	       If \hspace{-0.1mm}the \hspace{-0.1mm}natural \hspace{-0.1mm}regularity \hspace{-0.1mm}assumption \hspace{-0.1mm}\eqref{natural_regularity} \hspace{-0.1mm}is \hspace{-0.1mm}satisfied, 
        \hspace{-0.1mm}then, \hspace{-0.1mm}using \hspace{-0.1mm}that ${\jump{F(\nabla u)}\hspace{-0.1em}=\hspace{-0.1em}0}$ in $\mathcal{S}_h^{i}$ and the trace inequality \cite[Lemma A.16, (A.17)]{kr-phi-ldg}, for every $v_h\in \smash{\mathcal{S}^{1,\textit{cr}}_D(\mathcal{T}_h)}$, we find that
	       \begin{align}\label{rem:best-approxP1CR.1} 
	            \begin{aligned}
	            \big\|\smash{h_{\mathcal{S}}^{1/2}}\jump{F(\nabla_{\!h}v_h)}\big\|_{L^2(\mathcal{S}_h^{i};\mathbb{R}^d)}^2&=\big\|\smash{h_{\mathcal{S}}^{1/2}}\jump{F(\nabla_{\!h}v_h)-F(\nabla u)}\big\|_{L^2(\mathcal{S}_h^{i};\mathbb{R}^d)}^2\\&\leq 
	           c\,\|F(\nabla_{\!h}v_h)-F(\nabla u)\|_{L^2(\Omega;\mathbb{R}^d)}^2+c\,\|h_{\mathcal{T}}\nabla F(\nabla u)\|_{L^2(\Omega;\mathbb{R}^{d\times d})}^2\,,
	           \end{aligned}
	       \end{align}
	       so that the best-approximation result in  Theorem \ref{thm:best-approxP1CR} can be refined to
	       \begin{align}\label{rem:best-approxP1CR.2} 
	        \begin{aligned}
	            \|F(\nabla u_h^{\textit{c}})-F(\nabla u)\|_{L^2(\Omega;\mathbb{R}^d)}^2&\leq c\,  \inf_{v_h\in \mathcal{S}^{1,\textit{cr}}_D(\mathcal{T}_h)}\,\|F(\nabla_{\!h}  v_h)-F(\nabla u)\|_{L^2(\Omega;\mathbb{R}^d)}^2\\&\quad+c\,\|h_{\mathcal{T}}\nabla F(\nabla u)\|_{L^2(\Omega;\mathbb{R}^{d\times d})}^2\,.
	            \end{aligned}
	       \end{align}
	       In particular, \eqref{rem:best-approxP1CR.2} in conjunction with Theorem \ref{thm:best-approx} reveals that, under the natural~\mbox{regularity} assumption \eqref{natural_regularity}, 
        the performance of the $\mathcal{S}^1_D(\mathcal{T}_h)$-approximation \eqref{eq:pDirichletS1D} and the $\mathcal{S}^{1,\textit{cr}}_D(\mathcal{T}_h)$-approximation \eqref{eq:pDirichletS1crD} of \eqref{eq:pDirichletW1p} are comparable.
	    \end{remark}
		
		The main ingredient in the proof of Theorem \ref{thm:best-approxP1CR} is the following local efficiency result for \hspace{-0.15mm}the \hspace{-0.15mm}approximation \hspace{-0.15mm}error \hspace{-0.15mm}of \hspace{-0.15mm}the \hspace{-0.15mm}node-averaging \hspace{-0.15mm}quasi-interpolation \hspace{-0.15mm}operator \hspace{-0.15mm}${I_h^{\textit{av}}\colon\hspace{-0.17em}\mathcal{S}^{1,\textit{cr}}_D(\mathcal{T}_h)\hspace{-0.17em}\to\hspace{-0.17em} \mathcal{S}^1_D(\mathcal{T}_h)}$,~\hspace{-0.15mm}cf. \eqref{eq:a6},
	     \hspace{-0.15mm}with \hspace{-0.15mm}respect \hspace{-0.15mm}to \hspace{-0.15mm}Crouzeix--Raviart \hspace{-0.15mm}functions, \hspace{-0.15mm}measured \hspace{-0.15mm}in \hspace{-0.15mm}the \hspace{-0.15mm}natural \hspace{-0.15mm}distance,~\hspace{-0.15mm}cf.~\hspace{-0.15mm}Remark~\hspace{-0.15mm}\ref{rem:natural_dist}.
	
	\begin{lemma}\label{lem:best-approx-inv}
		        Let $\AAA\colon\mathbb{R}^d\to \mathbb{R}^d$ satisfy  Assumption \ref{assum:extra_stress} for $p\in (1,\infty)$ and  $\delta\ge 0$, and let $F\colon\mathbb{R}^d\to \mathbb{R}^d$ be defined by \eqref{eq:def_F} for the same $p\in (1,\infty)$     and  $\delta\ge 0$. Then, there exists a
		        constant $c>0$, depending only on the characteristics of $\AAA$~and~the~chunkiness~${\omega_0>0}$, such that  for every $v_h\in \smash{\mathcal{S}^{1,\textit{cr}}_D(\mathcal{T}_h)}$ and $T\in \mathcal{T}_h$, it holds
			    \begin{align*}
			    \|F(\nabla_{\! h} v_h)-F(\nabla I_h^{\textit{av}} v_h)\|_{L^2(T;\mathbb{R}^d)}^2&\leq c\,\textup{dist}_F^2(v_h,W^{1,p}_D(\Omega),\omega_T)
       +c\,\big\|\smash{h_{\mathcal{S}}^{1/2}}\jump{F(\nabla_{\!h}v_h)}\big\|_{L^2(\mathcal{S}_h^{i}(T);\mathbb{R}^d)}^2\,,
			    \end{align*} 
                where $\textup{dist}_F^2(v_h,W^{1,p}_D(\Omega),\omega_T)\coloneqq \inf_{v\in W^{1,p}_D(\Omega)}{\|F(\nabla_{\! h} v_h)-F(\nabla v)\|_{L^2(\omega_T;\mathbb{R}^d)}^2}$.
		    \end{lemma}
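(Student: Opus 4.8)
The plan is to reduce everything to the local shifted $N$-function $\varphi_{|\nabla_{\!h}v_h(T)|}$ and to control the element-wise affine function $w_h\coloneqq v_h-I_h^{\textit{av}}v_h$ on $T$ through its vertex values $w_h(z)=(v_h|_T)(z)-\langle v_h\rangle_z$, $z\in\mathcal N_h\cap T$. First, by the equivalence \eqref{eq:hammera},
\[
\|F(\nabla_{\!h}v_h)-F(\nabla I_h^{\textit{av}}v_h)\|_{L^2(T;\mathbb{R}^d)}^2\sim\rho_{\varphi_{|\nabla_{\!h}v_h|},T}(\nabla_{\!h}w_h)=|T|\,\varphi_{|\nabla_{\!h}v_h(T)|}(g_T),
\]
where the last equality holds because $\nabla_{\!h}v_h$ and $\nabla w_h$ are constant on $T$, $g_T\ge 0$ denoting the (constant) value of $|\nabla_{\!h}w_h|$ on $T$. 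Writing $w_h|_T=\sum_{z\in\mathcal N_h\cap T}w_h(z)\lambda_z^T$ in barycentric coordinates, shape regularity yields $g_T\le c\,h_T^{-1}\max_{z\in\mathcal N_h\cap T}|w_h(z)|$, and since $\varphi_{|\nabla_{\!h}v_h(T)|}$ is convex, vanishes at $0$, satisfies $\Delta_2$ uniformly in the shift, and $\mathcal N_h\cap T$ has $d+1$ elements, it suffices to bound $|T|\,\varphi_{|\nabla_{\!h}v_h(T)|}(h_T^{-1}|w_h(z)|)$ for each fixed vertex $z$ of $T$ by the right-hand side of the asserted estimate.

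For an interior vertex $z\in\Omega\cup\Gamma_N$ one has $\langle v_h\rangle_z=\mathrm{card}(\mathcal T_h(z))^{-1}\sum_{T'\in\mathcal T_h(z)}(v_h|_{T'})(z)$, so $w_h(z)$ is an average over $T'\ni z$ of the vertex jumps $(v_h|_T)(z)-(v_h|_{T'})(z)$, each of which telescopes along a chain of at most $c(\omega_0)$ interior sides through $z$ into $\pm\sum\jump{v_h}_S(z)$. Since $\jump{v_h}_S$ is affine on $S$ and vanishes at $x_S$, $|\jump{v_h}_S(z)|\le\|\jump{v_h}_S\|_{L^\infty(S)}\le c\,h_S|\jump{\nabla_{\!h}v_h}_S|$, hence $h_T^{-1}|w_h(z)|\le c\sum_{S\in\mathcal S_h^{i}(T)}|\jump{\nabla_{\!h}v_h}_S|$. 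Applying $\varphi_{|\nabla_{\!h}v_h(T)|}$ (convexity, $\Delta_2$, finitely many sides), multiplying by $|T|\sim h_S|S|$, performing the change of shift \eqref{lem:shift-change.1} from $|\nabla_{\!h}v_h(T)|$ to $|\nabla_{\!h}v_h(T_S)|$ for an element $T_S$ of $S$ (whose surplus $\varepsilon|F(\nabla_{\!h}v_h(T))-F(\nabla_{\!h}v_h(T_S))|^2$ again telescopes into side jumps of $F(\nabla_{\!h}v_h)$), and invoking $|\jump{F(\nabla_{\!h}v_h)}_S|^2\sim\varphi_{|\nabla_{\!h}v_h(T_S)|}(|\jump{\nabla_{\!h}v_h}_S|)$ (once more \eqref{eq:hammera}), the entire interior-vertex contribution is bounded by $c\,\|h_{\mathcal S}^{1/2}\jump{F(\nabla_{\!h}v_h)}\|_{L^2(\mathcal S_h^{i}(T);\mathbb{R}^d)}^2$.

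For a Dirichlet vertex $z\in\Gamma_D$ we have $\langle v_h\rangle_z=0$, so $w_h(z)=(v_h|_T)(z)$; telescoping the $z$-jumps as above reduces matters to $|(v_h|_{T_0})(z)|$ for an element $T_0\subseteq\omega_T$ possessing a side $S_0\subseteq\Gamma_D$ with $z\in S_0$. The key observations are that $v_h|_{T_0}$ is affine with $v_h|_{T_0}(x_{S_0})=0$, so that (as $z,x_{S_0}\in S_0$) $|(v_h|_{T_0})(z)|\le c\,h_{S_0}|(\nabla_{\!h}v_h(T_0))_{\mathrm{tan}}|$ with the tangential gradient taken along $S_0$, and that the affine trace $v_h|_{S_0}$ has vanishing mean on $S_0$, since $\fint_{S_0}v_h|_{S_0}\,\mathrm ds=v_h|_{S_0}(x_{S_0})=0$. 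Given any $v\in W^{1,p}_D(\Omega)$, $\mathrm{tr}_{S_0}v=0$ gives $v_h|_{S_0}=\mathrm{tr}_{S_0}(v_h|_{T_0}-v)$, which therefore still has zero mean on $S_0$; the corresponding shifted-Orlicz trace--Poincar\'e inequality on $T_0$ (with respect to $\varphi_{|\nabla_{\!h}v_h(T_0)|}$), together with $\varphi_{|\nabla_{\!h}v_h(T_0)|}(|\nabla_{\!h}v_h-\nabla v|)\sim|F(\nabla_{\!h}v_h)-F(\nabla v)|^2$ and the convexity estimate $\varphi_a(\lambda t)\le\lambda\,\varphi_a(t)$ for $\lambda\le 1$ (used to absorb powers of the mesh-size), bounds $|T|\,\varphi_{|\nabla_{\!h}v_h(T)|}(h_T^{-1}|(v_h|_{T_0})(z)|)$ by $c\,\|F(\nabla_{\!h}v_h)-F(\nabla v)\|_{L^2(T_0;\mathbb{R}^d)}^2$ plus, after one further change of shift $|\nabla_{\!h}v_h(T)|\rightsquigarrow|\nabla_{\!h}v_h(T_0)|$, side-jump terms of the same type as before. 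Taking the infimum over $v\in W^{1,p}_D(\Omega)$ and combining with the interior case yields the claim, with constants depending only on the characteristics of $\AAA$ and on $\omega_0>0$.

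I expect the Dirichlet-vertex case to be the main obstacle: one must extract the $\mathrm{dist}_F$-term from the zero boundary condition of the competitor $v$ by means of a \emph{shifted} trace--Poincar\'e inequality on a single element for functions whose trace has vanishing mean on one face — the crude bound $\|v_h|_{T_0}\|_{L^\infty(T_0)}\le h_{T_0}|\nabla_{\!h}v_h(T_0)|$ is too lossy and must be replaced by the tangential estimate above — and one has to arrange the several invocations of the change of shift \eqref{lem:shift-change.1} so that every surplus term collapses into the prescribed norm $\|h_{\mathcal S}^{1/2}\jump{F(\nabla_{\!h}v_h)}\|_{L^2(\mathcal S_h^{i}(T);\mathbb{R}^d)}^2$; a minor additional point is that near edges of $\Gamma_D$ one needs every boundary vertex of $T$ to lie on a Dirichlet side of some element of $\omega_T$.
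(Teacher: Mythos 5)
Your argument is correct in substance, but it takes a different route from the paper's proof, so let me compare. The paper never looks at vertex values or splits into cases: starting from \eqref{eq:hammera} and the Orlicz approximation property \eqref{eq:a6}, it reduces at once to the quantities $|T|\,\varphi_{\vert\nabla_{\!h}v_h(T)\vert}(|T|^{-1}\|\jump{v_h}_S\|_{L^1(S)})$ for \emph{all} sides $S\in\mathcal S_h(T)\setminus\Gamma_N$, and then exploits the single observation that $\pi_h^S\jump{v_h}_S=0=\jump{v}_S$ on every such side (interior and Dirichlet alike), so that the $L^1$ trace--Poincar\'e estimate $\|w-\pi_h^Sw\|_{L^1(S)}\le c\,\|\nabla w\|_{L^1(\omega_S)}$ applied to $w=v_h-v$, Jensen's inequality, and one application of the patch-shift-to-element-shift estimate (Lemma \ref{lem:patch_to_element}) produce the distance term and the jump term simultaneously. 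You instead pass to the vertex values of $v_h-I_h^{\textit{av}}v_h$ via an inverse estimate, treat interior/Neumann vertices by telescoping midpoint-vanishing jumps into gradient jumps (so these contributions land purely in the jump norm, with no competitor $v$ needed), and invoke the competitor only at Dirichlet vertices; the price is a case distinction, several separate changes of shift \eqref{lem:shift-change.1} whose surplus terms you must telescope by hand (essentially re-proving Lemma \ref{lem:patch_to_element} for constants), and the boundary-resolution remark you already noted. Your version is slightly sharper in that the $\textup{dist}_F$-term arises only from Dirichlet sides, while the paper's is shorter and uniform.

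One spot in your Dirichlet step should be tightened: the bound $|(v_h|_{T_0})(z)|\le c\,h_{S_0}|(\nabla_{\!h}v_h(T_0))_{\mathrm{tan}}|$ does not by itself connect to the competitor $v$, and the ``shifted-Orlicz trace--Poincar\'e inequality'' you invoke is not stated anywhere. The clean way to realize exactly what you describe is the scalar chain
\begin{align*}
  |(v_h|_{T_0})(z)|\;\le\;\|v_h\|_{L^\infty(S_0)}\;\le\; c\,|S_0|^{-1}\|v_h\|_{L^1(S_0)}
  \;=\;c\,|S_0|^{-1}\big\|(v_h|_{T_0}-v)-\pi_h^{S_0}(v_h|_{T_0}-v)\big\|_{L^1(S_0)}
  \;\le\;c\,|S_0|^{-1}\|\nabla_{\!h}v_h-\nabla v\|_{L^1(T_0)}\,,
\end{align*}
using the $L^\infty$--$L^1$ equivalence for polynomials on $S_0$ (cf.\ \cite[Lemma 12.1]{EG21}), the fact that $\pi_h^{S_0}(v_h|_{T_0}-v)=v_h(x_{S_0})=0$, and the $L^1$ trace--Poincar\'e estimate (cf.\ \cite[Corollary A.19]{kr-phi-ldg}); then Jensen's inequality for $\varphi_{\vert\nabla_{\!h}v_h(T)\vert}$, one change of shift to $\vert\nabla_{\!h}v_h(T_0)\vert$, and \eqref{eq:hammera} give the $\|F(\nabla_{\!h}v_h)-F(\nabla v)\|_{L^2(T_0;\mathbb R^d)}^2$-term plus jump terms, as you intend. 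With this substitution (and the implicit assumption that $\Gamma_D$ is resolved by boundary sides, which the paper also uses), your proof goes through with constants depending only on the characteristics of $\AAA$ and $\omega_0$.
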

		    
		    \begin{proof}
		        Using \eqref{eq:hammera} and element-wise for every ${T\in \mathcal{T}_h}$, the Orlicz-approximation properties of $I_h^{\textit{av}}:\smash{\mathcal{S}^{1,\textit{cr}}_D(\mathcal{T}_h)}\to \mathcal{S}^1_D(\mathcal{T}_h)$, cf. \eqref{eq:a6},  with $\psi=\varphi$ and $a=\vert \nabla_{\! h} v_h(T)\vert$,~where~we,~for~every~${T\in \mathcal{T}_h}$,~write $\vert \nabla_{\! h} v_h(T)\vert$ \hspace{-0.1mm}to \hspace{-0.1mm}indicate \hspace{-0.1mm}that \hspace{-0.1mm}the \hspace{-0.1mm}shift \hspace{-0.1mm}on \hspace{-0.1mm}the \hspace{-0.1mm}patch \hspace{-0.1mm}$\omega_T$ \hspace{-0.1mm}depends \hspace{-0.1mm}on \hspace{-0.1mm}the \hspace{-0.1mm}value \hspace{-0.1mm}of \hspace{-0.1mm}$\vert \nabla_{\! h} v_h\vert$ \hspace{-0.1mm}on \hspace{-0.1mm}the~\hspace{-0.1mm}element~\hspace{-0.1mm}$T$,  $\|\jump{v_h}_S\|_{L^\infty(S)}\hspace{-0.17em}\leq\hspace{-0.17em} c\,\vert S\vert^{-1}\|\jump{v_h}_S\|_{L^1(S)}$ \hspace{-0.2mm}(cf.\ \hspace{-0.2mm}\cite[\hspace{-0.2mm}Lemma \hspace{-0.2mm}12.1]{EG21}), and~\hspace{-0.2mm}${\vert T\vert\hspace{-0.17em} \sim\hspace{-0.17em} h_T\vert S\vert}$~\hspace{-0.2mm}for~\hspace{-0.2mm}all~\hspace{-0.2mm}${T\hspace{-0.17em}\in\hspace{-0.17em} \mathcal{T}_h}$,~\hspace{-0.1mm}${S\hspace{-0.17em}\in\hspace{-0.17em}\mathcal{S}_h(T)}$, with a constant depending only on~the~\mbox{chunkiness}~${\omega_0>0}$, 
		        for~every~${T\in \mathcal{T}_h}$, we find that
		       \begin{align}\label{lem:best-approx-inv.1}
		            \begin{aligned}
		                \|F(\nabla_{\! h} v_h)-F(\nabla I_h^{\textit{av}} v_h)\|_{L^2(T;\mathbb{R}^d)}^2&\leq c\,\rho_{\varphi_{\vert \nabla_{\! h} v_h(T)\vert     },T}(\nabla_{\! h}v_h-\nabla I_h^{\textit{av}} v_h)
		                \\&\leq 
		                c\,\sum_{S\in \mathcal{S}_h(T)\setminus\Gamma_N}{h_T\,\vert S\vert \varphi_{\vert \nabla_{\! h} v_h(T)\vert}(h_T^{-1}\vert \jump{v_h}_S\vert)\,}
		                \\&\leq 
		                c\,\sum_{S\in \mathcal{S}_h(T)\setminus\Gamma_N}{\vert T\vert\, \varphi_{\vert \nabla_{\! h} v_h(T)\vert}(\vert T\vert^{-1}\| \jump{v_h}_S\|_{L^1(S)})}
		                \,.
		            \end{aligned}
		       \end{align}\newpage
		      \hspace{-5mm}Next, for all $S\hspace{-0.16em}\in \hspace{-0.16em}\mathcal{S}_h$, we denote by $\pi_h^S\hspace{-0.16em}:\hspace{-0.16em}L^1(S)\hspace{-0.16em}\to\hspace{-0.16em} \mathbb{R}$, the side-wise (local) $L^2$-projection~operator onto constant functions, for every $w\hspace{-0.1em}\in\hspace{-0.1em} L^1(S)$ defined by ${\pi_h^Sw\coloneqq \fint_S{w\,\mathrm{d}s}}$.~Since~for~every~${w\in W^{1,1}(T)}$, where $T\in \mathcal{T}_h$ with $T\subseteq \omega_S$, by the $L^1$-stability of $\pi_h^S\colon L^1(S)\to \mathbb{R}$ and~\cite[Corollary~A.19]{kr-phi-ldg},~it~holds
		      \begin{align*}
		          \|w-\pi_h^S w\|_{L^1(S)}&\leq \|w-\Pi_hw -\pi_h^S(w-\Pi_hw)\|_{L^1(S)}
		          \\&\leq 2\,\|w-\Pi_hw \|_{L^1(S)}
		          \\&
		          \leq c\,\|\nabla w\|_{L^1(T;\mathbb{R}^d)}\,,
		      \end{align*}
		      where $c>0$ depends only on the~chunkiness $ \omega_0>0 $.
                Next, let $v\in W^{1,p}_D(\Omega)$ be fixed,~but~arbitrary. Using~that~${\pi_h^S\jump{v_h}_S=\jump{v}_S=0}$~in~$L^1(S)$ for all $S\in \mathcal{S}_h(T)\setminus\Gamma_N$ and $T\in \mathcal{T}_h$, for every $T\in \mathcal{T}_h$ and $S\in \mathcal{S}_h(T)\setminus\Gamma_N$, we find that
		      \begin{align}\label{lem:best-approx-inv.2.0}
		            \begin{aligned}
		            \| \jump{v_h}_S\|_{L^1(S)}&=\| \jump{v_h-v}_S-\pi_h^S\jump{v_h-v}_S\|_{L^1(S)}
		                \\&
                  \leq \| \nabla_{\! h} v_h-\nabla v\|_{L^1(\omega_S;\mathbb{R}^d)}\,.
		                \end{aligned}
		      \end{align}
		      Using~in~\eqref{lem:best-approx-inv.1}, \eqref{lem:best-approx-inv.2.0}, $\vert T\vert\sim \vert \omega_S\vert\sim\vert \omega_T\vert$ for all $T\in \mathcal{T}_h$, $S\in \mathcal{S}_h(T)$, where $c>0$ depends only on the chunkiness $ \omega_0>0 $, Jensen's inequality, and Lemma \ref{lem:patch_to_element}, for every $T\in \mathcal{T}_h$, we~deduce~that 
		       \begin{align}\label{lem:best-approx-inv.3}
		            \begin{aligned}
		          \|F(\nabla I_h^{\textit{av}} v_h)-F(\nabla_{\! h} v_h)\|_{L^2(T;\mathbb{R}^d)}^2
		                &
		                \leq c\,\sum_{S\in \mathcal{S}_h(T)\setminus\Gamma_N}{\vert \omega_S\vert\,\varphi_{\vert \nabla_{\! h} v_h(T)\vert}
		                (\vert \omega_S\vert^{-1}\| \nabla_{\! h} v_h-\nabla v\|_{L^1(\omega_S;\mathbb{R}^d)})}
		                \\&\leq c\,\sum_{S\in \mathcal{S}_h(T)\setminus\Gamma_N}{\rho_{\varphi_{\vert \nabla_{\! h} v_h(T)\vert},\omega_S}( \nabla_{\! h} v_h-\nabla v)}
		                \\[-1mm]&\leq c\,\rho_{\varphi_{\vert \nabla_{\! h} v_h(T)\vert},\omega_T}( \nabla_{\! h} v_h-\nabla v)
		                \\&\leq  \rho_{\varphi_{\vert \nabla_{\! h} v_h\vert},\omega_T}( \nabla_{\! h} v_h-\nabla v)+c\,\smash{\big\|\smash{h_{\mathcal{S}}^{1/2}}\jump{F(\nabla_{\!h}v_h)}\big\|_{L^2(\mathcal{S}_h^{i}(T);\mathbb{R}^d)}^2}\,.
		               \end{aligned}\hspace{-10mm}
		            \end{align}
		        Eventually, using \eqref{eq:hammera} in \eqref{lem:best-approx-inv.3} and, subsequently, taking the infimum with respect~to~${v\hspace*{-0.1em}\in\hspace*{-0.1em} W^{1,p}_D(\Omega)}$, we conclude the assertion.\enlargethispage{7mm}
		    \end{proof}
	
	\begin{proof}[Proof (of Theorem \ref{thm:best-approxP1CR}).]
	For \hspace{-0.1mm}every \hspace{-0.1mm}$v_h\hspace{-0.1em}\in\hspace{-0.1em} \smash{\mathcal{S}^{1,\textit{cr}}_D(\mathcal{T}_h)}$, \hspace{-0.1mm}using \hspace{-0.1mm}Theorem \hspace{-0.1mm}\ref{P1_best-approx} \hspace{-0.1mm}and \hspace{-0.1mm}Lemma \hspace{-0.1mm}\ref{lem:best-approx-inv},~\hspace{-0.1mm}we~\hspace{-0.1mm}find~\hspace{-0.1mm}that
	     \begin{align*}
	        \begin{aligned}\|F(\nabla u_h^{\textit{c}})\hspace{-0.1em}-\hspace{-0.1em}F(\nabla u)\|_{L^2(\Omega;\mathbb{R}^d)}^2&\leq c\,\|F(\nabla I_h^{\textit{av}} v_h)\hspace{-0.1em}-\hspace{-0.1em}F(\nabla     u)\|_{L^2(\Omega;\mathbb{R}^d)}^2
	        \\& \leq  c\,\|F(\nabla_{\!h} v_h)\hspace{-0.1em}-\hspace{-0.1em}F(\nabla     u)\|_{L^2(\Omega;\mathbb{R}^d)}^2
	        +c\,\|F(\nabla_{\! h}  v_h)\hspace{-0.1em}-\hspace{-0.1em}F(\nabla I_h^{\textit{av}}v_h)\|_{L^2(\Omega;\mathbb{R}^d)}^2
	        \\& \leq  c\,\|F(\nabla_{\!h} v_h)\hspace{-0.1em}-\hspace{-0.1em}F(\nabla     u)\|_{L^2(\Omega;\mathbb{R}^d)}^2
	        +c\,\smash{\big\|\smash{h_{\mathcal{S}}^{1/2}}\jump{F(\nabla_{\!h}v_h)}\big\|_{L^2(\mathcal{S}_h^{i};\mathbb{R}^d)}^2}\,.
	       \end{aligned}
	    \end{align*} 
	    Eventually, taking in 
	    the infimum with respect to  $v_h\in \smash{\mathcal{S}^{1,\textit{cr}}_D(\mathcal{T}_h)}$, we conclude the first claimed estimate. The second claimed estimate follows from the first  and Corollary \ref{cor:efficiency}.
	\end{proof}
	
	\begin{corollary}\label{cor:best-approxCRCR} 
            Let $\AAA\colon\mathbb{R}^d\to \mathbb{R}^d$ satisfy  Assumption \ref{assum:extra_stress} for $p\in (1,\infty)$ and  $\delta\ge 0$,~and~let~$F\colon\mathbb{R}^d\to \mathbb{R}^d$ be defined by \eqref{eq:def_F} for the same $p\in (1,\infty)$     and  $\delta\ge 0$. Then, there exists a
		        constant $c>0$, depending only on the characteristics of $\AAA$~and~the~chunkiness~${\omega_0>0}$, such that
		    \begin{align*}
		    &\|F(\nabla_{\!h} u_h^{\textit{cr}})-F(\nabla u)\|_{L^2(\Omega;\mathbb{R}^d)}^2
		    \\&\leq  c\,  \inf_{v_h\in \mathcal{S}^{1,\textit{cr}}_D(\mathcal{T}_h)}{\big[\|F(\nabla_{\!h}  v_h)-F(\nabla     u)\|_{L^2(\Omega;\mathbb{R}^d)}^2 +\smash{\big\|\smash{h_{\mathcal{S}}^{1/2}}\jump{F(\nabla_{\!h}v_h)}\big\|_{L^2(\mathcal{S}_h^{i};\mathbb{R}^d)}^2}+\mathrm{osc}_h(f,v_h)\big]}\,.
		    \\&\leq  c\,\inf_{v_h\in \mathcal{S}^{1,\textit{cr}}_D(\mathcal{T}_h)}{\big[\|F(\nabla_{\!h}  v_h)-F(\nabla     u)\|_{L^2(\Omega;\mathbb{R}^d)}^2+
		        \textup{dist}_F^2(v_h,\mathcal{S}^{1}_D(\mathcal{T}_h),\mathcal{T}_h)
		       +\textup{osc}_h(f,v_h) \big]}\,.
		    \end{align*}
    \end{corollary}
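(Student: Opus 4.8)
The plan is to imitate the proof of Theorem~\ref{thm:best-approxP1CR}, replacing the conforming best-approximation result Theorem~\ref{P1_best-approx} by the medius estimate Theorem~\ref{thm:best-approx}. First I would fix an arbitrary $v_h\in \smash{\mathcal{S}^{1,\textit{cr}}_D(\mathcal{T}_h)}$ and insert the competitor $I_h^{\textit{av}}v_h\in \mathcal{S}^1_D(\mathcal{T}_h)$ into the infimum in Theorem~\ref{thm:best-approx}, which gives
\begin{align*}
  \|F(\nabla_{\!h} u_h^{\textit{cr}})-F(\nabla u)\|_{L^2(\Omega;\mathbb{R}^d)}^2\leq c\,\big[\|F(\nabla I_h^{\textit{av}}v_h)-F(\nabla u)\|_{L^2(\Omega;\mathbb{R}^d)}^2+\mathrm{osc}_h(f,I_h^{\textit{av}}v_h)\big]\,.
\end{align*}
By the triangle inequality, $\|F(\nabla I_h^{\textit{av}}v_h)-F(\nabla u)\|_{L^2}^2\le 2\|F(\nabla I_h^{\textit{av}}v_h)-F(\nabla_{\!h}v_h)\|_{L^2}^2+2\|F(\nabla_{\!h}v_h)-F(\nabla u)\|_{L^2}^2$, and summing Lemma~\ref{lem:best-approx-inv} over all $T\in \mathcal{T}_h$, bounding $\textup{dist}_F^2(v_h,W^{1,p}_D(\Omega),\omega_T)\le \|F(\nabla_{\!h}v_h)-F(\nabla u)\|_{L^2(\omega_T;\mathbb{R}^d)}^2$ since $u\in W^{1,p}_D(\Omega)$, and exploiting the finite overlap of the patches $\{\omega_T\}_{T\in \mathcal{T}_h}$, yields
\begin{align*}
  \|F(\nabla_{\!h}v_h)-F(\nabla I_h^{\textit{av}}v_h)\|_{L^2(\Omega;\mathbb{R}^d)}^2\leq c\,\|F(\nabla_{\!h}v_h)-F(\nabla u)\|_{L^2(\Omega;\mathbb{R}^d)}^2+c\,\big\|\smash{h_{\mathcal{S}}^{1/2}}\jump{F(\nabla_{\!h}v_h)}\big\|_{L^2(\mathcal{S}_h^{i};\mathbb{R}^d)}^2\,.
\end{align*}

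Next I would deal with the extra oscillation term: applying the change of shift \eqref{lem:shift-change.3} element-wise so as to pass in $\mathrm{osc}_h(f,I_h^{\textit{av}}v_h)=\sum_{T\in \mathcal{T}_h}\rho_{(\varphi_{\vert\nabla I_h^{\textit{av}}v_h\vert})^*,T}(h_T(f-f_h))$ from the shift $\vert\nabla I_h^{\textit{av}}v_h\vert$ to the shift $\vert\nabla_{\!h}v_h\vert$ gives
\begin{align*}
  \mathrm{osc}_h(f,I_h^{\textit{av}}v_h)\leq c\,\mathrm{osc}_h(f,v_h)+c\,\|F(\nabla I_h^{\textit{av}}v_h)-F(\nabla_{\!h}v_h)\|_{L^2(\Omega;\mathbb{R}^d)}^2\,,
\end{align*}
and the last term is again controlled by the displayed estimate above. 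Combining these bounds and taking the infimum over $v_h\in \smash{\mathcal{S}^{1,\textit{cr}}_D(\mathcal{T}_h)}$ proves the first claimed inequality. For the second inequality I would substitute Corollary~\ref{cor:efficiency}: summing it over $S\in \mathcal{S}_h^{i}$ and using the finite overlap of $\{\omega_S\}_{S\in\mathcal{S}_h^{i}}$ bounds $\big\|\smash{h_{\mathcal{S}}^{1/2}}\jump{F(\nabla_{\!h}v_h)}\big\|_{L^2(\mathcal{S}_h^{i};\mathbb{R}^d)}^2$ by $c\,\|F(\nabla_{\!h}v_h)-F(\nabla u)\|_{L^2(\Omega;\mathbb{R}^d)}^2+c\,\textup{osc}_h(f,v_h)+c\,\textup{dist}_F^2(v_h,\mathcal{S}^1_D(\mathcal{T}_h),\mathcal{T}_h)$, and inserting this into the first inequality gives the second.

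I do not expect a genuine obstacle here; the only delicate point is the shift bookkeeping — each time one passes between $v_h$ and $I_h^{\textit{av}}v_h$, whether inside a shifted $N$-function or inside the oscillation, one must pay a natural-distance term $\|F(\nabla_{\!h}v_h)-F(\nabla I_h^{\textit{av}}v_h)\|^2$, which is precisely what Lemma~\ref{lem:best-approx-inv} (and, through it, Lemma~\ref{lem:patch_to_element}) renders harmless at the price of the jump terms. No analytic ingredient beyond Theorem~\ref{thm:best-approx}, Lemma~\ref{lem:best-approx-inv}, Corollary~\ref{cor:efficiency}, and the change-of-shift inequality \eqref{lem:shift-change.3} is required, and one only has to verify that all absorbed constants depend solely on the characteristics of $\AAA$ and the chunkiness $\omega_0>0$.
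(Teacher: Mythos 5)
Your argument is correct, and it uses the same circle of ideas as the paper, but routed slightly differently: the paper's proof is literally ``immediate consequence of Theorem~\ref{thm:best-approx} in conjunction with Theorem~\ref{thm:best-approxP1CR}'', i.e., it passes through the conforming solution $u_h^{\textit{c}}$ (Theorem~\ref{thm:best-approx} with the competitor $u_h^{\textit{c}}$, then Theorem~\ref{thm:best-approxP1CR} to bound $\|F(\nabla u_h^{\textit{c}})-F(\nabla u)\|_{L^2(\Omega;\mathbb{R}^d)}^2$ by the Crouzeix--Raviart best-approximation plus jump terms), whereas you never invoke $u_h^{\textit{c}}$ or Theorem~\ref{thm:best-approxP1CR} at all and instead insert the competitor $I_h^{\textit{av}}v_h$ directly into Theorem~\ref{thm:best-approx}, controlling $\|F(\nabla_{\!h}v_h)-F(\nabla I_h^{\textit{av}}v_h)\|_{L^2(\Omega;\mathbb{R}^d)}^2$ by Lemma~\ref{lem:best-approx-inv} (summed with finite patch overlap and $\textup{dist}_F^2(v_h,W^{1,p}_D(\Omega),\omega_T)\le\|F(\nabla_{\!h}v_h)-F(\nabla u)\|_{L^2(\omega_T;\mathbb{R}^d)}^2$). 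In effect you inline the proof of Theorem~\ref{thm:best-approxP1CR}, and your explicit treatment of the oscillation term via the shift change \eqref{lem:shift-change.3} (paying $\|F(\nabla I_h^{\textit{av}}v_h)-F(\nabla_{\!h}v_h)\|_{L^2(\Omega;\mathbb{R}^d)}^2$, exactly as in \eqref{cor:efficiency.2}) makes precise a step that the paper's one-line proof leaves implicit — on the paper's route the analogous step is the change of shift from $\vert\nabla u_h^{\textit{c}}\vert$ to $\vert\nabla_{\!h}v_h\vert$ in $\mathrm{osc}_h(f,u_h^{\textit{c}})$. The passage to the second inequality via Corollary~\ref{cor:efficiency}, summed over sides with finite overlap of $\omega_S$ (and sum of local infima bounded by the infimum of the sum), coincides with the paper's treatment in Theorem~\ref{thm:best-approxP1CR}. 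All constants you absorb come from \eqref{eq:hammera}, \eqref{lem:shift-change.1}--\eqref{lem:shift-change.3}, \eqref{eq:a6} and mesh-overlap counts, hence depend only on the characteristics of $\AAA$ and $\omega_0$, as required; I see no gap.
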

		    
    \begin{proof}
        Immediate consequence of Theorem \ref{thm:best-approx} in conjunction with Theorem \ref{thm:best-approxP1CR}.
	\end{proof}
	
	   \begin{remark}
	   If  the natural regularity assumption \eqref{natural_regularity} is satisfied, then, using \eqref{rem:best-approxP1CR.1}, 
	        the best-approximation result in  Corollary \ref{cor:best-approxCRCR} can be refined to
	       \begin{align*}
	        \begin{aligned}
	            \|F(\nabla u_h^{\textit{cr}})-F(\nabla u)\|_{L^2(\Omega;\mathbb{R}^d)}^2&\leq c\,  \inf_{v_h\in \mathcal{S}^{1,\textit{cr}}_D(\mathcal{T}_h)}{\big[\|F(\nabla_{\!h}  v_h)-F(\nabla u)\|_{L^2(\Omega;\mathbb{R}^d)}^2+\textup{osc}_h(f,v_h)\big]}\\&\quad+c\,\smash{\|h_{\mathcal{T}}\nabla F(\nabla u)\|_{L^2(\Omega;\mathbb{R}^{d\times d})}^2}\,.
	            \end{aligned}
	       \end{align*}
	    \end{remark}

    \section{A priori error analysis}\label{sec:a_priori}

        \qquad A further consequence of the medius error analysis (cf.\ Theorem \ref{thm:best-approx}) in Section \ref{sec:medius}~is~the~insight that the distance~of $u_h^{\textit{cr}}\hspace{-0.15em}\in\hspace{-0.15em} \smash{\smash{\mathcal{S}^{1,\textit{cr}}_D(\mathcal{T}_h)}}$ to $u\hspace{-0.15em}\in\hspace{-0.15em}\smash{ W^{1,p}_D(\Omega)}$, up to oscillation terms, is bounded by the distance of $u_h^c\hspace{-0.1em}\in\hspace{-0.1em} \mathcal{S}^1_D(\mathcal{T}_h)$ to $u\hspace{-0.1em}\in\hspace{-0.1em} \smash{W^{1,p}_D(\Omega)}$,  each measured in the~natural~distance, cf.\ Remark \ref{rem:natural_dist}. As a result, the approximation rate result in Theorem \ref{P1_apriori} for the $\mathcal{S}^1_D(\mathcal{T}_h)$-approximation \eqref{eq:pDirichletS1D} of \eqref{eq:pDirichletW1p} inherits to the $\smash{\mathcal{S}^{1,\textit{cr}}_D(\mathcal{T}_h)}$-approximation \eqref{eq:pDirichletS1crD}. 
        \enlargethispage{3mm}

		\begin{theorem}\label{thm:rate_u}
		   Let $\AAA\colon\mathbb{R}^d\to \mathbb{R}^d$ satisfy  Assumption \ref{assum:extra_stress} for $p\in(1,\infty)$ and  $\delta\ge 0$. Moreover, let $\varphi\colon \mathbb{R}_{\ge 0}\to \mathbb{R}_{\ge 0}$ be defined by \eqref{eq:def_phi} and let $F\colon\mathbb{R}^d\to \mathbb{R}^d$ be defined by \eqref{eq:def_F}, each for the same $p\in (1,\infty)$ and  $\delta\ge 0$. If \eqref{natural_regularity} is satisfied, then,
		    there exists a constant $c>0$, depending on the characteristics of $\AAA$ and the chunkiness $\omega_0>0$, such that, setting $h_{\max}\coloneqq \max_{T\in \mathcal{T}_h}{h_T}$, it holds
		  \begin{align*}
		    \|F(\nabla_{\!h} u_h^{\textit{cr}})-F(\nabla u)\|_{L^2(\Omega;\mathbb{R}^d)}^2&\leq  c\,\|h_{\mathcal{T}}\nabla F(\nabla u)\|_{L^2(\Omega;\mathbb{R}^{d\times d})}^2+c\,\rho_{(\varphi_{\vert \nabla u\vert})^*,\Omega}(h_{\mathcal{T}}f )
		    \\&\leq c\,\|h_{\mathcal{T}}\nabla F(\nabla u)\|_{L^2(\Omega;\mathbb{R}^{d\times d})}^2+c\, h_{\max}^2\,\big(\rho_{\varphi^*,\Omega}(f)  +\rho_{\varphi,\Omega}(\nabla u)\big) \,.
		    \end{align*}
		\end{theorem}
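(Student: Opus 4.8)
The plan is to bootstrap from the medius estimate of Theorem~\ref{thm:best-approx} and the conforming a priori estimate of Theorem~\ref{P1_apriori}, paying the price only in the oscillation term. Concretely, I would apply Theorem~\ref{thm:best-approx} to the admissible choice $v_h=u_h^{\textit{c}}\in\mathcal{S}^1_D(\mathcal{T}_h)$, which gives
\[
\|F(\nabla_{\!h}u_h^{\textit{cr}})-F(\nabla u)\|_{L^2(\Omega;\mathbb{R}^d)}^2\le c\,\big[\|F(\nabla u_h^{\textit{c}})-F(\nabla u)\|_{L^2(\Omega;\mathbb{R}^d)}^2+\mathrm{osc}_h(f,u_h^{\textit{c}})\big].
\]
By Theorem~\ref{P1_apriori} the first term is bounded by $c\,\|h_{\mathcal{T}}\nabla F(\nabla u)\|_{L^2(\Omega;\mathbb{R}^{d\times d})}^2$. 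For the oscillation term I would first discard $f_h$: since $\nabla u_h^{\textit{c}}$ is constant on each $T\in\mathcal{T}_h$, $f_h|_T=\fint_T f\,\mathrm dx$ and hence $|f_h|\le\fint_T|f|$, so the uniform $\Delta_2$-property of $(\varphi_a)^*$ together with Jensen's inequality yields $\mathrm{osc}_h(f,u_h^{\textit{c}},T)\le c\,\rho_{(\varphi_{|\nabla u_h^{\textit{c}}|})^*,T}(h_T f)$. Summing over $T$ and applying the change of shift \eqref{lem:shift-change.3} pointwise, with $a=\nabla u_h^{\textit{c}}(x)$, $b=\nabla u(x)$, $t=h_{\mathcal{T}}(x)|f(x)|$, and then invoking Theorem~\ref{P1_apriori} once more to absorb the surplus $\|F(\nabla u_h^{\textit{c}})-F(\nabla u)\|_{L^2}^2$, gives $\mathrm{osc}_h(f,u_h^{\textit{c}})\le c\,\rho_{(\varphi_{|\nabla u|})^*,\Omega}(h_{\mathcal{T}}f)+c\,\|h_{\mathcal{T}}\nabla F(\nabla u)\|_{L^2(\Omega;\mathbb{R}^{d\times d})}^2$. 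This establishes the first displayed inequality.

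For the second inequality I would estimate $\rho_{(\varphi_{|\nabla u|})^*,\Omega}(h_{\mathcal{T}}f)$ using the flux regularity, which is exactly what Lemma~\ref{lem:reg_equiv} and Lemma~\ref{lem:reg_dual_source} were prepared for. Setting $z\coloneqq\AAA(\nabla u)$ we have $\mathrm{div}\,z=-f$, and \eqref{natural_regularity} together with Lemma~\ref{lem:reg_equiv} gives $F^*(z)\in W^{1,2}(\Omega;\mathbb{R}^d)$ with $|\nabla F^*(z)|\sim|\nabla F(\nabla u)|$ a.e. Using the asymptotics $(\varphi_{|\nabla u|})^*(t)\sim((\delta+|\nabla u|)^{p-1}+t)^{p'-2}t^2$ (Remark following \eqref{eq:phi_shifted}) and $(\delta+|\nabla u|)^{p-1}\sim\delta^{p-1}+|\nabla u|^{p-1}\sim\delta^{p-1}+|z|$, one has pointwise
\[
(\varphi_{|\nabla u|})^*(h_{\mathcal{T}}|f|)\le c\,(\delta^{p-1}+|z|+h_{\mathcal{T}}|f|)^{p'-2}(h_{\mathcal{T}}|f|)^2 .
\]
Since $|f|=|\mathrm{div}\,z|\le c\,|\nabla z|$ a.e.\ (cf.\ Lemma~\ref{lem:reg_dual}) and Lemma~\ref{lem:reg_dual_source} gives $(\delta^{p-1}+|z|)^{p'-2}|\nabla z|^2\sim|\nabla F^*(z)|^2\sim|\nabla F(\nabla u)|^2$, I would distinguish the two regimes of $p$. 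For $p\ge 2$ (so $p'-2\le 0$) one has $(\delta^{p-1}+|z|+h_{\mathcal{T}}|f|)^{p'-2}\le(\delta^{p-1}+|z|)^{p'-2}$, whence $(\varphi_{|\nabla u|})^*(h_{\mathcal{T}}|f|)\le c\,h_{\mathcal{T}}^2|\nabla F(\nabla u)|^2$. For $p\le 2$ (so $p'\ge 2$) one splits $(\delta^{p-1}+|z|+h_{\mathcal{T}}|f|)^{p'-2}\le c\,(\delta^{p-1}+|z|)^{p'-2}+c\,(h_{\mathcal{T}}|f|)^{p'-2}$; the first part again contributes $c\,h_{\mathcal{T}}^2|\nabla F(\nabla u)|^2$, the second the term $c\,(h_{\mathcal{T}}|f|)^{p'}\le c\,h_{\max}^2|f|^{p'}\le c\,h_{\max}^2\varphi^*(|f|)$ (using $h_{\max}\le 1$, $p'\ge 2$, and $t^{p'}\le c\,\varphi^*(t)$). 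Integrating over $\Omega$ gives $\rho_{(\varphi_{|\nabla u|})^*,\Omega}(h_{\mathcal{T}}f)\le c\,\|h_{\mathcal{T}}\nabla F(\nabla u)\|_{L^2(\Omega;\mathbb{R}^{d\times d})}^2+c\,h_{\max}^2\,\rho_{\varphi^*,\Omega}(f)$, which, combined with the first inequality and the trivial $0\le h_{\max}^2\rho_{\varphi,\Omega}(\nabla u)$, yields the second.

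The main obstacle is the oscillation term. In contrast to the linear case $p=2$ of \cite{Bre15}, the shift in $(\varphi_{|\nabla v_h|})^*$ must be transported from the discrete gradient onto $\nabla u$, and this only closes because the residual is a natural-distance quantity that can be absorbed into the conforming error via Theorem~\ref{P1_apriori}; this is precisely where the change-of-shift lemma and the medius machinery are indispensable. The second, more delicate point is the optimal estimate of $\rho_{(\varphi_{|\nabla u|})^*,\Omega}(h_{\mathcal{T}}f)$ for $p>2$: a purely $N$-function-based argument only produces the suboptimal weight $h_{\max}^{p'}<h_{\max}^2$, so one genuinely needs the flux regularity $F^*(\AAA(\nabla u))\in W^{1,2}$ to trade the oscillation for the already present term $\|h_{\mathcal{T}}\nabla F(\nabla u)\|_{L^2}^2$.
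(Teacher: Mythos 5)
Your route to the first displayed inequality, and your treatment of the case $p\ge 2$ in the second, coincide with the paper's own proof: choose $v_h=u_h^{\textit{c}}$ in Theorem~\ref{thm:best-approx}, remove $f_h$ from the oscillation by convexity/Jensen (legitimate, since the shift $\vert\nabla u_h^{\textit{c}}\vert$ is element-wise constant), transport the shift from $\nabla u_h^{\textit{c}}$ to $\nabla u$ via \eqref{lem:shift-change.3} and absorb the surplus with Theorem~\ref{P1_apriori}; and for $p\ge 2$ the chain $f=-\mathrm{div}\,z$, $z\in W^{1,p'}(\Omega;\mathbb{R}^d)$ (Lemma~\ref{lem:reg_dual}\,(ii)), Lemma~\ref{lem:reg_dual_source} and Lemma~\ref{lem:reg_equiv} is exactly the paper's argument, only written pointwise instead of in modular form.

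The gap is in your case $p\le 2$. You claim that the ``first part'' $(\delta^{p-1}+\vert z\vert)^{p'-2}(h_{\mathcal T}\vert f\vert)^2$ is again bounded by $c\,h_{\mathcal T}^2\vert\nabla F(\nabla u)\vert^2$, using $\vert f\vert=\vert\mathrm{div}\,z\vert\le c\,\vert\nabla z\vert$ a.e.\ together with Lemma~\ref{lem:reg_dual_source}. This presupposes that $z=\AAA(\nabla u)$ is weakly differentiable, which for $p<2$ is only guaranteed when $\delta>0$ (Lemma~\ref{lem:reg_dual}\,(i)); for $p<2$ and $\delta=0$ the assumption \eqref{natural_regularity}, i.e.\ $F^*(z)\in W^{1,2}(\Omega;\mathbb{R}^d)$, does not yield $z\in W^{1,1}_{\mathrm{loc}}(\Omega;\mathbb{R}^d)$, because with $\delta=0$ one has $z=(F^*)^{-1}(F^*(z))$ and $(F^*)^{-1}$ is merely H\"older near the origin (its derivative blows up there, as $p'>2$), so $\nabla z$ and the pointwise bound $\vert f\vert\le c\,\vert\nabla z\vert$ are simply not available and the lemmas you cite do not apply. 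This is precisely why the theorem carries the extra term $h_{\max}^2\,\rho_{\varphi,\Omega}(\nabla u)$ and why the paper handles $p\in(1,2)$ without the flux: since $p'-2\ge 0$ one pulls $h^2$ out of $((\delta+\vert a\vert)^{p-1}+h\,t)^{p'-2}(h\,t)^2$ for $h\in[0,1]$ and then applies the shift change \eqref{lem:shift-change.3} with $b=0$, giving $(\varphi_{\vert a\vert})^*(h\,t)\le c\,h^2\big(\varphi^*(t)+\varphi(\vert a\vert)\big)$ and hence $\rho_{(\varphi_{\vert\nabla u\vert})^*,\Omega}(h_{\mathcal T}f)\le c\,h_{\max}^2\big(\rho_{\varphi^*,\Omega}(f)+\rho_{\varphi,\Omega}(\nabla u)\big)$. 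Replacing your splitting by this estimate (i.e.\ not introducing $z$ at all for $p\le 2$) closes the proof; as written, your argument asserts for $p<2$, $\delta=0$ a stronger bound (without $\rho_{\varphi,\Omega}(\nabla u)$) that your tools cannot justify.
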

		
		\begin{proof}\let\qed\relax
		         Using the convexity of $(\varphi_{\vert \nabla_{\!h} v_h(x)\vert})^*\colon\mathbb{R}_{\ge 0}\to \mathbb{R}_{\ge 0}$~for~a.e.\ $x\in \Omega$, $\sup_{a\ge 0}{\Delta_2((\varphi_a)^*)}<\infty$, the Orlicz-stability of $\Pi_h$ (cf. \cite[Corollary A.8, (A.12)]{kr-phi-ldg}), and the shift change \eqref{lem:shift-change.3},~for~every~$v_h\in\mathcal{S}^1_D(\mathcal{T}_h)$, we find that
		        \begin{align}\label{cor:rate_improve.1}
		            \begin{aligned}
		            \textrm{osc}_h(f,v_h)
		           & \leq c\,\smash{\rho_{(\varphi_{\vert \nabla v_h\vert})^*,\Omega}(h_{\mathcal{T}} f )}
		            \\&\leq c\,\smash{\rho_{(\varphi_{\vert \nabla u\vert})^*,\Omega}(h_{\mathcal{T}}f )}+c\,\|F(\nabla v_h)-F(\nabla u)\|_{L^2(\Omega;\mathbb{R}^d)}^2\,.
		            \end{aligned}
		        \end{align}
    		    Using \eqref{cor:rate_improve.1} in Theorem \ref{thm:best-approx}, for every $v_h\in\mathcal{S}^1_D(\mathcal{T}_h)$, we deduce that
                \begin{align}
		             \|F(\nabla_{\!h} u_h^{\textit{cr}})-F(\nabla u)\|_{L^2(\Omega;\mathbb{R}^d)}^2&\leq  c \,\|F(\nabla v_h)-F(\nabla u)\|_{L^2(\Omega;\mathbb{R}^d)}^2+ c\,\rho_{(\varphi_{\vert \nabla u\vert})^*,\Omega}(h_{\mathcal{T}}f )\,.\label{cor:rate_improve.2}
		        \end{align}
                Choosing $v_h=u_h^{\textit{c}}\in \mathcal{S}^1_D(\mathcal{T}_h)$ in \eqref{cor:rate_improve.2} and resorting to Theorem \ref{P1_apriori}, we arrive at
                \begin{align}\label{cor:rate_improve.3}
                    \|F(\nabla_{\!h} u_h^{\textit{cr}})-F(\nabla u)\|_{L^2(\Omega;\mathbb{R}^d)}^2&\leq  c \,\|h_{\mathcal{T}}\nabla F(\nabla u)\|_{L^2(\Omega;\mathbb{R}^{d\times d})}^2+ c\,\rho_{(\varphi_{\vert \nabla u\vert})^*,\Omega}(h_{\mathcal{T}}f )\,,
                \end{align}
                which is the first claimed a priori error estimate. For the second claimed a priori~error~estimate, we need to distinguish between the cases $p\in (1,2)$ and $p\in [2,\infty)$:

                \textit{Case $p\in (1,2)$.} If $p\in (1,2)$, then, there holds the elementary inequality
		    \begin{align*}
		     \smash{(\varphi_{\vert a\vert })^*(h\,t)}&\leq \smash{c\,\smash{ h^2}\big(\varphi^*(t)+\varphi(\vert a\vert)\big)}&&\quad\textup{ for all }a\in \mathbb{R}^d\,,\; t\ge 0\,,\; h\in [0,1]\,,
		         \end{align*}
		         which follows from the definition of  shifted \mbox{$N$-functions}, cf. \eqref{eq:phi_shifted}, and  the shift change~\eqref{lem:shift-change.3}~(i.e., with $b = 0$ and using that $\vert F(a)\vert^2=\varphi(\vert a\vert )$ for all $a\in \mathbb{R}^d$), so that
		        \begin{align}\label{cor:rate_improve.4}
		           \smash{  \rho_{(\varphi_{\vert \nabla u\vert})^*,\Omega}(h_{\mathcal{T}}f )}\leq c\, \smash{h_{\max}^2}\,\big(\rho_{\varphi^*,\Omega}(f)  +\rho_{\varphi,\Omega}(\nabla u)\big)\,.
		        \end{align}

                \textit{Case $p\in [2,\infty)$.} Inasmuch as the flux $z\hspace*{-0.1em}\coloneqq \hspace*{-0.1em}\AAA(\nabla u)\hspace*{-0.1em}\in\hspace*{-0.1em} W^{p'}_N(\textup{div};\Omega)$ 
                satisfies $\textup{div}\,z\hspace*{-0.1em}=\hspace*{-0.1em}-f$~in~$L^{p'}(\Omega)$
                and    \hspace{-0.1mm}$(\varphi^*)_{\vert \AAA(a)\vert }\hspace{-0.1em}\sim\hspace{-0.1em} (\varphi_{\vert a\vert })^*$ \hspace{-0.1mm}uniformly~\hspace{-0.1mm}in~\hspace{-0.1mm}${a\hspace{-0.1em}\in \hspace{-0.1em}\mathbb{R}^d}$ \hspace{-0.1mm}(cf.\ \hspace{-0.1mm}\cite[Lemma \hspace{-0.1mm}26]{die-ett}~\hspace{-0.1mm}with~\hspace{-0.1mm}$\vert \AAA(a)\vert\hspace{-0.1em}=\hspace{-0.1em}\varphi'(\vert a\vert)$~\hspace{-0.1mm}for~\hspace{-0.1mm}all~\hspace{-0.1mm}$ {a\hspace{-0.1em}\in\hspace{-0.1em} \mathbb{R}^d }$), we have that\vspace{-1mm}
		    \begin{align}\label{cor:rate_improve.5}
		        \smash{\rho_{(\varphi_{\smash{\vert \nabla  u\vert}})^*,\Omega}(h_{\mathcal{T}}f)\leq c\, \rho_{(\varphi^*)_{\vert z\vert},\Omega}(h_{\mathcal{T}}\nabla z)}\,.
		    \end{align}
             Since $p\ge  2$, i.e., $p'\leq 2$, and, thus, $(\varphi^*)_{\vert a\vert}(h\,t)\leq c\,h^2\,(\delta^{p-1}+\vert a\vert)^{p'-2}t^2$ for all $a\in \mathbb{R}^d$~and~${t,h\ge 0}$, 
		         we have that\vspace{-1mm}
		         \begin{align}\label{cor:rate_improve.6}
		             \smash{\rho_{(\varphi^*)_{\vert z\vert},\Omega}(h_{\mathcal{T}} \nabla z)\leq c\,\|h_{\mathcal{T}}(\delta^{p-1}+\vert z\vert)^{\smash{\frac{p'-2}{2}}} \nabla z\|_{L^2(\Omega;\mathbb{R}^{d\times d})}^2\,.}
		         \end{align}
		         In addition, due to Lemma \ref{lem:reg_equiv}, Lemma \ref{lem:reg_dual_source}, and Lemma \ref{lem:reg_dual}, we have that $z\in W^{1,\smash{p'}}(\Omega;\mathbb{R}^d)$~with
		         \begin{align}\label{cor:rate_improve.7}
                    \begin{aligned}
		            \hspace*{-0.5mm} \smash{\|h_{\mathcal{T}}(\delta^{p-1}+\vert z\vert)^{\smash{\frac{p'-2}{2}}} \nabla z\|_{L^2(\Omega;\mathbb{R}^{d\times d})}}\hspace*{-0.1em}\leq\hspace*{-0.1em} c\,\|h_{\mathcal{T}}\nabla F^*(z) \|^2_{L^2(\Omega;\mathbb{R}^{d\times d})}\hspace*{-0.1em}\leq\hspace*{-0.1em} c\,\|h_{\mathcal{T}}\nabla F(\nabla u) \|^2_{L^2(\Omega;\mathbb{R}^{d\times d})}\,.
               \end{aligned}
		         \end{align}
		         Combining \eqref{cor:rate_improve.6} and \eqref{cor:rate_improve.7} in \eqref{cor:rate_improve.5}, we deduce that
           \begin{align}\label{cor:rate_improve.8}
               \smash{\rho_{(\varphi_{\smash{\vert \nabla  u\vert}})^*,\Omega}(h_{\mathcal{T}}f)\leq c\,\|h_{\mathcal{T}}\nabla F(\nabla u) \|^2_{L^2(\Omega;\mathbb{R}^{d\times d})}\,.}
           \end{align}
           As \hspace{-0.1mm}a \hspace{-0.1mm}whole, \hspace{-0.15mm}using \hspace{-0.1mm}\eqref{cor:rate_improve.4} \hspace{-0.1mm}and \hspace{-0.1mm}\eqref{cor:rate_improve.8} \hspace{-0.1mm}in \hspace{-0.1mm}\eqref{cor:rate_improve.3}, \hspace{-0.1mm}we \hspace{-0.1mm}conclude \hspace{-0.1mm}the \hspace{-0.1mm}second \hspace{-0.1mm}claimed \hspace{-0.1mm}a \hspace{-0.1mm}priori~\hspace{-0.1mm}error~\hspace{-0.1mm}estimate.~~~$\qedsymbol$
		\end{proof}

    In the case \eqref{special_case}, i.e., \eqref{eq:pDirichletW1p} and \eqref{eq:pDirichletS1crD} admit equivalent convex minimization problems and we have access to the (discrete) convex duality theory from Subsection \ref{subsec:convex_duality} and~Subsection~\ref{subsec:discrete_convex_duality}, resorting to 
    the (discrete) convex optimality relations \eqref{eq:pDirichletOptimality2} and  \eqref{eq:pDirichletOptimalityCR2}, as well as the fact that the discrete dual solution is uniquely determined by the generalized Marini formula, cf. \eqref{eq:gen_marini},  we are in the position to  derive from Corollary \ref{thm:rate_u} an a priori error estimate for the dual solution and the discrete dual solution, measured in the conjugate natural distance, cf. Remark \ref{rem:conjugate_natural_dist}.
		
		\begin{lemma}\label{lem:rate_z} 
		Let $\AAA\colon\hspace*{-0.1em}\mathbb{R}^d\hspace*{-0.1em}\to\hspace*{-0.1em} \mathbb{R}^d$ be defined by \eqref{special_case} for $p\hspace*{-0.1em}\in\hspace*{-0.1em} (1,\infty)$ and $\delta\hspace*{-0.1em}\ge\hspace*{-0.1em} 0$ and~let~${F,F^*\colon\hspace*{-0.1em}\mathbb{R}^d\hspace*{-0.1em}\to\hspace*{-0.1em} \mathbb{R}^d}$ be defined by \eqref{eq:def_F}  for the same $p\in (1,\infty)$ and $\delta\ge 0$.  
		Then, there exists~a~constant~${c>0}$, depending on $p\in (1,\infty)$, $d\in \mathbb{N}$, $\delta\ge     0$, and the chunkiness $\omega_0>0$, such that
			\begin{align*}
				\|F^*(z_h^{\textit{rt}})\hspace*{-0.1em}-\hspace*{-0.1em}F^*(z)\|^2_{L^2(\Omega;\mathbb{R}^d)}&\leq c\,\|F(\nabla_{\!h} u_h^{\textit{cr}})\hspace*{-0.1em}-\hspace*{-0.1em}F(\nabla u)\|^2_{L^2(\Omega;\mathbb{R}^d)}\hspace*{-0.1em}+\hspace*{-0.1em}
				c\,\rho_{(\varphi_{\smash{\vert \nabla  u\vert}})^*,\Omega}(h_{\mathcal{T}}f)
				\,.
			\end{align*}
		\end{lemma}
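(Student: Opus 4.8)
The plan is to reduce the conjugate natural distance between $z$ and $z_h^{\textit{rt}}$ to the (already controlled) primal error $\|F(\nabla_{\!h}u_h^{\textit{cr}})-F(\nabla u)\|_{L^2(\Omega;\mathbb{R}^d)}^2$ plus an oscillation-type remainder, exploiting that both $z$ and $z_h^{\textit{rt}}$ are explicitly expressed through $\AAA$ and the data. I would start from the triangle inequality
\[
\|F^*(z_h^{\textit{rt}})-F^*(z)\|_{L^2(\Omega;\mathbb{R}^d)}^2\le 2\,\|F^*(z_h^{\textit{rt}})-F^*(\Pi_hz_h^{\textit{rt}})\|_{L^2(\Omega;\mathbb{R}^d)}^2+2\,\|F^*(\Pi_hz_h^{\textit{rt}})-F^*(z)\|_{L^2(\Omega;\mathbb{R}^d)}^2\,.
\]
For the second term I would invoke the convex optimality relations: by \eqref{eq:pDirichletOptimality2} and \eqref{eq:pDirichletOptimalityCR2} one has $z=\AAA(\nabla u)$ and $\Pi_hz_h^{\textit{rt}}=\AAA(\nabla_{\!h}u_h^{\textit{cr}})$, so the equivalence $|F^*(\AAA(a))-F^*(\AAA(b))|^2\sim|F(a)-F(b)|^2$ from \eqref{eq:hammera}, applied pointwise a.e.\ in $\Omega$ and integrated, bounds it by $c\,\|F(\nabla_{\!h}u_h^{\textit{cr}})-F(\nabla u)\|_{L^2(\Omega;\mathbb{R}^d)}^2$.

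For the first term I would use the generalized Marini formula \eqref{eq:gen_marini}, which gives $z_h^{\textit{rt}}-\Pi_hz_h^{\textit{rt}}=-\tfrac{f_h}{d}(\textup{id}_{\mathbb{R}^d}-\Pi_h\textup{id}_{\mathbb{R}^d})$; since $\textup{id}_{\mathbb{R}^d}|_T$ is affine, $\Pi_h\textup{id}_{\mathbb{R}^d}|_T=x_T$ and $|x-x_T|\le h_T$ for $x\in T$, so $|z_h^{\textit{rt}}-\Pi_hz_h^{\textit{rt}}|\le h_T|f_h|$ on each $T\in\mathcal{T}_h$. Applying \eqref{eq:hammerf} with $a=\Pi_hz_h^{\textit{rt}}$ and $b=z_h^{\textit{rt}}$, then the equivalence $(\varphi^*)_{|\AAA(a)|}\sim(\varphi_{|a|})^*$ (cf.\ \cite[Lemma 26]{die-ett}, using $|\AAA(a)|=\varphi'(|a|)$, equivalently $\delta^{p-1}+|\AAA(a)|\sim(\delta+|a|)^{p-1}$) with $a=\nabla_{\!h}u_h^{\textit{cr}}$, and finally the monotonicity of $(\varphi_{|\nabla_{\!h}u_h^{\textit{cr}}|})^*$, I obtain $|F^*(z_h^{\textit{rt}})-F^*(\Pi_hz_h^{\textit{rt}})|^2\le c\,(\varphi_{|\nabla_{\!h}u_h^{\textit{cr}}|})^*(h_T|f_h|)$ a.e.\ on $T$. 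Because $\nabla_{\!h}u_h^{\textit{cr}}$ is element-wise constant and $|f_h|=|\fint_Tf\,\mathrm{d}x|\le\fint_T|f|\,\mathrm{d}x$, Jensen's inequality (applied to the convex function $(\varphi_a)^*$ with the fixed shift $a=|\nabla_{\!h}u_h^{\textit{cr}}|\big|_T$) lets me replace $f_h$ by $f$ inside the integral, so that after integrating over $\Omega$, $\|F^*(z_h^{\textit{rt}})-F^*(\Pi_hz_h^{\textit{rt}})\|_{L^2(\Omega;\mathbb{R}^d)}^2\le c\,\rho_{(\varphi_{|\nabla_{\!h}u_h^{\textit{cr}}|})^*,\Omega}(h_{\mathcal{T}}f)$. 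A single change of shift \eqref{lem:shift-change.3} with $a=\nabla_{\!h}u_h^{\textit{cr}}$ and $b=\nabla u$, applied element-wise and integrated, then converts the shift $|\nabla_{\!h}u_h^{\textit{cr}}|$ into $|\nabla u|$ at the cost of another multiple of $\|F(\nabla_{\!h}u_h^{\textit{cr}})-F(\nabla u)\|_{L^2(\Omega;\mathbb{R}^d)}^2$, giving the bound $c\,\rho_{(\varphi_{|\nabla u|})^*,\Omega}(h_{\mathcal{T}}f)+c\,\|F(\nabla_{\!h}u_h^{\textit{cr}})-F(\nabla u)\|_{L^2(\Omega;\mathbb{R}^d)}^2$. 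Combining the two terms yields the claim.

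A pleasant feature of this route is that no absorption argument is required: the $\|F(\nabla_{\!h}u_h^{\textit{cr}})-F(\nabla u)\|_{L^2(\Omega;\mathbb{R}^d)}^2$-terms produced by \eqref{eq:hammera} and by the change of shift are already part of the asserted right-hand side, so \eqref{lem:shift-change.3} may be used with a fixed (rather than small) parameter. The only genuinely delicate points I would check carefully are the identification $(\varphi^*)_{|\AAA(a)|}\sim(\varphi_{|a|})^*$ and keeping straight which argument carries the shift when invoking \eqref{eq:hammerf} (so that the shift lands on $|\Pi_hz_h^{\textit{rt}}|=|\AAA(\nabla_{\!h}u_h^{\textit{cr}})|$ rather than on $|z_h^{\textit{rt}}|$); the remaining manipulations are routine estimates for shifted $N$-functions.
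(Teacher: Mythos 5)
Your proposal is correct and follows essentially the same route as the paper: split via $\Pi_h z_h^{\textit{rt}}$, use \eqref{eq:pDirichletOptimality2}, \eqref{eq:pDirichletOptimalityCR2} and the equivalences in \eqref{eq:hammera} for the projected part, and treat $z_h^{\textit{rt}}-\Pi_h z_h^{\textit{rt}}$ via \eqref{eq:hammerf}, the Marini formula \eqref{eq:gen_marini}, the identification $(\varphi^*)_{\vert\AAA(a)\vert}\sim(\varphi_{\vert a\vert})^*$, and the shift change \eqref{lem:shift-change.3}. The only cosmetic difference is that you replace $f_h$ by $f$ through an element-wise Jensen argument (legitimate, since the shift is constant on each element), where the paper invokes the Orlicz-stability of $\Pi_h$.
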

		
		\begin{proof}
		    Using the discrete convex optimality relations \eqref{eq:pDirichletOptimality2} and \eqref{eq:pDirichletOptimalityCR2}, two equivalences in \eqref{eq:hammera}, the generalized Marini formula \eqref{eq:gen_marini}, again, the discrete convex optimality relations \eqref{eq:pDirichletOptimalityCR2}, and the Orlicz-stability of $\Pi_h$ (cf. \cite[Corollary A.8, (A.12)]{kr-phi-ldg}), we find that
		    \begin{align}\label{cor:convex_rate_z.1} 
		            \begin{aligned}
		        	\|F^*(z_h^{\textit{rt}})-F^*(z)\|^2_{L^2(\Omega;\mathbb{R}^d)}&\leq 2\, \|F^*(\Pi_h z_h^{\textit{rt}})-F^*(z)\|^2_{L^2(\Omega;\mathbb{R}^d)}\\&\quad
		        	+2\,\|F^*( z_h^{\textit{rt}})-F^*(\Pi_h z_h^{\textit{rt}})\|^2_{L^2(\Omega;\mathbb{R}^d)}
		        	\\&
		        	\leq 2\, \|F^*(\AAA(\nabla_{\!h} u_h^{\textit{cr}}))-F^*(\AAA(\nabla u))\|^2_{L^2(\Omega;\mathbb{R}^d)}\\&\quad+c\,\rho_{(\varphi^*)_{\smash{\vert \Pi_h z_h^{\textit{rt}}\vert}},\Omega}(z_h^{\textit{rt}}-\Pi_h z_h^{\textit{rt}}) 
		        	\\&
		        	\leq c\, \|F(\nabla_{\!h} u_h^{\textit{cr}})-F(\nabla u)\|^2_{L^2(\Omega;\mathbb{R}^d)}+c\,\rho_{(\varphi^*)_{\smash{\vert \AAA(\nabla_{\! h }u_h^{\textit{cr}})\vert}},\Omega}(h_{\mathcal{T}}f_h)
		        	\\&
		        	\leq c\, \|F(\nabla_{\!h} u_h^{\textit{cr}})-F(\nabla u)\|^2_{L^2(\Omega;\mathbb{R}^d)}+c\,\rho_{(\varphi^*)_{\smash{\vert \AAA(\nabla_{\! h }u_h^{\textit{cr}})\vert}},\Omega}(h_{\mathcal{T}}f)\,.
		        	 \end{aligned}
		    \end{align}
		    Using that $(\varphi^*)_{\vert \AAA(a)\vert }\sim (\varphi_{\vert a\vert })^*$ uniformly in $a\in \mathbb{R}^d$ (cf. \cite[Lemma 26]{die-ett}~with~$\vert \AAA(a)\vert=\varphi'(\vert a\vert)$ for all $a\in \mathbb{R}^d$) and the shift~change~\eqref{lem:shift-change.3}, we observe that
		    \begin{align}\label{cor:convex_rate_z.2} 
		        \begin{aligned}
		            \rho_{(\varphi^*)_{\smash{\vert \AAA(\nabla_{\! h }u_h^{\textit{cr}})\vert}},\Omega}(h_{\mathcal{T}}f)&\leq c\,\rho_{(\varphi_{\smash{\vert \nabla_{\!h} u_h^{\textit{cr}}\vert}})^*,\Omega}(h_{\mathcal{T}}f)
		            \\&\leq c\,\rho_{(\varphi_{\smash{\vert \nabla  u\vert}})^*,\Omega}(h_{\mathcal{T}}f)+c\,\|F(\nabla_{\!h} u_h^{\textit{cr}})-F(\nabla u)\|^2_{L^2(\Omega;\mathbb{R}^d)}\,.
		        \end{aligned}
		    \end{align}
		    Combining \eqref{cor:convex_rate_z.1} and \eqref{cor:convex_rate_z.2}, we deduce that
		    \begin{align*}
		        \|F^*(z_h^{\textit{rt}})-F^*(z)\|^2_{L^2(\Omega;\mathbb{R}^d)}\leq c\, \|F(\nabla_{\!h} u_h^{\textit{cr}})-F(\nabla u)\|^2_{L^2(\Omega;\mathbb{R}^d)}+c\,\rho_{(\varphi_{\smash{\vert \nabla  u\vert}})^*,\Omega}(h_{\mathcal{T}}f)\,,
		   \end{align*}
            which is the  claimed a priori error estimate. 
		\end{proof}
		
		\begin{theorem}\label{thm:rate_z} Let $\AAA\colon\hspace*{-0.1em}\mathbb{R}^d\hspace*{-0.1em}\to\hspace*{-0.1em} \mathbb{R}^d$ be defined by \eqref{special_case} for $p\hspace*{-0.1em}\in\hspace*{-0.1em} (1,\infty)$ and $\delta\hspace*{-0.1em}\ge\hspace*{-0.1em} 0$ and~let~${F,F^*\colon\hspace*{-0.1em}\mathbb{R}^d\hspace*{-0.1em}\to\hspace*{-0.1em} \mathbb{R}^d}$ be defined by \eqref{eq:def_F}  for the same $p\in (1,\infty)$ and $\delta\ge 0$. 
		If  \eqref{natural_regularity} is satisfied, then, there exists a constant $c>0$, depending on $p\in (1,\infty)$, $d\in \mathbb{N}$, $\delta\ge     0$, and the chunkiness $\omega_0>0$,~such~that
		\begin{align*}
		    	\|F^*(z_h^{\textit{rt}})-F^*(z)\|^2_{L^2(\Omega;\mathbb{R}^d)}&\leq c\,\|h_{\mathcal{T}}\nabla F(\nabla u)\|_{L^2(\Omega;\mathbb{R}^{d\times d})}^2+
				c\,\rho_{(\varphi_{\vert \nabla u\vert})^*,\Omega}(h_{\mathcal{T}}f)
				\\&\leq c\,\|h_{\mathcal{T}}\nabla F(\nabla u)\|_{L^2(\Omega;\mathbb{R}^{d\times d})}^2+c\,h_{\max}^2\,\big(\rho_{\varphi^*,\Omega}(f)+\rho_{\varphi,\Omega}(\nabla u)\big)
				\,.
		\end{align*}
		\end{theorem}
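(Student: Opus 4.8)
The plan is to obtain Theorem~\ref{thm:rate_z} as a direct corollary of Lemma~\ref{lem:rate_z} and Theorem~\ref{thm:rate_u}. First I would apply Lemma~\ref{lem:rate_z}, which holds without any regularity hypothesis and bounds the conjugate natural distance by $\|F^*(\smash{z_h^{\textit{rt}}})-F^*(z)\|^2_{L^2(\Omega;\mathbb{R}^d)}\le c\,\|F(\nabla_{\!h}u_h^{\textit{cr}})-F(\nabla u)\|^2_{L^2(\Omega;\mathbb{R}^d)}+c\,\rho_{(\varphi_{\vert\nabla u\vert})^*,\Omega}(h_{\mathcal T}f)$. The first term on the right is exactly the primal natural distance, and under the natural regularity assumption~\eqref{natural_regularity} the first estimate in Theorem~\ref{thm:rate_u} controls it by $c\,\|h_{\mathcal T}\nabla F(\nabla u)\|^2_{L^2(\Omega;\mathbb{R}^{d\times d})}+c\,\rho_{(\varphi_{\vert\nabla u\vert})^*,\Omega}(h_{\mathcal T}f)$. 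Substituting this into the output of Lemma~\ref{lem:rate_z} and absorbing the two copies of the oscillation-type term $\rho_{(\varphi_{\vert\nabla u\vert})^*,\Omega}(h_{\mathcal T}f)$ into a single constant already yields the first claimed inequality.

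For the second inequality I would reuse the case distinction on $p$ already carried out in the proof of Theorem~\ref{thm:rate_u}. For $p\in(1,2)$, the elementary bound $(\varphi_{\vert a\vert})^*(h\,t)\le c\,h^2(\varphi^*(t)+\varphi(\vert a\vert))$, which follows from the definition of shifted $N$-functions~\eqref{eq:phi_shifted} together with the shift change~\eqref{lem:shift-change.3} taken with $b=0$ and $\vert F(a)\vert^2=\varphi(\vert a\vert)$, gives $\rho_{(\varphi_{\vert\nabla u\vert})^*,\Omega}(h_{\mathcal T}f)\le c\,h_{\max}^2\big(\rho_{\varphi^*,\Omega}(f)+\rho_{\varphi,\Omega}(\nabla u)\big)$, i.e.\ inequality~\eqref{cor:rate_improve.4}. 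For $p\in[2,\infty)$, I would pass to the flux $z=\AAA(\nabla u)$ using $\textup{div}\,z=-f$ in $L^{p'}(\Omega)$ and $(\varphi^*)_{\vert\AAA(a)\vert}\sim(\varphi_{\vert a\vert})^*$, exploit $p'\le2$ through $(\varphi^*)_{\vert a\vert}(h\,t)\le c\,h^2(\delta^{p-1}+\vert a\vert)^{p'-2}t^2$, and finally invoke Lemma~\ref{lem:reg_equiv}, Lemma~\ref{lem:reg_dual_source}, and Lemma~\ref{lem:reg_dual} to arrive at $\rho_{(\varphi_{\vert\nabla u\vert})^*,\Omega}(h_{\mathcal T}f)\le c\,\|h_{\mathcal T}\nabla F^*(z)\|^2_{L^2(\Omega;\mathbb{R}^{d\times d})}\le c\,\|h_{\mathcal T}\nabla F(\nabla u)\|^2_{L^2(\Omega;\mathbb{R}^{d\times d})}$, i.e.\ inequality~\eqref{cor:rate_improve.8}. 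Combining both cases bounds the oscillation-type term appearing on the right of the first inequality by $c\,\|h_{\mathcal T}\nabla F(\nabla u)\|^2_{L^2(\Omega;\mathbb{R}^{d\times d})}+c\,h_{\max}^2\big(\rho_{\varphi^*,\Omega}(f)+\rho_{\varphi,\Omega}(\nabla u)\big)$, which is the second claimed estimate.

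I do not expect a genuine obstacle here: everything needed is already packaged into Lemma~\ref{lem:rate_z} and Theorem~\ref{thm:rate_u}, and the only step of substance is re-deriving the data-dependent bound on $\rho_{(\varphi_{\vert\nabla u\vert})^*,\Omega}(h_{\mathcal T}f)$, which one may simply quote from \eqref{cor:rate_improve.4} and \eqref{cor:rate_improve.8} rather than reprove. If a standalone argument were wanted, the only mildly delicate point is the superquadratic case, where one has to ensure the regularity chain Lemma~\ref{lem:reg_equiv}$\to$Lemma~\ref{lem:reg_dual_source}$\to$Lemma~\ref{lem:reg_dual} remains valid also for $\delta=0$, exactly as handled in the proof of Theorem~\ref{thm:rate_u}.
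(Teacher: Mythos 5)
Your proposal is correct and follows exactly the paper's own argument: the first estimate is Lemma \ref{lem:rate_z} combined with the first bound in Theorem \ref{thm:rate_u}, and the second estimate follows from the first via \eqref{cor:rate_improve.4} and \eqref{cor:rate_improve.8}, which are already established in the proof of Theorem \ref{thm:rate_u}. No gaps.
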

		
		\begin{proof}
		    The first claimed a priori error estimate follows from Lemma \ref{lem:rate_z} and Theorem \ref{thm:rate_u}. Then, the second claimed a priori error estimate follows from the first in  conjunction with~\eqref{cor:rate_improve.4}~and~\eqref{cor:rate_improve.8}.
		\end{proof}
		
		\begin{remark}
		\begin{itemize}[noitemsep,topsep=0.0pt,labelwidth=\widthof{\textit{(ii)}},leftmargin=!]
		    \item[(i)] In the particular case $p\in  [2,\infty)$, due to \eqref{cor:rate_improve.8}, the second term~on~the~right-hand side in Theorem \ref{thm:rate_u} and Theorem \ref{thm:rate_z} can be omitted, so that we arrive at 
		        \begin{align*}
		            \|F(\nabla_{\!h} u_h^{\textit{cr}})-F(\nabla u)\|_{L^2(\Omega;\mathbb{R}^d)}^2+\|F^*(z_h^{\textit{rt}})-F^*(z)\|^2_{L^2(\Omega;\mathbb{R}^d)}\leq c\,\|h_{\mathcal{T}}\nabla F(\nabla u)\|_{L^2(\Omega;\mathbb{R}^{d\times d})}^2\,,
		        \end{align*}
		        which assumes a very similar form to the a priori error estimate in Theorem \ref{P1_apriori}.
		        
		     \item[(ii)] The a priori error estimates in Theorem \ref{thm:rate_u} and Theorem \ref{thm:rate_z} are optimal for all $p\in (1,\infty)$ and $\delta\ge 0$. This is confirmed via numerical experiments, cf. Subsection \ref{subsec:num_a_priori}.
		\end{itemize}
		        
		\end{remark}
	
    \section{A posteriori error analysis}\label{sec:a_posteriori}
	 
	 \qquad In this section, we examine a primal-dual a posteriori error estimator for the $p$-Dirichlet problem, derived in  \cite{BKAFEM}, for  reliability and efficiency. Here, reliability is an immediate~consequence of convex duality relations, while efficiency is based on Corollary \ref{cor:best-approx}.
	 
	 To begin with, in analogy with \cite[Proposition 5.1]{BKAFEM}, we introduce the
        \textit{primal-dual a posteriori error estimator} $\eta_h^2\colon\mathcal{S}^1_D(\mathcal{T}_h)\to \mathbb{R}_{\ge 0}$, for every $v_h\in \mathcal{S}^1_D(\mathcal{T}_h)$ defined by 
	 \begin{align*}
    \begin{aligned}
	     \eta_h^2(v_h)&\coloneqq \rho_{\varphi,\Omega}(\nabla v_h)-(\Pi_h \smash{z_h^{\textit{rt}}},\nabla v_h-\nabla_{\!h} u_h^{\textit{cr}})_{\Omega}-\rho_{\varphi,\Omega}(\nabla_{\!h} u_h^{\textit{cr}})
      \\&\quad +\rho_{\varphi^*,\Omega}(\smash{z_h^{\textit{rt}}})-\rho_{\varphi^*,\Omega}(\Pi_h \smash{z_h^{\textit{rt}}})\,.
      \end{aligned}
	 \end{align*}

    \begin{remark} The primal-dual a posteriori error estimator can be decomposed into two parts:
    \begin{itemize}[noitemsep,topsep=0.0pt,labelwidth=\widthof{\textit{(ii)}},leftmargin=!]
        \item[(i)] The discrete residual part $\eta_{A,h}^2\colon\mathcal{S}^1_D(\mathcal{T}_h)\to \mathbb{R}_{\ge 0}$, for every $v_h\in \mathcal{S}^1_D(\mathcal{T}_h)$ defined by 
        \begin{align*}
            \smash{\eta_{A,h}^2(v_h)\coloneqq \rho_{\varphi,\Omega}(\nabla v_h)-(\Pi_h \smash{z_h^{\textit{rt}}},\nabla v_h-\nabla_{\!h} u_h^{\textit{cr}})_{\Omega}-\rho_{\varphi,\Omega}(\nabla_{\!h} u_h^{\textit{cr}})\,,}
        \end{align*}
        measures how well $v_h\in \mathcal{S}^1_D(\mathcal{T}_h)$ satisfies the discrete convex duality relation \eqref{eq:pDirichletOptimalityCR1.2}~(or~\eqref{eq:pDirichletOptimalityCR2}).

        \item[(ii)] The data approximation part $\eta_{B,h}^2\in \mathbb{R}$, 
        defined by 
        \begin{align*}
            \smash{\eta_{B,h}^2
            \coloneqq \rho_{\varphi^*,\Omega}(\smash{z_h^{\textit{rt}}})-\rho_{\varphi^*,\Omega}(\Pi_h \smash{z_h^{\textit{rt}}})\,,}
        \end{align*}
        measures the error resulting from the replacement of $f\in L^{p'}(\Omega)$ by $f_h\in \mathcal{L}^0(\mathcal{T}_h)$ in \eqref{eq:pDirichletS1crD}.
    \end{itemize}
    \end{remark}

    The following reliability result applies.
	 
	 \begin{theorem}[Reliability]\label{thm:rel}
	 Let $\AAA\colon\mathbb{R}^d\to \mathbb{R}^d$ be defined by \eqref{special_case} for $p\in (1,\infty)$ and $\delta\ge 0$ and let $F\colon\mathbb{R}^d\to \mathbb{R}^d$ be defined by \eqref{eq:def_F} for the same $p\in (1,\infty)$ and $\delta\ge 0$.  If $f=f_h\in \mathcal{L}^0(\mathcal{T}_h)$, then, there exists a constant $c>0$, depending only on $p\in (1,\infty)$ and $\delta\ge 0$, such that  for every $v_h\in      \mathcal{S}^1_D(\mathcal{T}_h)$, it holds\vspace{-0.5mm}
	       \begin{align*}
	           \smash{\|F(\nabla v_h)-F(\nabla u)\|^2_{L^2(\Omega;\mathbb{R}^d)}\leq c\,
	               \eta^2_h(v_h)\,.}
	       \end{align*}
	\end{theorem}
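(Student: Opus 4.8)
The plan is to recognize the estimator $\eta_h^2(v_h)$ as the primal--dual gap $I(v_h)-D(z_h^{\textit{rt}})$ and to combine this with the uniform convexity of the $p$-Dirichlet energy at its minimizer. The first thing to record is that, under the hypothesis $f=f_h\in\mathcal{L}^0(\mathcal{T}_h)$, the discrete dual solution is admissible for the continuous dual problem: $z_h^{\textit{rt}}\in\mathcal{R}T^0_N(\mathcal{T}_h)\subseteq W^{p'}_N(\textup{div};\Omega)$ and, by \eqref{eq:pDirichletOptimalityCR1.1}, $\textup{div}\,z_h^{\textit{rt}}=-f_h=-f$, so that $D(z_h^{\textit{rt}})=-\rho_{\varphi^*,\Omega}(z_h^{\textit{rt}})\in\mathbb{R}$ and, by weak duality for the dual problem, $D(z_h^{\textit{rt}})\le D(z)=I(u)$.

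Next I would establish the algebraic identity $\eta_h^2(v_h)=I(v_h)-D(z_h^{\textit{rt}})$ for all $v_h\in\mathcal{S}^1_D(\mathcal{T}_h)$. Expanding $I(v_h)-D(z_h^{\textit{rt}})=\rho_{\varphi,\Omega}(\nabla v_h)-(f,v_h)_\Omega+\rho_{\varphi^*,\Omega}(z_h^{\textit{rt}})$ and subtracting the definition of $\eta_h^2(v_h)$, it remains to check that
\begin{align*}
-(f,v_h)_\Omega+(\Pi_h z_h^{\textit{rt}},\nabla v_h-\nabla_{\!h}u_h^{\textit{cr}})_\Omega+\rho_{\varphi,\Omega}(\nabla_{\!h}u_h^{\textit{cr}})+\rho_{\varphi^*,\Omega}(\Pi_h z_h^{\textit{rt}})=0\,.
\end{align*}
For the last two summands I would use the discrete strong duality $I_h^{\textit{cr}}(u_h^{\textit{cr}})=D_h^{\textit{rt}}(z_h^{\textit{rt}})$ together with $\textup{div}\,z_h^{\textit{rt}}=-f_h$ (equivalently, integrating the pointwise Fenchel identity \eqref{eq:pDirichletOptimalityCR1.2} and using \eqref{eq:pi0}), which gives $\rho_{\varphi,\Omega}(\nabla_{\!h}u_h^{\textit{cr}})+\rho_{\varphi^*,\Omega}(\Pi_h z_h^{\textit{rt}})=(f_h,\Pi_h u_h^{\textit{cr}})_\Omega$. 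Since $\mathcal{S}^1_D(\mathcal{T}_h)\subseteq\mathcal{S}^{1,\textit{cr}}_D(\mathcal{T}_h)$ with $\nabla v_h=\nabla_{\!h}v_h$ there, the discrete integration-by-parts formula \eqref{eq:pi0} applied to the pairs $(v_h,z_h^{\textit{rt}})$ and $(u_h^{\textit{cr}},z_h^{\textit{rt}})$ yields $(\Pi_h z_h^{\textit{rt}},\nabla v_h)_\Omega=(f_h,\Pi_h v_h)_\Omega$ and $(\Pi_h z_h^{\textit{rt}},\nabla_{\!h}u_h^{\textit{cr}})_\Omega=(f_h,\Pi_h u_h^{\textit{cr}})_\Omega$. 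Substituting, the displayed left-hand side collapses to $-(f,v_h)_\Omega+(f_h,\Pi_h v_h)_\Omega$, which vanishes because $f=f_h$ is element-wise constant and $\Pi_h$ is the $L^2$-projection onto $\mathcal{L}^0(\mathcal{T}_h)$.

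It then remains to bound the gap from below by the natural distance. Testing the variational equation \eqref{eq:pDirichletW1p} with $v_h-u\in W^{1,p}_D(\Omega)$ gives $(f,v_h-u)_\Omega=(\AAA(\nabla u),\nabla v_h-\nabla u)_\Omega$, whence
\begin{align*}
I(v_h)-I(u)=\int_\Omega\big(\varphi(\vert\nabla v_h\vert)-\varphi(\vert\nabla u\vert)-\AAA(\nabla u)\cdot(\nabla v_h-\nabla u)\big)\,\mathrm{d}x\,,
\end{align*}
the integral of the Bregman distance of the convex potential $a\mapsto\varphi(\vert a\vert)$ between $\nabla v_h$ and $\nabla u$. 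Writing this integrand as $\int_0^1 s^{-1}(\AAA(\nabla u+s(\nabla v_h-\nabla u))-\AAA(\nabla u))\cdot s(\nabla v_h-\nabla u)\,\mathrm{d}s$ and invoking the equivalences \eqref{eq:hammera} together with $\int_0^1 s^{-1}\psi(s\,t)\,\mathrm{d}s\sim\psi(t)$ for $\psi\in\Delta_2$, one obtains the pointwise lower bound $\gtrsim\vert F(\nabla v_h)-F(\nabla u)\vert^2$ with constants depending only on the characteristics of $\AAA$, hence $\|F(\nabla v_h)-F(\nabla u)\|_{L^2(\Omega;\mathbb{R}^d)}^2\le c\,(I(v_h)-I(u))$. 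Combining with the first paragraph, $I(v_h)-I(u)=I(v_h)-D(z)\le I(v_h)-D(z_h^{\textit{rt}})=\eta_h^2(v_h)$, which gives the claim.

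The only step that is not a bookkeeping identity built from the (discrete) optimality and duality relations and the discrete integration-by-parts formula \eqref{eq:pi0} is the coercivity estimate $I(v_h)-I(u)\gtrsim\|F(\nabla v_h)-F(\nabla u)\|_{L^2}^2$; it is the standard uniform-convexity property of $(p,\delta)$-structures and follows from \eqref{eq:hammera}, the one delicate point being the uniformity of the constants in $\delta\ge 0$ and $p\in(1,\infty)$, which is precisely what the shifted-$N$-function calculus of the preliminaries supplies.
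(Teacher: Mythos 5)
Your proposal is correct and follows essentially the same route as the paper: it identifies $\eta_h^2(v_h)$ with the primal--dual gap $I(v_h)-D(z_h^{\textit{rt}})$ using \eqref{eq:pDirichletOptimalityCR1.2}, \eqref{eq:pi0} and $f=f_h$, bounds that gap from below by $I(v_h)-I(u)$ via weak/strong duality, and controls the natural distance by the energy difference. The only deviation is cosmetic: the paper cites the co-coercivity estimate from \cite{KZ22}, whereas you re-derive it in-line through the Bregman-distance/line-integral argument based on \eqref{eq:hammera}.
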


 \begin{proof} In principle, the proof is already included as a special case in \cite[Proposition 3.1 \& Proposition 4.1]{BKAFEM}. For the benefit of the reader, however, the proof briefly reproduced here.

      Using the co-coercivity property of $I\colon W^{1,p}_D(\Omega)\to \mathbb{R}$ at the primal solution $u\in W^{1,p}_D(\Omega)$, i.e., that there exists a constant $c\hspace*{-0.1em}>\hspace*{-0.1em}0$, depending only on $p\hspace*{-0.1em}\in \hspace*{-0.1em}(1,\infty)$ and $\delta\hspace*{-0.1em}\ge\hspace*{-0.1em} 0$, cf. \cite[Theorem~8~(ii)]{KZ22}, such that for every $v\in W^{1,p}_D(\Omega)$, it holds
      \begin{align*}
          \smash{c^{-1}\|F(\nabla v)-F(\nabla u)\|^2_{L^2(\Omega;\mathbb{R}^d)}\leq I(v)-I(u)\leq c\,\|F(\nabla v)-F(\nabla u)\|^2_{L^2(\Omega;\mathbb{R}^d)}}\,,
      \end{align*} the strong duality relation, i.e., $I(u)=D(z)$, that $\smash{z_h^{\textit{rt}}\in W^{p'}_N(\textup{div};\Omega)}$ with $-\textup{div}\,\smash{z_h^{\textit{rt}}}=f_h=f$ in $\mathcal{L}^0(\mathcal{T}_h)$ (cf. \eqref{eq:pDirichletOptimalityCR1.1}), the discrete convex optimality relation \eqref{eq:pDirichletOptimalityCR1.2}, the discrete integration-by-parts formula \eqref{eq:pi0}, for every $v_h\in \mathcal{S}^1_D(\mathcal{T}_h)$,~we~find~that
	     \begin{align}\label{thm:eff}
	        \begin{aligned}
	            \smash{c^{-1}\|F(\nabla v_h)-F(\nabla u)\|^2_{L^2(\Omega;\mathbb{R}^d)}}&\leq I(v_h)-I(u)
	            \\&\leq I(v_h)-D(\smash{z_h^{\textit{rt}}})
	            \\&=\rho_{\varphi,\Omega}(\nabla v_h)-(f,v_h)_\Omega+\rho_{\varphi^*,\Omega}( \smash{z_h^{\textit{rt}}})
	            	         \\&=\rho_{\varphi,\Omega}(\nabla v_h )-(f_h,\Pi_hv_h)_\Omega+\rho_{\varphi^*,\Omega}(\Pi_h \smash{z_h^{\textit{rt}}})
	         \\&\quad +\rho_{\varphi^*,\Omega}(\smash{z_h^{\textit{rt}}} )-\rho_{\varphi^*,\Omega}( \Pi_h \smash{z_h^{\textit{rt}}} )
          \\&=\rho_{\varphi,\Omega}(\nabla v_h)+( \mathrm{div}\, \smash{z_h^{\textit{rt}}},\Pi_h v_h)_\Omega\\&\quad+( \Pi_h \smash{z_h^{\textit{rt}}},\nabla_{\! h} u_h^{\textit{cr}})_\Omega
          -\rho_{\varphi,\Omega}(\nabla_{\! h} u_h^{\textit{cr}})
          \\&\quad +\rho_{\varphi^*,\Omega}(\smash{z_h^{\textit{rt}}} )-\rho_{\varphi^*,\Omega}( \Pi_h \smash{z_h^{\textit{rt}}} )
	          \\&=\rho_{\varphi,\Omega}(\nabla v_h)-( \Pi_h \smash{z_h^{\textit{rt}}},\nabla v_h-\nabla_{\! h} u_h^{\textit{cr}})_\Omega-\rho_{\varphi,\Omega}(\nabla_{\! h} u_h^{\textit{cr}})
	         \\&\quad +\rho_{\varphi^*,\Omega}(\smash{z_h^{\textit{rt}}} )-\rho_{\varphi^*,\Omega}( \Pi_h \smash{z_h^{\textit{rt}}})\,,
	        \end{aligned}
    \end{align}
       which is the claimed reliability of $\eta_h^2\colon\mathcal{S}^1_D(\mathcal{T}_h)\to \mathbb{R}_{\ge 0}$.
    \end{proof}

    Key ingredient in the verification of the efficiency of the 
    primal-dual a posteriori~error~estimator $\eta_h^2\colon\mathcal{S}^1_D(\mathcal{T}_h)\to \mathbb{R}_{\ge 0}$ is the observation that it is bounded by the \textit{monotone 
    primal-dual a posteriori error estimator} $\eta_{F,h}^2\colon\mathcal{S}^1_D(\mathcal{T}_h)\to \mathbb{R}_{\ge 0}$, for every $v_h\in \mathcal{S}^1_D(\mathcal{T}_h)$ defined by 
	 \begin{align*}
     \smash{\eta_{F,h}^2(v_h)\coloneqq  \|F(\nabla v_h)-F(\nabla_{\!h} u_h^{\textit{cr}})\|^2_{L^2(\Omega;\mathbb{R}^d)}+\|F^*(\smash{z_h^{\textit{rt}}})-F^*(\Pi_h \smash{z_h^{\textit{rt}}})\|^2_{L^2(\Omega;\mathbb{R}^d)}\,.}
	 \end{align*}
    
	 \begin{lemma}
	     Let $\AAA\colon\mathbb{R}^d\to \mathbb{R}^d$ be defined by \eqref{special_case} for $p\in (1,\infty)$ and $\delta\ge 0$ and let $F\colon\mathbb{R}^d\to \mathbb{R}^d$ be defined by \eqref{eq:def_F} for the same $p\in (1,\infty)$ and $\delta\ge 0$. Then, there exists a constant $c>0$, depending only on $p\in (1,\infty)$ and $\delta\ge 0$, such that for every $v_h\in \mathcal{S}^1_D(\mathcal{T}_h)$, it holds
      \begin{align*}
          \smash{\eta_h^2(v_h)\leq c\,\eta_{F,h}^2(v_h)\,.}
      \end{align*}
	 \end{lemma}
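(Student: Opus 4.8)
The plan is to split $\eta_h^2(v_h)=\eta_{A,h}^2(v_h)+\eta_{B,h}^2$ and to bound each summand separately by recognising it as an integrated Bregman distance that is controlled by a monotonicity pairing. The only elementary ingredient is the inequality
\[
0\le g(a)-g(b)-Dg(b)\cdot(a-b)\le \big(Dg(a)-Dg(b)\big)\cdot(a-b)\,,
\]
valid for every convex $g\in C^1(\mathbb{R}^d)$, which I would use with $g=\varphi\circ\vert\cdot\vert$ and $g=\varphi^*\circ\vert\cdot\vert$ (both of class $C^1$, since $\varphi$ is a regular $N$-function and $(\varphi^*)'=(\varphi')^{-1}$ is continuous with $(\varphi^*)'(0)=0$). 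Besides this I use $\AAA=D(\varphi\circ\vert\cdot\vert)$ and $\AAA^{-1}=D(\varphi^*\circ\vert\cdot\vert)$ (cf.\ Remark \ref{rem:assum}, Remark \ref{rem:conjugate_natural_dist}), and the discrete optimality relation $\Pi_h z_h^{\textit{rt}}=\AAA(\nabla_{\!h}u_h^{\textit{cr}})$, cf.\ \eqref{eq:pDirichletOptimalityCR2}.

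For the residual part $\eta_{A,h}^2(v_h)$, substituting $\Pi_h z_h^{\textit{rt}}=\AAA(\nabla_{\!h}u_h^{\textit{cr}})$ rewrites it as $\int_\Omega[\varphi(\vert\nabla v_h\vert)-\varphi(\vert\nabla_{\!h}u_h^{\textit{cr}}\vert)-\AAA(\nabla_{\!h}u_h^{\textit{cr}})\cdot(\nabla v_h-\nabla_{\!h}u_h^{\textit{cr}})]\,\mathrm{d}x$, the integrated Bregman distance of $\varphi\circ\vert\cdot\vert$ at $\nabla_{\!h}u_h^{\textit{cr}}$ evaluated at $\nabla v_h$. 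By the displayed inequality the integrand is nonnegative and pointwise bounded by $(\AAA(\nabla v_h)-\AAA(\nabla_{\!h}u_h^{\textit{cr}}))\cdot(\nabla v_h-\nabla_{\!h}u_h^{\textit{cr}})$, which by \eqref{eq:hammera} is equivalent to $\vert F(\nabla v_h)-F(\nabla_{\!h}u_h^{\textit{cr}})\vert^2$; integrating yields $\eta_{A,h}^2(v_h)\le c\,\|F(\nabla v_h)-F(\nabla_{\!h}u_h^{\textit{cr}})\|_{L^2(\Omega;\mathbb{R}^d)}^2$.

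For the data approximation part, I would write $\eta_{B,h}^2=\int_\Omega[\varphi^*(\vert z_h^{\textit{rt}}\vert)-\varphi^*(\vert\Pi_h z_h^{\textit{rt}}\vert)]\,\mathrm{d}x$ and add and subtract $(\AAA^{-1}(\Pi_h z_h^{\textit{rt}}),z_h^{\textit{rt}}-\Pi_h z_h^{\textit{rt}})_\Omega$. By \eqref{eq:pDirichletOptimalityCR2} one has $\AAA^{-1}(\Pi_h z_h^{\textit{rt}})=\nabla_{\!h}u_h^{\textit{cr}}\in\mathcal{L}^0(\mathcal{T}_h)^d$, so this pairing vanishes, $z_h^{\textit{rt}}-\Pi_h z_h^{\textit{rt}}$ being $L^2$-orthogonal to element-wise constant vector fields by definition of $\Pi_h$. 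The remaining term is the integrated Bregman distance of $\varphi^*\circ\vert\cdot\vert$ at $\Pi_h z_h^{\textit{rt}}$ evaluated at $z_h^{\textit{rt}}$; the displayed inequality bounds it pointwise by $(\AAA^{-1}(z_h^{\textit{rt}})-\AAA^{-1}(\Pi_h z_h^{\textit{rt}}))\cdot(z_h^{\textit{rt}}-\Pi_h z_h^{\textit{rt}})$, which equals $(\AAA(a)-\AAA(b))\cdot(a-b)$ for $a=\AAA^{-1}(z_h^{\textit{rt}})$, $b=\AAA^{-1}(\Pi_h z_h^{\textit{rt}})$ and hence, by the last equivalence in \eqref{eq:hammera}, is $\sim\vert F^*(z_h^{\textit{rt}})-F^*(\Pi_h z_h^{\textit{rt}})\vert^2$. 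Integration gives $\eta_{B,h}^2\le c\,\|F^*(z_h^{\textit{rt}})-F^*(\Pi_h z_h^{\textit{rt}})\|_{L^2(\Omega;\mathbb{R}^d)}^2$, and adding the two bounds proves $\eta_h^2(v_h)\le c\,\eta_{F,h}^2(v_h)$ with $c$ entering only through \eqref{eq:hammera}, i.e.\ through the characteristics of $\AAA$, which for \eqref{special_case} depend only on $p$.

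I do not expect a real obstacle. The points needing care are of bookkeeping nature: verifying that the two combinations are genuinely Bregman distances (the identities $\AAA=D(\varphi\circ\vert\cdot\vert)$ and $\AAA^{-1}=D(\varphi^*\circ\vert\cdot\vert)$, the latter from $(\varphi^*)'=(\varphi')^{-1}$ or from $(\varphi\circ\vert\cdot\vert)^*=\varphi^*\circ\vert\cdot\vert$), and the degenerate set where $\nabla_{\!h}u_h^{\textit{cr}}$ or $\Pi_h z_h^{\textit{rt}}$ vanishes, on which all formulas remain consistent since $\varphi'(0)=(\varphi^*)'(0)=0$ and $\AAA(0)=\AAA^{-1}(0)=0$.
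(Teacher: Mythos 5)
Your proposal is correct and follows essentially the same route as the paper's proof: both substitute the discrete optimality relation \eqref{eq:pDirichletOptimalityCR2}, bound the two parts of $\eta_h^2(v_h)$ by the monotonicity pairings $(\AAA(\nabla v_h)-\AAA(\nabla_{\!h}u_h^{\textit{cr}}),\nabla v_h-\nabla_{\!h}u_h^{\textit{cr}})_\Omega$ and $(\AAA^{-1}(z_h^{\textit{rt}})-\AAA^{-1}(\Pi_h z_h^{\textit{rt}}),z_h^{\textit{rt}}-\Pi_h z_h^{\textit{rt}})_\Omega$ via convexity (your Bregman-distance phrasing is just a rewording of the paper's convexity inequalities), use the $L^2$-orthogonality of $z_h^{\textit{rt}}-\Pi_h z_h^{\textit{rt}}$ to the element-wise constant field $\AAA^{-1}(\Pi_h z_h^{\textit{rt}})=\nabla_{\!h}u_h^{\textit{cr}}$, and conclude with the equivalences \eqref{eq:hammera}/\eqref{eq:hammerf}. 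No gaps.
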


    \begin{proof} In principle, the proof is already included as a special case in \cite[Corollary 4.2]{BKAFEM}. For the benefit of the reader, however, the proof briefly reproduced here.
	     
        The \hspace*{-0.1mm}discrete  \hspace*{-0.1mm}convex  \hspace*{-0.1mm}optimality \hspace*{-0.1mm}relation \hspace*{-0.1mm}\eqref{eq:pDirichletOptimalityCR2}, \hspace*{-0.1mm}that, \hspace*{-0.1mm}by \hspace*{-0.1mm}the \hspace*{-0.1mm}convexity \hspace*{-0.1mm}of \hspace*{-0.1mm}${\varphi\hspace*{-0.05em}\circ\hspace*{-0.05em}\vert\hspace*{-0.05em}\cdot\hspace*{-0.05em}\vert, \varphi^*\hspace*{-0.05em}\circ\hspace*{-0.05em}\vert\hspace*{-0.05em}\cdot\hspace*{-0.05em}\vert\colon\hspace*{-0.1em}\mathbb{R}^d\hspace*{-0.1em}\to\hspace*{-0.1em} \mathbb{R}}$, $D(\varphi\circ\vert\cdot\vert)=\AAA$ and $D(\varphi^*\circ\vert\cdot\vert)=\smash{\AAA^{-1}}$, it holds
	     \begin{alignat*}{2}
	         -\varphi(\vert \nabla_{\! h} u_h^{\textit{cr}}\vert)&\leq  -\varphi(\vert \nabla v_h\vert)-\AAA(\nabla v_h)\cdot(\nabla_{\! h} u_h^{\textit{cr}}-\nabla v_h)&&\quad\text{ a.e. in }\Omega\,,\\
	          -\varphi^*(\vert \Pi_h \smash{z_h^{\textit{rt}}}\vert)&\leq  -\varphi^*(\vert \smash{z_h^{\textit{rt}}}\vert)-\smash{\AAA^{-1}}(\Pi_h \smash{z_h^{\textit{rt}}})\cdot(\smash{z_h^{\textit{rt}}}-\Pi_h \smash{z_h^{\textit{rt}}})&&\quad\text{ a.e. in }\Omega\,,
	     \end{alignat*}
	     the orthogonality relation $\smash{\AAA^{-1}}(\Pi_h \smash{z_h^{\textit{rt}}})\perp \smash{z_h^{\textit{rt}}}-\Pi_h \smash{z_h^{\textit{rt}}} $ in $L^2(\Omega;\mathbb{R}^d)$, and \eqref{eq:hammera} 
        further~yield
	     \begin{align*}
	         \eta^2_h(v_h)&= \rho_{\varphi,\Omega}(\nabla v_h )-(\AAA(\nabla_{\! h} u_h^{\textit{cr}}),\nabla v_h-\nabla_{\! h} u_h^{\textit{cr}})_\Omega-\rho_{\varphi,\Omega}(\nabla_{\! h} u_h^{\textit{cr}} )
	         \\&\quad +\rho_{\varphi^*,\Omega}(\smash{z_h^{\textit{rt}}} )
	         -\rho_{\varphi^*,\Omega}( \Pi_h \smash{z_h^{\textit{rt}}} )
	         \\&\leq (\AAA(\nabla v_h)-\AAA(\nabla_{\! h} u_h^{\textit{cr}}),\nabla v_h-\nabla_{\! h} u_h^{\textit{cr}})_\Omega
	         \\&\quad+ (\smash{\AAA^{-1}}(\smash{z_h^{\textit{rt}}})-\smash{\AAA^{-1}}(\Pi_h \smash{z_h^{\textit{rt}}}),\smash{z_h^{\textit{rt}}}-\Pi_h \smash{z_h^{\textit{rt}}})_\Omega
	         \leq c\,\eta^2_{F,h}(v_h)\,,
	     \end{align*}
	     where $c>0$ depends only on $p\in (1,\infty)$ and $\delta\ge 0$.
    \end{proof}
  
   The following  efficiency result applies.
	 
	 \begin{theorem}[Efficiency]\label{thm:eff}
	 Let $\AAA\colon\mathbb{R}^d\to \mathbb{R}^d$ be defined by \eqref{special_case} for $p\in (1,\infty)$ and $\delta\ge 0$ and let $F\colon\mathbb{R}^d\to \mathbb{R}^d$ be defined by \eqref{eq:def_F} for the same $p\in (1,\infty)$ and $\delta\ge 0$. Then, there exists a
	        constant $c>0$, depending only on $p\in (1,\infty)$, $\delta\ge 0$, and the chunkiness ${\omega_0>0}$, such that  for every $v_h\in      \mathcal{S}^1_D(\mathcal{T}_h)$, it holds
	       \begin{align*}
	           \smash{\eta^2_{F,h}(v_h)\leq c\,\|F(\nabla v_h)-F(\nabla u)\|^2_{L^2(\Omega;\mathbb{R}^d)}+c\,\textup{osc}_h(f,v_h)\,.}
	       \end{align*}
	\end{theorem}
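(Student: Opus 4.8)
The plan is to bound the two contributions to $\eta_{F,h}^2(v_h)$ separately. The first one, $\|F(\nabla v_h)-F(\nabla_{\!h} u_h^{\textit{cr}})\|^2_{L^2(\Omega;\mathbb{R}^d)}$, is handled immediately by the medius error analysis: Corollary~\ref{cor:best-approx} bounds it by $c\,[\|F(\nabla v_h)-F(\nabla u)\|^2_{L^2(\Omega;\mathbb{R}^d)}+\textup{osc}_h(f,v_h)]$, which is already of the claimed form. Hence the whole task reduces to estimating the data approximation contribution $\|F^*(z_h^{\textit{rt}})-F^*(\Pi_h z_h^{\textit{rt}})\|^2_{L^2(\Omega;\mathbb{R}^d)}$.

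For this term I would start from the generalized Marini formula~\eqref{eq:gen_marini} and the discrete convex optimality relation~\eqref{eq:pDirichletOptimalityCR2}. Since $\Pi_h$ annihilates $\textup{id}_{\mathbb{R}^d}-\Pi_h\textup{id}_{\mathbb{R}^d}$ (its value at the barycenter $x_T$ vanishes), these give $\Pi_h z_h^{\textit{rt}}=\AAA(\nabla_{\!h} u_h^{\textit{cr}})$ and $z_h^{\textit{rt}}-\Pi_h z_h^{\textit{rt}}=-\tfrac{f_h}{d}(\textup{id}_{\mathbb{R}^d}-\Pi_h\textup{id}_{\mathbb{R}^d})$, whence the pointwise bound $\vert z_h^{\textit{rt}}-\Pi_h z_h^{\textit{rt}}\vert\le c\,h_T\vert f_h\vert$ on each $T\in\mathcal{T}_h$. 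Next I would invoke the conjugate-natural-distance equivalence~\eqref{eq:hammerf} with the shift placed at $\vert\Pi_h z_h^{\textit{rt}}\vert=\vert\AAA(\nabla_{\!h} u_h^{\textit{cr}})\vert$ --- admissible because the left-hand side of~\eqref{eq:hammerf} is symmetric in its two arguments, and advantageous because this shift is element-wise constant --- combined with $(\varphi^*)_{\vert\AAA(a)\vert}\sim(\varphi_{\vert a\vert})^*$ uniformly in $a$ (cf.\ \cite[Lemma~26]{die-ett} with $\vert\AAA(a)\vert=\varphi'(\vert a\vert)$) and $\sup_{a\ge0}\Delta_2((\varphi_a)^*)<\infty$ to absorb multiplicative constants. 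This reduces, on each $T\in\mathcal{T}_h$, the data approximation contribution to $c\,\rho_{(\varphi_{\vert\nabla_{\!h} u_h^{\textit{cr}}\vert})^*,T}(h_T f_h)$.

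It then remains to estimate $\sum_{T\in\mathcal{T}_h}\rho_{(\varphi_{\vert\nabla_{\!h} u_h^{\textit{cr}}\vert})^*,T}(h_T f_h)$. I would first change the shift from $\vert\nabla_{\!h} u_h^{\textit{cr}}\vert$ to $\vert\nabla v_h\vert$ element-wise via~\eqref{lem:shift-change.3} (with a fixed shift parameter), at the price of an additional term $c\,\|F(\nabla_{\!h} u_h^{\textit{cr}})-F(\nabla v_h)\|^2_{L^2(T;\mathbb{R}^d)}$ whose sum over $T$ is again controlled by Corollary~\ref{cor:best-approx}. Then, writing $h_Tf_h=h_Tf-h_T(f-f_h)$ and using the $\Delta_2$-condition together with the definition $\textup{osc}_h(f,v_h,T)=\rho_{(\varphi_{\vert\nabla v_h\vert})^*,T}(h_T(f-f_h))$, I would reduce to $\rho_{(\varphi_{\vert\nabla v_h\vert})^*,T}(h_Tf)$, which is precisely the quantity bounded in the local efficiency estimate~\eqref{lem:efficiency.1} by $c\,\|F(\nabla v_h)-F(\nabla u)\|^2_{L^2(T;\mathbb{R}^d)}+c\,\textup{osc}_h(f,v_h,T)$. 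Summing over $T\in\mathcal{T}_h$ and combining with the bound for the first contribution yields the assertion.

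The only genuinely non-routine point --- everything else is bookkeeping --- is the careful placement of the shifts: one must recognize that the shift in~\eqref{eq:hammerf} may be taken at the element-wise constant quantity $\vert\Pi_h z_h^{\textit{rt}}\vert$ rather than at $\vert z_h^{\textit{rt}}\vert$, so that the identification $(\varphi^*)_{\vert\AAA(\cdot)\vert}\sim(\varphi_{\vert\cdot\vert})^*$ and the shift-change lemma~\eqref{lem:shift-change.3} apply cleanly on each element, and then to chain the local efficiency estimate~\eqref{lem:efficiency.1}, the shift change, and Corollary~\ref{cor:best-approx} in an order that lets all auxiliary terms be absorbed into the two already-controlled quantities.
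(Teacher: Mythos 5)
Your proposal is correct and follows essentially the same route as the paper's proof: the first part of $\eta^2_{F,h}(v_h)$ is handled by Corollary \ref{cor:best-approx}, and the second part is reduced via the Marini formula \eqref{eq:gen_marini}, the equivalence \eqref{eq:hammerf} with the shift placed at $\vert\Pi_h z_h^{\textit{rt}}\vert=\vert\AAA(\nabla_{\!h}u_h^{\textit{cr}})\vert$, the identification $(\varphi^*)_{\vert\AAA(\cdot)\vert}\sim(\varphi_{\vert\cdot\vert})^*$, the shift change \eqref{lem:shift-change.3}, and the local efficiency estimate \eqref{lem:efficiency.1}. The only cosmetic deviation is that you pass from $f_h$ to $f$ by a triangle-type argument producing an extra $\textup{osc}_h(f,v_h)$ term, whereas the paper uses the Orlicz-stability of $\Pi_h$; both are fine.
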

	
		\begin{proof} Appealing to Corollary \ref{cor:best-approx} and \eqref{lem:efficiency.1}, we have that
    \begin{align}\label{thm:rel_and_eff_P1.2}
        \|F(\nabla v_h)-F(\nabla_{\! h} u_h^{\textit{cr}})\|^2_{L^2(\Omega;\mathbb{R}^d)}&\leq c\,\big[\|F(\nabla v_h)-F(\nabla u)\|^2_{L^2(\Omega;\mathbb{R}^d)}+\textup{osc}_h(f,v_h)\big]\,,\\
        \smash{\rho_{(\varphi_{\vert \nabla v_h\vert})^*,\Omega}(h_{\mathcal{T}} f)}&\leq c\,\big[\|F(\nabla v_h)-F(\nabla u)\|^2_{L^2(\Omega;\mathbb{R}^d)}+\textup{osc}_h(f,v_h)\big]\,.\label{thm:rel_and_eff_P1.2.0}
    \end{align}
    On the other hand, using \eqref{eq:hammerf}, $\vert \Pi_h \smash{z_h^{\textit{rt}}}\vert=\vert \AAA(\nabla_{\! h} u_h^{\textit{cr}})\vert$,  $(\varphi^*)_{\vert \AAA(a)\vert}\sim (\varphi_{\vert a\vert})^*$ uniformly~in~${a\in \mathbb{R}^d}$,~the shift change \eqref{lem:shift-change.3}, the Orlicz-stability of $\Pi_h$ (cf. \cite[Corollary A.8, (A.12)]{kr-phi-ldg}), 
    and~\eqref{thm:rel_and_eff_P1.2.0},~we~find~that
    \begin{align}\label{thm:rel_and_eff_P1.3}
    \hspace{-2.5mm}\begin{aligned}\|F^*(\smash{z_h^{\textit{rt}}})-F^*(\Pi_h \smash{z_h^{\textit{rt}}})\|^2_{L^2(\Omega;\mathbb{R}^d)}&\leq 
        c\, \smash{\rho_{(\varphi^*)_{\vert\AAA(\nabla_{\! h} u_h^{\textit{cr}})\vert},\Omega}(h_{\mathcal{T}} f_h)}
        \\&\leq 
        c\, \smash{\rho_{(\varphi_{\vert \nabla_{\!h} u_h^{\textit{cr}}\vert})^*,\Omega}(h_{\mathcal{T}} f_h)}
        \\&\leq c_\varepsilon\, \smash{\rho_{(\varphi_{\vert \nabla v_h\vert})^*,\Omega}(h_{\mathcal{T}}f_h)}+\varepsilon\,c\,\|F(\nabla v_h)-F(\nabla_{\! h} u_h^{\textit{cr}})\|^2_{L^2(\Omega;\mathbb{R}^d)}
          \\&\leq c_\varepsilon\, \smash{\rho_{(\varphi_{\vert \nabla v_h\vert})^*,\Omega}(h_{\mathcal{T}} f)}+\varepsilon\,c\,\|F(\nabla v_h)-F(\nabla_{\! h} u_h^{\textit{cr}})\|^2_{L^2(\Omega;\mathbb{R}^d)}
           \\&\leq c_\varepsilon\,\big[ \|F(\nabla v_h)-F(\nabla u)\|^2_{L^2(\Omega;\mathbb{R}^d)}+\textup{osc}_h(f,v_h)\big]\\&\quad +\varepsilon\,c\,\|F(\nabla v_h)-F(\nabla_{\! h} u_h^{\textit{cr}})\|^2_{L^2(\Omega;\mathbb{R}^d)}\,.
            \end{aligned}\hspace{-2.5mm}
    \end{align}
    Adding \eqref{thm:rel_and_eff_P1.2} and \eqref{thm:rel_and_eff_P1.3} and, then, choosing $\varepsilon>0$ sufficiently small, we conclude the assertion.
    \end{proof}
	
	\section{Numerical experiments}\label{sec:experiments}
	
	\qquad In this section, we confirm the theoretical findings of  Section \ref{sec:a_priori} and Section \ref{sec:a_posteriori} via numerical experiments.
    
    All experiments were conducted using the finite element software package \mbox{\textsf{FEniCS}} (version 2019.1.0), cf.~\cite{LW10}. 
    All graphics are generated using the \textsf{Matplotlib} (version 3.5.1) library, cf.~\cite{Hun07}.
    
	\subsection{A priori error analysis}\label{subsec:num_a_priori}
	
	\qquad In \hspace{-0.1mm}this \hspace{-0.1mm}subsection, \hspace{-0.1mm}we \hspace{-0.1mm}confirm \hspace{-0.1mm}the \hspace{-0.1mm}theoretical \hspace{-0.1mm}findings \hspace{-0.1mm}of \hspace{-0.1mm}Section \hspace{-0.1mm}\ref{sec:a_priori}.
	\hspace{-0.1mm}More~\hspace{-0.1mm}precisely,~\hspace{-0.1mm}we~\hspace{-0.1mm}apply~\hspace{-0.1mm}the $\smash{\mathcal{S}^{1,\textit{cr}}_D(\mathcal{T}_h)}$-approximation \eqref{eq:pDirichletS1crD} of the variational 
	$p$-Dirichlet problem \eqref{eq:pDirichletW1p} with $\AAA\colon\mathbb{R}^d\to\mathbb{R}^d$, for every $a\in\mathbb{R}^d$ defined by
	\begin{align*}
	    \smash{\AAA(a) \coloneqq (\delta+\vert a\vert)^{p-2}a}\,,
	\end{align*}
	i.e., \eqref{special_case} applies, 
    where $\delta\coloneqq 1\textrm{e}{-}4$ and $p\in (1,\infty)$.~We~approximate the discrete primal solution ${u_h^{\textit{cr}}\in \smash{\mathcal{S}^{1,\textit{cr}}_D(\mathcal{T}_h)}}$  deploying the Newton line search algorithm  of \mbox{\textsf{PETSc}} (version     3.17.3), cf. \cite{LW10}, with an absolute tolerance of $\tau_{abs}= 1\textrm{e}{-}8$ and a relative tolerance of $\tau_{rel}=1\textrm{e}{-}10$. The linear~sys-tem \hspace*{-0.1mm}emerging \hspace*{-0.1mm}in \hspace*{-0.1mm}each \hspace*{-0.1mm}Newton \hspace*{-0.1mm}step \hspace*{-0.1mm}is \hspace*{-0.1mm}solved \hspace*{-0.1mm}using \hspace*{-0.1mm}a \hspace*{-0.1mm}sparse \hspace*{-0.1mm}direct \hspace*{-0.1mm}solver~\hspace*{-0.1mm}from~\hspace*{-0.1mm}\textup{\textsf{MUMPS}}~\hspace*{-0.1mm}(version~\hspace*{-0.1mm}5.5.0), cf. \cite{mumps}.
    
    For  our numerical experiments, we choose $\Omega= (-1,1)^2$, $\Gamma_D=\partial \Omega$, and as a manufactured solution of \eqref{eq:pDirichlet}, the function $u\in W^{1,p}_D(\Omega)$, for every $x\coloneqq (x_1,x_2)^\top\in \Omega$ defined by
    \begin{align*}
    	\smash{u(x)\coloneqq d(x)\,\vert x\vert^\alpha}\,,
    \end{align*}
    i.e., we set $f\coloneqq -\textup{div}\,\AAA(\nabla u)$. Here,
      $d\in C^\infty(\overline{\Omega})$, for every $x=(x_1,x_2)^\top \in\overline{\Omega}$ defined by 
    \begin{align*}
        \smash{d(x)\coloneqq (1-x_1^2)\,(1-x_2^2)}\,,
    \end{align*}
    is a smooth cut-off function enforcing the homogeneous Dirichlet boundary condition.~Moreover, we \hspace*{-0.1mm}choose \hspace*{-0.1mm}$\alpha\hspace*{-0.1em}=\hspace*{-0.1em}1.01$, \hspace*{-0.1mm}which \hspace*{-0.1mm}yields \hspace*{-0.1mm}that \hspace*{-0.1mm}$u\hspace*{-0.1em}\in W^{1,p}_D(\Omega)$ \hspace*{-0.1mm}satisfies \hspace*{-0.1mm}the \hspace*{-0.1mm}natural \hspace*{-0.1mm}regularity \hspace*{-0.1mm}assumption~\hspace*{-0.1mm}\eqref{natural_regularity}. As a result, appealing to Theorem \ref{thm:rate_u} and Theorem \ref{thm:rate_z}, we can expect the
    convergence~rate~$1$.
     
    We construct an initial triangulation $\mathcal
    T_{h_0}$, where $h_0=\smash{\frac{1}{\sqrt{2}}}$, by subdividing a~rectangular~Cartesian grid into regular triangles with different orientations.  Finer     triangulations $\mathcal T_{h_k}$, $k=1,\dots,9$, where $h_{k+1}=\frac{h_k}{2}$ for all $k=1,\dots,9$, are 
    obtained by
    regular subdivision of the previous grid: each triangle is subdivided
    into four equal triangles by connecting the midpoints of the edges, i.e., applying the red-refinement rule, cf. \cite{Car04}.
    
    Then, for the resulting series of triangulations $\mathcal T_k\coloneqq \mathcal T_{h_k}$, $k=1,\dots,9$, we apply the above Newton scheme to compute the discrete primal solution $u_k^{\textit{cr}}\coloneqq u_{h_k}^{\textit{cr}}\in \mathcal{S}^{1,\textit{cr}}_D(\mathcal{T}_k)$,~${k=1,\dots,9}$,~and,~then, resorting to the generalized Marini formula \eqref{eq:gen_marini}, the discrete dual solution ${z_k^{\textit{rt}}\coloneqq z_{h_k}^{\textit{rt}}\in\mathcal{R}T^0_N(\mathcal{T}_k)}$, $k=1,\dots,9$. Subsequently, we compute the error quantities
    \begin{align}\label{errors}
    	\left.\begin{aligned}
    		e_{F,k}&\coloneqq \|F(\nabla_{\!h_k}u_k^{\textit{cr}} )-F(\nabla u)\|_{L^2(\Omega;\mathbb{R}^2)}\,,\\
    		e_{F^*,k}&\coloneqq \|F^*(z_k^{\textit{cr}})-F^*(z)\|_{L^2(\Omega;\mathbb{R}^2)}\,,
    \end{aligned}\quad\right\}\quad k=1,\dots,9\,.
    \end{align}
    As estimation of the convergence rates,  the experimental order of convergence~(EOC)
    \begin{align*}
    	\texttt{EOC}_k(e_k)\coloneqq \frac{\log(e_k/e_{k-1})}{\log(h_k/h_{k-1})}\,, \quad k=1,\dots,9\,,
    \end{align*}
    where for every $k= 1,\dots,9$, we denote by $e_k$,
    either $e_{F,k}$ or 
    $e_{F^*,k}$, 
    ~\mbox{respectively},~is~recorded. 
    
    For different values of $p\in \{1.25,1.5,1.75, 2, 2.25, 2.5, 2.75, 3, 3.25, 3.5, 3.75,4\}$ and a
    series of triangulations~$\mathcal{T}_k$, $k = 1,\dots,9$,
    obtained by uniform mesh refinement as described~above,~the~EOC is
    computed and for $k = 4,\dots,9$ presented in Table~\ref{tab1}  and
    Table~\ref{tab3}.~In~each~case, we report a convergence ratio of about $\texttt{EOC}_k(e_k)\approx 1$, $k=4,\dots,9$, confirming the optimality of the  a priori error estimates established in Theorem~\ref{thm:rate_u} and  Theorem~\ref{thm:rate_z}.
    
\begin{table}[H]
     \setlength\tabcolsep{2pt}
 	\centering
 	\begin{tabular}{c |c|c|c|c|c|c|c|c|c|c|c|c|} \cmidrule(){2-13}
 	\hline 
		   
		    \multicolumn{1}{|c||}{\cellcolor{lightgray}\diagbox[height=1.1\line,width=0.11\dimexpr\linewidth]{\vspace{-0.6mm}$k$}{\\[-5mm] $p$}}
		    & \cellcolor{lightgray}1.25
		    & \cellcolor{lightgray}1.5 & \cellcolor{lightgray}1.75  & \multicolumn{1}{c||}{\cellcolor{lightgray}2.0}  &  \multicolumn{1}{c|}{\cellcolor{lightgray}2.25} & \cellcolor{lightgray}2.5  & \cellcolor{lightgray}2.75 &  \cellcolor{lightgray}3.0   & \cellcolor{lightgray}3.25  & \cellcolor{lightgray}3.5  & \cellcolor{lightgray}3.75  & \cellcolor{lightgray}4.0 \\ \hline\hline
			\multicolumn{1}{|c||}{\cellcolor{lightgray}$4$}  & 0.827 & 0.928 & 0.931 & \multicolumn{1}{c||}{0.936} & \multicolumn{1}{c|}{0.940} & 0.942 & 0.943 & 0.942 & 0.939 & 0.935 & 0.930 & 0.924 \\ \hline
			\multicolumn{1}{|c||}{\cellcolor{lightgray}$5$}  & 1.007 & 0.928 & 0.946 & \multicolumn{1}{c||}{0.955} & \multicolumn{1}{c|}{0.962} & 0.967 & 0.970 & 0.973 & 0.974 & 0.974 & 0.974 & 0.973 \\ \hline
			\multicolumn{1}{|c||}{\cellcolor{lightgray}$6$}  & 1.086 & 0.951 & 0.955 & \multicolumn{1}{c||}{0.962} & \multicolumn{1}{c|}{0.968} & 0.973 & 0.978 & 0.981 & 0.984 & 0.986 & 0.987 & 0.988 \\ \hline
			\multicolumn{1}{|c||}{\cellcolor{lightgray}$7$}  & 0.938 & 0.950 & 0.958 & \multicolumn{1}{c||}{0.965} & \multicolumn{1}{c|}{0.971} & 0.976 & 0.980 & 0.984 & 0.987 & 0.989 & 0.991 & 0.992 \\ \hline
			\multicolumn{1}{|c||}{\cellcolor{lightgray}$8$}  & 0.883 & 0.956 & 0.962 & \multicolumn{1}{c||}{0.967} & \multicolumn{1}{c|}{0.973} & 0.977 & 0.981 & 0.985 & 0.988 & 0.990 & 0.992 & 0.993 \\ \hline
			\multicolumn{1}{|c||}{\cellcolor{lightgray}$9$}  & 0.998 & 0.958 & 0.964 & \multicolumn{1}{c||}{0.969} & \multicolumn{1}{c|}{0.974} & 0.978 & 0.982 & 0.986 & 0.988 & 0.991 & 0.992 & 0.994 \\ \hline\hline
			\multicolumn{1}{|c||}{\cellcolor{lightgray}\small \textrm{expected}}   & 1.00  & 1.00  & 1.00  & \multicolumn{1}{c||}{1.00}  & \multicolumn{1}{c|}{1.00} & 1.00  & 1.00  & 1.00  & 1.00  & 1.00  & 1.00  & 1.00 \\ \hline
\end{tabular}\vspace{-2mm}
 	\caption{Experimental order of convergence: $\texttt{EOC}_k(e_{F,k})$,~${k=4,\dots,9}$.} 
 	\label{tab1}
 \end{table}\vspace{-5mm}

 \begin{table}[H]
     \setlength\tabcolsep{2pt}
 	\centering
 	\begin{tabular}{c |c|c|c|c|c|c|c|c|c|c|c|c|} \cmidrule(){2-13}
 	\hline 
		   
		    \multicolumn{1}{|c||}{\cellcolor{lightgray}\diagbox[height=1.1\line,width=0.11\dimexpr\linewidth]{\vspace{-0.6mm}$k$}{\\[-5mm] $p$}}
		    & \cellcolor{lightgray}1.25
		    & \cellcolor{lightgray}1.5 & \cellcolor{lightgray}1.75  & \multicolumn{1}{c||}{\cellcolor{lightgray}2.0}  &  \multicolumn{1}{c|}{\cellcolor{lightgray}2.25} & \cellcolor{lightgray}2.5  & \cellcolor{lightgray}2.75 &  \cellcolor{lightgray}3.0   & \cellcolor{lightgray}3.25  & \cellcolor{lightgray}3.5  & \cellcolor{lightgray}3.75  & \cellcolor{lightgray}4.0 \\ \hline\hline
			\multicolumn{1}{|c||}{\cellcolor{lightgray}$4$}  & 0.906 & 0.946 & 0.936 & \multicolumn{1}{c||}{0.937} & \multicolumn{1}{c|}{0.938} & 0.938 & 0.934 & 0.929 & 0.921 & 0.912 & 0.902 & 0.892 \\ \hline
			\multicolumn{1}{|c||}{\cellcolor{lightgray}$5$}  & 1.002 & 0.917 & 0.944 & \multicolumn{1}{c||}{0.958} & \multicolumn{1}{c|}{0.965} & 0.970 & 0.972 & 0.972 & 0.972 & 0.970 & 0.967 & 0.963 \\ \hline
			\multicolumn{1}{|c||}{\cellcolor{lightgray}$6$}  & 1.069 & 0.948 & 0.955 & \multicolumn{1}{c||}{0.965} & \multicolumn{1}{c|}{0.972} & 0.978 & 0.982 & 0.985 & 0.986 & 0.987 & 0.988 & 0.987 \\ \hline
			\multicolumn{1}{|c||}{\cellcolor{lightgray}$7$}  & 0.945 & 0.942 & 0.957 & \multicolumn{1}{c||}{0.968} & \multicolumn{1}{c|}{0.975} & 0.981 & 0.985 & 0.988 & 0.990 & 0.992 & 0.993 & 0.994 \\ \hline
			\multicolumn{1}{|c||}{\cellcolor{lightgray}$8$}  & 0.875 & 0.949 & 0.961 & \multicolumn{1}{c||}{0.970} & \multicolumn{1}{c|}{0.977} & 0.982 & 0.986 & 0.989 & 0.992 & 0.993 & 0.995 & 0.996 \\ \hline
			\multicolumn{1}{|c||}{\cellcolor{lightgray}$9$}  & 0.988 & 0.953 & 0.963 & \multicolumn{1}{c||}{0.971} & \multicolumn{1}{c|}{0.978} & 0.983 & 0.987 & 0.990 & 0.992 & 0.994 & 0.995 & 0.996 \\ \hline\hline
			\multicolumn{1}{|c||}{\cellcolor{lightgray}\small \textrm{expected}}   & 1.00  & 1.00  & 1.00  & \multicolumn{1}{c||}{1.00}  & \multicolumn{1}{c|}{1.00} & 1.00  & 1.00  & 1.00  & 1.00  & 1.00  & 1.00  & 1.00 \\ \hline
\end{tabular}\vspace{-2mm}
 	\caption{Experimental order of convergence: $\texttt{EOC}_k(e_{F^*,k})$,~${k=4,\dots,9}$.} 
 	\label{tab3}
 \end{table}

    \subsection{A posteriori error analysis}\label{subsec:num_a_posteriori}
    
    \qquad In \hspace{-0.1mm}this \hspace{-0.1mm}subsection, \hspace{-0.1mm}we \hspace{-0.1mm}confirm \hspace{-0.1mm}the \hspace{-0.1mm}theoretical \hspace{-0.1mm}findings \hspace{-0.1mm}of \hspace{-0.1mm}Section \hspace{-0.1mm}\ref{sec:a_posteriori}.
	\hspace{-0.1mm}More~\hspace{-0.1mm}precisely,~\hspace{-0.1mm}we~\hspace{-0.1mm}apply~\hspace{-0.1mm}the $\smash{\mathcal{S}^{1,\textit{cr}}_D(\mathcal{T}_h)}$-approximation \eqref{eq:pDirichletS1crD} of the variational 
	$p$-Dirichlet problem \eqref{eq:pDirichletW1p} with $\AAA\colon\mathbb{R}^d\to\mathbb{R}^d$, for every $a\in\mathbb{R}^d$ defined by
	\begin{align*}
	    \smash{\AAA(a) \coloneqq (\delta+\vert a\vert)^{p-2}a}\,,
	\end{align*}
    where $\delta\coloneqq 1\mathrm{e}{-}5$ and $p\in (1,\infty)$, in an adaptive mesh refinement algorithm based~on~local~refine-ment indicators $(\eta^2_{h,T}(v_h))_{T\in \mathcal{T}_h}$ associated with   the primal-dual a posteriori~error~estimator~$\eta^2_{h}(v_h)$. 
    More precisely, for every $v_h\in \smash{\mathcal{S}^1_D(\mathcal{T}_h)}$ and $T\in \mathcal{T}_h$, we define
    \begin{align*}
        \eta_{A,h,T}^2(v_h)&\coloneqq \rho_{\varphi,T}(\nabla v_h)-(\Pi_h \smash{z_h^{\textit{rt}}},\nabla v_h-\nabla_{\!h} u_h^{\textit{cr}})_T-\rho_{\varphi,T}(\nabla_{\!h} u_h^{\textit{cr}})\,,\\
        \eta_{B,h,T}^2
        &\coloneqq \rho_{\varphi^*,T}(\smash{z_h^{\textit{rt}}})-\rho_{\varphi^*,T}(\Pi_h \smash{z_h^{\textit{rt}}})\,,\\
        \eta_{h,T}^2(v_h)&\coloneqq \eta_{A,h,T}^2(v_h)+\eta_{B,h,T}^2
        \,.
    \end{align*}
    
    Before we present numerical experiments, we briefly outline the  details of the implementations. 
    In general, we follow the adaptive algorithm, cf. \cite{DK08}:\enlargethispage{5mm}
    
	\begin{algorithm}[AFEM]\label{alg:afem}
		Let $\varepsilon_{\textup{STOP}}\!>\!0$, $\theta\!\in\! (0,1)$ and  $\mathcal{T}_0$ a conforming initial  triangulation~of~$\Omega$. Then, for every $k\in \mathbb{N}\cup \{0\}$:
	\begin{description}[noitemsep,topsep=1pt,labelwidth=\widthof{\textit{('Estimate')}},leftmargin=!,font=\normalfont\itshape]
		\item[('Solve')]\hypertarget{Solve}{}
		Compute the discrete primal solution $u_k^{\textit{cr}}\in \smash{\mathcal{S}^{1,\textit{cr}}_D(\mathcal{T}_k)}$. Post-process $u_k^{\textit{cr}}\in \smash{\mathcal{S}^{1,\textit{cr}}_D(\mathcal{T}_k)}$ to obtain the discrete dual solution $z_k^{\textit{rt}}\in \smash{\mathcal{R}T^0_N(\mathcal{T}_k)}$ and a conforming approximation $v_k\in \mathcal{S}^1_D(\mathcal{T}_k)$ of the primal solution $u\in W^{1,p}_D(\Omega)$.
		\item[('Estimate')]\hypertarget{Estimate}{} Compute the local refinement indicators $\smash{(\eta^2_{k,T}(v_k))_{T\in \mathcal{T}_k}\coloneqq (\eta^2_{h_k,T}(v_k))_{T\in \mathcal{T}_k}}$. If $\smash{\eta^2_k(v_k)\coloneqq \eta^2_{h_k}(v_k)\leq \varepsilon_{\textup{STOP}}}$, then \textup{STOP}; otherwise, continue with step (\hyperlink{Mark}{'Mark'}).
		\item[('Mark')]\hypertarget{Mark}{}  Choose a minimal (in terms of cardinality) subset $\mathcal{M}_k\subseteq\mathcal{T}_k$ such that
		\begin{align*}
			\sum_{T\in \mathcal{M}_k}{\eta_{k,T}^2(v_k)}\ge \theta^2\sum_{T\in \mathcal{T}_k}{\eta_{k,T}^2(v_k)}\,.
		\end{align*}
		\item[('Refine')]\hypertarget{Refine}{} Perform a conforming refinement of $\mathcal{T}_k$ to obtain $\mathcal{T}_{k+1}$~such~that~each $T\in \mathcal{M}_k$  is refined in $\mathcal{T}_{k+1}$. 
		Increase~$k\mapsto k+1$~and~continue~with~('Solve').
	\end{description}
	\end{algorithm}

	\begin{remark}
			\begin{description}[noitemsep,topsep=1pt,labelwidth=\widthof{\textit{(iii)}},leftmargin=!,font=\normalfont\itshape]
				\item[(i)] \hspace{-2.5mm}The discrete primal solution $u_k^{\textit{cr}}\hspace{-0.1em}\in \hspace{-0.1em}\smash{\mathcal{S}^{1,\textit{cr}}_D(\mathcal{T}_k)}$  in step (\hyperlink{Solve}{'Solve'}) is computed~deploy-ing the Newton~line~search algorithm of \textsf{\textup{PETSc}}, cf. \cite{PETSc19}, with an absolute tolerance of about $\tau_{\textit{abs}}=1\mathrm{e}{-}8$ and a relative tolerance~of~about~${\tau_{\textit{rel}}=1\mathrm{e}{-}10}$. The linear system emerging in each Newton step is solved using a sparse direct solver~from~\textup{\textsf{MUMPS}} (version~5.5.0),~cf.~\cite{mumps}. 
				\item[(ii)] The reconstruction of the discrete dual solution $z_k^{\textit{rt}}\in \smash{\mathcal{R}T^0_N(\mathcal{T}_k)}$ in step (\hyperlink{Solve}{'Solve'}) is based on the generalized Marini formula \eqref{eq:gen_marini} and does not entail further computational costs.
                \item[(iii)] As a conforming approximation, we employ $v_k=I^{\textit{av}}_{h_k} u_k^{\textit{cr}}\in \mathcal{S}^1_D(\mathcal{T}_k)$.
				\item[(iv)] If not otherwise specified, we employ the parameter $\theta=\smash{\frac{1}{2}}$ in step (\hyperlink{Estimate}{'Mark'}).
				\item[(v)] To find the  set $\mathcal{M}_k\subseteq \mathcal{T}_k$ in step (\hyperlink{Mark}{'Mark'}), we~deploy~the~D\"orfler marking strategy,~cf.~\cite{Doe96}.
				\item[(vi)] The \hspace*{-0.1mm}(minimal) \hspace*{-0.1mm}conforming \hspace*{-0.1mm}refinement \hspace*{-0.1mm}of \hspace*{-0.1mm}$\mathcal{T}_k$ \hspace*{-0.1mm}with \hspace*{-0.1mm}respect \hspace*{-0.1mm}to \hspace*{-0.1mm}$\mathcal{M}_k$~\hspace*{-0.1mm}in~\hspace*{-0.1mm}step~\hspace*{-0.1mm}(\hyperlink{Refine}{'Refine'})~\hspace*{-0.1mm}is~\hspace*{-0.1mm}\mbox{obtained} by deploying the \textit{red}-\textit{green}-\textit{blue}-refinement algorithm, cf.~\cite{Car04}.\vspace{-1mm}
			\end{description}
	\end{remark}

    For our numerical experiments, we choose $\Omega\coloneqq \left(-1,1\right)^2 \setminus ([0,1]\times [-1,0])$, $ \Gamma_D \coloneqq \partial\Omega$, and as a manufactured solution of \eqref{eq:pDirichlet}, the function $u\in W^{1,p}_D(\Omega)$, in polar coordinates, for every $(r,\theta)^\top\in (0,\infty)\times (0,2\pi)$ defined by
	\begin{align*}
		\smash{u(r, \theta) \coloneqq d(r,\theta)\, r^\sigma \sin(\tfrac{2}{3}\, \theta)\,.}
	\end{align*}
    Here $d\in C^\infty(\overline{\Omega})$, in polar coordinates, for every $(r,\theta)^\top\in (0,\infty)\times (0,2\pi)$ defined by $ d(r,\theta)\coloneqq (1-r^2\,\cos^2(\theta))\,(1-r^2\,\sin^2(\theta))$, 
    enforces the homogeneous Dirichlet boundary condition. 
    Moreover, for every $p\in (1,\infty)$,~we~choose~$\sigma \coloneqq 1.01-\smash{\frac{1}{p}}$, which just yields that $F(\nabla u) \in W^{\frac{1}{2},2}(\Omega;\mathbb{R}^2)$, so \hspace{-0.1mm}that \hspace{-0.1mm}uniform \hspace{-0.1mm}mesh \hspace{-0.1mm}refinement \hspace{-0.1mm}is \hspace{-0.1mm}expected \hspace{-0.1mm}to \hspace{-0.1mm}yield \hspace{-0.1mm}an \hspace{-0.1mm}error \hspace{-0.1mm}decay \hspace{-0.1mm}for \hspace{-0.1mm}$e_{F,k}$, \hspace{-0.1mm}cf.~\hspace{-0.1mm}\eqref{errors},~\hspace{-0.1mm}with~\hspace{-0.1mm}rate~\hspace{-0.1mm}$\frac{1}{2}$.

 The initial triangulation $\mathcal{T}_0$ in Algorithm \ref{alg:afem} consists of 96 elements and 65 vertices.~In~Figure~\ref{fig:pDirichletPD}, for $p\in \{1.5,2,2.5,3\}$,  $ k=0,\dots,19$, if using adaptive mesh refinements, $ k=0,\dots,4$, if using uniform mesh refinement, and $v_k\coloneqq I_{h_k}^{\textit{av}}u_k^{\textit{cr}}\in\smash{\mathcal{S}^1_D(\mathcal{T}_k)}$, the primal-dual a posteriori error estimator $\eta^2_k(v_k)\hspace{-0.1em}\coloneqq \hspace{-0.1em}\eta^2_{h_k}(v_k)$
	 as well as the error quantity $\rho^2(v_k)\coloneqq \|F (\nabla v_k)-F (\nabla u) \|_{L^2(\Omega;\mathbb{R}^2)}^2$
	are plotted versus the number of degrees of freedom  $N_k \coloneqq  \textup{card}(\mathcal{S}_{h_k}^{i})$ (i.e., $\smash{h_k \sim N_k^{\smash{-\frac{1}{2}}}}$), in a \mbox{$\log\log$-plot}.~In~it, one clearly observes that uniform mesh refinement yields the expected reduced rate $\smash{h_k \sim N_k^{\smash{-\frac{1}{2}}}}$, while \hspace{-0.1mm}adaptive \hspace{-0.1mm}mesh \hspace{-0.1mm}refinement \hspace{-0.1mm}yields \hspace{-0.1mm}the \hspace{-0.1mm}improved \hspace{-0.1mm}quasi-optimal~\hspace{-0.1mm}rate~\hspace{-0.1mm}$\smash{h_k^2\hspace{-0.1em} \sim \hspace{-0.1em}N_k^{-1}}$.~\hspace{-0.1mm}In~\hspace{-0.1mm}\mbox{particular}, for every $p\in \{1.5,2,2.5,3\}$ and $k\hspace{-0.1em}=\hspace{-0.1em}0,\dots,19$, \hspace{-0.1mm}if \hspace{-0.1mm}using \hspace{-0.1mm}adaptive \hspace{-0.1mm}mesh \hspace{-0.1mm}refinement, \hspace{-0.1mm}and \hspace{-0.1mm}$ k\hspace{-0.1em}=\hspace{-0.1em}0,\dots,4$, \hspace{-0.1mm}if \hspace{-0.1mm}using \hspace{-0.1mm}uniform \hspace{-0.1mm}mesh \hspace{-0.1mm}\mbox{refinement}, the primal-dual a posteriori error estimator $\eta^2_k(v_k)$ defines a reliable and efficient upper \hspace{-0.1mm}bound \hspace{-0.1mm}for \hspace{-0.1mm}$\rho^2(v_k)$, \hspace{-0.1mm}confirming \hspace{-0.1mm}the \hspace{-0.1mm}findings~\hspace{-0.1mm}of~\hspace{-0.1mm}Theorem~\hspace{-0.1mm}\ref{thm:rel}~\hspace{-0.1mm}and~\hspace{-0.1mm}\mbox{Theorem}~\hspace{-0.1mm}\ref{thm:eff}.\vspace{-0.25cm}

    \begin{figure}[H]
        \centering
        \includegraphics[width=14cm]{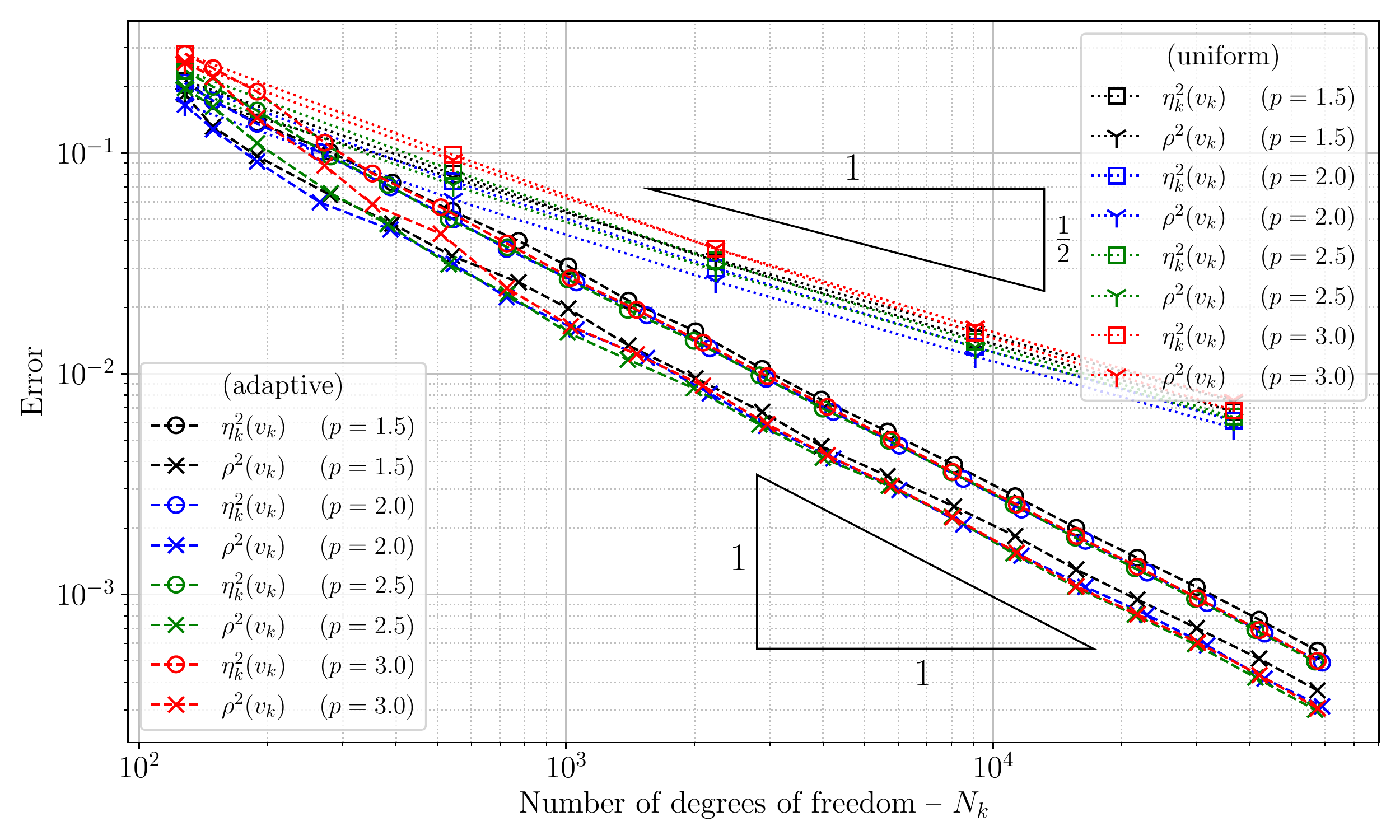}\vspace{-0.5cm}
        \caption{Plots of $\eta_k^2(v_k)$ and  $\rho^2(v_k)$ for $p\in \{1.5,2,2.5,3\}$ and $v_k\coloneqq I_{h_k}^{\textit{av}}u_k^{\textit{cr}}\in\smash{\mathcal{S}^1_D(\mathcal{T}_k)}$, using adaptive mesh refinement for $ k=0,\dots,19$ and using uniform mesh refinement for $ k=0,\dots,4$.\vspace{-1.25cm}}
        \label{fig:pDirichletPD}
    \end{figure}

    \appendix
    \section{Appendix}\label{sec:appendix}

    \hspace{5mm}In this appendix, we give a proof of the inequalities \eqref{eq:a6}.

    \begin{proposition}\label{prop:n-function}
		Let $\psi\colon \mathbb{R}_{\ge 0}\to \mathbb{R}_{\ge 0}$ be an $N$-function such that $\psi\in\Delta_2\cap \nabla_2$. Then,~for~every $v_h\in \mathcal{S}^{1,\textit{\textrm{cr}}}_D(\mathcal{T}_h)$, $m\in \{0,1\}$, $a\ge 0$, and $T\in \mathcal{T}_h$, we have that
		\begin{align*}
			\fint_T{\psi_a(h_T^m\vert \nabla_h^m(v_h-\textcolor{black}{I_h^{\textit{av}}}v_h)\vert)\,\textup{d}x}\leq  c_{\textit{av}}\sum_{S\in \mathcal{S}_h(T)\setminus\Gamma_N}{\fint_S{\psi_a(\vert\jump{v_h}_S\vert)\,\textup{d}s}}\,,
		\end{align*}
            where $c_{\textit{av}}>0$  depends only on $\Delta_2(\psi),\Delta_2(\psi^*)>0$ and the chunkiness $\omega_0>0$.
	\end{proposition}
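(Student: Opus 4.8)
The plan is to exploit that $w_h \coloneqq v_h - I_h^{\textit{av}}v_h$ restricted to any $T \in \mathcal{T}_h$ is an affine function, to rewrite its nodal values as sums of jumps of $v_h$ across a chain of sides of uniformly bounded length, and to conclude via convexity, the uniform $\Delta_2$-property of the shifted $N$-functions $\{\psi_a\}_{a\ge 0}$ (which holds because $\psi\in\Delta_2\cap\nabla_2$, so that $\Delta_2(\psi_a)$ and $\Delta_2((\psi_a)^*)$ are bounded in terms of $\Delta_2(\psi),\Delta_2(\psi^*)$ uniformly in $a$), and Jensen's inequality. Throughout, fix $T\in\mathcal{T}_h$ with vertices $z_0,\dots,z_d\in\mathcal{N}_h\cap T$.

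\emph{Step 1 (reduction to nodal values).} Since $w_h|_T\in\mathcal{P}_1(T)$, the affine function $|w_h|$ attains its maximum over $T$ at a vertex, and $\nabla w_h$ is constant with $h_T|\nabla w_h|$ controlled by vertex differences via shape regularity; hence $\|w_h\|_{L^\infty(T)}+h_T\|\nabla w_h\|_{L^\infty(T)}\le c\max_i|w_h(z_i)|$ with $c=c(\omega_0)$. As $\psi_a$ is non-decreasing with $\Delta_2(\psi_a)\le c$ uniformly in $a\ge0$, this yields for both $m\in\{0,1\}$
\begin{align*}
\fint_T \psi_a\big(h_T^m|\nabla_h^m w_h|\big)\,\mathrm{d}x\;\le\;\psi_a\Big(c\max_i|w_h(z_i)|\Big)\;\le\;c\sum_{i=0}^d\psi_a\big(|w_h(z_i)|\big)\,.
\end{align*}

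\emph{Step 2 (telescoping over a chain of sides).} For $z_i\in\Omega\cup\Gamma_N$ one has $I_h^{\textit{av}}v_h(z_i)=\langle v_h\rangle_{z_i}=\tfrac{1}{\textup{card}(\mathcal{T}_h(z_i))}\sum_{T'\in\mathcal{T}_h(z_i)}(v_h|_{T'})(z_i)$, so $w_h(z_i)=\tfrac{1}{\textup{card}(\mathcal{T}_h(z_i))}\sum_{T'\in\mathcal{T}_h(z_i)}\big[(v_h|_T)(z_i)-(v_h|_{T'})(z_i)\big]$; for $z_i\in\Gamma_D$ one has $\langle v_h\rangle_{z_i}=0$ and $w_h(z_i)=(v_h|_T)(z_i)$. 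By connectedness of $\textup{int}(\omega_T)$, any $T'\in\mathcal{T}_h(z_i)$ is joined to $T$ by a chain $T=T_0,\dots,T_n=T'$ with $S_k\coloneqq T_{k-1}\cap T_k\in\mathcal{S}_h^{i}$, $z_i\in S_k$, and $n\le c(\omega_0)$; in the Dirichlet case $T$ is moreover joined to some $T'$ possessing a side $S'\subseteq\Gamma_D$ with $z_i\in S'$. Telescoping, and using $(v_h|_{T_{k-1}})(z_i)-(v_h|_{T_k})(z_i)=\pm\jump{v_h}_{S_k}(z_i)$ together with $(v_h|_{T'})(z_i)=\jump{v_h}_{S'}(z_i)$ in the Dirichlet case, gives $|w_h(z_i)|\le\sum_S|\jump{v_h}_S(z_i)|$, a sum over at most $c(\omega_0)$ sides, each of which contains a vertex of $T$ and is either interior or Dirichlet, hence belongs to $\mathcal{S}_h(T)\setminus\Gamma_N$.

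\emph{Step 3 (pointwise side values and conclusion).} For interior $S$ the Crouzeix--Raviart condition gives $\jump{v_h}_S(x_S)=0$, and for $S\subseteq\Gamma_D$ the condition $v_h(x_S)=0$ gives $\jump{v_h}_S(x_S)=0$; since $\jump{v_h}_S$ is affine on $S$, scaling to the reference simplex yields $\|\jump{v_h}_S\|_{L^\infty(S)}\le c\,\fint_S|\jump{v_h}_S|\,\mathrm{d}s$ for affine functions vanishing at the barycenter. Combining Steps 1--3 with convexity of $\psi_a$, the uniform bound on the number of sides, the uniform $\Delta_2$-estimate, and Jensen's inequality $\psi_a\big(\fint_S|\jump{v_h}_S|\,\mathrm{d}s\big)\le\fint_S\psi_a(|\jump{v_h}_S|)\,\mathrm{d}s$, I obtain
\begin{align*}
\fint_T \psi_a\big(h_T^m|\nabla_h^m(v_h-I_h^{\textit{av}}v_h)|\big)\,\mathrm{d}x\;\le\;c_{\textit{av}}\sum_{S\in\mathcal{S}_h(T)\setminus\Gamma_N}\fint_S\psi_a\big(|\jump{v_h}_S|\big)\,\mathrm{d}s\,,
\end{align*}
with $c_{\textit{av}}$ depending only on $\Delta_2(\psi),\Delta_2(\psi^*)$ and $\omega_0$. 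The main obstacle is Step 2: setting up the connecting chains, bounding their length uniformly by the chunkiness, and treating the three cases $z_i\in\Omega$, $z_i\in\Gamma_N$, $z_i\in\Gamma_D$ so that Neumann boundary sides never appear on the right-hand side; Steps 1 and 3 are routine applications of inverse estimates, the $\Delta_2$-condition, and Jensen's inequality.
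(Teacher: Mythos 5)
Your proof is correct and, at the top level, follows the same two-step structure as the paper: first a pointwise bound for $v_h-I_h^{\textit{av}}v_h$ (and $h_T\nabla_h(v_h-I_h^{\textit{av}}v_h)$) on $T$ in terms of side-averaged jumps over $\mathcal{S}_h(T)\setminus\Gamma_N$, then monotonicity, the uniform $\Delta_2$-bound $\sup_{a\ge 0}\Delta_2(\psi_a)<\infty$ (via $\psi\in\Delta_2\cap\nabla_2$, cf.\ \cite[Lemma 22]{DK08}), convexity and Jensen's inequality. The difference is that the paper simply imports the first step as inequality \eqref{prop:n-function1} from \cite[Lemma A.2]{BKObstacle} combined with the polynomial inverse inequality \cite[Lemma 12.1]{EG21}, whereas you reprove it from scratch: reduction to nodal values of the affine error on $T$, telescoping over a chain of elements in the vertex star of each node $z_i$ through sides containing $z_i$, and the $L^\infty$--$L^1$ inverse estimate on each side. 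This makes your argument self-contained (it is essentially the proof of the cited lemma), at the cost of a longer exposition; two points you gloss over are exactly the geometric facts hidden in that citation: what you need in Step 2 is the face-connectedness of the star $\mathcal{T}_h(z_i)$ (with chain length bounded by the chunkiness), which is a slightly different statement than connectedness of $\textup{int}(\omega_T)$, and for $z_i\in\Gamma_D$ the existence of a boundary side $S'\subseteq\Gamma_D$ with $z_i\in S'$ requires the (implicit) compatibility of the triangulation with the splitting $\partial\Omega=\Gamma_D\cup\Gamma_N$. Also note that in Step 3 the vanishing of $\jump{v_h}_S$ at $x_S$ is not needed: the bound $\|\jump{v_h}_S\|_{L^\infty(S)}\leq c\,\fint_S{\vert\jump{v_h}_S\vert\,\textup{d}s}$ holds for all polynomials of fixed degree on shape-regular sides, so your argument does not actually use the Crouzeix--Raviart constraint for this proposition (it only enters in Corollary \ref{cor:n-function}).
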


	\begin{proof}
		Owing to \cite[Lemma A.2]{BKObstacle} together with \cite[Lemma 12.1]{EG21}, there exists a constant~$\overline c_{\textit{av}}>0$, depending only on the chunkiness $\omega_0>0$, such that 
		\begin{align}
			h_T^m\| \nabla_h^m(v_h-\textcolor{black}{I_h^{\textit{av}}}v_h)\|_{L^\infty(T;\mathbb{R}^{d^m})}\leq \overline c_{\textit{av}}\sum_{\smash{S\in \mathcal{S}_h(T)\setminus\Gamma_N}}{\fint_S{\vert \jump{v_h}_S\vert\,\textup{d}s}}\,.\label{prop:n-function1}
		\end{align}
	    Using in \eqref{prop:n-function1} the $\Delta_2$-condition and
		convexity of $\psi_a\colon \mathbb{R}_{\ge 0}\to \mathbb{R}_{\ge 0}$, $ a\ge 0 $, in particular, Jensen's inequality, and that $\sup_{h>0}{\sup_{T\in \mathcal{T}_h}{\textup{card}(\mathcal{S}_h(T)\setminus \Gamma_N)}}\leq c_{\mathcal{T}}$, where $c_{\mathcal{T}}>0$ depends only on the chunkiness $\omega_0>0$,	
  we find that
		\begin{align*}
		\fint_T{\psi_a(h_T^m\vert \nabla_h^m(v_h-\textcolor{black}{I_h^{\textit{av}}}v_h)\vert)\,\textup{d}x}
			&\leq 
		\Delta_2(\psi_a)^{\lceil \overline c_{\textit{av}}c_{\mathcal{T}}\rceil }
			\psi_a\bigg(\frac{1}{\textup{card}(\mathcal{S}_h(T)\setminus\Gamma_N)}\sum_{S\in\mathcal{S}_h(T)\setminus\Gamma_N}{\fint_S{\vert \jump{v_h}_S\vert\,\textup{d}s}}\bigg)
		\\&\leq  \Delta_2(\psi_a)^{\lceil \overline c_{\textit{av}}c_{\mathcal{T}}\rceil }\frac{1}{\textup{card}(\mathcal{S}_h(T)\setminus\Gamma_N)}\sum_{S\in\mathcal{S}_h(T)\setminus\Gamma_N}{\fint_S{\psi_a(\vert \jump{v_h}_S\vert)\,\textup{d}s}}\,.
		\end{align*}
        Eventually, using that $\sup_{a\ge 0}{\Delta_2(\psi_a)}<\infty$, cf. \cite[Lemma 22]{DK08}, we conclude the assertion.
		\end{proof}
	
		\begin{corollary}\label{cor:n-function}
			Let $\psi\colon \mathbb{R}_{\ge 0}\to \mathbb{R}_{\ge 0}$ be an $N$-function  such that $\psi\in\Delta_2\cap \nabla_2$. 
			Then,  for every $v_h\in  \mathcal{S}^{1,\textit{cr}}_D(\mathcal{T}_h)$, $m\in\{0,1\}$, $a\ge 0$ and $T\in \mathcal{T}_h$, we have that
		\begin{align*}
			\fint_T{\psi_a(h_T^m\vert \nabla_h^m(v_h-\textcolor{black}{I_h^{\textit{av}}}v_h)\vert)\,\textup{d}x}&\leq  c_{\textit{av}}\sum_{S\in\mathcal{S}_h(T)\setminus\Gamma_N}{\fint_S{\psi_a(h_S\vert\jump{\nabla_h v_h}_S\vert)\,\textup{d}s}}\\&\leq 
			 \tilde{c}_{\textit{av}}\fint_{\omega_T}{\psi_a(h_T\vert\nabla_h v_h\vert)\,\textup{d}x}\,,
		\end{align*}
		where $\tilde{c}_{\textit{av}}>0$ depends only  on $\Delta_2(\psi),\Delta_2(\psi^*)>0$ and the chunkiness $\omega_0>0$.
		\end{corollary}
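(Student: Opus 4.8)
The plan is to derive Corollary~\ref{cor:n-function} from Proposition~\ref{prop:n-function} together with two elementary observations: a Poincar\'e-type bound on the side-jumps of Crouzeix--Raviart functions, and the by now routine bookkeeping with the $\Delta_2$-condition. Throughout, $T\in\mathcal{T}_h$ is fixed, $m\in\{0,1\}$, $a\ge 0$, and for a side $S\in\mathcal{S}_h(T)\setminus\Gamma_N$ we write $\jump{\nabla_h v_h}_S$ for the constant vector $\nabla v_h|_{T_+}-\nabla v_h|_{T_-}$ if $S=\partial T_+\cap\partial T_-$ is interior, and $\jump{\nabla_h v_h}_S=\nabla v_h|_T$ if $S\subseteq\partial T\cap\Gamma_D$, in accordance with the jump conventions used for the scalar traces.

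First I would prove the first inequality. For an interior side $S=\partial T_+\cap\partial T_-$, the jump $\jump{v_h}_S=v_h|_{T_+}-v_h|_{T_-}$ is the restriction to $S$ of an affine function whose value at the barycenter $x_S$ vanishes, by the Crouzeix--Raviart continuity constraint $\jump{v_h}_S(x_S)=0$; for a Dirichlet side $S\subseteq\Gamma_D$ one has $\jump{v_h}_S=v_h|_T$ with $v_h(x_S)=0$ by definition of $\mathcal{S}^{1,\textit{cr}}_D(\mathcal{T}_h)$. In either case, since $x-x_S$ is a tangential vector for $x\in S$ and $\vert x-x_S\vert\le h_S$, we get
\begin{align*}
  \vert\jump{v_h}_S(x)\vert=\vert\jump{\nabla_h v_h}_S\cdot(x-x_S)\vert\le h_S\,\vert\jump{\nabla_h v_h}_S\vert\qquad\text{for all }x\in S\,,
\end{align*}
and the right-hand side is constant on $S$. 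Hence, by monotonicity of $\psi_a$, $\psi_a(\vert\jump{v_h}_S\vert)\le\psi_a(h_S\vert\jump{\nabla_h v_h}_S\vert)$ pointwise on $S$; taking means over $S$ and summing over $S\in\mathcal{S}_h(T)\setminus\Gamma_N$, then applying Proposition~\ref{prop:n-function}, yields the first claimed inequality with the constant $c_{\textit{av}}$ from that proposition, whose dependence is the asserted one.

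For the second inequality I would, for each $S\in\mathcal{S}_h(T)\setminus\Gamma_N$, estimate $\vert\jump{\nabla_h v_h}_S\vert\le\sum_{T'\subseteq\omega_S}\vert\nabla v_h|_{T'}\vert$ and use convexity together with the $\Delta_2$-condition (as in Proposition~\ref{prop:n-function}) to get $\psi_a(h_S\vert\jump{\nabla_h v_h}_S\vert)\le c\sum_{T'\subseteq\omega_S}\psi_a(h_S\vert\nabla v_h|_{T'}\vert)$, where $\omega_S$ consists of the one or two elements adjacent to $S$. Shape regularity gives $h_S\sim h_{T'}\sim h_T$ for $T'\subseteq\omega_S\subseteq\omega_T$, so a further application of $\Delta_2$ replaces $h_S$ by $h_T$ up to a constant, and since $\psi_a(h_T\vert\nabla v_h|_{T'}\vert)$ is constant on $T'$ it equals $\fint_{T'}\psi_a(h_T\vert\nabla_h v_h\vert)\,\textup{d}x$. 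Using $\vert T'\vert\sim\vert\omega_T\vert$, that $\fint_S$ of a constant equals that constant, the disjointness of the $T'\subseteq\omega_S$ and the inclusion $\omega_S\subseteq\omega_T$, and finally summing over the uniformly boundedly many sides $S\in\mathcal{S}_h(T)\setminus\Gamma_N$, we arrive at $\sum_{S\in\mathcal{S}_h(T)\setminus\Gamma_N}\fint_S\psi_a(h_S\vert\jump{\nabla_h v_h}_S\vert)\,\textup{d}s\le\tilde c_{\textit{av}}\fint_{\omega_T}\psi_a(h_T\vert\nabla_h v_h\vert)\,\textup{d}x$. As throughout, $\sup_{a\ge 0}\Delta_2(\psi_a)<\infty$ (cf.\ \cite[Lemma~22]{DK08}) keeps the constant independent of $a$.

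The \emph{main obstacle}, and the only step that is not pure bookkeeping with $\Delta_2$, Jensen's inequality and shape regularity, is the $L^\infty$-bound $\|\jump{v_h}_S\|_{L^\infty(S)}\le h_S\vert\jump{\nabla_h v_h}_S\vert$. It hinges entirely on the jump vanishing at the barycenter $x_S$ — exactly the defining property of the Crouzeix--Raviart space (resp.\ the homogeneous Dirichlet condition at the $\Gamma_D$-midpoints). Without this one could only bound $\jump{v_h}_S$ by its own size, not by the gradient jump, and the estimate would fail; everything else transfers verbatim from the proof of Proposition~\ref{prop:n-function}.
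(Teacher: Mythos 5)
Your proof is correct and follows essentially the same route as the paper: the first estimate rests on the identity $\jump{v_h}_S=\jump{\nabla_h v_h}_S\cdot(\textup{id}_{\mathbb{R}^d}-x_S)$ on $S$ (i.e., the affine jump vanishing at the barycenter $x_S$, resp.\ the Dirichlet midpoint condition), combined with Proposition~\ref{prop:n-function}, exactly as in the paper. The only cosmetic difference is in the second estimate, where the paper cites the discrete trace inequality \cite[Lemma 12.8]{EG21} while you argue directly from the element-wise constancy of $\nabla_h v_h$ together with $\Delta_2$, Jensen, and shape regularity; both reduce to the same bookkeeping and yield the stated constant dependence.
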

		
		\begin{proof}
			Follows from Proposition \ref{prop:n-function}, if we exploit that $\jump{v_h}_S=\jump{\nabla_h v_h}_S\cdot(\textup{id}_{\mathbb{R}^d}-x_S)$~on~$S$~for~all $S\in \mathcal{S}_h$ and $v_h\in \mathcal{S}^{1,\textit{cr}}_D(\mathcal{T}_h)$ and the discrete trace inequality~\cite[Lemma~12.8]{EG21}.
		\end{proof}

    \textit{Acknowledgement.} The author is grateful for the stimulating discussions with S. Bartels.
 
    {\setlength{\bibsep}{0pt plus 0.0ex}\small


	\providecommand{\MR}[1]{}
\providecommand{\bysame}{\leavevmode\hbox to3em{\hrulefill}\thinspace}
\providecommand{\noopsort}[1]{}
\providecommand{\mr}[1]{\href{http://www.ams.org/mathscinet-getitem?mr=#1}{MR~#1}}
\providecommand{\zbl}[1]{\href{http://www.zentralblatt-math.org/zmath/en/search/?q=an:#1}{Zbl~#1}}
\providecommand{\jfm}[1]{\href{http://www.emis.de/cgi-bin/JFM-item?#1}{JFM~#1}}
\providecommand{\arxiv}[1]{\href{http://www.arxiv.org/abs/#1}{arXiv~#1}}
\providecommand{\doi}[1]{\url{https://doi.org/#1}}
\providecommand{\MR}{\relax\ifhmode\unskip\space\fi MR }
\providecommand{\MRhref}[2]{%
  \href{http://www.ams.org/mathscinet-getitem?mr=#1}{#2}
}
\providecommand{\href}[2]{#2}

}
\end{document}